\documentclass[12pt]{article}
\usepackage[utf8]{inputenc}
\usepackage{amsmath, amsthm, amssymb,fullpage, xcolor}
\usepackage{bbm}
\usepackage{boxedminipage}
\usepackage{libertine}
\usepackage{libertinust1math}
\usepackage[T1]{fontenc}
\usepackage[font=small,labelfont=bf]{caption}
\usepackage{hyperref}
\usepackage{microtype}
\usepackage[chicago]{xellipsis}
\usepackage{pgf,tikz,pgfplots}
\pgfplotsset{compat=1.14}
\usepackage{mathrsfs}
\usepackage{enumitem}
\usepackage{thmtools}
\usepackage{thm-restate}
\usepackage{hyperref}
\usepackage{cleveref}
\usepackage{booktabs}

\usepackage{marginnote}
\usepackage{threeparttable, tablefootnote}
\usepackage{float}

\usetikzlibrary{arrows}
\hypersetup{
    colorlinks=true, 
    citecolor=blue, 
    linkcolor=blue, 
    urlcolor=red, 
    linktoc=all 
}


\newtheorem{theorem}{Theorem}[section]
\newtheorem{lemma}[theorem]{Lemma}
\newtheorem{corollary}[theorem]{Corollary}
\newtheorem{proposition}[theorem]{Proposition}
\theoremstyle{definition}
\newtheorem{definition}[theorem]{Definition}
\newtheorem{observation}[theorem]{Observation}
\newtheorem{remark}[theorem]{Remark}

\newtheorem{conjecture}[theorem]{Conjecture}

\newcommand{\poly}{\mathrm{poly}}
\newcommand{\polylog}{\mathrm{polylog}}
\newcommand{\fl}{\mathsf{fl}}

\newcommand{\outdef}{\eta}
\newcommand{\indef}{\beta}
\newcommand{\sgnerr}{\beta}
\newcommand{\eigerr}{\delta}
\newcommand{\eigprob}{\theta}
\newcommand{\spnerr}{\eta}
\newcommand{\spr}{\mathsf{spr}}
\newcommand{\re}{\operatorname{Re}}
\renewcommand{\Re}{\operatorname{Re}}

\renewcommand{\u}{\mach}
\newcommand{\R}{\mathbb{R}}
\newcommand{\C}{\mathbb{C}}
\newcommand{\dist}{\mathsf{dist}}

\renewcommand{\P}{\mathbb{P}}

\newcommand{\bm}[1]{\begin{bmatrix} #1\end{bmatrix}}
\newcommand{\vol}{\mathrm{vol}}
\newcommand{\dE}{\mathbb{E}}

\newcommand{\eps}{\epsilon}

\newcommand{\Tr}{\mathrm{Tr}}

\newcommand{\gap}{\mathrm{gap}}

\newcommand{\cp}[1]{\mathsf{C}^{+}_{#1}}
\newcommand{\cm}[1]{\mathsf{C}^{-}_{#1}}
\newcommand{\cpm}[1]{\mathsf{C}_{#1}}
\newcommand{\ap}[2]{\mathsf{A}^{+}_{#1,#2}}
\newcommand{\am}[2]{\mathsf{A}^{-}_{#1,#2}}
\newcommand{\apm}[2]{\mathsf{A}_{#1,#2}}
\newcommand{\newton}{g}
\newcommand{\G}{\mathsf{G}}
\newcommand{\dee}{\textup{d}}
\newcommand{\mach}{\textbf{\textup{u}}}

\newcommand{\sgn}{\mathrm{sgn}}

\newcommand{\SGN}{\mathsf{SGN}}

\newcommand{\SPAN}{\mathsf{DEFLATE}}
\newcommand{\EIG}{\mathsf{EIG}}

\newcommand{\RURV}{\mathsf{RURV}}
\newcommand{\N}{\mathsf{N}}
\newcommand{\cn}{c_{\mathsf{N}}}

\newcommand{\grid}{\mathsf{grid}}
\renewcommand{\gets}{\leftarrow}

\newcommand{\spa}{\mathrm{deflate}}
\newcommand{\rurv}{\mathrm{rurv}}
\newcommand{\qr}{\mathrm{QR}}
\newcommand{\rank}{\mathrm{rank}}
\newcommand{\range}{\mathrm{range}}

\newcommand{\muav}{\mu_{F, \mathsf{av}}}
\newcommand{\calV}{\mathcal{V}}

\renewcommand{\u}{\mach}

\newcommand{\MM}{\mathsf{MM}}
\newcommand{\pmult}{\mu_{\MM}}
\newcommand{\INV}{\mathsf{INV}}
\newcommand{\pinv}{\mu_{\INV}}
\newcommand{\cinv}{c_\INV}
\newcommand{\pqr}{\mu_\QR}
\newcommand{\QR}{\mathsf{QR}}

\newcommand{\SPLIT}{\mathsf{SPLIT}}

\newcommand{\g}{\mathsf{g}}
\newcommand{\expbound}{2e^{-2 n}}
\newcommand{\expboundd}{2e^{-t^2 n}}

\DeclareMathOperator{\diag}{diam}

\allowdisplaybreaks

\newcommand{\SHATTER}{\mathsf{SHATTER}}
\title{ Pseudospectral Shattering, the Sign Function, and Diagonalization in Nearly Matrix Multiplication Time}
\author{ Jess Banks\thanks{Department of Mathematics, UC Berkeley, Berkeley, CA 94720.}  \thanks{Supported by  the NSF Graduate Research Fellowship Program under Grant DGE-1752814.} \\ \normalsize jess.m.banks@berkeley.edu \and  Jorge Garza-Vargas \footnotemark[1] \\\normalsize jgarzavargas@berkeley.edu \and   Archit Kulkarni \footnotemark[1]  \\ \normalsize akulkarni@berkeley.edu \and  Nikhil Srivastava \footnotemark[1]  \thanks{Supported by NSF Grant CCF-1553751.} \footnote{Corresponding Author.}\\ \normalsize nikhil@math.berkeley.edu }

\date{\normalsize \today}
\begin{document}

	\maketitle
	\begin{abstract}
{\footnotesize	We exhibit a randomized algorithm which given a square matrix $A\in \C^{n\times n}$ with $\|A\|\le 1$ and $\delta>0$, computes with high probability an invertible $V$  and diagonal $D$  such that
	$	    \|A-VDV^{-1}\|\le \delta$	    
	 using $O(T_\MM(n)\log^2(n/\delta))$ arithmetic operations, in finite arithmetic with $O(\log^4(n/\delta)\log n)$ bits of precision. The computed similarity $V$ additionally satisfies $\|V\|\|V^{-1}\|\le O(n^{2.5}/\delta)$.  Here $T_\MM(n)$ is the number of arithmetic operations required to multiply two $n\times n$ complex matrices numerically stably, known to satisfy $T_\MM(n)=O(n^{\omega+\eta})$ for every $\eta>0$ where $\omega$ is the exponent of matrix multiplication (Demmel et al., Numer. Math., 2007). The algorithm is a variant of the spectral bisection algorithm in numerical linear algebra (Beavers Jr. and Denman, Numer. Math., 1974) with a crucial Gaussian perturbation preprocessing step.  Our result significantly improves the previously best known provable running times of $O(n^{10}/\delta^2)$ arithmetic operations  for diagonalization of general matrices (Armentano et al., J. Eur. Math. Soc., 2018), and (with regards to the dependence on $n$) $O(n^3)$ arithmetic operations for Hermitian matrices (Dekker and Traub, Lin. Alg. Appl., 1971). It is the first algorithm to achieve nearly matrix multiplication time for diagonalization in any model of computation (real arithmetic, rational arithmetic, or finite arithmetic), thereby matching the complexity of other dense linear algebra operations such as inversion and $QR$ factorization upto polylogarithmic factors.
	
    The proof rests on two new ingredients. (1) We show that adding a small complex Gaussian perturbation
    to {\em any} matrix splits its pseudospectrum into $n$ small well-separated components. In particular, this implies that the eigenvalues of the perturbed matrix have a large minimum gap, a property of independent interest in random matrix theory. (2) We give a rigorous analysis
    of Roberts' Newton iteration method (Roberts, Int. J. Control, 1980) for computing the sign function of a matrix in finite arithmetic, itself an open problem
    in numerical analysis since at least 1986. \\
    {\em Keywords: Linear Algebra, Random Matrix Theory, Numerical Analysis, Computational Complexity.} \\ { \em AMS MSC 2020 Codes: 60B20, 65F15, 68Q25.} \\
    Communicated by Peter B\"urgisser.
    \par }
    
	\end{abstract}
\newpage
	\tableofcontents
	\newpage
	\section{Introduction}\label{sec:intro}


We study the algorithmic problem of approximately finding all of
the eigenvalues {and eigenvectors} of a given arbitrary $n\times n$
complex matrix.  While this problem is quite well-understood in the special case of
Hermitian matrices (see, e.g., \cite{parlett1998symmetric}), the general
non-Hermitian case has remained mysterious from a theoretical standpoint even
after several decades of research. In particular, the currently best known {\em
provable} algorithms for this problem run in time $O(n^{10}/\delta^2)$
\cite{abbcs} or $O(n^c\log(1/\delta))$ \cite{cai1994computing}
with $c\ge 12$ where $\delta>0$ is the desired accuracy, depending on the model
of computation and notion of approximation considered.\footnote{A detailed
discussion of these and other related results appears in Section
\ref{sec:related}.} To be sure, the non-Hermitian case is well-motivated:
coupled systems of differential equations, linear dynamical systems in control
theory, transfer operators in mathematical physics, and the nonbacktracking
matrix in spectral graph theory are but a few situations where finding the
eigenvalues {\em and eigenvectors} of a non-Hermitian matrix is important. 

The key difficulties in dealing with non-normal matrices are the interrelated
phenomena of {\em non-orthogonal eigenvectors} and {\em spectral
instability}, the latter referring to extreme sensitivity of the eigenvalues and invariant subspaces to perturbations of the matrix. Non-orthogonality slows down convergence of
standard algorithms such as the power method, and spectral instability can
force the use of very high precision arithmetic, also leading to slower
algorithms. Both phenomena together make it difficult to  reduce the
eigenproblem to a subproblem by ``removing'' an eigenvector or invariant
subspace, since this can only be done approximately and one must control the
spectral stability of the subproblem in order to be able to rigorously reason about it. 

In this paper, we overcome these difficulties by identifying and leveraging a
phenomenon we refer to as {\em pseudospectral shattering}: adding a small
complex Gaussian perturbation to any matrix typically yields a matrix with
well-conditioned eigenvectors and a large minimum gap between the eigenvalues,
implying spectral stability. Previously, even the existence of such a regularizing perturbation with favorable parameters was not known \cite{davies2007approximate}. This result builds on the recent solution of
Davies' conjecture \cite{banks2019gaussian}, and is of independent interest in random
matrix theory, where minimum eigenvalue gap bounds in the non-Hermitian case were previously only known for i.i.d. models \cite{shi2012smallest,ge2017eigenvalue}. 

We complement the above by proving that a variant of the well-known spectral
bisection algorithm in numerical linear algebra \cite{beavers1974new} is both fast and numerically
stable when run on a pseudospectrally shattered matrix --- we call an iterative algorithm {\em numerically stable} if it can be implemented using finite precision arithmetic with polylogarithmically many bits, corresponding to a
dynamical system whose trajectory to the approximate solution is robust to
adversarial noise (see, e.g. \cite{smale1997complexity}). The key step in the
bisection algorithm is computing the {\em sign function} of a matrix, a problem
of independent interest in many areas such including control theory and approximation theory \cite{kenney1995matrix}. Our main algorithmic contribution is a rigorous
analysis of the well-known Newton iteration method \cite{roberts1980linear}
for computing the sign function {\em in finite arithmetic}, showing that it
converges quickly and numerically stably on matrices for which the sign
function is well-conditioned, in particular on pseudospectrally shattered ones.

The end result is an algorithm which reduces the general diagonalization
problem to a {polylogarithmic} (in the desired accuracy and dimension $n$) number of invocations of standard numerical
linear algebra routines (multiplication, inversion, and QR factorization), each of which is reducible to matrix multiplication
\cite{demmel2007fast}, yielding a nearly matrix multiplication runtime for the whole algorithm. This improves on the previously best known running time of $O(n^3+n^2\log(1/\delta))$ arithmetic operations even in the Hermitian case (\cite{dekker1971shifted}, see also \cite{ hoffmann1978new, parlett1998symmetric}), and yields the same improvement for the related problem of computing the singular value decomposition of a matrix.

We now proceed to give precise mathematical formulations of the eigenproblem and
computational model, followed by statements of our results and a detailed
discussion of related work.
\subsection{Problem Statement}\label{sec:models}
An {\em eigenpair} of a matrix $A\in \C^{n\times n}$ is a tuple $(\lambda, v)\in \C\times \C^n$ such that 
$$Av=\lambda v,$$
and $v$ is normalized to be a unit vector.  The {\em eigenproblem} is the
problem of finding a maximal set of linearly independent eigenpairs $(\lambda_i,v_i)$ of a given matrix $A$; note that an eigenvalue may appear more than
once if it has geometric multiplicity greater than one.  In the case when $A$
is diagonalizable, the solution consists of exactly $n$ eigenpairs, and if $A$
has distinct eigenvalues then the solution is unique, up to the phases of the $v_i$. 

\subsubsection{Accuracy and Conditioning}
Due to the Abel-Ruffini theorem, it is impossible to have a finite-time
algorithm which solves the eigenproblem exactly using arithmetic operations and
radicals.  Thus, all we can hope for is {\em approximate} eigenvalues and
eigenvectors, up to a desired accuracy $\delta>0$. There are two standard notions of approximation. We assume $\|A\|\le 1$ for normalization, where throughout this work, $\Vert \cdot \Vert$ denotes the spectral norm (the $\ell^2 \to \ell^2$ operator norm).\\

\noindent {\bf Forward Approximation.} Compute pairs $(\lambda_i',v_i')$ such that
$$|\lambda_i-\lambda_i'|\le\delta\quad\textrm{and}\quad \|v_i-v_i'\|\le\delta$$
for the true eigenpairs $(\lambda_i,v_i)$, i.e., find a solution close to the exact solution. 
This makes sense in contexts where the exact solution is meaningful; e.g. the
matrix is of theoretical/mathematical origin, and unstable (in the entries) quantities such
as eigenvalue multiplicity can have a significant meaning.\\

\noindent {\bf Backward Approximation.} Compute $(\lambda_i',v_i')$ which are the exact eigenpairs of a matrix $A'$ satisfying 
$$\|A'-A\|\le \delta,$$
i.e., find the exact solution to a nearby problem. This is the appropriate and standard notion in scientific computing, where the matrix is of physical or empirical origin and is not assumed to be known exactly (and even if it were, roundoff error would destroy this exactness). Note that since diagonalizable matrices are dense in $\C^{n\times n}$, one can hope to always find a complete set of eigenpairs for some nearby $A'=VDV^{-1}$, yielding an {\em approximate diagonalization} of $A$:\marginnote{\emph{approximate diagonalization}}
\begin{equation}\label{eqn:approxdiag} \|A-VDV^{-1}\|\le \delta.\end{equation}

Note that the eigenproblem in either of the above formulations is {\em not} easily reducible to the problem of
computing eigenvalues, since they can only be computed approximately and it is
not clear how to obtain approximate eigenvectors from approximate eigenvalues. 
We now introduce a condition number for the eigenproblem, which measures the sensitivity of the eigenpairs of a matrix to perturbations and allows us to relate its forward and backward approximate solutions.\\

\noindent {\bf Condition Numbers.}
For diagonalizable $A$, the {\em eigenvector condition number} of $A$, denoted $\kappa_V(A)$, is defined as:
\begin{equation}\label{eqn:kappavdef}
    \kappa_V(A):=\inf_{V} \|V\|\|V^{-1}\|, \marginnote{$\kappa_V(A)$}
\end{equation}
where the infimum is over all invertible $V$ such that $A=VDV^{-1}$ for some diagonal $D$, and its {\em minimum eigenvalue gap} is defined as:
$$
    \gap(A):=\min_{i\neq j}|\lambda_i(A)-\lambda_j(A)|,
    \marginnote{$\gap(A)$}
$$
where $\lambda_i$ are the eigenvalues of $A$ (with multiplicity).
\newcommand{\kappaeig}{\kappa_{\mathrm{eig}}}
We define the {\em condition number of the eigenproblem} to be\footnote{This quantity is inspired by but not identical to the ``reciprocal of the distance to ill-posedness'' for the eigenproblem considered by Demmel \cite{demmel1987condition}, to which it is polynomially related. See also \cite{van1987estimating} for another natural  definition of eigenvector condition number similar in spirit to that of Demmel.}: 
\begin{equation}\label{eqn:kappaeig} 
\kappaeig(A):=\frac{\kappa_V(A)}{\gap(A)}\in [0,\infty].\marginnote{$\kappaeig$}
\end{equation}
It follows from the proposition below (whose proof appears in Section \ref{sec:prelimspectral}) that a $\delta$-backward approximate solution of the eigenproblem is a $6n\kappaeig(A)\delta$-forward approximate solution.\footnote{ In fact, it can be shown that $\kappaeig(A)$ is related by a $\poly(n)$ factor to the smallest constant for which \eqref{eqn:backwardforward} holds for all sufficiently small $\delta>0$.}
\begin{proposition} \label{prop:eigperturb}
If $\|A\|,\|A'\|\le 1$, $\|A-A'\|\le \delta$, and $\{(v_i,\lambda_i)\}_{i\le n}$, $\{(v_i',\lambda_i')\}_{i\le n}$ are eigenpairs of $A,A'$ with distinct eigenvalues, and $\delta < \frac{\gap(A)}{8 \kappa_V(A)}$, then
\begin{equation}\label{eqn:backwardforward}
    \|v_i'-v_i\|\le 2n\kappaeig(A) \delta \quad \text{ and }\quad  \|\lambda_i'-\lambda_i\|\le \kappa_V(A) \delta \le 2\kappaeig(A) \delta\quad \forall i=1,\ldots,n,
\end{equation} after possibly multiplying the $v_i$ by phases.
\end{proposition}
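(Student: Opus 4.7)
I would prove the two inequalities separately: the eigenvalue bound from the Bauer--Fike theorem, and the eigenvector bound by expanding $v_i'$ in the eigenbasis of $A$ and using the eigenvalue gap to show its coefficients against all other eigenvectors are small. Both parts rely on the hypothesis $\delta < \gap(A)/(8\kappa_V(A))$ to pin down a well-defined bijection between the two spectra.

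For the eigenvalues, I would apply Bauer--Fike to a diagonalization $A = VDV^{-1}$ realizing $\kappa_V(A)$ up to $\epsilon$: every eigenvalue of $A'$ lies within $\kappa_V(A)\delta$ of some $\lambda_j$. The hypothesis forces $\kappa_V(A)\delta < \gap(A)/8$, so the Bauer--Fike disks around the distinct $\lambda_j$ are pairwise disjoint, and since $A'$ also has $n$ distinct eigenvalues, they pair up bijectively. After relabeling this gives $|\lambda_i'-\lambda_i| \leq \kappa_V(A)\delta$, and the refinement $\kappa_V(A)\delta \leq 2\kappaeig(A)\delta$ follows from the elementary bound $\gap(A) \leq 2$ (since $\|A\|\leq 1$ keeps all eigenvalues in the closed unit disk).

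For the eigenvectors, I would expand $v_i'$ in the columns $w_j$ of $V$: write $v_i' = Vd$ with $d = V^{-1}v_i'$, so $\|d\|_2 \leq \|V^{-1}\|$. Comparing $A'v_i' = \lambda_i'v_i'$ with $Av_i' = VDd$ yields the identity
$$V\,\mathrm{diag}(\lambda_j - \lambda_i')\,d \;=\; (A - A')\,v_i',$$
and hence $\|\mathrm{diag}(\lambda_j-\lambda_i')\,d\|_2 \leq \|V^{-1}\|\delta$. The previous step supplies $|\lambda_j-\lambda_i'|\geq 7\gap(A)/8$ for $j\neq i$, so $\sqrt{\sum_{j\ne i}|d_j|^2} \leq 8\|V^{-1}\|\delta/(7\gap(A))$. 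Defining $e := \sum_{j\ne i} d_jw_j = V_{-i}d_{-i}$ and using $\|V_{-i}\| \leq \|V\|$ (removing a column cannot enlarge the spectral norm) yields $\|e\| \leq (8/7)\kappaeig(A)\delta$. Writing $v_i' = c_iv_i + e$ with $c_i := d_i\|w_i\|$ the coefficient in the unit-eigenvector basis, the constraint $\|v_i'\|=1$ forces $\bigl||c_i|-1\bigr| \leq \|e\|$; after a phase rotation making $c_i$ real and positive, the triangle inequality gives $\|v_i' - v_i\| \leq |c_i-1|+\|e\| \leq 2\|e\| \leq (16/7)\kappaeig(A)\delta$, comfortably inside the claimed $6n\kappaeig(A)\delta$.

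The only delicate step is the bijective matching of spectra in the first part; the rest is a short, gap-driven algebraic manipulation. The stated prefactor of $6n$ is substantially looser than the $O(1)\cdot\kappaeig(A)\delta$ this approach yields, so any clerical constants lost in the details are easily absorbed. If the authors instead prefer a more conceptual derivation via the resolvent and spectral projectors $P_i = \tfrac{1}{2\pi i}\oint_{\Gamma_i}(zI-A)^{-1}dz$ on a contour of radius $\approx \gap(A)/4$, that route naturally produces an extra factor of $\kappa_V(A)$ that needs to be absorbed, again using the gap hypothesis.
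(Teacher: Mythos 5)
Your argument is correct in substance but follows a genuinely different route from the paper's. The paper interpolates $A(t)=(1-t)A+tA'$, uses contour integrals of the resolvent around each eigenvalue to show that the individual eigenvalue condition numbers stay bounded by $3\kappa_V(A)$ along the path, deduces $\kappa_V(A(t))\le 4n\,\kappa_V(A)$ via the relation $\kappa_V\le\sqrt{n\sum_i\kappa(\lambda_i)^2}$ (this is precisely where the factor of $n$ enters), and then integrates the standard bound $\|\dot v_i(t)\|\le \kappa_V(A(t))\|\dot A(t)\|/\mathrm{gap}(A(t))$. Your direct expansion of $v_i'$ in a near-optimal eigenbasis of $A$, combined with the identity $V\,\mathrm{diag}(\lambda_j-\lambda_i')\,d=(A-A')v_i'$ and the gap lower bound $|\lambda_j-\lambda_i'|\ge 7\,\mathrm{gap}(A)/8$ for $j\ne i$, avoids the resolvent machinery entirely and yields the dimension-free constant $16/7$ in place of $6n$, which is strictly stronger. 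The trade-off is that the paper's homotopy-plus-contour-integral method is the template reused throughout the rest of the paper (e.g.\ in the projector stability estimates and the sign-function analysis), whereas your computation is tailored to this one statement.

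One step you should tighten: disjointness of the Bauer--Fike disks $D(\lambda_j,\kappa_V(A)\delta)$ together with the fact that $A'$ has $n$ distinct eigenvalues does not by itself force exactly one eigenvalue of $A'$ per disk --- a priori two eigenvalues of $A'$ could land in the same disk while another disk is empty. You need the standard continuity argument (exactly the homotopy $A(t)$ the paper uses): the spectrum of $A(t)$ stays inside the disjoint union of disks for all $t\in[0,1]$, so the number of eigenvalues in each disk is a locally constant integer equal to $1$ at $t=0$, hence equal to $1$ at $t=1$. With that insertion the bijection, and therefore the labeling your eigenvector computation relies on, is justified; the rest of your argument goes through as written.
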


Note that $\kappaeig=\infty$ if and only if $A$ has a double eigenvalue; in this case, a relation like \eqref{eqn:backwardforward} is not possible since different infinitesimal changes to $A$ can produce macroscopically different eigenpairs.


In this paper we will present a backward approximation approximation for the eigenproblem with running time scaling polynomially in $\log(1/\delta)$, which by \eqref{eqn:backwardforward} yields a forward approximation algorithm with running time scaling polynomially in $\log(1/\kappaeig\delta)$. \\

\begin{remark}[Multiple Eigenvalues] A backward approximation algorithm for the eigenproblem can be used to accurately find bases for the eigenspaces of matrices with multiple eigenvalues, but quantifying the forward error requires introducing condition numbers for invariant subspaces rather than eigenpairs. A standard treatment of this can be found in any numerical linear algebra textbook, e.g. \cite{demmel1997applied}, and we do not discuss it further in this paper for simplicity of exposition.
\end{remark}

\subsubsection{Models of Computation}
These questions may be studied in various computational models: exact {\em real arithmetic} (i.e., infinite precision), {\em variable precision
rational arithmetic} (rationals are stored exactly as numerators and denominators), and {\em finite precision arithmetic} (real numbers are rounded to a fixed number of bits which may depend on the input size and accuracy). Only the last two models yield
actual Boolean complexity bounds, but introduce a second source of error stemming from the fact that 
computers cannot exactly represent real numbers. 

We study the third model in this paper, axiomatized as follows.\\

\noindent {\bf Finite Precision Arithmetic.} We use the standard floating point axioms from
\cite{higham2002accuracy}. Numbers are stored and manipulated approximately up
to some machine precision $\mach:=\mach(\delta,n)>0$, which for us will depend on the instance size $n$ and desired accuracy $\delta$. This means every number $x\in\C$ is stored as $\fl(x)=(1+\Delta)x$ for some adversarially chosen $\Delta\in \C$ satisfying $|\Delta|\le \mach$, and each arithmetic operation $\circ\in \{+,-,\times,\div\}$ is guaranteed to yield an output satisfying 
$$ 
    \fl(x\circ y) = (x\circ y)(1+\Delta)\quad |\Delta|\le \mach.
$$ 
It is also standard and convenient to assume that we can evaluate $\sqrt{x}$ for any $x\in \R$, where again $\fl(\sqrt x) = \sqrt x (1 + \Delta)$ for $|\Delta| \le \mach$. 

Thus, the outcomes of all
operations are adversarially noisy due to roundoff. The bit lengths of numbers stored in this form remain fixed at $\lg(1/\mach)$\marginnote{$\lg$}, where $\lg$ denotes the logarithm base 2. The {\em bit complexity} of an algorithm is therefore the number of arithmetic operations times $O^*(\log(1/\mach))$, the running time of standard floating point arithmetic, where the $*$ suppresses $\log\log(1/\mach)$ factors. We will state all running times in terms of arithmetic operations accompanied by the required number of bits of precision, which thereby immediately imply bit complexity bounds.\\

\begin{remark}[Overflow, Underflow, and Additive Error] Using $p$ bits for the exponent in the floating-point representation allows one to represent numbers with magnitude in the range $[2^{-2^p},2^{2^p}]$. It can be easily checked that all of
	the nonzero numbers, norms, and condition numbers appearing during the
	execution of our algorithms lie in the range
	$[2^{-\lg^c(n/\delta)},2^{\lg^c(n/\delta)}]$ for some small $c$, so 
	overflow and underflow do not occur. In fact, we could have
	analyzed our algorithm in a computational model where every number is
	simply rounded to the nearest rational with denominator $2^{\lg^c(n/\delta)}$---corresponding to {\em additive} arithmetic errors. We have chosen
	to use the multiplicative error floating point model since it is the
	standard in numerical analysis, but our algorithms do not exploit any
	subtleties arising from the difference between the two models.
\end{remark}
The advantages of the floating point model are that it is realistic and potentially yields
very fast algorithms by using a small number of bits of precision (polylogarithmic in $n$ and $1/\delta$), in contrast to rational arithmetic,
where even a simple operation such as inverting an $n\times n$ integer matrix
requires $n$ extra bits of precision (see, e.g., Chapter 1 of
\cite{grotschel2012geometric}).  An iterative algorithm that can be implemented
in finite precision (typically, polylogarithmic in the input size and desired accuracy) is called {\em numerically stable}.

The disadvantage of the model is that it is only possible to compute forward
approximations of quantities which are {\em well-conditioned} in the input --- in particular, discontinuous quantities such as eigenvalue multiplicity cannot be computed in the floating point model, since it is not even assumed that the input is stored exactly. 

\subsection{Results and Techniques}
In addition to $\kappaeig$, we will need some more refined quantities to measure the stability of the eigenvalues and eigenvectors of a matrix to perturbations, and to state our results regarding it. The most important of these is the $\epsilon$-pseudospectrum, defined for any $\epsilon>0$ and $M\in \C^{n\times n}$ as:
\begin{align}
   \label{eqn:pseudodef1} \Lambda_\epsilon(M) &:= \left\{ \lambda\in \C : \lambda \in \Lambda(M + E) \text{ for some }\|E\| < \epsilon\right\} \marginnote{$\Lambda_\eps$} \\
   \label{eqn:pseudodef2}
    &= \left\{\lambda \in \C : \left\|(\lambda - M)^{-1}\right\| > 1/\epsilon \right\}
\end{align}
where $\Lambda(\cdot)$ denotes the spectrum of a matrix. The equivalence of (\ref{eqn:pseudodef1}) and (\ref{eqn:pseudodef2}) is simple and can be found in the excellent book \cite{trefethen2005spectra}.\\ 

\noindent {\bf Eigenvalue Gaps, $\kappa_V$, and Pseudospectral Shattering.} 
The key probabilistic result of the paper is that a random {\em complex} Gaussian perturbation of any matrix yields a nearby matrix with 
large minimum eigenvalue gap and small $\kappa_V$.
\begin{theorem}[Smoothed Analysis of $\gap$ and $\kappa_V$]\label{thm:smoothed} Suppose $A\in \C^{n\times n}$ with $\|A\|\le 1$, and $\gamma\in (0,1/2)$. Let $G_n$ be 
an $n\times n$ matrix with i.i.d. complex Gaussian $N(0,1_\C/n)$ entries, and let $X:=A+\gamma G_n$. Then $$\kappa_V(X)\le \frac{n^2}{\gamma}, \quad  \gap(X)\ge \frac{\gamma^4}{n^5}, \quad \text{and}\quad  \|G_n\|\leq 4,$$ with probability at least $1-12/n$.\end{theorem}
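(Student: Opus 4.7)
The plan is to prove the three conclusions separately. The norm bound $\|G_n\|\le 4$ is immediate from standard non-asymptotic operator norm concentration for complex Gaussian matrices (e.g.\ Davidson--Szarek or Gordon's inequality), with failure probability exponentially small in $n$ and thus well inside the $12/n^2$ budget. The eigenvector condition number bound $\kappa_V(X)\le n^2/\gamma$ I would invoke essentially as a black box from the prior work \cite{banks2019gaussian} of (some of) the same authors resolving Davies' conjecture: that paper shows that a complex Gaussian perturbation of scale $\gamma$ applied to an arbitrary $\|A\|\le 1$ matrix produces one with $\kappa_V\le n^2/\gamma$ with probability at least $1-O(1/n^2)$.

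The substantive new content is the gap bound $\gap(X)\ge \gamma^4/n^5$. My plan is a grid/union-bound argument driven by tail estimates on the two smallest singular values of $X-z$ at fixed $z\in \C$. By unitary invariance of the complex Ginibre ensemble, both can be analyzed assuming $B+\gamma G_n$ for a fixed shifted matrix $B$. The first is the standard smoothed smallest-singular-value estimate
\[
    \P[\sigma_n(X-z)\le s] \le C(ns/\gamma)^2,
\]
and the crucial new ingredient is the analogue for the second-smallest singular value,
\[
    \P[\sigma_{n-1}(X-z)\le s] \le C(ns/\gamma)^4,
\]
obtained via two-dimensional anti-concentration on the random $2$-dimensional subspace spanned by the bottom two right singular vectors of $X-z$.

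Given these tail bounds, the argument concludes as follows. If two eigenvalues $\lambda_i,\lambda_j$ of $X$ are within distance $t$, taking $z=(\lambda_i+\lambda_j)/2$ and working in the $2$-dimensional invariant subspace $\mathrm{span}(v_i,v_j)$ gives $\sigma_{n-1}(X-z)\le \kappa_V(X)\cdot t$ via the min-max characterization of singular values together with the definition of $\kappa_V$. Covering the disk $\{|z|\le 1+\|G_n\|\}\supset \Lambda(X)$ by a grid of spacing $\omega\sim t$, a union bound over the $O(1/\omega^2)$ grid points applied to the second tail estimate rules out any such near-collision with probability $1-O(1/n^2)$ once $t=\gamma^4/n^5$ and $\kappa_V\le n^2/\gamma$ are substituted in. I expect the main obstacle to be this joint two-smallest-singular-value tail bound: the single-singular-value version is routine smoothed analysis, but promoting it to a genuinely $s^4$-type bound requires anti-concentration on a data-dependent $2$-dimensional subspace and careful tracking of dimensional factors to hit the claimed $\gamma^4/n^5$ exponent rather than a weaker polynomial.
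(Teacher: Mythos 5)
Your overall architecture matches the paper's: the norm bound is standard concentration, $\kappa_V$ is controlled via \cite{banks2019gaussian} (the paper uses the local version, Theorem \ref{thm:davies}, plus Markov and \eqref{eqn:kappav}), and the gap bound is a net/union-bound argument in which two eigenvalues near $z$ plus $\kappa_V(X)\le t$ force $\sigma_{n-1}(z-X)\le rt$ exactly as in Lemma \ref{lem:sigma2}. You have also correctly identified the tail bound on $\sigma_{n-1}(X-z)$ as the crux. However, there is a genuine quantitative gap in what you propose for that crux.

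Your claimed bound $\P[\sigma_{n-1}(X-z)\le s]\le C(ns/\gamma)^4$ is too weak to yield $\gap(X)\ge\gamma^4/n^5$. Run the numbers: with $s=\kappa_V(X)\cdot r\le (n^2/\gamma)r$, each net point fails with probability $C(n^3r/\gamma^2)^4$, and summing over the $\Theta(1/r^2)$ net points gives $Cn^{12}r^2/\gamma^8$; requiring this to be $O(1/n^2)$ forces $r\lesssim \gamma^4/n^{7}$, two powers of $n$ short of the claim. (With the paper's parameters $t=n^2/\gamma$, $r=\gamma^4/n^5$, an exponent-$4$ bound makes the union bound evaluate to order $n^2$, which is vacuous.) The paper instead obtains exponent $8$: by \'Sniady's coupling (Theorem \ref{thm:sniady}) one has $\sigma_{n-1}(M+\gamma G_n)\succeq \sigma_{n-1}(\gamma G_n)$ stochastically for \emph{any} fixed $M$, and Szarek's bound (Theorem \ref{thm:szarek}) with $j=n-1$ gives $\P[\sigma_{n-1}(G_n)<2\alpha/n]\le(\sqrt{2e}\,\alpha)^{2(n-j+1)^2}=(\sqrt{2e}\,\alpha)^{8}$, whence Corollary \ref{corr:sigma2szarek}. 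The exponent $2k^2=8$ (rather than the ``two independent directions'' heuristic $2k=4$) reflects singular-value repulsion, and it is exactly what makes the arithmetic close. Your proposed route via anti-concentration on the data-dependent $2$-dimensional bottom singular subspace is both harder to make rigorous and, as stated, delivers the wrong exponent; the coupling argument sidesteps the data-dependence entirely and should replace it. With exponent $8$ in hand, the rest of your argument goes through as written.
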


The proof of Theorem \ref{thm:smoothed} appears in Section \ref{sec:smoothed}. The key idea is to first control $\kappa_V(X)$ using \cite{banks2019gaussian}, and then observe that for a matrix with small $\kappa_V$, two eigenvalues of $X$ near a complex number $z$ imply a small {\em second-least} singular value of $z-X$, which we are able to control.

In Section \ref{sec:shatter} we develop the notion of {\em pseudospectral shattering}, which is implied by Theorem \ref{thm:smoothed} and says roughly that the pseudospectrum consists of $n$ components that lie in separate squares of an appropriately coarse grid in the complex plane. This is useful in the analysis of the spectral bisection algorithm in Section \ref{sec:spectralbisec}.\\

\noindent {\bf Matrix Sign Function.} The sign function of a number $z\in \C$ with $\Re(z)\neq 0$ is defined as $+1$ if $\Re(z)>0$ and $-1$ if $\Re(z)<0$. The {\em matrix sign function} \marginnote{$\sgn(\cdot)$} of a matrix $A$ with Jordan normal form
$$ 
    A = V\bm{N & \\ & P}V^{-1},
$$
where $N$ (resp. $P$) has eigenvalues with strictly negative (resp. positive) real part, is defined as
$$ 
    \sgn(A) = V\bm{-I_N & \\ & I_P} V^{-1},
$$
where $I_P$ denotes the identity of the same size as $P$. The sign function is undefined for matrices with eigenvalues on the imaginary axis. Quantifying this discontinuity, Bai and Demmel \cite{bai1998using} defined the following condition number for the sign function: 
\newcommand{\kappasign}{\kappa_{\mathrm{sign}}}
\begin{equation}
    \kappasign(M) := \inf\left\{  1/\epsilon^2 : \Lambda_\epsilon(M) \textrm{ does not intersect the imaginary axis} \right\},
    \marginnote{$\kappasign$}
\end{equation}
and gave perturbation bounds for $\sgn(M)$ depending on $\kappasign$.

Roberts \cite{roberts1980linear} showed that the simple iteration 
\begin{equation}\label{eqn:robertsnewton} A_{k+1}=\frac{A_k+A_k^{-1}}{2}\end{equation}
converges  globally and  quadratically to $\sgn(A)$ in exact arithmetic,
but his proof relies on the fact that all iterates of the algorithm are
simultaneously diagonalizable, a property which is destroyed in finite
arithmetic since inversions can only be done approximately.\footnote{Doing the inversions exactly in rational arithmetic could require numbers of bit length $n^k$ for $k$ iterations, which will typically not even be polynomial.} In Section \ref{sec:matrix-sign} we show that this iteration is indeed convergent when implemented in finite arithmetic for matrices with small $\kappasign$, given a numerically stable matrix inversion algorithm.  This leads to the following result:
\begin{theorem}[Sign Function Algorithm] \label{thm:signintro} There is a deterministic algorithm $\SGN$ which on input an $n \times n$ matrix $A$ with $\|A\|\le 1$, a number $K$ with $K \ge \kappasign(A)$, and a desired accuracy $\beta \in (0, 1/12)$,  outputs an approximation $\SGN(A)$ with 
$$\|\SGN(A)-\sgn(A)\|\le \beta,$$
in 
\begin{equation}
O( (\log K + \log \log (1/\beta)) T_{\INV}(n) )
\end{equation}
 arithmetic operations on a floating point machine with 
 $$ \lg(1/\mach) = O(\log n \log^3 K (\log K + \log(1/\beta)))$$  
 bits of precision, where $T_\INV(n)$ denotes the number of arithmetic operations used by a numerically stable matrix inversion algorithm (satisfying Definition \ref{def:inv}).
\end{theorem}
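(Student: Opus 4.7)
The plan is to analyze the Newton iteration $A_{k+1}=(A_k+A_k^{-1})/2$ in finite arithmetic by tracking how the pseudospectrum of the iterates evolves under the scalar M\"obius map $\newton(z)=(z+z^{-1})/2$. In exact arithmetic $\newton$ has $\pm 1$ as super-attracting fixed points and sends the open right (resp.\ left) half-plane into itself; moreover every iterate $A_k$ is simultaneously diagonalizable with $A_0$ and its eigenvalues evolve by coordinatewise application of $\newton$. In finite arithmetic this commutativity is destroyed because the inversions can only be done approximately, so instead of chasing eigenvalues I would chase the pseudospectrum, using contour-integral representations to remain faithful to the exact dynamics.

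The first step is to set up the right invariant. The hypothesis $K\ge\kappasign(A)$ gives $\Lambda_{1/\sqrt K}(A)\cap i\R=\emptyset$, so there is an initial pseudospectral separation of size $1/\sqrt K$ from the imaginary axis. I would then define a shrinking sequence of Jordan contours $\Gamma_k$ around $\{\pm 1\}$ together with pseudospectral radii $\eps_k$ and maintain the invariant $\Lambda_{\eps_k}(A_k)\subset\Gamma_k$ throughout. The shape of $\Gamma_k$ is dictated by pulling small neighborhoods of $\{\pm 1\}$ back through $k$ iterations of $\newton$: during a warm-up phase the diameter of $\Gamma_k$ contracts only geometrically, but once $\Gamma_k\subset\{|z\mp 1|\le 1/2\}$ the quadratic convergence of $\newton$ takes over and the diameter drops doubly exponentially.

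To propagate the invariant across one step, I would write the computed iterate as $A_{k+1}=\tfrac12(A_k+A_k^{-1})+E_k$, where $\|E_k\|\lesssim \poly(n)\cdot\u\cdot\|A_k^{-1}\|$ aggregates the inversion error guaranteed by Definition~\ref{def:inv} together with the floating-point error in the subsequent sum. Using the basic inclusion $\Lambda_{\eps+\|E_k\|}(M+E_k)\supseteq\Lambda_\eps(M)$, one can bound $\Lambda_{\eps_{k+1}}(A_{k+1})$ by the image $\newton(\Lambda_{\eps_k}(A_k))$ thickened by $\|E_k\|$ times the Lipschitz constant of $\newton^{-1}$ on $\Gamma_{k+1}$. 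The key ingredient is a resolvent bound $\|(z-A_k)^{-1}\|\le 1/\eps_k$ on $\partial\Gamma_k$, which controls both $\|A_k^{-1}\|$ (and hence $\|E_k\|$) and the deviation of $\newton(A_k)$ from $\sgn(A)$ via the contour integral $\sgn(A)-\tfrac12(A_k+A_k^{-1})=\tfrac{1}{2\pi i}\oint_{\Gamma_k}(\sgn(z)-\newton(z))(z-A_k)^{-1}\,dz$.

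The main obstacle, and the reason for the $\log^3 K$ factor in the precision, is balancing the radii $\eps_k$ against the accumulated roundoff: if $\eps_k$ shrinks too fast the resolvent on $\Gamma_k$ blows up and the inversion error swamps the Newton progress, while if it shrinks too slowly the pseudospectral inclusion cannot be closed. Choosing $\eps_k$ on the same order as $\dist(\Gamma_k,\{\pm 1\})$, the total iteration count is $T=O(\log K)$ warm-up steps plus $O(\log\log(1/\beta))$ quadratic steps; each iteration costs one call to $\INV$ and $O(n^2)$ scalar operations, giving the claimed $O((\log K+\log\log(1/\beta))\,T_\INV(n))$ total. Tracing the error amplification — a factor polynomial in $n$ and $1/\eps_k$ per step — across all $T$ steps then yields the precision $\lg(1/\u)=O(\log n\,\log^3 K\,(\log K+\log(1/\beta)))$ sufficient to guarantee $\|\SGN(A)-\sgn(A)\|\le\beta$.
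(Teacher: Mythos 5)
Your proposal follows essentially the same route as the paper: the shrinking contours you describe as pullbacks of neighborhoods of $\pm 1$ under $\newton$ are exactly the paper's Circles of Apollonius $\cpm{\alpha_k}$ (level sets of $|\tfrac{1-z}{1+z}|$, which the Newton map squares), the invariant $\Lambda_{\eps_k}(A_k)\subset\cpm{\alpha_k}$ is propagated by the same contour-integral representation of $(w-\newton(A_k))^{-1}$, and the resolvent bound $1/\eps_k$ on the contour controls both $\|A_k^{-1}\|$ and the per-step roundoff exactly as you say, yielding the same iteration count and precision. One point your sketch glosses over: the identity $\sgn(A)-\tfrac12(A_k+A_k^{-1})=\tfrac{1}{2\pi i}\oint_{\Gamma_k}(\sgn(z)-\newton(z))(z-A_k)^{-1}\,\dee z$ is only valid with $\sgn(A_k)$ on the left, and in finite arithmetic $\sgn(\widetilde{A}_k)\neq\sgn(A)$ because each perturbation $E_k$ moves the spectral projectors; the paper must separately bound the accumulated drift $\|\sgn(\widetilde{A}_N)-\sgn(A)\|\le\sum_k\|\sgn(\widetilde{A}_{k+1})-\sgn(\widetilde{A}_{k+1}-E_k)\|$ via the resolvent identity, contributing the term $\tfrac{8}{s}\sum_k\|E_k\|/\eps_{k+1}^2$ to the final error. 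Your pseudospectral invariant supplies exactly the resolvent bounds needed to carry out that estimate, so this is a missing step rather than a flaw in the approach, but it is the one substantive piece of the finite-arithmetic argument not visible in your outline.
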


The main new idea in the proof of Theorem \ref{thm:signintro} is to control the evolution of the pseudospectra $\Lambda_{\epsilon_k}(A_k)$ of the iterates with appropriately decreasing (in $k$) parameters $\epsilon_k$, using a sequence of carefully chosen shrinking contour integrals in the complex plane. The pseudospectrum provides a richer induction hypothesis than scalar quantities such as condition numbers, and allows one to control all quantities of interest using the holomorphic functional calculus. This technique is introduced in Sections \ref{sec:circlesappolonius} and \ref{sec:exactarithnewton}, and carried out in finite arithmetic in Section \ref{sec:finitearithnewton}, yielding Theorem \ref{thm:signintro}. \\

\noindent {\bf Diagonalization by Spectral Bisection.} Given an algorithm for computing the sign function, there is a natural and well-known approach to the eigenproblem pioneered in \cite{beavers1974new}. The idea is that the matrices $(I\pm\sgn(A))/2$ are spectral projectors onto the invariant subspaces corresponding to the eigenvalues of $A$ in the left and right open half planes, so if some shifted matrix $z + A$ or $z + iA$ has roughly half its eigenvalues in each half plane, the problem can be reduced to smaller subproblems appropriate for recursion. 

The two difficulties in carrying out the above approach are: (a) efficiently computing the sign function (b) finding a balanced splitting along an axis that is well-separated from the spectrum. These are nontrivial even in exact arithmetic, since the iteration \eqref{eqn:robertsnewton} converges slowly if (b) is not satisfied, even without roundoff error. We use Theorem \ref{thm:smoothed} to ensure that a good splitting always exists after a small Gaussian perturbation of order $\delta$, and Theorem \ref{thm:signintro} to compute splittings efficiently in finite precision. Combining this with well-understood techniques such as rank-revealing QR factorization, we obtain the following theorem, whose proof appears in Section \ref{sec:eigproof}.

\begin{theorem}[Backward Approximation Algorithm] \label{thm:bkwd} There is a randomized algorithm $\EIG$ which on input any matrix $A\in \C^{n\times n}$ with $\|A\|\le 1$ and a desired accuracy parameter $\delta>0$ outputs a diagonal $D$ and invertible $V$ such that 
$$ \quad \|A-VDV^{-1}\|\le \delta \quad\mathrm{and}\quad \kappa(V) \le 32n^{2.5}/\delta$$
in 
$$O\left(T_\MM(n)\log^2\frac{n}{\delta}\right)$$
arithmetic operations on a floating point machine with $$O(\log^4(n/\delta)\log n)$$ bits of precision, with probability at least $1-14/n$. Here $T_\MM(n)$ refers to the running time of a numerically stable matrix multiplication algorithm (detailed in Section \ref{sec:logstable}). 
\end{theorem}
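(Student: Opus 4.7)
The plan is to combine the smoothed-analysis bound of Theorem \ref{thm:smoothed} with the sign-function algorithm of Theorem \ref{thm:signintro} inside a recursive spectral bisection scheme, so that every subproblem we ever encounter is pseudospectrally shattered and hence admits a well-conditioned splitting along a coordinate axis. Concretely, I would first replace $A$ by $X := A + \gamma G_n$ for $\gamma$ of order $\delta/n^{O(1)}$, chosen small enough that $\|A-X\| \le \gamma\|G_n\| \le \delta/2$ while Theorem \ref{thm:smoothed} gives $\kappa_V(X) \le n^2/\gamma$ and $\gap(X) \ge \gamma^4/n^5$ with probability $1 - 12/n^2$. From this one deduces (in the sense of the pseudospectral-shattering discussion of Section \ref{sec:shatter}) that there is a grid of mesh size $\omega = \mathrm{poly}(\gamma/n)$ in $\C$ whose lines miss the $\epsilon$-pseudospectrum of $X$ for some $\epsilon = \Omega(\omega/n)$, and each grid cell contains at most one eigenvalue.

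With shattering in hand, I would run recursive bisection. At each recursive call on a shattered matrix $M$ of size $m$, I would search a small number of horizontal and vertical lines of the grid for one that splits $\spec(M)$ into two balanced pieces, translate/rotate so that this line becomes the imaginary axis, and then invoke $\SGN$ from Theorem \ref{thm:signintro} to compute $S \approx \sgn(M')$; the grid separation from the pseudospectrum controls $\kappasign(M') \le 1/\epsilon^2 = \mathrm{poly}(n/\delta)$, so $\SGN$ runs in $O(T_\INV(m)\log(n/\delta))$ operations to accuracy $\beta = 1/\mathrm{poly}(n/\delta)$. From $(I\pm S)/2$ a rank-revealing QR step ($\mathsf{DEFLATE}$) produces orthonormal bases $Q_+,Q_-$ spanning the two invariant subspaces, the matrix $[Q_+\,Q_-]$ block-triangularizes $M'$, and the recursion proceeds on the two diagonal blocks. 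The product of all similarities encountered along the recursion tree is the $V$ in the statement; the diagonal $D$ consists of the $1\times 1$ blocks produced once a subproblem has size $1$.

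For the complexity, the recursion tree has depth $O(\log n)$ because each split is balanced (any imbalance is absorbed by allowing $O(\log n)$ candidate grid lines); the total work per level is dominated by one top-level $\SGN$ call, i.e.\ $O(T_\MM(n)\log(n/\delta))$ using the fact that $T_\INV(n) = O(T_\MM(n))$ by \cite{demmel2007fast}, plus the $\mathsf{DEFLATE}$ calls, whose aggregate cost is the same. Summing across levels gives $O(T_\MM(n)\log^2(n/\delta))$ operations, and the precision requirement of Theorem \ref{thm:signintro} with $K,1/\beta = \mathrm{poly}(n/\delta)$ translates to $O(\log^4(n/\delta)\log n)$ bits. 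The condition number of the assembled $V$ is controlled using $\kappa_V(X) \le n^2/\gamma$ together with the fact that each level only concatenates orthonormal bases, which adds at most a factor $\sqrt{n}$; choosing $\gamma = \Theta(\delta/n^{1/2})$ then yields $\kappa(V) \le 32n^{2.5}/\delta$.

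The main obstacle will be the error analysis in step (b)--(c) above: $\SGN$ does not return the exact sign, so $(I\pm S)/2$ is only an approximate projector, and $\mathsf{DEFLATE}$ returns bases only for an approximation of the invariant subspace. I need to argue that the perturbed subproblems handed to the two recursive branches are themselves pseudospectrally shattered on a (slightly coarser) grid, so that the induction hypothesis is preserved; this is the step that pins down how small $\beta$ and $\mach$ must be at each level, and it is what forces the $\log^4(n/\delta)$ factor in the bit count. Once this invariant is maintained, a final triangle-inequality argument against $X$ (backward error $\delta/2$) and against $A$ (perturbation error $\delta/2$) closes the bound $\|A-VDV^{-1}\|\le\delta$, and the failure probability $1/n$ accounts for the grid-existence/balanced-split event, additional to the $12/n^2$ coming from Theorem \ref{thm:smoothed}.
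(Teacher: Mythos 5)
Your plan is, in its architecture, exactly the paper's proof: perturb by $\gamma G_n$ to obtain pseudospectral shattering (Theorem \ref{thm:smoothed} / $\SHATTER$), then run recursive spectral bisection in which each split is computed via $\SGN$ on a shattered subproblem and the invariant subspaces are extracted by a randomized rank-revealing QR ($\SPAN$), with the shattering invariant propagated to the two children so that the induction closes; the operation count and bit-precision accounting you give also match Theorem \ref{thm:eig-finite-guarantee}. So I will only flag the one step where your sketch diverges from the paper and where, as written, it would fail.

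Your control of $\kappa(V)$ --- ``each level only concatenates orthonormal bases, which adds at most a factor $\sqrt{n}$'' --- is not a valid argument. First, the bases $Q_+$ and $Q_-$ returned for sibling branches span invariant subspaces of a non-normal matrix and are in general far from mutually orthogonal, so the concatenated matrix $\begin{pmatrix} Q_+\widetilde V_+ & Q_-\widetilde V_-\end{pmatrix}$ can be badly conditioned even when each factor is perfectly orthonormal; the loss at a single level is governed by the angle between the two invariant subspaces, not by a universal $\sqrt{n}$. Second, even if a $\sqrt{n}$ loss per level were correct, compounding it over the $\Theta(\log n)$ levels of the recursion tree would give $n^{\Theta(\log n)}$, which is superpolynomial and destroys the claimed bound $\kappa(V)\le 32n^{2.5}/\delta$. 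The paper avoids any per-level accounting: it uses the \emph{forward-error} guarantee of $\EIG$ (each column of the output $V$ is $\delta'$-close to a true unit eigenvector of the shattered matrix $X$), writes $X=WCW^{-1}$ with $W$ having unit columns so that $\kappa(W)\le 8n^2/\delta$ globally via \eqref{eqn:kappav} and Theorem \ref{thm:smoothed}, and then bounds $\kappa(V)$ by a perturbation argument on $\sigma_n(V)\ge\sigma_n(W)-\|V-W\|$ with $\delta'$ chosen small enough (of order $\delta^3/n^{2.5}$). You should replace your per-level argument with this global comparison; it is also what forces the recursion to carry a forward eigenvector guarantee rather than merely a block-triangularization guarantee.
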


Since there is a correspondence in terms of the condition number between backward and forward approximations, and as it is customary in numerical analysis, our discussion revolves around backward approximation guarantees. For convenience of the reader we write down below the explicit guarantees that one gets by using   \eqref{eqn:backwardforward} and invoking $\EIG$ with accuracy $\frac{\delta}{6n \kappaeig}$.
\begin{corollary}[Forward Approximation Algorithm] \label{thm:fwd} There is a randomized algorithm which on input any matrix $A\in \C^{n\times n}$ with $\|A\|\le 1$,  a desired accuracy parameter $\delta>0$, and an estimate $K\ge \kappaeig(A)$ outputs a $\delta-$forward approximate solution to the eigenproblem for $A$ in 
$$O\left(T_\MM(n)\log^2\frac{n K}{\delta}\right)$$
arithmetic operations on a floating point machine with $$O(\log^4(nK/\delta)\log n)$$ bits of precision, with probability at least $1-1/n-12/n^2$. Here $T_\MM(n)$ refers to the running time of a numerically stable matrix multiplication algorithm (detailed in Section \ref{sec:logstable}). 
\end{corollary}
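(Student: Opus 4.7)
The plan is to derive this corollary as a direct reduction from Theorem \ref{thm:bkwd} (backward approximation) via Proposition \ref{prop:eigperturb} (the backward-to-forward transfer). Concretely, given the inputs $A$, $\delta$, and $K\ge \kappaeig(A)$, I would set the backward accuracy parameter to
\[
    \delta' := \frac{\delta}{6nK}
\]
and invoke $\EIG$ on $A$ with accuracy $\delta'$. By Theorem \ref{thm:bkwd}, with probability at least $1-1/n-12/n^2$ this produces an invertible $V$ and diagonal $D$ with $\|A-VDV^{-1}\|\le \delta'$. From this backward-approximate diagonalization I would extract the candidate forward-approximate eigenpairs $(\lambda_i',v_i')$ by taking $\lambda_i':=D_{ii}$ and $v_i':= Ve_i/\|Ve_i\|$; since $A'V = VD$, these are genuine eigenpairs of the nearby matrix $A':=VDV^{-1}$.

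Next I would verify that the hypothesis of Proposition \ref{prop:eigperturb} applies to the pair $(A,A')$. By the definition of $\kappaeig$ and the assumption $K\ge \kappaeig(A)=\kappa_V(A)/\gap(A)$, we have
\[
    \frac{\gap(A)}{8\kappa_V(A)} \ge \frac{1}{8K},
\]
while $\delta' = \delta/(6nK) < 1/(8K)$ whenever $\delta < 3n/4$, which is automatic under the normalization $\|A\|\le 1$ (otherwise the bound is trivial). Proposition \ref{prop:eigperturb} then yields
\[
    \|v_i-v_i'\| \le 6n\kappaeig(A)\delta' \le 6nK\cdot \frac{\delta}{6nK} = \delta,
    \qquad
    |\lambda_i-\lambda_i'|\le 2\kappaeig(A)\delta' \le \frac{\delta}{3n},
\]
up to phases on the $v_i$, which is the desired forward approximation (indeed slightly stronger on the eigenvalues).

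For the complexity bookkeeping, the running time and precision bounds of Theorem \ref{thm:bkwd} in terms of the new accuracy $\delta'$ satisfy
\[
    \log\frac{n}{\delta'} = \log\frac{6n^2 K}{\delta} = O\!\left(\log\frac{nK}{\delta}\right),
\]
so the arithmetic cost becomes $O(T_\MM(n)\log^2(nK/\delta))$ and the required precision becomes $O(\log^4(nK/\delta)\log n)$ bits, exactly as stated. The success probability is inherited unchanged from $\EIG$. I do not anticipate a real obstacle here: essentially the only thing to check carefully is that the condition $\delta' < \gap(A)/(8\kappa_V(A))$ of Proposition \ref{prop:eigperturb} is automatic from our choice of $\delta'$ and the hypothesis $K\ge \kappaeig(A)$, and that normalizing the columns of the computed $V$ does not introduce error beyond what is already absorbed in $\delta'$; both are straightforward.
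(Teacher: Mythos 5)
Your proposal is correct and follows exactly the route the paper indicates: invoke $\EIG$ with backward accuracy $\delta/(6nK)$ and transfer to a forward guarantee via Proposition \ref{prop:eigperturb}, with the logarithmic bookkeeping absorbing the factor $6n^2K/\delta$ into $O(\log(nK/\delta))$. The only details you add beyond the paper's one-line justification (checking the hypothesis $\delta'<\gap(A)/(8\kappa_V(A))$ and extracting normalized eigenpairs from $V$ and $D$) are handled correctly.
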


\begin{remark}[Accuracy vs. Precision] The gold standard of ``backward stability'' in numerical analysis postulates that
$$ \log(1/\mach) = \log(1/\delta) + \log(n),$$
i.e., the number of bits of precision is linear in the number of bits of accuracy. The relaxed notion of ``logarithmic stability'' introduced in \cite{DDHK} requires
$$ \log(1/\mach) = \log(1/\delta)+O(\log^c(n)\log(\kappa))$$
for some constant $c$, where $\kappa$ is an appropriate condition number. In comparison, Theorem \ref{thm:bkwd} obtains the weaker relationship
$$ \log(1/\mach) = O(\log^4(1/\delta)\log(n) + \log^5(n)),$$
which is still polylogarithmic in $n$ in the regime $\delta=1/\poly(n)$.\end{remark}

\subsection{Related Work} \label{sec:related}

\noindent {\bf Minimum Eigenvalue Gap.}
The minimum eigenvalue gap of random matrices has been studied in the case of
Hermitian and unitary matrices, beginning with the work of Vinson \cite{vinson2011closest},
who proved an $\Omega(n^{-4/3})$ lower bound on this gap in the case of the Gaussian Unitary Ensemble (GUE) and the Circular Unitary Ensemble (CUE). Bourgade and Ben Arous
\cite{arous2013extreme} derived exact limiting formulas for the distributions
of all the gaps for the same ensembles. Nguyen, Tao, and Vu
\cite{nguyen2017random} obtained non-asymptotic inverse polynomial bounds for a large class of
non-integrable Hermitian models with i.i.d. entries (including Bernoulli matrices). 

In a different direction, Aizenman et al. proved an inverse-polynomial bound
\cite{aizenman2017matrix} in the case of an arbitrary Hermitian matrix plus a
GUE matrix or a Gaussian Orthogonal Ensemble (GOE) matrix, which may be viewed as a smoothed analysis of the minimum
gap.  Theorem \ref{thm:polygap} may be viewed as a non-Hermitian analogue of
the last result. 

In the non-Hermitian case, Ge \cite{ge2017eigenvalue} obtained an inverse polynomial bound for i.i.d. matrices with real entries satisfying some mild moment conditions, and  \cite{shi2012smallest}\footnote{At the time of writing, the work \cite{shi2012smallest} is still an unpublished arXiv preprint.} proved an inverse polynomial lower bound for the complex Ginibre ensemble. Theorem \ref{thm:polygap} may be seen as a generalization of these results to non-centered complex Gaussian matrices.\\

\noindent {\bf Smoothed Analysis and Free Probability.} 
The study of numerical algorithms on Gaussian random matrices (i.e., the case $A=0$ of smoothed analysis) dates back to \cite{von1947numerical,smale1985efficiency,demmel1988probability,edelman1988eigenvalues}.  The powerful idea of improving the conditioning of a numerical computation by adding a small amount of Gaussian noise was introduced by Spielman and Teng in \cite{spielman2004smoothed}, in the context of the simplex algorithm.
Sankar, Spielman, and Teng \cite{sankar2006smoothed} showed that adding real
Gaussian noise to any matrix yields a matrix with polynomially-bounded
condition number; \cite{banks2019gaussian} can be seen as an extension of
this result to the condition number of the eigenvector matrix, where the proof crucially requires that the Gaussian perturbation is complex rather than real.  The main difference between our results and most of the results on smoothed analysis (including \cite{abbcs}) is that our running time depends logarithmically rather than polynomially on the size of the perturbation.


The broad idea of regularizing the spectral instability of a nonnormal matrix by
adding a random matrix can be traced back to the work of \'Sniady
\cite{sniady2002random} and Haagerup and Larsen \cite{haagerup2000brown} in the context of Free Probability theory.\\

\noindent{\bf  Matrix Sign Function.}
The matrix sign function was introduced by Zolotarev in 1877. 
It became a popular topic in numerical analysis following
the work of Beavers and Denman \cite{beavers1973computational, beavers1974new, denman1976matrix}
and Roberts \cite{roberts1980linear}, who used it first to solve the algebraic
Ricatti and Lyapunov equations and then as an approach to the eigenproblem; see
\cite{kenney1995matrix} for a broad survey of its early history. The numerical
stability of Roberts' Newton iteration was investigated by Byers
\cite{byers1986numerical}, who identified some cases where it is and isn't
stable.  Malyshev \cite{malyshev1993parallel}, Byers, He, and Mehrmann \cite{byers1997matrix}, Bai, Demmel, and Gu \cite{bai1997inverse}, and Bai and Demmel
\cite{bai1998using} studied the condition number of the matrix sign function, and showed that {if} the
Newton iteration converges then it can be used to obtain a high-quality
invariant subspace\footnote{This is called an {\em a fortiriori bound} in
numerical analysis.}, but did not prove convergence in finite
arithmetic and left this as an open question.\footnote{\cite{byers1997matrix} states: ``A
priori backward and forward error bounds for evaluation of the matrix sign
function remain elusive.''} The key issue in analyzing the convergence of the
iteration is to bound the condition numbers of the intermediate matrices
that appear, as N. Higham remarks in his 2008 textbook:
\begin{quote}
Of course, to obtain a complete picture, we also need to understand the effect
of rounding errors on the iteration prior to convergence. This effect is surprisingly
difficult to analyze.\ \xelip Since errors will
in general occur on each iteration, the overall error will be a complicated function of
$\kappa_{\text{\emph{sign}}}(X_k)$ and $E_k$ for all $k$.\ \xelip 
We are not aware of any published rounding error analysis for the computation of $\text{\emph{sign}}(A)$ via the Newton
iteration.
--\cite[Section 5.7]{higham2008functions}
\end{quote}
This is precisely the problem solved by Theorem \ref{thm:signintro}, which is as far as we know the first provable algorithm for computing the sign function of an arbitrary matrix which does not require computing the Jordan form.

In the special case of Hermitian matrices, Higham \cite{higham1994matrix} established
efficient reductions between the sign function and the polar decomposition. Byers and Xu \cite{byers2008new}
proved backward stability of a certain scaled version of the Newton iteration for Hermitian matrices,
in the context of computing the polar decomposition. Higham and Nakatsukasa \cite{nakatsukasa2012backward} (see also the improvement \cite{nakatsukasa2016computing}) proved backward stability of a different iterative scheme for
computing the polar decomposition, and used it to give backward stable spectral bisection algorithms
for the Hermitian eigenproblem with $O(n^3)$-type complexity.\\

\noindent {\bf Non-Hermitian Eigenproblem.} 
{\em Floating Point Arithmetic.} The eigenproblem has been thoroughly studied in the numerical analysis community,
in the floating point model of computation. While there are provably fast and accurate algorithms in the Hermitian case (see the next subsection) and a large body of work for various structured matrices (see, e.g., \cite{bindel2005fast}), the general case is not nearly as well-understood. As recently as 1997, J. Demmel remarked in his well-known textbook \cite{demmel1997applied}: ``\xelip the problem of devising an algorithm [for the non-Hermitian eigenproblem] that is numerically stable and globally (and quickly!) convergent remains open." 

Demmel's question remained entirely open until 2015, when it was answered in the following sense by Armentano, Beltr\'an, B\"urgisser, Cucker, and Shub in the remarkable paper \cite{abbcs}. They exhibited an algorithm (see their Theorem 2.28) which given any $A\in
		\C^{n\times n}$ with $\|A\|\le 1$ and $\sigma>0$ produces in $O(n^{9}/\sigma^2)$
		expected arithmetic operations the diagonalization of 
		the nearby random perturbation
		$A+\sigma G$ where $G$ is a matrix with standard complex Gaussian
		entries.
		By setting $\sigma$ sufficiently small, this may be viewed as
		a {backward approximation} algorithm for diagonalization, in
		that it solves a nearby problem essentially
		exactly\footnote{The output of their algorithm is $n$ vectors
		on each of which Newton's method converges quadratically to an
		eigenvector, which they refer to as ``approximation \`a la Smale''.} -- in particular, by setting $\sigma=\delta/\sqrt{n}$ and noting that $\|G\|=O(\sqrt{n})$ with very high probability, their result implies a running time of $O(n^{10}/\delta^2)$ in our setting.
		Their algorithm is based on homotopy continuation methods, which they argue
		informally are numerically stable and can be implemented in finite precision arithmetic. Our algorithm is similar on a high level in that it adds a Gaussian perturbation to the input and then obtains a high accuracy forward approximate solution to the perturbed problem. The difference is that their overall running time depends polynomially rather than logarithmically on the accuracy $\delta$ desired with respect to the original unperturbed problem.
		
\begin{table}
\centering
\begin{threeparttable}[] 

\begin{tabular}{@{}lllll@{}} 
\toprule
 Result    & Error &  Arithmetic Ops & Boolean Ops & Restrictions  \\ \midrule

\cite{parlett1998symmetric} & Backward & $n^3+n^2\log(1/\delta)$ & $n^3\log(n/\delta)+n^2\log(1/\delta)\log(n/\delta)$ & Hermitian\\

\cite{abbcs}      &   Backward  & $n^{10}/\delta^2$    & $n^{10}/\delta^2\cdot \polylog(n/\delta)$\tnote{a} \\

\cite{ben2018quasi}  & Backward & $n^{\omega+1}\polylog(n)\log(1/\delta)$ & $n^{\omega+1}\polylog(n)\log(1/\delta)$ & Hermitian\\

Theorem \ref{thm:bkwd} \tnote{b}  & Backward & $T_\MM(n)\log^2(n/\delta)$ & $T_\MM(n)\log^6(n/\delta)\log(n)$ &\\

Corollary \ref{thm:fwd}  & Forward & $T_\MM(n)\log^2(n\kappaeig/\delta)$ & $T_\MM(n)\log^6(n\kappaeig/\delta)\log(n)$ & \\

\bottomrule
\end{tabular}

\begin{tablenotes}\footnotesize
\item [a] Does not specify a particular bound on precision.
\item [b] $T_\MM(n)=O(n^{\omega+\eta})$ for every $\eta>0$, see Definition \ref{def:MM} for details.
\end{tablenotes}

\caption{Results for finite-precision floating-point arithmetic} \label{tab:runtimes}
\end{threeparttable}
\end{table}
{\em Other Models of Computation.}
If we relax the requirements further and ask for {any} provable algorithm in {any} model of Boolean computation, 
there is only one more positive result with a polynomial bound on the number of bit operations: Jin Yi Cai showed in 1994 \cite{cai1994computing} that given a rational
		$n\times n$ matrix $A$ with integer entries of bit length $a$, one
		can find an $\delta$-forward approximation to its Jordan Normal Form $A=VJV^{-1}$
		in time $\poly(n,a,\log(1/\delta))$, where the degree of the
		polynomial is at least 12.
		This algorithm works in the {rational arithmetic} model of computation, so it does not quite answer
		Demmel's question since it is not a numerically stable algorithm. However, it enjoys the significant advantage of being able to compute forward approximations to discontinuous quantities such as the Jordan structure. 

\begin{table}[] 
\centering
\begin{threeparttable}

\begin{tabular}{@{}llllll@{}}
\toprule
Result  &  Model   & Error &  Arithmetic Ops & Boolean Ops & Restrictions\\ \midrule

\cite{cai1994computing} & Rational & Forward\tnote{a} & $\poly(a,n,\log(1/\delta))\tnote{b}$ & $\poly(a,n,\log(1/\delta))$ & \\

\cite{pan1999complexity} & Rational & Forward & $n^{\omega}+n\log\log(1/\delta)$ & $n^{\omega+1}a+n^2\log(1/\delta)\log\log(1/\delta)$ & Eigenvalues only\tnote{c}\\

\cite{louis2016accelerated} & Finite\tnote{c} & Forward & $n^\omega\log(n)\log(1/\delta)$ & $n^\omega\log^4(n)\log^2(n/\delta)$ & \mbox{Hermitian, $\lambda_1$ only}\\

\bottomrule
\end{tabular}
\begin{tablenotes}\footnotesize
\item [a] Actually computes the Jordan Normal Form. The degree of the polynomial is not specified, but is at least $12$ in $n$.
\item [b] In the bit operations, $a$ denotes the bit length of the input entries.
\item [c] Uses a custom bit representation of intermediate quantities.
\end{tablenotes}
\caption{Results for other models of arithmetic} \label{tab:runtimes2}
\end{threeparttable}
\end{table}

As far as we are aware, there are no other published provably  polynomial-time algorithms for the general eigenproblem.
The two standard references for diagonalization appearing most often in theoretical computer science papers do not meet this criterion.
	In particular, the widely cited work by Pan and Chen
	\cite{pan1999complexity} proves that one can compute the {\em
	eigenvalues} of $A$ in $O(n^\omega + n\log\log(1/\delta))$ (suppressing logarithmic factors) {\em arithmetic} operations by
	finding the roots of its characteristic polynomial, which becomes a
	bound of $O(n^{\omega+1}a+n^2\log(1/\delta)\log\log(1/\delta))$ bit operations if the characteristic
	polynomial is computed exactly in rational arithmetic and the matrix has entries of bit length $a$. However that paper
	does {not} give any bound for the amount of time taken to find
	approximate eigenvectors from approximate eigenvalues, and states this as an open problem.\footnote{	``The remaining nontrivial problems are, of course, the estimation of the above output precision $p$ [sufficient for finding an approximate eigenvector
	from an approximate eigenvalue], \xelip .  We leave these open problems as a challenge for the reader.'' -- \cite[Section 12]{pan1999complexity}.}

	Finally, the important work of Demmel, Dumitriu, and Holtz \cite{demmel2007fast} (see also the followup \cite{ballard2010minimizing}), which we rely on heavily,
	does not claim to provably solve the eigenproblem either---it bounds the running
	time of {one iteration} of a specific algorithm, and shows that
	such an iteration can be implemented numerically stably, without
	proving any bound on the number of iterations required in general.\\

\noindent {\bf Hermitian Eigenproblem}. For comparison, the eigenproblem for
Hermitian matrices is much better understood.  We cannot give a complete
bibliography of this huge area, but mention one relevant landmark result: the work of
Wilkinson \cite{wilkinson1968global}, who exhibited a globally convergent diagonalization algorithm, and the work of  Dekker and Traub \cite{dekker1971shifted} who quantified the rate of convergence of Wilkinson's algorithm and from which it follows that the Hermitian eigenproblem can be solved with
	backward error $\delta$ in $O(n^3+n^2\log(1/\delta))$ arithmetic operations in exact arithmetic.\footnote{We are not aware a published analysis of this algorithm in finite arithmetic, but believe that it can be carried out with $O(\log(n/\delta))$ bits of precision. The only issue that needs to be handled is forward instability of the QR step when the Wilkinson shift is very close to an eigenvalue of the matrix, which can be resolved e.g. by a small random perturbation of the Wilkinson shift.}. We refer the reader to \cite[\S 8.10]{parlett1998symmetric} for the simplest and most insightful proof of this result, due to Hoffman and Parlett \cite{hoffmann1978new} 

There has also recently been renewed interest in this problem in the theoretical computer science community, with
the goal of bringing the runtime close to $O(n^\omega)$: Louis and Vempala \cite{louis2016accelerated}
show how to find a $\delta-$approximation of just the largest eigenvalue in $O(n^\omega\log^4(n)\log^2(1/\delta))$
bit operations, and Ben-Or and Eldar \cite{ben2018quasi} give an $O(n^{\omega+1}\polylog(n))$-bit-operation algorithm for finding
a $1/\poly(n)$-approximate diagonalization of an $n\times n$ Hermitian matrix normalized to have $\|A\|\le 1$.

\begin{remark}[Davies' Conjecture]\label{rem:davies} The beautiful paper \cite{davies2007approximate} introduced the idea of approximating a matrix function $f(A)$ for nonnormal $A$ by $f(A+E)$ for some well-chosen $E$ regularizing the eigenvectors of $A$. This directly inspired our approach to solving the eigenproblem via regularization. 

The {existence} of an approximate diagonalization \eqref{eqn:approxdiag} for every $A$
with a {\em well-conditioned similarity} $V$ (i.e, $\kappa(V)$ depending
polynomially on $\delta$ and $n$) was precisely the content of Davies'
conjecture \cite{davies2007approximate}, which was recently solved by some of the authors and Mukherjee in \cite{banks2019gaussian}. The existence
	of such a $V$ is a pre-requisite for proving that one can always efficiently find an approximate diagonalization in
finite arithmetic, since if $\|V\|\|V^{-1}\|$ is very large it may require many bits of precision to represent. Thus, Theorem \ref{thm:bkwd} can be viewed as an efficient algorithmic answer to Davies' question.
\end{remark}

\begin{remark}[Subsequent work in Random Matrix Theory]\label{rem:realdavies}
Since the first version of the present paper was made public there have been some advances  in random matrix theory \cite{banks2020overlaps, jain2020real} that prove analogues of  Theorem \ref{thm:smoothed}  in the case where $G_n$ is replaced by a  perturbation with random real independent entries.  These results formally articulate that, in the context of this paper,  there is nothing special about complex Ginibre matrices, and that the same  regularization effect can be achieved using a broader class of perturbations. Bounding the eigenvector condition number and the eigenvalue gap when the random perturbation has  real  entries poses interesting technical challenges that were tackled in different ways in the aformentioned papers. We also refer the reader to \cite{cipolloni2021condition} where optimal results were obtained in the case where $A=0$ and $G_n$ has real Gaussian entries. 
\end{remark}

\begin{remark}[Alternate Proofs using \cite{abbcs}]
\label{rem:comparisontoabbcs}
In October 2021 (about two years after the first appearance of this paper), we noticed that a version of Theorem \ref{thm:smoothed} (with a worse $\kappa_V$ bound but a better eigenvalue gap bound) as well as the main theorem of \cite{banks2019gaussian} (with a slightly worse dependence on $n$) can be easily derived from some auxiliary results shown in \cite{abbcs} (specifically  Proposition 2.7 and Theorem 2.14 of that paper), which we were not previously aware of. We present these short alternate proofs in Appendix \ref{sec:proofsforABBCS}. We remark that our original proofs are essentially different from those appearing in \cite{abbcs} --- in particular, they rely on studying the area of pseudospectra, whereas the proof of Theorem 2.14 of \cite{abbcs} relies on geometric concepts and the coarea formula for Gaussian integrals of certain determinantal quantities on Riemannian manifolds. The proofs based on pseudospectra are arguably more flexible; as mentioned in Remark \ref{rem:realdavies}, they have been recently generalized to ensembles besides the complex Ginibre ensemble, which seems difficult to do for the more algebraic proofs of \cite{abbcs}.
\end{remark}

\noindent {\bf Reader Guide.} This paper contains a lot of parameters and constants. On first reading, it may be good to largely ignore the constants not appearing in exponents, and to keep in mind the typical setting $\delta=1/\poly(n)$ for the accuracy, in which case the important auxiliary parameters $\omega, 1-\alpha, \epsilon, \beta, \eta$ are all $1/\poly(n)$, and the machine precision is $\log(1/\mach)=\polylog(n)$.

	\section{Preliminaries}


Let $M \in \C^{n\times n}$ be a complex matrix, not necessarily normal. We will write matrices and vectors with uppercase and lowercase letters, respectively. Let us denote by $\Lambda(M)$ \marginnote{$\Lambda(M),\lambda_i(M)$} the spectrum of $M$ and by $\lambda_i(M)$ its individual eigenvalues. In the same way we denote the singular values of $M$ by $\sigma_i(M)$ \marginnote{$\sigma_i(M)$} and we adopt the convention $\sigma_1(M) \ge \sigma_2(M) \ge \cdots \ge \sigma_n(M)$. When $M$ is clear from the context we will simplify notation and just write $\Lambda, \lambda_i$ or $\sigma_i$ respectively. 

Recall that the \emph{operator norm} of $M$ is
$$
    \|M\| = \sigma_1(M) = \sup_{\|x \| = 1}\|Mx\|.
$$
As usual, we will say that $M$ is \textit{diagonalizable} if it can be written as $M = VDV^{-1}$ for some diagonal matrix $D$ whose nonzero entries contain the eigenvalues of $M$. In this case we have the spectral expansion
\begin{equation} \label{eq:spectral-expansion}
    M = \sum_{i=1}^n \lambda_i v_i w_i^\ast,
\end{equation}
where the right and left eigenvectors $v_i$ and $w_j^\ast$ are the columns and rows of $V$ and $V^{-1}$ respectively, normalized so that $w^\ast_i v_i = 1$. 

\subsection{Spectral Projectors and Holomorphic Functional Calculus} \label{sec:holofuncalc}
Let $M \in \C^{n\times n}$, with eigenvalues $\lambda_1,...,\lambda_n$.
We say that a matrix $P$ is a \emph{spectral projector} \marginnote{\emph{spectral\\ projector}} for $M$ if $MP = PM$
and  $P^2 = P$. For instance, each of the terms $v_i w_i^\ast$ appearing in the
spectral expansion \eqref{eq:spectral-expansion} is a spectral projector, as
$Av_iw_i^\ast = \lambda_i v_i w_i^\ast = v_i w_i^\ast A$ and $w_i^\ast v_i =
1$. If $\Gamma_i$ is a simple closed positively oriented rectifiable curve in the complex plane
separating $\lambda_i$ from the rest of the spectrum, then it is well-known
that
$$  
    v_i w_i^\ast = \frac{1}{2\pi i}\oint_{\Gamma_i} (z - M)^{-1}\dee z,
$$
by taking the Jordan normal form of the the \emph{resolvent} \marginnote{\emph{resolvent}} $(z - M)^{-1}$ and applying Cauchy's integral formula.\footnote{In this manuscript we will use $z-M$ as a shorthand notation for $zI-M$ where $I$ denotes the identity matrix.}

Since every spectral projector $P$ commutes with $M$, its range agrees exactly
with an invariant subspace of $M$. We will often find it useful to choose some
region of the complex plane bounded by a simple closed positively oriented rectifiable curve $\Gamma$,
and compute the spectral projector onto the invariant subspace spanned by those
eigenvectors whose eigenvalues lie inside $\Gamma$. Such a projector can be
computed by a contour integral analogous to the above.

Recall that if $f$ is any function, and $M$ is diagonalizable, then we can
meaningfully define $f(M) := V f(D) V^{-1}$, where $f(D)$ is simply the result of applying $f$ to
each element of the diagonal matrix $D$. The \emph{holomorphic functional calculus} gives an
equivalent definition that extends to the case when $M$ is non-diagonalizable.
As we will see, it has the added benefit that bounds on the norm of the
resolvent of $M$ can be converted into bounds on the norm of $f(M)$.

\begin{proposition}[Holomorphic Functional Calculus]
    Let $M$ be any matrix, $B \supset \Lambda(M)$ be an open neighborhood of its spectrum (not necessarily connected), and $\Gamma_1,...,\Gamma_k$ be simple closed positively oriented rectifiable curves in $B$ whose interiors together contain all of $\Lambda(M)$. Then if $f$ is holomorphic on $B$, the definition
    $$
        f(M) := \frac{1}{2\pi i}\sum_{j=1}^k \oint_{\Gamma_j} f(z)(z - M)^{-1}\dee z
        \marginnote{holomorphic\\ functional\\ calculus}
    $$
    is an \emph{algebra homomorphism} in the sense that $(fg)(M) = f(M)g(M)$ for any $f$ and $g$ holomorphic on $B$.
\end{proposition}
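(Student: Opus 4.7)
The plan is to reduce the claim to two classical inputs: Cauchy's theorem (for contour independence) and the resolvent identity (for multiplicativity). Write $R(z):=(z-M)^{-1}$, which is holomorphic on $\C\setminus\Lambda(M)$. The first step is to check that $f(M)$ does not depend on the choice of contour family: since $z\mapsto f(z)R(z)$ is holomorphic on $B\setminus\Lambda(M)$, any two admissible families of contours cobound a finite union of regions inside this holomorphic domain, so Cauchy's theorem identifies the two sums. Linearity of $f\mapsto f(M)$ then follows from linearity of the integral.

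For multiplicativity, the key trick is to compute $f(M)g(M)$ using two \emph{nested} contour families inside $B$: an inner family $\{\Gamma_j'\}$ enclosing $\Lambda(M)$, and an outer family $\{\Gamma_i\}$ enclosing $\Lambda(M)\cup\bigcup_j \Gamma_j'$. Such a configuration is arranged by parallel-thickening any initial family---the original contours form a compact set with positive distance to $\C\setminus B$. Combining Fubini on compact contours with the resolvent identity $R(z)R(w)=(R(z)-R(w))/(w-z)$, valid because $z\in\Gamma_i$ never coincides with $w\in\Gamma_j'$, yields
$$f(M)g(M) = \frac{1}{(2\pi i)^2}\sum_{i,j}\oint_{\Gamma_i}\oint_{\Gamma_j'}\frac{f(z)g(w)\bigl(R(z)-R(w)\bigr)}{w-z}\,\dee w\,\dee z.$$

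I would then split this double integral into two pieces. In the piece with $R(z)$, for each fixed $z$ on an outer contour the inner sum $\sum_j\oint_{\Gamma_j'} g(w)/(w-z)\,\dee w$ vanishes, because $z$ lies outside every $\Gamma_j'$ and the integrand is holomorphic in $w$ inside each. In the piece with $R(w)$, for each fixed $w$ on an inner contour the inner sum $\sum_i\oint_{\Gamma_i} f(z)/(w-z)\,\dee z=-2\pi i f(w)$ by Cauchy's integral formula, since $w$ is enclosed by exactly one $\Gamma_i$. Substituting back collapses everything to
$$f(M)g(M)=\frac{1}{2\pi i}\sum_j\oint_{\Gamma_j'}f(w)g(w)R(w)\,\dee w=(fg)(M),$$
with the final equality by contour independence from step one.

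The main obstacle is the topological bookkeeping: ensuring the outer family winds exactly once around each component of the inner family, and that both families stay inside $B$ when $B$ is disconnected. The parallel-thickening construction handles both, provided it is carried out component-wise with respect to $B$; the arithmetic of the winding numbers is the only subtle combinatorial point, and it forces the inner contours to be chosen in a thin neighborhood of $\Lambda(M)$ so that the outer thickening still lies in $B$.
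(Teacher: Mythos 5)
The paper states this proposition without proof---it is the classical Riesz--Dunford holomorphic functional calculus, quoted as a known fact---so there is no argument of the paper's to compare yours against. Your proof is the standard textbook one (nested contour families plus the resolvent identity $R(z)R(w)=\bigl(R(z)-R(w)\bigr)/(w-z)$, followed by killing one term with Cauchy's theorem and collapsing the other with Cauchy's integral formula), and the signs and the final substitution all check out. The one point that genuinely requires care is the one you flag: Cauchy's theorem and the integral formula are applied on the \emph{interiors} of the contours, so those closed interiors must lie in $B$ even though the proposition only places the curves themselves in $B$. Since $M$ is a finite matrix, $\Lambda(M)$ is a finite set, so by your step-one contour independence one may replace any admissible family by small circles about the eigenvalues whose closed disks lie in $B$, and the parallel-thickened outer family inherits this; with that reduction made explicit the argument is complete.
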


Finally, we will frequently use the 
\emph{resolvent identity} 
$$(z - M)^{-1} - (z - M')^{-1} = (z-M)^{-1}(M - M')(z
- M')^{-1}$$
to analyze perturbations of contour integrals.

\subsection{Pseudospectrum and Spectral Stability} \label{sec:prelimspectral}



The $\epsilon-$pseudospectrum of a matrix is defined in 
\eqref{eqn:pseudodef1}. Directly from this definition, we
can relate the pseudospectra of a matrix and a perturbation of it.
\begin{proposition}[\cite{trefethen2005spectra}, Theorem 52.4] 
\label{prop:decrementeps} 
    For any $n \times n$ matrices $M$ and $E$ and any $\eps > 0$, $\Lambda_{\eps - \Vert E\Vert}(M) \subseteq \Lambda_\eps(M+E)$.
\end{proposition}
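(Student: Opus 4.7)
The plan is to proceed directly from the defining characterization \eqref{eqn:pseudodef1} of the pseudospectrum as the union of spectra of all sufficiently small perturbations, and just unfold definitions: any eigenvalue of a $(\epsilon-\|E\|)$-perturbation of $M$ is also an eigenvalue of an $\epsilon$-perturbation of $M+E$ obtained by absorbing $E$ into the perturbation. No analysis is needed; only the triangle inequality.

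More concretely, I would first dispose of the degenerate case $\epsilon - \|E\| \le 0$: here $\Lambda_{\epsilon - \|E\|}(M) = \emptyset$ by \eqref{eqn:pseudodef1} (since there is no $F$ with $\|F\| < \epsilon - \|E\| \le 0$), and the inclusion holds vacuously. So assume $\epsilon - \|E\| > 0$.

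Next, take any $\lambda \in \Lambda_{\epsilon - \|E\|}(M)$. By \eqref{eqn:pseudodef1}, there exists a matrix $F$ with $\|F\| < \epsilon - \|E\|$ such that $\lambda \in \Lambda(M + F)$. Rewrite
\[
    M + F = (M + E) + (F - E),
\]
and observe that by the triangle inequality
\[
    \|F - E\| \le \|F\| + \|E\| < (\epsilon - \|E\|) + \|E\| = \epsilon.
\]
Hence $\lambda \in \Lambda((M+E) + F')$ for $F' := F - E$ with $\|F'\| < \epsilon$, which by \eqref{eqn:pseudodef1} means $\lambda \in \Lambda_\epsilon(M+E)$. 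This is the entire argument.

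There is no main obstacle — the statement is essentially a restatement of the triangle inequality once the additive characterization \eqref{eqn:pseudodef1} of pseudospectrum is in hand. The only subtlety worth flagging is the strict-inequality convention ($\|F\| < \epsilon$ rather than $\le$) used in \eqref{eqn:pseudodef1}, which the computation above respects.
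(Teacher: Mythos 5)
Your proof is correct and is exactly the argument the paper has in mind: the paper gives no proof of its own, simply citing Trefethen--Embree and noting the claim follows ``directly from this definition,'' i.e.\ from \eqref{eqn:pseudodef1} via the triangle inequality as you do. Your handling of the degenerate case $\eps - \|E\| \le 0$ and of the strict-inequality convention is a welcome bit of care.
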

\noindent It is also immediate that $\Lambda(M) \subset \Lambda_\epsilon(M)$, and in fact a stronger relationship holds as well:
\begin{proposition}[\cite{trefethen2005spectra}, Theorem 4.3]
\label{thm:componentsofpseudoespec}
    For any $n \times n$ matrix $M$, any bounded connected component of $\Lambda_\eps(M)$ must contain an eigenvalue of $M$.
\end{proposition}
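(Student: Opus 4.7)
The plan is to argue by contradiction using the maximum modulus principle for the matrix-valued resolvent $R(z) := (z-M)^{-1}$, which is holomorphic on $\C \setminus \Lambda(M)$.

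Suppose some bounded connected component $C$ of $\Lambda_\epsilon(M)$ contains no eigenvalue of $M$. Since $\Lambda_\epsilon(M)$ is defined by a strict inequality it is open, hence so is $C$, and $R$ is holomorphic on an open neighborhood of $\overline{C}$. Moreover any boundary point $z_0 \in \partial C$ is a limit of points in the complement $\Lambda_\epsilon(M)^c$ (where $\|R(\cdot)\| \leq 1/\epsilon$ by definition), and since $R$ is continuous at $z_0$, we obtain $\|R(z_0)\| \leq 1/\epsilon$ for every $z_0 \in \partial C$.

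Next I would invoke the maximum modulus principle for the norm of a holomorphic matrix-valued function: because $R$ is holomorphic on a neighborhood of $\overline{C}$ and $C$ is bounded, one has $\sup_{z \in \overline{C}} \|R(z)\| = \max_{z \in \partial C} \|R(z)\|$. Combining this with the previous step gives $\|R(z)\| \leq 1/\epsilon$ throughout $C$, contradicting $C \subset \Lambda_\epsilon(M)$ and completing the argument.

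The one nontrivial ingredient is the matrix-valued maximum principle. I would justify it by writing $\|R(z)\| = \sup_{\|u\|=\|v\|=1} |\langle u, R(z) v\rangle|$; each scalar inner product $\langle u, R(z)v\rangle$ is holomorphic on $\C \setminus \Lambda(M)$, so $|\langle u, R(z)v\rangle|$ is subharmonic there, and the supremum of a locally uniformly bounded family of subharmonic functions is subharmonic. The classical maximum principle for subharmonic functions on bounded domains then delivers the needed bound. The main (mild) obstacle is making the subharmonicity step fully rigorous, in particular verifying upper semicontinuity of the supremum, which is standard given the continuity of $R$ on $\overline{C}$.
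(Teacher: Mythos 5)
Your argument is correct, and it is essentially the standard proof of this fact: the paper itself gives no proof, citing \cite{trefethen2005spectra}, Theorem 4.3, where the argument is exactly the maximum principle applied to the subharmonic function $z \mapsto \|(z-M)^{-1}\|$ on a component free of eigenvalues. The only point worth streamlining is that $\partial C$ lies outside $\Lambda_\eps(M)$ outright (a boundary point of a connected component of an open set cannot belong to that open set), so $\|(z_0-M)^{-1}\| \le 1/\eps$ holds at each $z_0 \in \partial C$ by definition without a limiting argument; likewise, the subharmonicity step can bypass the general theorem on suprema of subharmonic families by verifying the sub-mean-value inequality directly, since $\|R(z_0)\| = \sup_{\|u\|=\|v\|=1} |\langle u, R(z_0)v\rangle| \le \sup_{u,v} \frac{1}{2\pi}\int_0^{2\pi} |\langle u, R(z_0+re^{i\theta})v\rangle|\,d\theta \le \frac{1}{2\pi}\int_0^{2\pi}\|R(z_0+re^{i\theta})\|\,d\theta$, and $\|R(\cdot)\|$ is continuous on $\overline{C}$.
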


Several other notions of stability will be useful to us as well. If $M$ has distinct eigenvalues $\lambda_1,\ldots,\lambda_n$, and spectral expansion as in \eqref{eq:spectral-expansion}, we define the \emph{eigenvalue condition number of $\lambda_i$} to be
	$$
	    \kappa(\lambda_i) := \left\| {v_i w_i^\ast}\right\| =\|v_i\|\|w_i\|.
	    \marginnote{$\kappa(\lambda_i)$}
	$$
By considering the scaling of $V$ in \eqref{eqn:kappavdef} in which its columns $v_i$ have unit length, so that $\kappa(\lambda_i) = \Vert w_i \Vert$, we obtain the useful relationship
\begin{equation}\label{eqn:kappav} 
    \kappa_V(M)\le \Vert V \Vert \Vert V^{-1} \Vert \le  \|V\|_F\|V^{-1}\|_F\le \sqrt{n\cdot\sum_{i\le n} \kappa(\lambda_i)^2}.
\end{equation}
Note also that the eigenvector condition number and pseudospectrum are related as follows: 
\begin{lemma}[\cite{trefethen2005spectra}]
\label{lem:pseudospectralbauerfike}
    Let $D(z,r)$ denote the open disk of radius $r$ centered at $z \in \C$.  For every $M \in \C^{n\times n}$,
    \begin{equation} \label{eqn:lambdakappa}
        \bigcup_i D(\lambda_i,\eps)\subset \Lambda_\eps(M)\subset \bigcup_i D(\lambda_i, \eps \kappa_V(M)).
    \end{equation}
\end{lemma}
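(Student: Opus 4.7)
The plan is to prove the two inclusions separately, using the resolvent characterization \eqref{eqn:pseudodef2} of the pseudospectrum, i.e., $\Lambda_\eps(M) = \{z \in \C : \|(z-M)^{-1}\| > 1/\eps\}$ (together with $\Lambda(M) \subset \Lambda_\eps(M)$, which is immediate from \eqref{eqn:pseudodef1} by taking $E = 0$).

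For the left inclusion, I would fix $z$ with $|z - \lambda_i| < \eps$ for some $i$. The case $z = \lambda_i$ is handled by $\Lambda(M) \subset \Lambda_\eps(M)$. Otherwise, I would lower-bound the resolvent norm by testing it on the eigenvector $v_i$ corresponding to $\lambda_i$: since $(z - M)v_i = (z - \lambda_i)v_i$ and $z - \lambda_i \neq 0$, we get $(z-M)^{-1} v_i = v_i/(z - \lambda_i)$, hence
$$
\|(z - M)^{-1}\| \;\ge\; \frac{\|v_i\|}{|z-\lambda_i|\,\|v_i\|} \;=\; \frac{1}{|z - \lambda_i|} \;>\; \frac{1}{\eps},
$$
so $z \in \Lambda_\eps(M)$.

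For the right inclusion, the non-diagonalizable case is trivial because then $\kappa_V(M) = \infty$ by the definition \eqref{eqn:kappavdef} and the right-hand side equals $\C$. Assuming $M$ is diagonalizable, I would fix any diagonalization $M = VDV^{-1}$. For $z \notin \Lambda(M)$, the resolvent factors as $(z - M)^{-1} = V(z - D)^{-1} V^{-1}$, and since $(z-D)^{-1}$ is diagonal with entries $1/(z - \lambda_i)$,
$$
\|(z - M)^{-1}\| \;\le\; \|V\|\,\|V^{-1}\|\,\|(z - D)^{-1}\| \;=\; \frac{\|V\|\,\|V^{-1}\|}{\min_i |z - \lambda_i|}.
$$
Taking the infimum over all such $V$ gives $\|(z - M)^{-1}\| \le \kappa_V(M)/\min_i |z - \lambda_i|$. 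Combining with $\|(z - M)^{-1}\| > 1/\eps$ yields $\min_i |z - \lambda_i| < \eps\,\kappa_V(M)$, i.e., $z \in \bigcup_i D(\lambda_i, \eps \kappa_V(M))$.

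No step here is really an obstacle; the only delicate point is bookkeeping of strict vs.\ non-strict inequalities (the pseudospectrum is defined by a strict inequality, and the disks are open, so everything lines up) and separately handling the eigenvalues themselves on the left and non-diagonalizable $M$ on the right. Both halves ultimately reduce to the single algebraic identity $(z-M)^{-1} = V(z-D)^{-1}V^{-1}$ applied in opposite directions.
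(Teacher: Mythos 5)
Your proof is correct. The paper states this lemma with a citation to \cite{trefethen2005spectra} and gives no proof of its own, and your argument is exactly the standard one from that reference: the first inclusion by testing the resolvent on an eigenvector of $\lambda_i$, the second by the Bauer--Fike bound $\|(z-M)^{-1}\|\le \kappa_V(M)/\min_i|z-\lambda_i|$ obtained from $(z-M)^{-1}=V(z-D)^{-1}V^{-1}$; the only point left implicit is that a $z\in\Lambda(M)$ trivially lies in the right-hand union because $\kappa_V(M)\ge 1$.
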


In this paper we will repeatedly use that assumptions about the pseudospectrum of a matrix can be turned into stability statements about functions applied to  the matrix via the holomorphic functional calculus. Here we describe an instance of particular importance. 

Let $\lambda_i$ be a simple eigenvalue of $M$ and let $\Gamma_i$ be a contour in the complex plane, as in Section \ref{sec:holofuncalc}, separating $\lambda_i$ from the rest of the spectrum of $M$, and assume $\Lambda_\epsilon(M)\cap \Gamma =\emptyset$. Then, for any  $\|M-M'\|< \eta <\epsilon$, a combination of Proposition \ref{prop:decrementeps} and Proposition \ref{thm:componentsofpseudoespec} implies that there is a unique eigenvalue $\lambda_i'$ of $M'$ in the region enclosed by $\Gamma$, and furthermore $\Lambda_{\epsilon-\eta}(M')\cap \Gamma = \emptyset$. If $v_i'$ and $w_i'$ are the right and left eigenvectors of $M'$ corresponding to $\lambda_i'$ we have 
\begin{align}
        \| v_i'w_i'^\ast - v_iw_i^\ast \| &= \frac{1}{2\pi} \left\|\oint_\Gamma (z - M)^{-1} - (z-M')^{-1} \dee z \right\| \nonumber \\
        &= \frac{1}{2\pi}\left\|\oint_\Gamma (z - M)^{-1}(M - M')(z-M')^{-1} \dee z \right\| \nonumber \\ \label{eq:projectorstability}
        &\le \frac{\ell(\Gamma)}{2\pi}\frac{\eta}{\epsilon(\epsilon - \eta)}.
    \end{align}
We have introduced enough tools to prove Proposition \ref{prop:eigperturb}.

\begin{proof}[Proof of Proposition \ref{prop:eigperturb}]
    For $t\in [0, 1]$ define  $A(t) = (1-t)A+ tA' $. Since $\delta <\frac{\gap(A)}{8\kappa_V(A)}$ the Bauer-Fike theorem implies  that $A(t)$ has distinct eigenvalues for all $t$, and in fact $\gap(A(t))\geq \frac{3\gap(A)}{4}$. Standard results in perturbation theory (for instance \cite[Theorem 1]{greenbaum2020first} or any of the references therein) imply that for every $i=1, \dots, n$, $A(t)$ has a unique eigenvalue $\lambda_i(t)$ such that $\lambda_i(t)$ is a differentiable trajectory, $\lambda_i(0) =\lambda_i$ and $\lambda_i(1)=\lambda_i'$. Let $P_i(t)$ be the associated spectral projector of $\lambda_i(t)$, which is uniquely defined via a contour integral, and write $P_i = P_i(0)$.
    
    Let $\Gamma_i$ be the positively oriented contour forming the boundary of the closed disk centered at $\lambda_i$ with radius $\gap(A)/2$, and define $\epsilon =\frac{\gap(A)}{2\kappa_V(A)}$. Lemma \ref{lem:pseudospectralbauerfike} implies $\Lambda_{\epsilon}(A)$ is contained in the union of these disks over all $i \in [n]$, and for fixed $t\in [0, 1]$, since $\|A-A(t)\|< t \delta \le \epsilon/4$, Proposition \ref{prop:decrementeps} gives the same containment for $\Lambda_{3\epsilon/4}(A(t))$. Since these disks intersect only in their boundaries (if they do at all), $\|(z - A)^{-1}\| \le 1/\epsilon$ and $\|(z - A(t))^{-1}\| \le 4/3\epsilon$ for $z \in \Gamma_i$. By the derivation of \eqref{eq:projectorstability} above,
    $$
        |\kappa(\lambda_i)-\kappa(\lambda_i(t))|\leq \|P_i(t) - P_i\| \leq \frac{\ell(\Gamma_i)}{2\pi} \cdot \frac{1}{\epsilon}\cdot \frac{4}{3\epsilon}\cdot \frac{\epsilon}{4} = \frac{\gap(A)}{2}\frac{2\kappa_V(A)}{3\gap(A)} = \frac{\kappa_V(A)}{3}
    $$
    and hence $\kappa(\lambda_i(t))\leq \kappa(\lambda_i)+\kappa_V(A)/3 \leq 4\kappa_V(A)/3$. Combining this with (\ref{eqn:kappav}) we obtain 
    $$\kappa_V(A(t)) \leq 2 \sqrt{n \cdot \sum_i \kappa(\lambda_i)^2}< 4n \kappa_V(A)/3. $$
    
    From Theorem 2 of \cite{greenbaum2020first} and the subsequent discussion on p. 468, there exist analytic functions $v_i(t)$ satisfying $v_i(0) = v_i$ and $A(t)v_i(t) = \lambda_i(t)v_i(t)$ for all $i \in [n]$ and $t \in [0,1]$, which furthermore admit the bound
    $$
        \|\dot{v_i}(t)\| \le \frac{\kappa_V(A(t))}{\gap(A(t))}\|\dot A(t)\|\|v_i(t)\| \le \frac{\delta \kappa_V(A(t))}{\gap(A(t)}\|v_i(t)\|.
    $$
    However, these $v_i(t)$ need not in general be unit vectors (see \cite[Section 3.4]{greenbaum2020first} and references for discussion of various normalizations). Therefore set $\hat v_i(t) = \|v_i(t)\|^{-1} v_i(t)$, and note that by an application of the chain rule, 
    $$
        \|\dot{\hat{v_i}}(t)\| \le \frac{\delta \kappa_V(A(t))}{\gap(A(t))}.
    $$
    It then follows that the vectors $v_i' = \hat{v_i}(1)$ for $i \in [n]$ satisfy the conclusion of the theorem, by bounding $\kappa_V(A(t))\leq 4n\kappa_V(A)/3$ and $\gap(A(t))\geq \frac{3\gap(A)}{4}$, and integrating the resulting upper bound $\|\dot{\hat{v_i}}(t)\| \le \frac{16n\delta \kappa_V(A)}{9\gap(A)}$ from $t = 0$ to $t= 1$.
    
\end{proof}


\subsection{Finite-Precision Arithmetic}
We briefly elaborate on the axioms for floating-point arithmetic given in Section \ref{sec:models}. Similar guarantees to the ones appearing in that section for scalar-scalar operations also hold for operations such as matrix-matrix addition and matrix-scalar multiplication. In particular, if $A$ is an $n\times n$ complex matrix,
$$
    \fl(A) = A + A \circ \Delta \qquad |\Delta_{i,j}| < \mach.
$$
It will be convenient for us to write such errors in additive, as opposed to multiplicative form. We can convert the above to additive error as follows. Recall that for any $n\times n$ matrix, the spectral norm (the $\ell^2 \to \ell^2$ operator norm) is at most $\sqrt n$ times the $\ell^2 \to \ell^1$ operator norm, i.e. the maximal norm of a column. Thus we have
\begin{equation} \label{eqn:maxnorm}
    \|A \circ\Delta\| \le \sqrt{n} \max_i \|(A\circ \Delta) e_i\| \le \sqrt{n}\max_{i,j}|\Delta_{i,j}| \max_i \|A e_i\| \le \mach \sqrt{n}\|A\|.
\end{equation}
For more complicated
operations such as matrix-matrix multiplication and matrix inversion, we use
existing error guarantees from the literature. This is the subject of Section
\ref{sec:logstable}. 

We will also need to compute the trace of a matrix $A \in \C^{n\times n}$, and normalize a vector $x \in \C^n$. Error analysis of these is standard (see for instance the discussion in \cite[Chapters 3-4]{higham2002accuracy}) and the results in this paper are highly insensitive to the details. For simplicity, calling $\hat x := x/\|x\|$, we will assume that
\begin{align}
    |\fl\left(\Tr A\right) - \Tr A| &\le n\|A\| \mach \label{eqn:fltrace}\\
    \| \fl(\hat x) - \hat x\| &\le n\mach. \label{eqn:flnorm}
\end{align}
Each of these can be achieved by assuming that $\mach n \le \epsilon$ for some suitably chosen $\epsilon$, independent of $n$, a requirement which will be depreciated shortly by several tighter assumptions on the machine precision.

Throughout the paper, we will take the pedagogical perspective that our
algorithms are games played between the practitioner and an adversary who may
additively corrupt each operation. In particular, we will include explicit
error terms (always denoted by $E_{(\cdot)}$) in each appropriate step of every
algorithm. In many cases we will first analyze a routine in exact
arithmetic---in which case the error terms will all be set to zero---and
subsequently determine the machine precision $\mach$ necessary so that the errors are 
small enough to guarantee convergence.

\subsection{Sampling Gaussians in Finite Precision}
\label{sec:gaussians}
For various parts of the algorithm, we will need to sample from normal distributions.  
For our model of arithmetic, we assume that the complex normal distribution can be sampled up to machine precision in $O(1)$ arithmetic operations.  To be precise, we assume the existence of the following sampler:  

\begin{definition}[Complex Gaussian Sampling]\label{def:gaussian}

A $\cn$-stable Gaussian sampler $\N(\sigma)$ takes as input $\sigma \in \mathbb{R}_{\ge 0}$ and outputs a sample of a random variable $\widetilde{G} = \N(\sigma)$ with the property that there exists $G \sim N_{\mathbb{C}}(0, \sigma^2)$ satisfying 
\[ 
    |\widetilde{G} - G|  \le \cn \sigma \cdot \mach 
    \marginnote{$\cn$}
\]
with probability one, in at most $T_\N$ arithmetic operations for some universal constant $T_\N>0$. \marginnote{$T_\N$}
\end{definition}

Note that, since the Gaussian distribution has unbounded support, one should only expect the sampler $\N(\sigma)$ to have a relative error guarantee of the sort $|\widetilde{G} - G|  \le \cn \sigma |G| \cdot \mach$. However, as it will become clear below, we only care about realizations of Gaussians satisfying $|G|<R$, for a certain prespecified $R>0$, and the rare event $|G|>R$ will be accounted for in the failure probability of the algorithm. So, for the sake of exposition we decided to omit the $|G|$ in the bound on $|\widetilde{G}-G|$.  

We will only sample $O(n^2)$ Gaussians during the algorithm, so this sampling will not contribute significantly to the runtime.  Here as everywhere in the paper, we will omit issues of underflow or overflow.  Throughout this paper, to simplify some of our bounds, we will also assume that $\cn \geq 1$.

\subsection{Black-box Error Assumptions for  Multiplication, Inversion, and QR} \label{sec:logstable}
Our algorithm uses matrix-matrix multiplication, matrix inversion, and QR
factorization as primitives. For our analysis, we must therefore assume some
bounds on the error and runtime costs incurred by these subroutines.  In this
section, we first formally state the kind of error and runtime bounds we require, and
then discuss some implementations known in the literature that satisfy each of
our requirements with modest constants. 

Our definitions are inspired by the
definition of {\em logarithmic stability} introduced in \cite{demmel2007fast}.  Roughly speaking, they say that implementing the algorithm with floating point
precision $\u$ yields an accuracy which is at most polynomially or quasipolynomially in $n$
worse than $\u$ (possibly also depending on the condition number in the case of
inversion).  Their definition has the property that while a logarithmically stable algorithm is not strictly-speaking backward stable, it can attain the same forward error bound as a backward stable algorithm at the cost of increasing the bit length by a polylogarithmic factor.  See Section 3 of their paper for a precise definition and a more detailed discussion of how their definition relates to standard numerical stability notions.

\begin{definition}
\label{def:MM}
	A {\em $\pmult(n)$-stable multiplication algorithm} \marginnote{$\MM$} $\MM(\cdot, \cdot)$ takes as input $A,B\in \C^{n\times n}$ and a precision $\u>0$ and outputs $C=\MM(A, B)$ satisfying
    $$ 
        \|C-AB\|\le \pmult(n) \cdot \u \|A\|\|B\|,
        \marginnote{$\pmult $}
    $$
    on a floating point machine with precision $\u$, in  $T_\MM(n)$ arithmetic operations. \marginnote{$T_{\MM}(n)$}
\end{definition}

\begin{definition} \label{def:inv} A {\em $(\pinv(n), \cinv)-$stable inversion algorithm} $\INV(\cdot)$ \marginnote{$\INV$} takes as input $A\in\C^{n\times n}$ and a precision $\u$ and outputs $C=\INV(A)$ satisfying
$$ 
    \|C-A^{-1}\|\le \pinv(n)\cdot \u\cdot \kappa(A)^{\cinv \log n}\|A^{-1}\|,
    \marginnote{$\pinv,\cinv$}
$$
on a floating point machine with precision $\u$, in $T_\INV(n)$ arithmetic operations. \marginnote{$T_{\INV}$}
\end{definition}

\begin{definition}
\label{def:qr}
	A {\em $\pqr(n)$-stable QR factorization algorithm} $\QR(\cdot)$ \marginnote{$\QR$} takes as input $A\in \C^{n\times n}$ and a precision $\u$, and outputs $[Q,R]=\QR(A)$ such that \begin{enumerate}
	\item $R$ is exactly upper triangular.
	\item There is a unitary $Q'$ and a matrix $A'$ such that \begin{equation}\label{eqn:qr1} Q' A'= R, \end{equation}
		and
    $$
        \|Q' - Q\|\le \pqr(n)\u, \quad \textrm{and} \quad \| A'-A\|\le \pqr(n)\u\|A\|,
        \marginnote{$\pqr$}
    $$
	\end{enumerate}
on a floating point machine with precision $\u$. Its running time is $T_\QR(n)$ arithmetic operations. \marginnote{$T_{\QR}$}
\end{definition}
\begin{remark} 
    Throughout this paper, to simplify some of our bounds, we will assume that $$1 \leq \mu_\MM(n), \mu_{\INV}(n) , \mu_\QR (n), c_{\INV}\log n.$$
\end{remark}

The above definitions can be instantiated with traditional $O(n^3)$-complexity algorithms for which $\pmult, \pqr, \pinv$ are all $O(n)$ and $\cinv=1$
\cite{higham2002accuracy}. This yields easily-implementable practical algorithms with 
running times depending cubically on $n$.

In order to achieve $O(n^\omega)$-type efficiency, we instantiate them with
fast-matrix-multiplication-based algorithms and with $\mu(n)$ taken to be a low-degree
polynomial \cite{demmel2007fast}. Specifically, the following parameters are known to be achievable.

\begin{theorem}[Fast and Stable Instantiations of $\MM,\INV, \QR$] \label{thm:instantiate}
~\\
\begin{enumerate}
	\item If $\omega$ is the exponent of matrix multiplication, then for every $\eta>0$ there is a $\pmult(n)-$stable multiplication algorithm with $\pmult(n)=n^{c_\eta}$ and $T_\MM(n)=O(n^{\omega+\eta})$, where $c_\eta$ does not depend on $n$.
	\item Given an algorithm for matrix multiplication satisfying (1), there is a ($\pinv(n),\cinv)$-stable inversion algorithm with
		$$\pinv(n)\le O(\pmult(n)n^{\lg(10)}),\quad \quad \cinv\le 8,$$
		and $T_\INV(n)\le T_\MM(3n)=O(T_\MM(n))$.
	\item Given an algorithm for matrix multiplication satisfying (1), there is a $\pqr(n)-$stable QR factorization algorithm with
		$$\pqr(n)=O(n^{c_\QR} \pmult(n)),$$
		where $c_\QR$ is an absolute constant, and $T_\QR(n)=O(T_\MM(n))$.
\end{enumerate}
In particular, all of the running times above are bounded by $T_\MM(n)$ for an $n\times n$ matrix.
\end{theorem}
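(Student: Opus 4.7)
The plan is to invoke the constructions of \cite{DDHK} and \cite{demmel2007fast}, verifying that each meets the stability definitions set up above. The proof is largely a matter of citing and unpacking these references, so the aim is to identify the key ingredients and the one conceptual obstacle (stable inversion) where something nontrivial happens.

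For part (1), the plan is to start from any recursive bilinear algorithm for matrix multiplication achieving exponent $\omega+\eta$, as constructed in \cite{DDHK}. Because such an algorithm is a fixed composition of additions, scalar multiplications, and recursive calls with coefficients bounded in absolute value by a constant depending only on the base algorithm, a standard componentwise forward-error analysis (cf.\ Higham \cite{higham2002accuracy}) applied level by level shows that on floating-point inputs it returns $C$ with $\|C-AB\|\le n^{c_\eta}\mach\|A\|\|B\|$ for a constant $c_\eta$ depending only on the base algorithm. Unpacking the recursion depth of $O(\log n)$ and the per-level cost then yields $\pmult(n)=n^{c_\eta}$ and $T_\MM(n)=O(n^{\omega+\eta})$.

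For part (2), the approach is to use the block-recursive Schur-complement inversion
\[
\begin{bmatrix} A_{11} & A_{12}\\ A_{21} & A_{22}\end{bmatrix}^{-1}
=\begin{bmatrix}A_{11}^{-1}+A_{11}^{-1}A_{12}S^{-1}A_{21}A_{11}^{-1} & -A_{11}^{-1}A_{12}S^{-1}\\-S^{-1}A_{21}A_{11}^{-1} & S^{-1}\end{bmatrix},
\]
where $S=A_{22}-A_{21}A_{11}^{-1}A_{12}$. Applied naively to $A$ this is unstable since $A_{11}$ can be arbitrarily ill-conditioned; the key insight of \cite{demmel2007fast} is to first reduce to inverting the Hermitian positive-definite matrix $B=A^*A$ (so $\kappa(B)=\kappa(A)^2$), for which every leading principal submatrix and every Schur complement are themselves positive-definite with condition number at most $\kappa(B)$. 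Running the recursion on $B$, each of the $O(\log n)$ levels amplifies the error by at most a constant power of $\kappa(A)$, and summing contributions gives the $\kappa(A)^{\cinv\log n}$ factor with $\cinv\le 8$. The bound $\pinv(n)\le O(\pmult(n) n^{\lg 10})$ comes from accounting for the roughly ten matrix multiplications at each of the $\log n$ recursion levels; finally, $A^{-1}=B^{-1}A^*$ is recovered with one additional multiplication, all inside $T_\INV(n)=O(T_\MM(n))$ by the master theorem.

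For part (3), the plan is to use the recursive block QR of \cite{demmel2007fast}: split $A=[A_L\ A_R]$ columnwise, recursively compute $A_L=Q_L R_L$, form $A_R'=Q_L^* A_R$ by fast multiplication, and recursively factor the bottom $(n/2)\times(n/2)$ block of $A_R'$ to obtain the remaining columns of $Q$ and the lower-right block of $R$. Each recursion level performs $O(1)$ fast multiplications and two smaller QRs, so the runtime satisfies $T_\QR(n)=2T_\QR(n/2)+O(T_\MM(n))=O(T_\MM(n))$ and the error accumulates polynomially to $\pqr(n)=O(n^{c_\QR}\pmult(n))$. The computed factor is only near-unitary, but the definition only requires the existence of a true unitary $Q'$ and a nearby $A'$ satisfying $Q'A'=R$, which we obtain by taking $Q'$ to be the unitary factor of the polar decomposition of the computed $Q$ and absorbing the discrepancy into $A'$. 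The hard step throughout is the inversion analysis in part (2): it is the symmetrization trick $A\mapsto A^*A$ that keeps the per-level condition-number amplification under control and prevents the recursion from blowing up.
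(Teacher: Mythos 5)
Your proposal is correct and takes essentially the same route as the paper: the theorem is proved by direct appeal to \cite{DDHK} for stable fast multiplication and to \cite{demmel2007fast} for the logarithmically stable inversion and QR routines, which is exactly what the paper's (purely citation-based) proof does. The extra algorithmic detail you supply---the level-by-level error analysis of the recursive bilinear multiplication, the symmetrized Schur-complement inversion whose per-level amplification yields the $\kappa(A)^{\cinv\log n}$ and $n^{\lg 10}$ factors, and the column-recursive block QR with the polar-factor argument for the mixed forward--backward guarantee---is a faithful unpacking of those references rather than a new argument, so nothing further is required.
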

	\begin{proof}
	(1) is Theorem 3.3 of \cite{DDHK}. (2) is Theorem 3.3 (see also equation (9) above its statement) of \cite{demmel2007fast}. The final claim follows by noting that $T_\MM(3n)=O(T_\MM(n))$ by dividing a $3n\times 3n$ matrix into nine $n\times n$ blocks and proceeding blockwise, at the cost of a factor of $9$ in $\pinv(n)$. (3) appears in Section 4.1 of \cite{demmel2007fast}.

	\end{proof}
We remark that for specific existing fast matrix multiplication algorithms such as Strassen's
algorithm, specific small values of $\mu_\MM(n)$ are known (see \cite{DDHK} and its references for details), so these may also be used as a black box, though we will not do this in this paper.

	\section{Pseudospectral Shattering} \label{sec:gap}

This section is devoted to our central probabilistic result, Theorem \ref{thm:smoothed}, and the accompanying notion of \textit{pseudospectral shattering} which will be used extensively in our analysis of the spectral bisection algorithm in Section \ref{sec:spectralbisec}.

\subsection{Smoothed Analysis of Gap and Eigenvector Condition Number}\label{sec:smoothed}

As is customary in the literature, we will refer to an $n\times n$ random matrix $G_n$ \marginpar{$G_n$} whose entries are independent complex Gaussians drawn from $\mathcal{N}(0,1_\C/n)$ as a \textit{normalized complex Ginibre random matrix}. To be absolutely clear, and because other choices of scaling are quite common, we mean that $\dE G_{i,j} = 0$ and $\dE |G_{i,j}|^2 = 1/n$. 

In the course of proving Theorem \ref{thm:smoothed}, we will need to bound the probability that the second-smallest singular value of an arbitrary matrix with small Ginibre perturbation is atypically small. We begin with a well-known lower tail bound on the singular values of a Ginibre matrix alone.

\begin{theorem}[{\cite[Theorem 1.2]{szarek1991condition}}]\label{thm:szarek}
For $G_n$ an $n\times n$ normalized complex Ginibre matrix and for any $\alpha \geq 0$ it holds that
$$
    \P\left[\sigma_j(G_n) <  \frac{\alpha(n-j+1)}{n} \right] \leq \left(\sqrt{2e} \, \alpha\right)^{2(n-j+1)^2}.
$$
\end{theorem}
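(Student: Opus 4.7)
My plan is to prove the bound by integrating the explicit joint density of the squared singular values of $G_n$, which (by the Ginibre–Wishart correspondence) follows the Laguerre Unitary Ensemble. Writing $s_i := \sigma_i(G_n)^2$ with $s_1 \geq \cdots \geq s_n \geq 0$, the ordered density is
$$
p(s_1,\ldots,s_n) \;=\; c_n \prod_{i<j}(s_i - s_j)^2 \prod_{i=1}^n e^{-n s_i}, \qquad c_n \;=\; \frac{n^{n^2}}{\prod_{i=0}^{n-1} i!\,(i+1)!}.
$$
Setting $k := n-j+1$ and $\tau := \alpha^2 k^2/n^2$, the event $\{\sigma_j(G_n) < \alpha k/n\}$ is exactly the event that the $k$ smallest squared singular values all lie in $[0,\tau)$.

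The key step is to split the Vandermonde factor into ``small--small'', ``small--large'', and ``large--large'' pieces according to whether the indices lie in the small block $\{n-k+1,\ldots,n\}$ or the large block $\{1,\ldots,n-k\}$, and then substitute $s_\ell = \tau u_\ell$ for $\ell$ in the small block. The Jacobian contributes $\tau^k$, the small--small Vandermonde contributes $\tau^{2\binom{k}{2}}$, and together they produce the decisive factor
$$
\tau^{k + 2\binom{k}{2}} \;=\; \tau^{k^2} \;=\; \alpha^{2k^2}\,(k/n)^{2k^2}.
$$
On the small block $e^{-n s_\ell} \leq 1$, and the $u$-integral that remains over $[0,1]^k$ is a Selberg integral, $\int_{[0,1]^k}\prod_{i<j}(u_i-u_j)^2\,du = \prod_{j=0}^{k-1} (j!)^2(j+1)!/(k+j)!$. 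I would bound the small--large cross terms crudely by $(s_i - s_j)^2 \leq s_i^2$ for $s_j \leq \tau$ small, leaving a partition function over $s_1,\ldots,s_{n-k}$ weighted by $\prod s_i^{2k} e^{-n s_i}$ times the large--large Vandermonde, which is itself evaluable in closed form by a second Selberg/LUE normalization calculation.

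Assembling the pieces yields $\P[\sigma_j(G_n) < \alpha k/n] \leq C_{n,k}\cdot \alpha^{2k^2} (k/n)^{2k^2}$ for an explicit ratio $C_{n,k}$ of factorials and powers of $n$. The main obstacle is the purely arithmetic step of verifying $C_{n,k}\,(k/n)^{2k^2} \leq (2e)^{k^2}$, which is where the constant $\sqrt{2e}$ in the theorem is born: Stirling's formula must be applied carefully so that the $n^{n^2}$ in $c_n$ cancels against the factorial ratios produced by the two Selberg evaluations, leaving only the clean numerical factor $(2e)^{k^2}$. If this bookkeeping loses constants, one can tighten the crude cross-term inequality to $(s_i-s_j)^2 \leq s_i^2(1-\tau/s_i)^2$ and absorb the error, but no new analytic input is required. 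As a sanity check, $k=1$ reduces to Edelman's classical estimate $\P[\sigma_n(G_n) < \alpha/n] \leq 2e\,\alpha^2$.
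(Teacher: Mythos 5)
First, a point of reference: the paper does not prove this statement at all --- it is imported verbatim as \cite[Theorem 1.2]{szarek1991condition} and used as a black box (only through Corollary \ref{corr:sigma2szarek}, where it is invoked with $j=n-1$). So there is no internal proof to compare against; the relevant comparison is with Szarek's original argument, and your route --- express the squared singular values as an LUE, split the Vandermonde into small/cross/large blocks, rescale the small block by $\tau$ to extract $\tau^{k+2\binom{k}{2}}=\tau^{k^2}$, and evaluate the two surviving integrals by Selberg --- is exactly that argument transplanted from the real to the complex ensemble (which is why the exponent doubles from $(n-j+1)^2$ to $2(n-j+1)^2$). The outline is sound, and your $k=1$ sanity check is consistent: the exact answer there is $1-e^{-\alpha^2}\le\alpha^2$, comfortably inside $2e\alpha^2$.

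Two caveats. The minor one: $(s_i-s_\ell)^2\le s_i^2$ is false in general for $s_\ell\in[0,\tau)$ (take $s_i=0$, $s_\ell=\tau/2$); it holds only because on the ordered domain every large-block variable dominates every small-block variable, so you must apply this pointwise bound \emph{before} relaxing the domain to a product of simplices, or else symmetrize with a $\binom{n}{k}$ factor and use $(s_i-s_\ell)^2\le\max(s_i,s_\ell)^2\le(s_i+\tau)^2$ instead. The substantive one: the entire content of the theorem is the inequality $C_{n,k}\,(k/n)^{2k^2}\le(2e)^{k^2}$, and your proposal asserts rather than performs this verification. For a statement whose only content is a sharp constant multiplying $\alpha^{2k^2}$, the Stirling bookkeeping that cancels $n^{n^2}$ against the two Laguerre/Selberg products \emph{is} the proof; until it is carried out there is no guarantee the method lands at $\sqrt{2e}$ rather than some larger absolute constant (which, to be fair, would still suffice for the paper's application after adjusting the constants in Corollary \ref{corr:sigma2szarek}). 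As written, this is a credible and correctly structured plan, but not yet a proof.
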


\noindent As in several of the authors' earlier work \cite{banks2019gaussian}, we can transfer this result to case of a Ginibre perturbation via a remarkable coupling result of P. \'{S}niady.

\begin{theorem}[\'Sniady \cite{sniady2002random}]\label{thm:sniady}
	Let $A_1$ and $A_2$ be $n \times n$ complex matrices such that $\sigma_i(A_1) \le \sigma_i(A_2)$ for all $1 \le i \le n$.  Assume further that $\sigma_i(A_1) \ne \sigma_j(A_1)$ and $\sigma_i(A_2) \ne \sigma_j(A_2)$ for all $i \ne j$.  Then for every $t \ge 0$, there exists a joint distribution on pairs of $n \times n$ complex matrices $(G_1, G_2)$ such that 
	\begin{enumerate}
    	\item the marginals $G_1$ and $G_2$ are distributed as normalized complex Ginibre matrices, and
    	\item almost surely $\sigma_i(A_1 + \sqrt{t} G_1) \le \sigma_i(A_2 + \sqrt{t} G_2)$ for every $i$.
 	\end{enumerate} 
\end{theorem}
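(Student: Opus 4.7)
The plan is dynamical: interpolate between $s=0$ and $s=t$ using matrix Brownian motion, and construct the coupling at the level of the continuous processes. Specifically, I would introduce the matrix-valued processes $M_s^{(k)} := A_k + B_k(s)$ for $k=1,2$, where $B_k(\cdot)$ is a matrix Brownian motion (i.i.d. complex entries, Brownian motion in time) scaled so that $B_k(t)$ is distributed as $\sqrt{t}$ times a normalized complex Ginibre matrix. The desired joint law is then recovered at time $s=t$, so it suffices to couple $B_1$ and $B_2$ in such a way that $\sigma_i(M_s^{(1)}) \le \sigma_i(M_s^{(2)})$ holds almost surely for every $i$ and every $s \in [0,t]$.

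The first technical step is to derive SDEs for the ordered singular value processes, in analogy with Dyson Brownian motion for Hermitian eigenvalues. So long as the singular values remain simple, the map $M \mapsto \sigma_i(M)$ is smooth, and Itô's formula yields an equation of the form
\begin{equation*}
    d\sigma_i(M_s^{(k)}) \;=\; \Big(\text{sum of repulsion-type terms in the } \sigma_j(M_s^{(k)})\Big)\,ds \;+\; d\beta_i^{(k)},
\end{equation*}
where $d\beta_i^{(k)}$ is a scalar Brownian motion obtained by projecting the matrix increment $dB_k$ onto the rank-one direction $u_i^{(k)} (v_i^{(k)})^\ast$ determined by the $i$th left and right singular vectors of $M_s^{(k)}$. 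The structural feature that makes monotone coupling plausible is that these drifts are monotone in the correct sense: if one singular value is pushed up and the neighboring ones fixed, the drift on it weakly decreases, so two processes that start ordered should stay ordered provided they share the same noise.

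The coupling itself is then built by driving $dB_1$ and $dB_2$ from a common underlying matrix white noise $dW$, but composed with the (time-dependent) unitary changes of basis coming from the instantaneous SVDs of $M_s^{(1)}$ and $M_s^{(2)}$ respectively; this arranges that the scalar noises $d\beta_i^{(1)}$ and $d\beta_i^{(2)}$ are \emph{identical}. The marginal laws of $B_1$ and $B_2$ remain those of independent matrix Brownian motions, by unitary invariance of Ginibre. With synchronous noise, the increments $\Delta_i(s) := \sigma_i(M_s^{(2)}) - \sigma_i(M_s^{(1)})$ satisfy a drift-only equation, and a comparison/Gr\"onwall argument on the coupled system shows that $\Delta_i(s) \ge 0$ is preserved: whenever some $\Delta_i$ is about to cross zero from above, the difference in drifts is nonnegative by the monotonicity noted above, so zero acts as a reflecting barrier.

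The hardest part, I expect, is handling the \emph{collision set} where two singular values coincide: the coefficients in the SDE blow up there, the coupling unitaries are not defined smoothly, and our monotone comparison is only valid away from collisions. Two routes around this: either prove a non-collision lemma (almost surely no coincidences occur for $s \in (0,t]$, in the spirit of the standard Dyson Brownian motion analysis), or regularize by adding an independent $\epsilon G'$ to both matrices, carry out the coupling argument for the regularized problem where singular values are almost surely distinct throughout, and pass to the limit $\epsilon \to 0$ using continuity of singular values in the entries. Either way, the assumption that $A_1$ and $A_2$ have distinct singular values ensures the argument can be started.
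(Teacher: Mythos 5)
The paper does not prove this statement at all: it is imported verbatim as a black box from \cite{sniady2002random}, so there is no internal proof to compare against. Your sketch is, however, essentially a reconstruction of \'Sniady's actual argument: he runs matrix Brownian motion from $A_1$ and $A_2$, derives Dyson-type It\^o equations for the singular value processes, and couples the two systems by feeding both the same driving noise after conjugating by the instantaneous singular vector frames (unitary invariance preserving the Ginibre marginals), then invokes an order-preservation comparison for the resulting coupled SDEs. So the strategy is the right one.

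Two points deserve care if you were to execute this. First, the monotonicity you invoke is stated in the wrong variable: for a comparison theorem of this type you need \emph{quasi-monotonicity} (the Kamke condition), i.e.\ that the drift of $\sigma_i$ is non-decreasing in each $\sigma_j$ with $j \ne i$ -- which does hold here, since $\partial_{\sigma_j}\bigl(\sigma_i/(\sigma_i^2-\sigma_j^2)\bigr) > 0$ for positive singular values -- not that ``the drift on $\sigma_i$ weakly decreases when $\sigma_i$ is pushed up,'' which is a stability statement irrelevant to order preservation and, with the repulsion term, not even true near a neighboring singular value. Second, the collision issue you flag is genuinely the delicate part; in the complex ($\beta=2$) setting the repulsion is strong enough that a non-collision lemma holds, and the hypothesis that $A_1,A_2$ have simple singular values (which the paper then removes by a limiting argument in Corollary~\ref{corr:sigma2szarek}) is exactly what lets the SDE be started. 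As written, your proposal is a correct plan rather than a proof, with the SDE derivation, the adaptedness of the rotated noise, and the non-collision lemma all left as substantial unproved ingredients.
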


\begin{corollary} \label{corr:sigma2szarek} For any fixed matrix $M$ and parameters $\gamma, t>0$
$$ \P[\sigma_{n-1}(M+\gamma G_n)<t]\le  (e/2)^4 (tn/\gamma)^8 \le 4(tn/\gamma)^8.$$
\end{corollary}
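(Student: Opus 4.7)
The plan is to apply Śniady's coupling to transfer the question to an unperturbed Ginibre matrix, and then invoke Szarek's lower tail bound.

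First I would couple $M + \gamma G_n$ to a pure Ginibre matrix. Taking $A_1 = 0$ and $A_2 = M$ in Theorem \ref{thm:sniady}, the hypothesis $\sigma_i(A_1)\le\sigma_i(A_2)$ holds trivially, so there is a joint distribution of normalized complex Ginibre matrices $(G_1, G_2)$ such that almost surely
$$\sigma_i(\gamma G_1) \le \sigma_i(M + \gamma G_2)\qquad\text{for every } i.$$
Strictly speaking Theorem \ref{thm:sniady} requires the $\sigma_i(A_j)$ to be distinct, which fails for $A_1 = 0$. I would handle this by an approximation: apply the theorem to $A_1^{(\e)} = \e\,\mathrm{diag}(1,2,\dots,n)$ and a generic $A_2^{(\e)}$ with $\sigma_i(A_2^{(\e)}) \ge \sigma_i(A_1^{(\e)})$ that converges to $M$ in norm, and pass to the limit using continuity of singular values. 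In particular,
$$\P[\sigma_{n-1}(M+\gamma G_n)<t]\ \le\ \P[\sigma_{n-1}(\gamma G_n)<t]\ =\ \P\!\left[\sigma_{n-1}(G_n)< t/\gamma\right],$$
where the last equality is just the scaling $\sigma_{n-1}(\gamma G_n) = \gamma\sigma_{n-1}(G_n)$.

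Next I would instantiate Theorem \ref{thm:szarek} at $j = n-1$, so that $n-j+1 = 2$, giving
$$\P\!\left[\sigma_{n-1}(G_n) < \tfrac{2\alpha}{n}\right] \ \le\ \left(\sqrt{2e}\,\alpha\right)^{8}$$
for any $\alpha\ge 0$. Choosing $\alpha = tn/(2\gamma)$ so that $2\alpha/n = t/\gamma$ yields
$$\P\!\left[\sigma_{n-1}(G_n) < t/\gamma\right]\ \le\ \left(\sqrt{2e}\cdot\tfrac{tn}{2\gamma}\right)^{\!8} = \frac{(2e)^4}{2^{8}}\left(\frac{tn}{\gamma}\right)^{\!8} = \left(\frac{e}{2}\right)^{\!4}\left(\frac{tn}{\gamma}\right)^{\!8}.$$
Combined with the previous display this is exactly the first inequality in the statement, and the second follows from the numerical bound $(e/2)^4 = e^4/16 < 4$.

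The only genuine subtlety is the distinctness hypothesis in Śniady's theorem when applied with $A_1 = 0$; everything else is a direct composition of Theorem \ref{thm:sniady}, Theorem \ref{thm:szarek}, and rescaling. I do not expect any additional obstacles.
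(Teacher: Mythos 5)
Your proof is correct and follows essentially the same route as the paper: couple via \'Sniady's theorem with $A_1=0$, $A_2=M$ (handling the distinct-singular-value hypothesis by a vanishing perturbation), rescale, and invoke Szarek's bound with $j=n-1$ and $\alpha = tn/(2\gamma)$. The constant computation $(\sqrt{2e}\cdot tn/(2\gamma))^8 = (e/2)^4(tn/\gamma)^8 \le 4(tn/\gamma)^8$ matches the paper exactly.
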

\begin{proof} We would like to apply Theorem \ref{thm:sniady} to $A_1=0$ and $A_2=M$, but the theorem has the technical condition that $A_1$ and $A_2$ have distinct singular values.  Taking vanishingly small perturbations of $0$ and $M$ satisfying this condition and taking the size of the perturbation to zero, we obtain
$$ \P[\sigma_{n-1}(M+\gamma G_n)<t] \le \P[\sigma_{n-1}(\gamma G_n)<t] = \P[\sigma_{n-1}(G_n)<t/\gamma].$$
Invoking Theorem \ref{thm:szarek} with $j=n-1$ and $\alpha$ replaced by $tn/2\gamma$ yields the claim.\end{proof}

We will need as well the main theorem of \cite{banks2019gaussian}, which shows that the addition of a small complex Ginibre to an arbitrary matrix tames its eigenvalue condition numbers.

\begin{theorem}[{\cite[Theorem 1.5]{banks2019gaussian}}] \label{thm:davies} 
	Suppose $A\in\C^{n\times n}$ with $\|A\|\le 1$ and $\delta\in (0,1)$. Let $G_n$ be a complex Ginibre matrix, and let $\lambda_1,\ldots,\lambda_n\in \C$ be the (random) eigenvalues of $A+\delta G_n$. Then for every measurable open set $B\subset \C,$
	$$ 
		\dE \sum_{\lambda_i \in B} \kappa(\lambda_i)^2 \le \frac{n^2}{\pi \delta^2}\vol(B).
	$$
\end{theorem}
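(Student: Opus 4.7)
The plan is to prove this by relating the sum $\sum \kappa(\lambda_i)^2$ to the Lebesgue area of the $\epsilon$-pseudospectrum and then controlling the latter in expectation via a pointwise small-singular-value bound for a Ginibre perturbation. More precisely, for any matrix $M$ with distinct eigenvalues (which holds almost surely for $M = A + \delta G_n$) the leading-order relation
$$
\vol(\Lambda_\epsilon(M) \cap B) \;\geq\; \pi \epsilon^2 \sum_{\lambda_i \in B} \kappa(\lambda_i)^2 \cdot (1 - o_\epsilon(1))
$$
will let me transfer an area bound into the desired estimate on $\dE \sum_{\lambda_i \in B} \kappa(\lambda_i)^2$ after taking $\epsilon \to 0$.

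First I would prove this volume inequality deterministically. Using the spectral expansion (\ref{eq:spectral-expansion}), the resolvent decomposes near a simple eigenvalue $\lambda_i$ as
$$(z - M)^{-1} \;=\; \frac{v_i w_i^*}{z - \lambda_i} + R_i(z),$$
where $R_i$ is holomorphic and bounded on a disk whose size is controlled by the gap to the rest of $\Lambda(M)$. Since $\|v_i w_i^*\| = \kappa(\lambda_i)$, one gets $\|(z-M)^{-1}\| \geq \kappa(\lambda_i)/|z - \lambda_i| - \|R_i(z)\|$, so the definition (\ref{eqn:pseudodef2}) forces $\Lambda_\epsilon(M)$ to contain a disk of radius $\epsilon \kappa(\lambda_i) - O(\epsilon^2)$ around every $\lambda_i$. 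For $\epsilon$ smaller than (a constant times) the spectral gap, these disks are pairwise disjoint and each one containing a $\lambda_i$ in the interior of $B$ lies entirely inside $B$, giving the displayed inequality.

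Next I would apply Fubini and bound the pointwise probability:
$$
\dE \vol(\Lambda_\epsilon(M) \cap B) \;=\; \int_B \P\bigl[\sigma_n(zI - A - \delta G_n) < \epsilon\bigr]\, dz.
$$
To bound the integrand, I invoke \'Sniady's coupling (Theorem \ref{thm:sniady}) with $A_1 = 0$ and $A_2 = zI - A$, noting that $\sigma_i(A_1) = 0 \leq \sigma_i(A_2)$; after a small perturbation to enforce distinct singular values and then passing to the limit, this produces a coupling under which $\sigma_n(\delta G_n') \leq \sigma_n(zI - A - \delta G_n'')$ almost surely, so that
$$\P[\sigma_n(zI - A - \delta G_n) < \epsilon] \;\leq\; \P[\sigma_n(\delta G_n) < \epsilon] \;=\; \P[\sigma_n(G_n) < \epsilon/\delta].$$
For the normalized complex Ginibre matrix, the smallest eigenvalue of the complex Wishart matrix $nG_n^* G_n$ has density $n e^{-nx}$, giving the exact bound $\P[\sigma_n(G_n) < s] = 1 - e^{-n^2 s^2} \leq n^2 s^2$, and hence $\P[\sigma_n(zI - A - \delta G_n) < \epsilon] \leq n^2 \epsilon^2 / \delta^2$.

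Combining the two estimates,
$$
\pi \,\dE\!\sum_{\lambda_i \in B} \kappa(\lambda_i)^2 \cdot (1 - o_\epsilon(1)) \;\leq\; \frac{1}{\epsilon^2} \int_B \frac{n^2 \epsilon^2}{\delta^2}\, dz \;=\; \frac{n^2}{\delta^2} \vol(B),
$$
and taking $\epsilon \to 0$ yields the theorem. I expect the main obstacle to be making the volume identity fully rigorous: one must show that the $O(\epsilon^2)$ error in the radius of each pseudospectral disk is controlled \emph{uniformly}, handle the measure-zero contribution from eigenvalues on $\partial B$, and justify the interchange of limit and expectation (e.g.\ by first proving the inequality for compactly supported continuous test functions and then approximating $\mathbf{1}_B$ from below by such functions using monotone convergence). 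Everything else is a routine consequence of the two ingredients above, together with the almost-sure simplicity of the spectrum of $A + \delta G_n$ needed to justify the deterministic step.
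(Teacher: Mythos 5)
This theorem is not proved in the paper at all---it is imported as a black box from \cite[Theorem 1.5]{banks2019gaussian}---and your proposal correctly reconstructs the argument of that reference: the limiting-area identity $\vol(\Lambda_\epsilon(M)\cap B)\ge \pi\epsilon^2\sum_{\lambda_i\in B}\kappa(\lambda_i)^2(1-o_\epsilon(1))$, Tonelli, \'Sniady's coupling against $A_1=0$, and the exact law $\P[\sigma_n(G_n)<s]=1-e^{-n^2s^2}\le n^2s^2$ (which is needed in place of the cruder Theorem \ref{thm:szarek}, as the latter would cost an extra factor of $2e$) are precisely the ingredients used there. The limit--expectation interchange you flag is handled most cleanly by Fatou's lemma: all quantities are nonnegative and the $\epsilon\to 0$ limit is taken pointwise per realization, so no uniformity of the $O(\epsilon^2)$ error across realizations is required, and openness of $B$ disposes of the boundary issue.
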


\noindent Our final lemma before embarking on the proof in earnest shows that bounds on the $j$-th smallest singular value and eigenvector condition number are sufficient to rule out the presence of $j$ eigenvalues in a small region. For our particular application we will take $j=2$.  

\begin{lemma}\label{lem:sigma2} Let $D(z_0,r) :=\{z \in \mathbb{C} :|z-z_0|<r\}$. \marginpar{$D(z_0, r)$}  If $M\in \mathbb{C}^{n\times n}$ is a diagonalizable matrix with at least $j$ eigenvalues in $D(z_0,r)$ then 
$$\sigma_{n-j+1}(z_0-M) \leq r\kappa_V(M).$$
\end{lemma}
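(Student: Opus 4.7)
The plan is to diagonalize $M$ and reduce the claim to the trivial case where $M$ is already diagonal, then transfer the singular-value bound through a multiplicative perturbation inequality.

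First I would fix an arbitrary diagonalization $M = VDV^{-1}$ with $D = \mathrm{diag}(\lambda_1,\ldots,\lambda_n)$, ordering the eigenvalues so that $\lambda_1,\ldots,\lambda_j \in D(z_0, r)$ (which is possible by hypothesis). Then $z_0 I - M = V(z_0 I - D) V^{-1}$. The matrix $z_0 I - D$ is diagonal with entries $z_0 - \lambda_i$, so its singular values are exactly the sorted values of $|z_0 - \lambda_i|$. Because at least $j$ of these absolute values are strictly less than $r$, when we sort in decreasing order the last $j$ entries satisfy $\sigma_n(z_0 I - D) \le \cdots \le \sigma_{n-j+1}(z_0 I - D) < r$.

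Next I would invoke the standard singular-value inequality $\sigma_k(AXB) \le \|A\|\|B\|\,\sigma_k(X)$ (an immediate consequence of the Courant–Fischer min-max characterization) applied with $A = V$, $B = V^{-1}$, and $X = z_0 I - D$. This gives
\[
\sigma_{n-j+1}(z_0 - M) \;\le\; \|V\|\,\|V^{-1}\|\,\sigma_{n-j+1}(z_0 I - D) \;\le\; r\,\|V\|\,\|V^{-1}\|.
\]
Finally I would take the infimum over all diagonalizing factorizations $V$; since the left-hand side does not depend on $V$, we obtain $\sigma_{n-j+1}(z_0 - M) \le r \,\kappa_V(M)$, as desired.

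There is no real obstacle here—the whole argument is a one-line reduction to the diagonal case plus the submultiplicativity of singular values under left/right multiplication. The only point worth being careful about is that the definition of $\kappa_V(M)$ is an infimum over diagonalizing similarities, so the bound must hold for every such $V$ before one passes to the infimum; the argument above is uniform in $V$, so this is automatic.
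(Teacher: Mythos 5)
Your proof is correct and follows essentially the same route as the paper: both reduce to the diagonal matrix $z_0 I - D$ via Courant--Fischer, the only difference being that you cite the standard inequality $\sigma_k(AXB)\le\|A\|\|B\|\sigma_k(X)$ as a black box while the paper derives it inline with the substitution $y=Vx$. Your closing remark about taking the infimum over diagonalizing $V$ is a correct (and slightly more careful) handling of the definition of $\kappa_V$.
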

\begin{proof} Write $M=VDV^{-1}$. By Courant-Fischer:
\begin{align*}
    \sigma_{n-j+1}(z_0-M) &= \min_{S:\dim(S)=j}\max_{x\in S\setminus\{0\}} \frac{\|V(z_0-D)V^{-1}x\|}{\|x\|} & \\
    &=\min_{S:\dim(S)=j}\max_{y\in V(S)\setminus\{0\}} \frac{\|V(z_0-D)y\|}{\|Vy\|}&\textrm{setting $y=Vx$}\\
    &=\min_{S:\dim(S)=j}\max_{y\in S\setminus\{0\}} \frac{\|V(z_0-D)y\|}{\|Vy\|}&\textrm{since $V$ is invertible}\\
    &\le \min_{S:\dim(S)=j}\max_{y\in S\setminus\{0\}} \frac{\|V\|\|(z_0-D)y\|}{\sigma_n(V)\|y\|} & \\
    &\le \kappa_V(M)\sigma_{n-j+1}(z_0-D). &
\end{align*}
Since $z_0-D$ is diagonal its singular values are just $|z_0-\lambda_i|$, so the $j$-th smallest is at most $r$, finishing the proof.
\end{proof}

We now present the main tail bound that we use to control the minimum gap and eigenvector condition number.
\begin{theorem}[Multiparameter Tail Bound]\label{thm:polygap} 
    Let $A\in \mathbb{C}^{n\times n}$. Assume $\|A\|\le 1$ and $\gamma<1/2$, and let $X:=A+\gamma G_n$ where $G_n$ is a complex Ginibre matrix. For every $t,r>0$: 
        \begin{equation}\label{eqn:smoothed}
            \P\big[\kappa_V(X)<t,\, \gap(X)>r, \, \|G_n\|<4\big]\ge 1-\left(\frac{144}{r^2}\cdot 4(trn/\gamma)^8 + (9n^3/\gamma^2t^2)+\expbound\right).
        \end{equation}
\end{theorem}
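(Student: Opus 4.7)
The plan is to union-bound the complement of the good event in \eqref{eqn:smoothed} over three sources of failure: (a) the Ginibre tail event $\|G_n\|\ge 4$; (b) the event $\kappa_V(X)\ge t$ together with $\|G_n\|<4$; and (c) the event $\gap(X)\le r$ together with $\kappa_V(X)<t$ and $\|G_n\|<4$. The three probabilities will contribute exactly the three terms subtracted in \eqref{eqn:smoothed}.

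For (a), I would invoke a standard operator-norm tail bound for a normalized complex Ginibre matrix, yielding $\dP[\|G_n\|\ge 4]\le 2e^{-2n}$. For (b), the key observation is that on $\{\|G_n\|<4\}$ we have $\|X\|\le \|A\|+\gamma\|G_n\|\le 1+4\gamma\le 3$ using $\gamma<1/2$, so every eigenvalue of $X$ lies in $B:=D(0,3)$. Applying Theorem \ref{thm:davies} with this $B$ gives
$$
    \dE\Big[\1_{\{\|G_n\|<4\}}\sum_i\kappa(\lambda_i)^2\Big]\le \dE\sum_{\lambda_i\in B}\kappa(\lambda_i)^2\le \frac{n^2}{\pi\gamma^2}\vol(B)=\frac{9n^2}{\gamma^2}.
$$
By \eqref{eqn:kappav}, $\{\kappa_V(X)\ge t\}\subseteq \{\sum_i\kappa(\lambda_i)^2\ge t^2/n\}$, and Markov's inequality then yields the claimed contribution of order $n^2/(\gamma^2 t^2)$, up to the overall numerical constant appearing in the theorem statement.

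For (c), the heart of the argument is a net combined with Corollary \ref{corr:sigma2szarek}. Cover $D(0,3)$ by a square grid $\mathcal{N}$ of spacing $r/2$, of cardinality at most $144/r^2$. Whenever $\gap(X)\le r$, some pair of eigenvalues $\lambda_i,\lambda_j$ of $X$ lies within distance $r$; their midpoint is within $r/(2\sqrt{2})$ of some $z_0\in\mathcal{N}$, so that $\lambda_i,\lambda_j\in D(z_0,r)$. Combined with $\kappa_V(X)<t$, Lemma \ref{lem:sigma2} then forces $\sigma_{n-1}(z_0-X)\le r\kappa_V(X)<rt$. For each fixed $z_0$, Corollary \ref{corr:sigma2szarek} applied with $M=z_0 I-A$ (using rotational symmetry of the Ginibre law, so that $-G_n$ is again Ginibre) gives $\dP[\sigma_{n-1}(z_0-X)<rt]\le 4(rtn/\gamma)^8$; a union bound over $\mathcal{N}$ produces the first term $(144/r^2)\cdot 4(rtn/\gamma)^8$.

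The main obstacle is matching the $1/r^2$ cost of the union bound in (c) against the tail of a small singular value: this is only feasible because Theorem \ref{thm:szarek} (via its Corollary \ref{corr:sigma2szarek}) gives the eighth-power bound $(sn/\gamma)^8$ on $\dP[\sigma_{n-1}<s]$, so the net cost $r^{-2}$ is more than paid for by the $r^8$ from the threshold $s=rt$. The exponent $2(n-j+1)^2=8$ at $j=n-1$ is precisely what drives this; the analogous bound for $\sigma_n$ would give only a quadratic dependence and would be useless here, which is exactly why the two-eigenvalue argument via Lemma \ref{lem:sigma2} is essential instead of a direct single-eigenvalue Bauer--Fike approach.
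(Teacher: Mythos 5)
Your proposal is correct and follows essentially the same route as the paper's proof: the same three-event union bound, the same $r/2$-net of $D(0,3)$ combined with Lemma \ref{lem:sigma2} (at $j=2$) and Corollary \ref{corr:sigma2szarek}, and the same use of Theorem \ref{thm:davies} plus Markov's inequality restricted to the event $\|G_n\|<4$. One small shared caveat: Markov's inequality at threshold $t^2/n$ actually yields $9n^3/(\gamma^2 t^2)$ rather than $9n^2/(\gamma^2 t^2)$ --- a factor-of-$n$ discrepancy that is also present between the paper's own proof and its theorem statement, so it is not a defect of your argument relative to the paper's.
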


\begin{proof} 
    Write $\Lambda(X):=\{\lambda_1,\ldots,\lambda_n\}$ for the (random) eigenvalues of $X:=A+\gamma G_n$, in increasing order of magnitude (there are no ties almost surely). Let $\mathcal{N}\subset \C$ be a minimal $r/2$-net of $B:=D(0,3)$, recalling the standard fact that one exists of size no more than $(3\cdot 4/r)^2=144/r^2$. The most useful feature of such a net is that, by the triangle inequality, for any $a,b \in D(0,3)$ with distance at most $r$, there is a point $y\in \mathcal{N}$ with $|y-(a+b)/2|<r/2$ satisfying $a,b\in D(y,r)$. In particular, if $\gap(X) < r$, then there are two eigenvalues in the disk of radius $r$ centered at some point $y \in \mathcal{N}$.
    
    Therefore, consider the events
    \begin{align*}
        E_\gap &:= \{\gap(X)<r\} \subset\{\exists y\in \mathcal{N}: |D(y,r)\cap \Lambda(X)|\ge 2\}
        \marginnote{$E_\gap$} \\
        E_D &:= \{\Lambda(X)\not\subseteq D(0,3)\}\subset \{\|G_n\|\ge 4\}:=E_G 
        \marginnote{$E_D,E_G$} \\
        E_\kappa &:= \{\kappa_V(X) > t\}
        \marginnote{$E_\kappa$} \\
        E_y &:=\{\sigma_{n-1}(y-X) < rt\},\qquad y\in \mathcal{N}.
        \marginnote{$E_y$}
    \end{align*}
    Lemma \ref{lem:sigma2} applied to each $y\in \mathcal{N}$ with $j=2$ reveals that 
    $$ 
        E_\gap \subseteq E_D\cup E_\kappa\cup \bigcup_{y\in \mathcal{N}} E_y,
    $$
    whence
    $$ 
        E_\gap \cup E_\kappa \subseteq E_D\cup E_\kappa\cup \bigcup_{y\in \mathcal{N}} E_y.
    $$
    By a union bound, we have
    \begin{equation}\label{eqn:gapunion}
        \P[E_\gap \cup E_\kappa] \le \P[E_D\cup E_\kappa]+|\mathcal{N}|\max_{y\in \mathcal{N}} \P[E_y].
    \end{equation}
    From the tail bound on the operator norm of a Ginibre matrix in \cite[Lemma 2.2]{banks2019gaussian}, 
    \begin{equation}\label{eqn:ginibrenorm}
        \P[E_D] \le \P[E_G]\le  2e^{-(4-2\sqrt{2})^2n}\le \expbound.
    \end{equation}
    Observe that by \eqref{eqn:kappav},
    $$ 
        \left\{\kappa_V(X) > \sqrt{n\sum_{\lambda_i\in D(0,3)}\kappa(\lambda_i)^2}\right\} \subset E_D,
    $$
    since the inequality in the left hand event must reverse when we sum over all $\lambda_i \in \Lambda(X)$; thus
    $$ 
        E_\kappa \subset E_D \cup \left\{\sum_{\lambda_i\in D(0,3)}\kappa(\lambda_i)^2 > t^2/n\right\}.
    $$
    Theorem \ref{thm:davies} and Markov's inequality yields
    $$ 
        \P \left[\sum_{\lambda_i\in D(0,3)}\kappa(\lambda_i)^2 > t^2/n\right] \le \dE \sum_{\lambda_i\in D(0,3)}\kappa(\lambda_i)^2 \frac{n}{t^2} \le \frac{9\pi n^2}{\pi \gamma^2} \frac{n}{t^2} = \frac{9n^3}{t^2\gamma^2}.
    $$
    Thus, we have
    $$
        \P[E_\kappa \cup E_D]\le \frac{9n^3}{t^2\gamma^2} + \expbound.
    $$
    
    Corollary \ref{corr:sigma2szarek} applied to $M=-y+A$ gives the bound
    $$ 
        \P[E_y]\le 4\left(\frac{trn}{\gamma}\right)^8,
    $$
    for each $y\in \mathcal{N}$, and plugging these estimates back into \eqref{eqn:gapunion} we have
    $$ 
        \P[E_\gap \cup E_\kappa \cup E_D] \le \P[E_\gap \cup E_\kappa\cup E_G]\le \frac{144}{r^2}\cdot 4\left(\frac{trn}{\gamma}\right)^8 + \frac{9n^3}{\gamma^2t^2}+\expbound,
    $$
    as desired.
    \end{proof}
    
A specific setting of parameters in Theorem \ref{thm:polygap} immediately yields Theorem \ref{thm:smoothed}.

\begin{proof}[Proof of Theorem \ref{thm:smoothed}]
Applying Theorem \ref{thm:polygap} with parameters $ t:=\frac{n^2}{\gamma}$ and $r := \frac{\gamma^4}{n^5}$, we have 
\begin{equation}\label{event:goodkappa} 
    \P\big[\gap(X)>r,\, \kappa_V(X)<t,\, \|G\| \le 4\big]
    \ge 1-{600} \frac{n^{10}}{\gamma^8}\left(\frac{\gamma^2}{n^2}\right)^8 - \frac{9}{n^2}-\expbound\ge 1-12/n,
\end{equation}
as desired, where in the last step we use the assumption $\gamma < 1/2$.
\end{proof}
    Since it is of independent interest in random matrix theory, we record the best bound on the gap alone that is possible to extract from the theorem above.
    \begin{corollary}[Minimum Gap Bound] 
    \label{cor:mingapbound}
    
    For $X$ as in Theorem \ref{thm:polygap},
    $$
        \P[\gap(X)<r]\le 2\cdot 9^{4/5}(144\cdot 4)^{1/5}(n/\gamma)^{2+6/5}r^{6/5}\le 42(n/\gamma)^{3.2}r^{1.2}+ \expbound. 
    $$
    In particular, the probability is $o(1)$ if $r=o((\gamma/n)^{8/3})$.
    \end{corollary}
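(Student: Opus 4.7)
The plan is to simply read off the bound on $\P[\gap(X) < r]$ from Theorem \ref{thm:polygap} by projecting away the two auxiliary events in its conclusion and then tuning the free parameter $t$. Since $\{\gap(X) < r\}$ is contained in the complement of the event $\{\kappa_V(X) < t, \gap(X) > r, \|G_n\| < 4\}$ for every $t > 0$, Theorem \ref{thm:polygap} immediately gives
\[
\P[\gap(X) < r] \;\le\; 576\,(n/\gamma)^{8}\, t^{8}\, r^{6} \;+\; \frac{9\,(n/\gamma)^{2}}{t^{2}} \;+\; \expbound,
\]
where I used $\tfrac{144}{r^2}\cdot 4(trn/\gamma)^8 = 576\,(n/\gamma)^8 t^8 r^6$. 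The exponential term is $\gamma$- and $r$-independent and will be absorbed into the final $\expbound$ on the right-hand side.

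The remaining step is to choose $t$ as a function of $r, n, \gamma$ to minimize the sum of the first two terms. Rather than differentiate, I would balance them: setting $576\,(n/\gamma)^{8}t^{8}r^{6} = 9(n/\gamma)^{2}/t^{2}$ yields $t^{10} = \tfrac{9}{576}(n/\gamma)^{-6} r^{-6}$, at which point each of the two terms equals $9^{4/5}\,576^{1/5}\,(n/\gamma)^{16/5}\, r^{6/5}$. Their sum gives the leading constant $2 \cdot 9^{4/5}(144\cdot 4)^{1/5}$ and exponents $16/5 = 3.2$ on $n/\gamma$ and $6/5 = 1.2$ on $r$, matching the statement exactly. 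A small numerical check (using $576 = 9\cdot 64$, so $9^{4/5}\cdot 576^{1/5} = 9\cdot 64^{1/5}$, hence the full constant is $18\cdot 64^{1/5} \approx 41.3$) confirms the looser bound $42(n/\gamma)^{3.2} r^{1.2} + \expbound$.

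There is essentially no obstacle here beyond careful bookkeeping of exponents: the proof is a one-parameter optimization applied to a bound we already have. The only point worth double-checking is that balancing the two polynomial terms (rather than exactly minimizing) is good enough, which it is because the balanced value is within a factor of $5/(2\cdot 4^{4/5}) \approx 1.64$ of the true minimum, and the stated constant $42$ already reflects the balanced choice.
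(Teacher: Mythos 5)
Your proof is correct and is essentially identical to the paper's: the paper likewise bounds $\P[\gap(X)<r]$ by the complement of the event in Theorem \ref{thm:polygap} and sets $t^{10}=\frac{9}{144\cdot 4}(\gamma/nr)^6$ to balance the two polynomial terms, which is exactly your choice. The only difference is that you spell out the bookkeeping the paper leaves implicit (and your closing aside about how far balancing is from the true minimum is harmless but unnecessary, since any choice of $t$ yields a valid upper bound).
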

    \begin{proof}
    Setting
    $$t^{10} = \frac{9}{144\cdot 4}(\gamma/nr)^6$$
    in Theorem \ref{thm:polygap} balances the first two terms and yields the advertised bound.
\end{proof}

\renewcommand{\c}{\psi}
\subsection{Shattering}\label{sec:shatter}
Propositions \ref{prop:decrementeps} and \ref{thm:componentsofpseudoespec} in the preliminaries together tell us that if the $\epsilon$-pseudospectrum of an $n\times n$ matrix $A$ has $n$ connected components, then each eigenvalue of any size-$\epsilon$ perturbation $\widetilde{A}$ will lie in its own connected component of $\Lambda_\epsilon(A)$. The following key definitions  make this phenomenon quantitative in a sense which is useful for our analysis of spectral bisection.

\begin{definition}[Grid] 
    A \textit{grid} \marginpar{$\grid$} in the complex plane consists of the boundaries of a lattice of squares with lower edges parallel to the real axis. We will write
    $$
        \grid(z_0,\omega,s_1,s_2) \subset \C
    $$
    to denote an $s_1\times s_2$ grid of $\omega\times \omega$-sized squares and lower left corner at $z_0 \in \C$. Write $\diag(\g) := \omega\sqrt{s_1^2 + s_2^2}$\marginpar{$\diag(\g)$} for the diameter of the grid.
\end{definition}
\begin{definition}[Shattering] 
    A pseudospectrum $\Lambda_\epsilon(A)$ is \emph{shattered} \marginpar{\emph{shattered}} with respect to a grid $\g$ if:
    \begin{enumerate}
        \item Every square of $\g$ has at most one eigenvalue of $A$. 
        \item $\Lambda_\eps(A)\cap \g=\emptyset$.
    \end{enumerate}
\end{definition}

\begin{observation}
    \label{obs:eps-guarantee}
    As $\Lambda_\epsilon(A)$ contains a ball of radius $\epsilon$ about each eigenvalue of $A$, shattering of the $\epsilon$-pseudospectrum with respect to a grid with side length $\omega$ implies $\epsilon \le \omega/2$.
\end{observation}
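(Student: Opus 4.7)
The plan is to combine two simple ingredients: the inclusion $D(\lambda_i,\epsilon)\subset\Lambda_\epsilon(A)$ from Lemma~\ref{lem:pseudospectralbauerfike}, and the elementary planar fact that every point inside a square of side length $\omega$ lies within distance $\omega/2$ of the square's boundary. The whole proof is really just unpacking the definition of shattering against these two facts.

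First, invoke the first condition of shattering to place each eigenvalue $\lambda$ of $A$ inside some square $S$ of the grid $\g$. Since $S$ has side length $\omega$, the point in $S$ farthest from $\partial S$ is the center, which lies at distance exactly $\omega/2$ from each of its four edges; so in particular $\dist(\lambda,\partial S)\le\omega/2$. Then by the second shattering condition we have $\Lambda_\epsilon(A)\cap\g=\emptyset$, so in particular $\Lambda_\epsilon(A)\cap\partial S=\emptyset$. But $D(\lambda,\epsilon)\subset\Lambda_\epsilon(A)$ by the first inclusion in \eqref{eqn:lambdakappa}, so the open disk $D(\lambda,\epsilon)$ must also miss $\partial S$. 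This forces $\epsilon\le\dist(\lambda,\partial S)\le\omega/2$, as claimed.

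There is no real obstacle to carrying this out; the only thing worth verifying carefully is the planar estimate, which follows immediately by observing that any point in a closed square of side length $\omega$ is within $\omega/2$ of at least one of the two horizontal or two vertical edges (take the minimum of the distances to the two parallel edges in each direction, which sum to $\omega$).
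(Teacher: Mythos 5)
Your argument is correct and is exactly the paper's (one-line) justification unpacked: the disk $D(\lambda,\epsilon)\subset\Lambda_\epsilon(A)$ must avoid the grid lines, and no point of a square of side $\omega$ is farther than $\omega/2$ from its boundary. The only implicit step, shared with the paper, is that each eigenvalue actually lies in some square of $\g$, which is the standing convention whenever shattering is invoked.
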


\noindent As a warm-up for more sophisticated arguments later on, we give here an easy consequence of the shattering property.

\begin{lemma}
    If $\lambda_1, \dots, \lambda_n$ are the eigenvalues of $A$, and $\Lambda_\epsilon(A)$ is shattered with respect to a grid $\g$ with side length $\omega$, then every eigenvalue condition number satisfies $\kappa(\lambda_i) \le \frac{2\omega}{\pi \epsilon}$.
\end{lemma}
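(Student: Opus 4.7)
The plan is to express each eigenvalue's spectral projector $v_i w_i^\ast$ as a contour integral around the grid square that isolates $\lambda_i$, and then bound the integrand uniformly using the shattering hypothesis.

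First I would use the shattering assumption to set up a suitable contour. Since every grid square of $\g$ contains at most one eigenvalue, and every eigenvalue lies in some square (because $\Lambda(M) \subset \Lambda_\epsilon(M)$ is disjoint from $\g$), there is for each $i$ a unique square $S_i$ of $\g$ containing $\lambda_i$ and no other eigenvalue. Let $\Gamma_i = \partial S_i$, oriented positively. By the shattering hypothesis $\Lambda_\epsilon(M) \cap \g = \emptyset$, so in particular $\Gamma_i$ avoids $\Lambda_\epsilon(M)$, and the interior of $\Gamma_i$ contains only $\lambda_i$ among the eigenvalues of $M$.

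Next I would invoke the holomorphic functional calculus (as used in Section \ref{sec:holofuncalc}) to write
\[
    v_i w_i^\ast = \frac{1}{2\pi i} \oint_{\Gamma_i} (z - M)^{-1}\, \dee z,
\]
since $\Gamma_i$ separates $\lambda_i$ from the rest of $\Lambda(M)$. Taking norms and using the equivalent definition \eqref{eqn:pseudodef2} of the pseudospectrum, every $z \in \Gamma_i$ satisfies $\|(z - M)^{-1}\| \le 1/\epsilon$ because $z \notin \Lambda_\epsilon(M)$. Finally, the perimeter of $\Gamma_i$ is $4\omega$, so
\[
    \kappa(\lambda_i) = \|v_i w_i^\ast\| \le \frac{1}{2\pi}\cdot 4\omega \cdot \frac{1}{\epsilon} = \frac{2\omega}{\pi \epsilon},
\]
which is the desired bound. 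There is no real obstacle here; the only small point to verify is that the shattering hypothesis really does produce a contour $\Gamma_i$ enclosing exactly one eigenvalue and disjoint from $\Lambda_\epsilon(M)$, which follows immediately from the two defining conditions of shattering.
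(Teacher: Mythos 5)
Your proof is correct and is essentially identical to the paper's: both extract the spectral projector $v_i w_i^\ast$ as a contour integral over the boundary of the grid square containing $\lambda_i$, bound the resolvent by $1/\epsilon$ there using $\Lambda_\epsilon(M)\cap\g=\emptyset$, and multiply by the perimeter $4\omega$ divided by $2\pi$. No issues.
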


\begin{proof}
    Let $v,w^\ast$ be a right/left eigenvector pair for some eigenvalue $\lambda_i$ of $A$, normalized so that $w^\ast v = 1$. Letting $\Gamma$ be the positively oriented boundary of the square of $\g$ containing $\lambda_i$, we can extract the projector $vw^\ast$ by integrating, and pass norms inside the contour integral to obtain
    \begin{align}
        \kappa(\lambda_i) &= \|vw^\ast\|
        = \left\|\frac{1}{2\pi i}\oint_\Gamma (z - A)^{-1}\dee z \right\|
        \le \frac{1}{2\pi}\oint_\Gamma \left\|(z - A)^{-1}\right\|\dee z
        \le \frac{2\omega}{\pi \epsilon}.
    \end{align}
    In the final step we have used the fact that, given the definition of pseudospectrum (\ref{eqn:pseudodef2}) above, $\Lambda_\epsilon(A) \cap \g = \emptyset$ means $\|(z - A)^{-1}\| \le 1/\epsilon$ on $\g$.
\end{proof}

The theorem below quantifies the extent to which perturbing by a Ginibre matrix results in a shattered pseudospectrum.  See Figure \ref{fig:shattering} for an illustration in the case where the initial matrix is poorly conditioned.  In general, not all eigenvalues need move so far upon such a perturbation, in particular if the respective $\kappa_i$ are small.
\begin{figure}
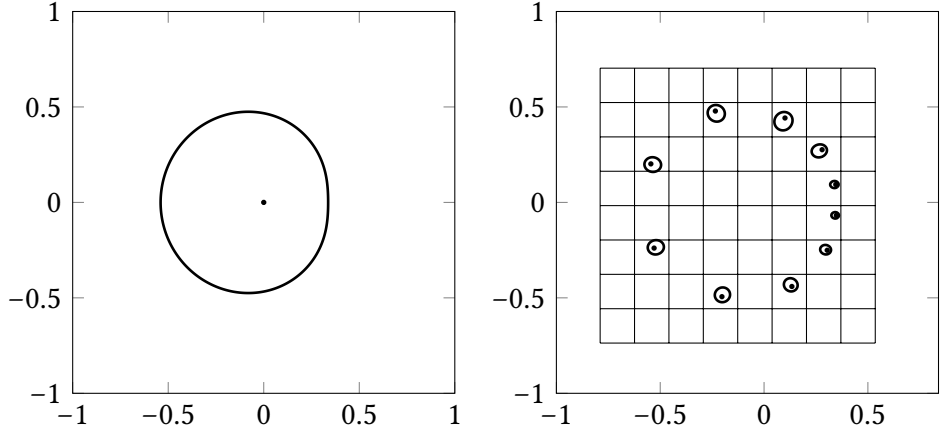

    \centering
    \input{Content/Figures/Toeplitz.tex}
    \input{Content/Figures/Toeplitz-perturbed.tex}
    
    \caption{ $T$ is a sample of an upper triangular $10\times 10$ Toeplitz matrix with zeros on the diagonal and an independent standard real Gaussian repeated along each diagonal above the main diagonal. $G$ is a sample of a $10\times 10$ complex Ginibre matrix with unit variance entries. Using the MATLAB package EigTool \cite{wright2002eigtool}, the boundaries of the $\eps$-pseudospectrum of $T$ (left) and $T+10^{-6} G$ (right) for $\eps = 10^{-6}$ are plotted along with the spectra.  The latter pseudospectrum is shattered with respect to the pictured grid.}
    \label{fig:shattering}
\end{figure}
\bigskip
\newcommand{\epsexactshatter}{\frac{\gamma^5}{16n^9}}
\begin{theorem}[Exact Arithmetic Shattering]\label{thm:exactshatter} 
    Let $A\in \mathbb{C}^{n\times n}$ and $X:=A+\gamma G_n$ for $G_n$ a complex Ginibre matrix. Assume $\|A\|\leq 1$ and $0< \gamma < 1/2$. Let $\g := \grid (z, \omega,\lceil 8/\omega\rceil , \lceil 8/\omega\rceil )$ with $\omega := \frac{\gamma^4}{4n^5}$, and $z$ chosen uniformly at random from the square of side $\omega$ cornered at $-4-4i$. Then, $\kappa_V(X)\le n^2/\gamma$, $\|A-X\|\le 4\gamma$, and $\Lambda_\eps(X)$ is shattered with respect to $\g$ for
   
    $$\eps := \epsexactshatter,$$
    with probability at least $1-13/n$.
\end{theorem}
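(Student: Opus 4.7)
The first two assertions, namely $\kappa_V(X) \le n^2/\gamma$ and $\|A-X\| = \gamma\|G_n\| \le 4\gamma$, follow directly from Theorem \ref{thm:smoothed} applied to $A$ with perturbation scale $\gamma$, which also delivers the bound $\gap(X) \ge \gamma^4/n^5$ on the same event. Call this event $E_1$; it occurs with probability at least $1-12/n^2$. Throughout, I condition on $E_1$ and then use the independent randomness in the shift $z$ to handle the shattering statement.

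Next I verify the two defining conditions of shattering. For the first (at most one eigenvalue per square), the diameter of each grid square is $\omega\sqrt{2} = \gamma^4\sqrt{2}/(4n^5)$, which is strictly less than $\gap(X) \ge \gamma^4/n^5$; hence no two eigenvalues can lie in a common square. For the second ($\Lambda_\eps(X) \cap \g = \emptyset$), Lemma \ref{lem:pseudospectralbauerfike} gives
$$\Lambda_\eps(X) \subset \bigcup_{i=1}^n D(\lambda_i, \eps \kappa_V(X)), \qquad \eps \kappa_V(X) \le \frac{\gamma^5}{16 n^9} \cdot \frac{n^2}{\gamma} = \frac{\gamma^4}{16 n^7},$$
so it suffices to show that on $E_1$, with high probability over $z$, every eigenvalue of $X$ lies at distance at least $\gamma^4/(16n^7)$ from $\g$. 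Note also that on $E_1$, $\|X\| \le 1+4\gamma \le 3$, so every eigenvalue lies in $D(0,3)$, which is contained in the region spanned by $\g$ since $z$ corners a square of side $\omega$ at $-4-4i$ and $\lceil 8/\omega\rceil \omega \ge 8$.

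The probabilistic step over $z$ is the core of the argument. Conditional on $E_1$ (and hence on the eigenvalues of $X$), the real and imaginary parts of $z$ are independent and uniform on $[-4,-4+\omega]$, so for each fixed eigenvalue $\lambda_i$, the quantities $\Re(\lambda_i - z) \bmod \omega$ and $\Im(\lambda_i - z) \bmod \omega$ are each uniform on $[0,\omega)$. Consequently, the distance from $\lambda_i$ to the nearest vertical grid line of $\g$ has the property that
$$\dP\left[\dist(\lambda_i, \text{vertical lines of }\g) < \frac{\gamma^4}{16 n^7}\right] \le \frac{2 \cdot \gamma^4/(16n^7)}{\omega} = \frac{1}{2n^2},$$
and the same holds for horizontal lines. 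Union-bounding over the two directions and then over the $n$ eigenvalues, the probability that some eigenvalue lies within $\gamma^4/(16 n^7)$ of $\g$ is at most $n \cdot (1/n^2) = 1/n$. Combining this with the $12/n^2$ failure probability for $E_1$ via a union bound gives the claimed $1-1/n-12/n^2$ success probability.

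The main thing to watch is that the shift $z$ is chosen independently of $G_n$ so that the conditioning step is legitimate, and that the constants chosen for $\omega$ and $\eps$ are exactly the ones which make $\eps \kappa_V(X)/\omega \le 1/(4n^2)$ and $\omega\sqrt{2} < \gap(X)$ simultaneously hold; no other step requires a delicate estimate.
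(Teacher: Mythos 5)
Your proposal is correct and follows essentially the same route as the paper: condition on the event of Theorem \ref{thm:smoothed}, note that $\gap(X)\ge 4\omega$ exceeds the square diameter $\omega\sqrt 2$ so each square holds at most one eigenvalue, use the uniform randomness of the shift $z$ to show each $\lambda_i$ is at distance at least $\omega/4n^2=\gamma^4/(16n^7)$ from $\g$ except with probability $1/n^2$, and conclude via Lemma \ref{lem:pseudospectralbauerfike} that $\eps\kappa_V(X)\le \omega/4n^2$ keeps $\Lambda_\eps(X)$ off the grid. The only cosmetic difference is that you bound the distance to the grid coordinate-wise (a $2t/\omega$ bound per direction) whereas the paper computes the area $(\omega-2s)^2/\omega^2$ of the safe region directly; the resulting $1/n^2$ per-eigenvalue failure probability and the final union bound are identical.
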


\begin{proof}
 Condition on the event in Theorem \ref{thm:smoothed}, so that
 $$
 \kappa_V(X)\le \frac{n^2}{\gamma},\quad \|X-A\|\le 4\gamma,\quad \text{ and } \gap(X)\ge\frac{\gamma^4}{n^5}=4\omega.$$
 Consider the random grid $\g$. Since $D(0,3)$ is contained in the square of side length $8$ centered at the origin, every eigenvalue of $X$ is contained in one square of $\g$ with probability $1$. Moreover, since $\gap(X)>4\omega$, no square can contain two eigenvalues. Let 
$$\dist_\g(z):=\min_{y\in \g}|z-y|.$$ 
 Let $\lambda_i := \lambda_i(X)$.  We now have for each $\lambda_i$  and every $s < \frac{\omega}{2}$ :
\begin{equation*}
 \P[\dist_\g(\lambda_i)>s] = \frac{(\omega-2s)^2}{\omega^2} = 1- \frac{4 s}{\omega}+ \frac{4s^2 }{\omega^2} \geq    1-\frac{4s}{\omega},
 \end{equation*}
since the distribution of $\lambda_i$ inside its square is uniform with respect to Lebesgue measure.
Setting $s=\omega /4n^2$, this probability is at least $1-1/n^2$, so by a union bound 
\begin{equation}\label{event:goodspace} \P[\min_{i\le n} \dist_\g(\lambda_i)>\omega/4n^2]>1-1/n,\end{equation}
i.e., every eigenvalue is well-separated from $\g$ with probability $1-1/n$.

We now recall from \eqref{eqn:lambdakappa} that
$$ \Lambda_\eps(X)\subset \bigcup_{i\le n} D(\lambda_i, \kappa_V(X)\epsilon).$$
Thus, on the events \eqref{event:goodkappa} and \eqref{event:goodspace}, we see that $\Lambda_\eps(X)$ is shattered with respect to $\g$ as long as
$$ \kappa_V(X)\epsilon < \frac{\omega}{4n^2},$$
which is implied by
$$ \epsilon < \frac{\gamma^4}{4n^5}\cdot \frac{1}{4n^2} \cdot \frac{\gamma}{n^2} =\frac{\gamma^5}{16n^9}.$$
Thus, the advertised claim holds with probability at least
$$ 1-\frac{1}{n}- \frac{13}{n} = 1 - \frac{13}{n} ,$$
as desired.
\end{proof}

Finally, we show that the shattering property is retained when the Gaussian perturbation is added in finite precision rather than exactly. This also serves as a pedagogical warmup for our presentation of more complicated algorithms later in the paper: we use $E$ to represent an adversarial roundoff error (as in step 2), and for simplicity neglect roundoff error completely in computations whose size does not grow with $n$ (such as steps 3 and 4, which set scalar parameters).

\begin{figure}[ht]
    \begin{boxedminipage}{\textwidth}
        $$ \SHATTER $$ {\small
        \textbf{Input:} Matrix $A \in \C^{n\times n}$, Gaussian perturbation size $\gamma\in (0,1/2)$.\\
        \textbf{Requires:} $\|A\| \le 1$.\\
        \textbf{Algorithm:} $(X,\g, \epsilon) = \SHATTER(A,\gamma)$
        \begin{enumerate}
            \item $G_{ij}\gets \N(1/{n})$ for $i,j=1,\ldots,n$.
            \item $X\gets A + \gamma G + E$.
            \item Let $\g$ be a random grid with $\omega = \frac{\gamma^4}{4n^5}$ and bottom left corner $z$ chosen as in Theorem \ref{thm:exactshatter}.
            \item $\epsilon \gets \frac{1}{2}\cdot\epsexactshatter$
        \end{enumerate}
        \textbf{Output:} Matrix $X\in \C^{n\times n}$, grid $\g$, shattering parameter $\epsilon>0$.\\
        \textbf{Ensures:} $\|X-A\|\le 4\gamma$, $\kappa_V(X)\le n^2/\gamma$, and $\Lambda_\epsilon(X)$ is shattered with respect to $\g$, with probability at least $1-13/n$.}
    \end{boxedminipage}
\end{figure}

\begin{theorem}[Finite Arithmetic Shattering]\label{thm:finiteshatter} Assume there is a $\cn$-stable Gaussian sampling algorithm $\N$ satisfying the requirements of Definition \ref{def:gaussian}. Then $\SHATTER$ has the advertised guarantees as long as the machine precision satisfies
\begin{equation}\label{eqn:shatterprecision}
\mach \le \frac{1}{2}\epsexactshatter\cdot\frac{1}{(3+\cn)\sqrt{n}},
\end{equation}\label{eqn:shatterbitops}
and runs in $$n^2T_\N+ n^2=O(n^2)$$ arithmetic operations.
\end{theorem}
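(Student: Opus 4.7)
The plan is to reduce the finite-arithmetic analysis to the already-established exact-arithmetic Theorem~\ref{thm:exactshatter} by coupling the computed matrix $X$ to the idealized matrix $X' := A + \gamma G$, where $G$ is the true complex Ginibre matrix that the finite-precision sample $\widetilde G$ tracks according to Definition~\ref{def:gaussian}. The plan is to show that the hypothesized bound on $\mach$ forces $\|X - X'\| \le \epsilon$, so that invoking Theorem~\ref{thm:exactshatter} at pseudospectral parameter $2\epsilon = \epsexactshatter$ for $X'$ lets every conclusion of that theorem transfer to $X$ with essentially no loss.

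\medskip

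For the norm estimate, I would write $\widetilde G = G + \Delta$ with $|\Delta_{ij}| \le \cn\mach/\sqrt n$, so that the Frobenius bound yields $\gamma\|\widetilde G - G\| \le \gamma \cn \mach \sqrt n$. The accumulated roundoff $E$ in forming $\gamma \widetilde G$ and then $A + \gamma\widetilde G$ is controlled by \eqref{eqn:maxnorm}: conditioned on the event $\|G\| \le 4$ from Theorem~\ref{thm:smoothed}, the intermediate operand has spectral norm at most $3$ (up to negligible corrections from $\Delta$), so $\|E\| \le 3\mach\sqrt n$. Summing, $\|X - X'\| \le (3 + \cn)\mach\sqrt n$, which is at most $\epsilon$ by the hypothesized precision bound.

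\medskip

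On the $(1 - 1/n - 12/n^2)$-probability event of Theorem~\ref{thm:exactshatter}, $X'$ satisfies $\kappa_V(X') \le n^2/\gamma$, $\|X' - A\| \le 4\gamma$, and $\Lambda_{2\epsilon}(X')$ is shattered with respect to $\g$. Shattering transfers to $X$ via one application of Proposition~\ref{prop:decrementeps}: $\Lambda_\epsilon(X) \subseteq \Lambda_{\epsilon + \|X - X'\|}(X') \subseteq \Lambda_{2\epsilon}(X')$, so $\Lambda_\epsilon(X) \cap \g = \emptyset$, and each bounded component of $\Lambda_\epsilon(X)$ lies inside a single grid square and, by Proposition~\ref{thm:componentsofpseudoespec}, contains exactly one eigenvalue of $X$. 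The bound $\|X - A\| \le \|X' - A\| + \|X - X'\| \le 4\gamma + \epsilon$ is effectively $4\gamma$. The runtime bound is immediate from counting $n^2$ Gaussian draws at $T_\N$ arithmetic operations each plus $n^2$ scalar additions to assemble $X$, giving $n^2 T_\N + n^2 = O(n^2)$.

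\medskip

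The one subtle point I expect to be the main obstacle is the eigenvector-conditioning bound $\kappa_V(X) \le n^2/\gamma$, since $\kappa_V$ is not continuous under general matrix perturbations and cannot be passed from $X'$ to $X$ by a naive perturbation argument. The clean fix is to rewrite $X = A'' + \gamma G$ where $A'' := A + \gamma\Delta + E$ has norm $1 + o(1)$, and to reapply Theorem~\ref{thm:polygap} to $A''$ in place of $A$: the proof of that theorem uses the hypothesis $\|A\| \le 1$ only to guarantee $\Lambda(X) \subset D(0,3)$ on a high-probability event, a conclusion that is robust to the $o(1)$ perturbation, and all its tail inputs (Szarek, Davies, Ginibre operator norm) depend only polynomially on the enclosing radius. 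Hence the smoothed bound on $\kappa_V(X)$ survives verbatim.
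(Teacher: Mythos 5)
Your treatment of the two error sources, the coupling to the exact-arithmetic output, the transfer of shattering via Proposition \ref{prop:decrementeps}, and the operation count is exactly the paper's proof: the paper bounds the sampling error by $\cn\sqrt{n}\mach$ and the roundoff by $3\sqrt{n}\mach$ (on the event $\|G_n\|\le 4$), concludes $\|X-X'\|\le\tfrac12\epsexactshatter$, and invokes Proposition \ref{prop:decrementeps}. That part is correct and needs no further comment.

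The one place you go beyond the paper is the $\kappa_V(X)$ paragraph, and there your proposed fix does not work as stated. Writing $X = A'' + \gamma G$ with $A'' = A + \gamma\Delta + E$ and ``reapplying Theorem \ref{thm:polygap} to $A''$'' is illegitimate: both $\Delta$ (the sampler's adversarial rounding of $G$) and $E$ (the adversarial roundoff in forming $A+\gamma\widetilde G$) are permitted to depend on the realized Gaussians, so $A''$ is correlated with $G$, whereas the smoothed-analysis inputs --- Theorem \ref{thm:davies} and Corollary \ref{corr:sigma2szarek} --- are statements about a \emph{fixed} base matrix perturbed by an \emph{independent} Ginibre. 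So the bound does not ``survive verbatim.'' You are right that $\kappa_V$ is not continuous and cannot be passed from $X'$ to $X$ naively; the honest routes are either to accept a polynomially weaker bound on $\kappa_V(X)$ via stability of the individual eigenvalue condition numbers under the shattered pseudospectrum (the contour bound \eqref{eq:projectorstability} over a grid square gives $\kappa(\lambda_i(X)) \le \kappa(\lambda_i(X')) + O(\omega/\epsilon) = O(n^4/\gamma)$, hence $\kappa_V(X) = \poly(n)/\gamma$ by \eqref{eqn:kappav} rather than $n^2/\gamma$), or to observe that such a weaker bound suffices downstream. To be fair, the paper's own proof is entirely silent on how the $\kappa_V$ guarantee transfers to the finite-arithmetic $X$, so you have put your finger on a real subtlety --- but your argument does not establish the claimed bound either.
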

\begin{proof}
    The two sources of error in $\SHATTER$ are:
    \begin{enumerate}
        \item An additive error of operator norm at most $n\cdot \cn\cdot (1/\sqrt{n})\cdot \mach\le  \cn\sqrt{n}\cdot \mach$ from $\N$, by Definition \ref{def:gaussian}.
        \item An additive error of norm at most $\sqrt{n}\cdot\|X\|\cdot \mach\le 3\sqrt{n}\mach$, with probability at least $1-1/n$,  from the roundoff $E$ in step 2.
    \end{enumerate}
    Thus, as long as the precision satisfies \eqref{eqn:shatterprecision}, we have
    $$ \|\SHATTER(A,\gamma)-\mathrm{shatter}(A,\gamma)\|\le \frac{1}{2}\epsexactshatter,$$
    where $\mathrm{shatter}(\cdot)$ refers to the (exact arithmetic) outcome of Theorem \ref{thm:exactshatter}. The correctness of $\SHATTER$ now follows from Proposition \ref{prop:decrementeps}. Its running time is bounded by
    $$n^2T_\N+ n^2$$
    arithmetic operations, as advertised.
\end{proof}

\section{Matrix Sign Function}
\label{sec:matrix-sign}
The algortithmic centerpiece of this work is the analysis, in finite arithmetic, of a well-known iterative method for approximating to the matrix sign function. Recall from Section \ref{sec:intro} that if $A$ is a matrix whose spectrum avoids the imaginary axis, then
$$
    \sgn(A) = P_+ - P_-
$$
where the $P_+$ and $P_-$ are the spectral projectors corresponding to eigenvalues in the open right and left half-planes, respectively. The iterative algorithm we consider approximates the matrix sign function by repeated application to $A$ of the function 
\begin{equation}
    g(z) := \frac{1}{2}(z + z^{-1}). 
    \marginnote{$\newton$} 
\end{equation}
This is simply Newton's method to find a root of $z^2 - 1$, but one can verify that the function $g$ fixes the left and right halfplanes, and thus we should expect it to push those eigenvalues in the former towards $-1$, and those in the latter towards $+1$.  

We denote the specific finite-arithmetic implementation used in our algorithm by $\SGN$; the pseudocode is provided below.

\begin{figure}[ht]
    \begin{boxedminipage}{\textwidth}
    $$\SGN$$ {\small
        \textbf{Input:} Matrix $A \in \C^{n\times n}$, pseudospectral guarantee $\epsilon$, circle parameter $\alpha$, and desired accuracy $\delta$ \\
        \textbf{Requires:} $\Lambda_\epsilon(A) \subset \cpm{\alpha}$. \\
        \textbf{Algorithm:} $S = \SGN(A,\epsilon,\alpha,\delta)$
        \begin{enumerate}
            \item $N \gets  \lceil \lg(1/(1-
\alpha)) + 3 \lg \lg(1/(1-\alpha)) + \lg \lg (1/(\sgnerr \eps)) + 7.59 \rceil$ 
            
            \item $A_0 \gets A$
            \item For $k = 1,...,N$, 
            \begin{enumerate}
                \item $A_k \gets \tfrac{1}{2}(A_{k-1} + A^{-1}_{k-1}) + E_k$
            \end{enumerate}
            \item $S \gets A_N$
        \end{enumerate}
        \textbf{Output:} Approximate matrix sign function $S$ \\
        \textbf{Ensures:} $\|S - \sgn(A)\| \le \delta$ }
    \end{boxedminipage}
\end{figure}

In Subsection \ref{sec:circlesappolonius} we briefly discuss the specific preliminaries that will be used throughout this section. In Subsection \ref{sec:exactarithnewton} we give a \emph{pseudospectral} proof of the rapid global convergence of this iteration when implemented in exact arithmetic. In Subsection \ref{sec:exactarithnewton} we show that the proof provided in Subsection \ref{sec:finitearithnewton} is robust enough to handle the finite arithmetic case; a formal statement of this main result is the content of Theorem \ref{prop:sgnerr}.

\subsection{Circles of Apollonius}
\label{sec:circlesappolonius}

It has been known since antiquity that a circle in the plane may be described as the set of points with a fixed ratio of distances to two focal points. By fixing the focal points and varying the ratio in question, we get a family of circles named for the Greek geometer Apollonius of Perga. We will exploit several interesting properties enjoyed by these \emph{Circles of Apollonius} in the analysis below.

More precisely, we analyze the Newton iteration map $\newton$ in terms of the family of Apollonian circles whose foci are the points $\pm 1 \in \C$. 
For the remainder of this section we will write $m(z) = \tfrac{1 - z}{1 + z}$ for the M\"obius transformation taking the right half-plane to the unit disk, and for each $\alpha \in (0,1)$ we denote by 
$$
    \cp{\alpha} = \left\{z \in \C : |m(z)| \le \alpha \right\}, \quad \cm{\alpha} = \{ z \in \mathbb{C} : |m(z)|^{-1} \leq \alpha \}
    \marginnote{$\cp{\alpha}, \cm{\alpha}$}
$$
the closed region in the right (respectively left) half-plane bounded by such a circle. Write $\partial\cp{\alpha}$ and $\partial\cm{\alpha}$ for their boundaries, and $\cpm{\alpha} = \cp{\alpha} \cup \cm{\alpha}$ for their union. See Figure \ref{fig:Calpha} for an illustration.

The region $\cp{\alpha}$ is a disk centered at $\tfrac{1 + \alpha^2}{1 - \alpha^2} \in \R$, with radius $\tfrac{2\alpha}{1-\alpha^2}$, and whose intersection with the real line is the interval $(m(\alpha),m(\alpha)^{-1})$; $\cm{\alpha}$ can be obtained by reflecting $\cp{\alpha}$ with respect to the imaginary axis. 
  \begin{figure}
      \centering
      \begin{tikzpicture}[line cap=round,line join=round
,x=1.0cm,y=1.0cm]
\begin{axis}[
x=1.0cm,y=1.0cm,
axis lines=middle,
xmin=-0.5799999999999995,
xmax=5.400000000000007,
ymin=-3.059999999999997,
ymax=3.039999999999999,
xtick={-0.0,1.0,...,5.0},
ytick={-3.0,-2.0,...,3.0},]
\clip(-0.58,-3.06) rectangle (5.4,3.04);
\draw [line width=1.pt] (4.555555555555556,0.) circle (4.4444444444444455cm);
\draw [line width=1.pt] (2.387533875338754,0.) circle (2.1680216802168024cm);
\draw [line width=1.pt] (1.3924160411495559,0.) circle (0.9689284966655702cm);
\draw [line width=1.pt] (1.057925451863556,0.) circle (0.34526259817812477cm);
\end{axis}
\end{tikzpicture}
      \caption{ Apollonian circles appearing in the analysis of the Newton iteration.  Depicted are $\partial \cp{\alpha^{2^{k}}}$ for $\alpha=0.8$ and $k = 0, 1, 2, 3$, with smaller circles corresponding to larger $k$.}
      \label{fig:Calpha}
  \end{figure}
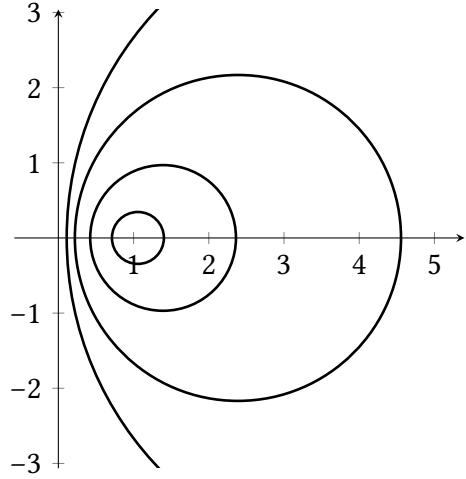
For $\alpha > \beta > 0$, we will write
$$ 
    \ap{\alpha}{\beta} = \cp{\alpha} \setminus \cp{\beta}
    \marginnote{$\ap{\alpha}{\beta},\am{\alpha}{ \beta} $}
$$
for the \emph{Apollonian annulus} lying inside  $\cp{\alpha}$ and outside $\cp{\beta}$; note that the circles are not concentric so this is not strictly speaking an annulus, and note also that in our notation this set does not include $\partial \cp{\beta}$. In the same way define $\am{\alpha}{\beta}$ for the left half-plane and write $\apm{\alpha}{\beta} = \ap{\alpha}{\beta} \cup \am{\alpha}{\beta}$.

\begin{observation}[\cite{roberts1980linear}]
\label{obs:newtonmap}
The Newton map $\newton$ is a two-to-one map from $\cp{\alpha}$ to $\cp{\alpha^2}$, and a two-to-one map from $\cm{\alpha}$ to $\cm{\alpha^2}$.  
\end{observation}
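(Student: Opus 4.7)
The plan is to reduce both claims to the identity
$$m(\newton(z)) = -m(z)^2, \qquad m(z) = \frac{1-z}{1+z},$$
which follows from a direct algebraic computation: writing $\newton(z) = (z^2+1)/(2z)$ one finds $1 - \newton(z) = -(z-1)^2/(2z)$ and $1 + \newton(z) = (z+1)^2/(2z)$, whose ratio is $-\left(\tfrac{z-1}{z+1}\right)^2 = -m(z)^2$. Taking absolute values yields $|m(\newton(z))| = |m(z)|^2$, from which the containment $\newton(\cp{\alpha}) \subseteq \cp{\alpha^2}$ is immediate. The analogous statement for $\cm{\alpha}$ follows from the symmetries $\newton(-z) = -\newton(z)$ and $m(-z) = m(z)^{-1}$, which together give $\cm{\alpha} = -\cp{\alpha}$ and reduce the left half-plane case to the right one.

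For the two-to-one part, I would fix a target $w \in \cp{\alpha^2}$ and solve $\newton(z) = w$ directly: clearing denominators yields the quadratic $z^2 - 2wz + 1 = 0$, whose roots $z_1, z_2$ satisfy $z_1 z_2 = 1$ and $z_1 + z_2 = 2w$. The identity above forces $|m(z_i)|^2 = |m(w)| \le \alpha^2$, so each root already lies in $\cp{\alpha} \cup \cm{\alpha}$; it therefore suffices to show both lie in the open right half-plane. Since $\re(1/z) = \re(z)/|z|^2$, the two reciprocal roots have real parts of the same sign, and if that common sign were negative we would contradict $2\re(w) = \re(z_1) + \re(z_2) > 0$ (which uses $w \in \cp{\alpha^2}$ being in the open right half-plane). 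Hence both preimages lie in $\cp{\alpha}$, establishing the two-to-one map from $\cp{\alpha}$ to $\cp{\alpha^2}$; the $\cm{}$ case follows again by the $z \mapsto -z$ symmetry.

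I do not expect any serious obstacle. The only bookkeeping issues are the degenerate points $w = \pm 1$, where the two preimages coincide and should be counted with multiplicity two, and the pole $z = 0$ of $\newton$, which lies on the imaginary axis and hence outside $\cp{\alpha} \cup \cm{\alpha}$, so it never arises.
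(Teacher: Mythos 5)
Your proof is correct and rests on the same key identity as the paper's, namely $m(\newton(z)) = -m(z)^2$ (the paper records only its absolute value $|m(\newton(z))| = |m(z)|^2$ and says "similarly for the left half-plane"). You go further by explicitly verifying the two-to-one/onto claim via the quadratic $z^2 - 2wz + 1 = 0$, which the paper leaves implicit; the only superfluous step is the half-plane sign argument at the end, since $|m(z_i)| \le \alpha < 1$ already places each root in $\cp{\alpha}$ by definition (that inequality by itself forces $\re(z_i) > 0$).
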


\begin{proof}
 This follows from the fact that for each $z$ in the right half-plane,
$$
    |m(g(z))| = \left|\frac{1 - \tfrac{1}{2}(z + 1/z)}{1 + \tfrac{1}{2}(z + 1/z)}\right| = \left|\frac{(1-z)^2}{(z + 1)^2}\right| = |m(z)|^2
$$
and similarly for the left half-plane.
\end{proof}
It follows from Observation \ref{obs:newtonmap} that under repeated application of the Newton map $g$, any point in the right or left half-plane converges to $+1$ or $-1$, respectively.

\subsection{Exact Arithmetic}
\label{sec:exactarithnewton}

In this section, we set $A_0 := A$ and $A_{k+1} := \newton (A_k)$ for all $k \ge 0$. \marginnote{$A_k$} In the case of exact arithmetic, Observation \ref{obs:newtonmap} implies global convergence of the Newton iteration when $A$ is diagonalizable. For the convenience of the reader we provide this argument (due to \cite{roberts1980linear}) below. 

\begin{proposition}
\label{prop:newtonexactarithmetic}
Let $A$ be a diagonalizable $n \times n$ matrix and assume that $\Lambda(A) \subset  \cpm{\alpha}$ for some $\alpha \in (0,1)$. Then for every $N \in \mathbb{N}$ we have the guarantee
$$\|A_N - \sgn(A) \| \leq  \frac{4\alpha^{2^N}}{\alpha^{2^{N+1}}+1} \cdot \kappa_V(A).$$
 Moreover, when $A$ does not have eigenvalues on the imaginary axis  the minimum $\alpha$ for which $\Lambda(A) \subset  \cpm{\alpha}$ is given by
 $$\alpha^2 = \max_{1 \le i \le n} \left\{ 1 - \frac{4|\re(\lambda_i(A))|}{|\lambda_i(A)-\sgn(\lambda_i(A))|^2} \right\}$$
\end{proposition}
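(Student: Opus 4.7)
The plan is to reduce the matrix norm $\|A_N - \sgn(A)\|$ to a pointwise bound on the eigenvalues, and then track each eigenvalue through the iteration using the Möbius transformation $m(z)=(1-z)/(1+z)$ that defines the Apollonius regions $\cpm{\alpha}$.

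First, because $g(z)=\tfrac{1}{2}(z+1/z)$ is a scalar rational function, the Newton iteration in exact arithmetic preserves the diagonalization: writing $A=VDV^{-1}$, a short induction (using the hypothesis $\Lambda(A)\subset\cpm\alpha$ to guarantee invertibility of every iterate) gives $A_k=V\,g^{(k)}(D)\,V^{-1}$ for all $k\ge 0$. Since $g$ maps each open half-plane into itself and $\sgn$ is constant on each, $\sgn(A)=V\,\sgn(D)\,V^{-1}$ for the same $V$. Passing norms through and minimizing over the choice of $V$ yields
\begin{equation*}
    \|A_N-\sgn(A)\|\;\le\;\kappa_V(A)\cdot\max_{i}|g^{(N)}(\lambda_i)-\sgn(\lambda_i)|.
\end{equation*}
This reduces the problem to a scalar estimate on a single eigenvalue $\lambda\in\cpm{\alpha}$.

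The heart of the argument is the algebraic identity that $m$ conjugates $g$ to the squaring map (up to a sign). A direct calculation gives $1-g(z)=-(z-1)^2/(2z)$ and $1+g(z)=(z+1)^2/(2z)$, whence
\begin{equation*}
    m(g(z))\;=\;\frac{1-g(z)}{1+g(z)}\;=\;-\Bigl(\frac{1-z}{1+z}\Bigr)^{\!2}\;=\;-m(z)^2.
\end{equation*}
Iterating, $m(g^{(N)}(\lambda))=-m(\lambda)^{2^N}$ for $N\ge 1$, which immediately recovers Observation \ref{obs:newtonmap}. Inverting this Möbius relation produces the closed forms
\begin{equation*}
    g^{(N)}(\lambda)-1\;=\;\frac{2\,m(\lambda)^{2^N}}{1-m(\lambda)^{2^N}},\qquad g^{(N)}(\lambda)+1\;=\;\frac{2}{1-m(\lambda)^{2^N}},
\end{equation*}
valid for $\lambda$ in the right half-plane, together with the symmetric expressions in the left half-plane using $m^{-1}$ (since $\cm\alpha$ is the reflection of $\cp\alpha$ and $g$ maps it two-to-one onto $\cm{\alpha^2}$). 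Substituting $|m(\lambda)|\le\alpha$ in $\cp\alpha$ and $|m(\lambda)|^{-1}\le\alpha$ in $\cm\alpha$, and controlling the denominator $|1-m(\lambda)^{2^N}|$ in terms of $\alpha^{2^N}$, produces a scalar bound of the desired shape after a short calculation.

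For the second assertion I would unfold the definition of $\cp\alpha$ and $\cm\alpha$ directly. Using $|1+\lambda|^2-|1-\lambda|^2=4\re(\lambda)$, one computes
\begin{equation*}
    |m(\lambda)|^2\;=\;\frac{|1-\lambda|^2}{|1+\lambda|^2}\;=\;1-\frac{4\,\re(\lambda)}{|1+\lambda|^2}
\end{equation*}
for $\re(\lambda)>0$, and symmetrically $|m(\lambda)|^{-2}=1-4|\re(\lambda)|/|1-\lambda|^2$ for $\re(\lambda)<0$. In both cases the denominator has the form $|\lambda\pm\sgn(\lambda)|^2$ (with the sign determined by which half-plane $\lambda$ lies in), so taking the max of $|m(\lambda_i)|^{\pm 2}$ over $i$ gives the minimum admissible $\alpha^2$.

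The main conceptual content is the identity $m\circ g=-m^2$; everything else is bookkeeping. The only subtlety is handling the two half-planes in a unified way, so that the scalar estimate on $|g^{(N)}(\lambda)-\sgn(\lambda)|$ and the formula for the minimum $\alpha$ come out in the symmetric form advertised in the statement rather than in two separate cases.
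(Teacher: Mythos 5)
Your proposal follows essentially the same route as the paper: reduce to a scalar estimate on each eigenvalue at the cost of a factor $\kappa_V(A)$ (all iterates and $\sgn(A)$ share the eigenvectors of $A$), track the eigenvalues through the iteration via the M\"obius map $m$, and derive the formula for the minimal $\alpha$ by expanding $|m(z)|^{\pm 2}$. Your identity $m\circ g=-m^2$ is just the exact form of Observation \ref{obs:newtonmap}, and your one-piece bound $\|V(g^{(N)}(D)-\sgn(D))V^{-1}\|\le\kappa_V(A)\max_i|g^{(N)}(\lambda_i)-\sgn(\lambda_i)|$ is slightly cleaner than the paper's, which splits into the two half-planes and pays an extra factor of $2$ via a spectral-radius bound on each piece. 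Two points of friction with the statement as displayed, neither of which reflects an error in your method. First, your denominator bound $|1-m(\lambda)^{2^N}|\ge 1-\alpha^{2^N}$ yields the scalar estimate $2\alpha^{2^N}/(1-\alpha^{2^N})$, which is exactly $\sup\{|z-1|:z\in\cp{\alpha^{2^N}}\}$ (attained at the real endpoint $m(\alpha^{2^N})^{-1}$, since $1$ is a focus of the Apollonian disk, not its center); this cannot be improved to the advertised $4\alpha^{2^N}/(\alpha^{2^{N+1}}+1)$, which it exceeds whenever $\alpha^{2^N}>\sqrt{2}-1$, so you should state the bound your argument actually gives, namely $2\alpha^{2^N}\kappa_V(A)/(1-\alpha^{2^N})$, rather than the displayed constant. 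Second, your computation of the minimal $\alpha$ correctly produces the denominator $|\lambda_i+\sgn(\lambda_i)|^2$ in both half-planes---that plus sign is what unifies the two cases---so rather than hedging with ``$\pm$'' you should note that the minus sign in the displayed formula is a typo. Neither discrepancy matters downstream, since the quantitative work in the paper is carried by the pseudospectral analogue, Lemma \ref{lem:boundforsign}.
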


\begin{proof}
 Consider the spectral decomposition $A = \sum_{i=1}^n \lambda_i v_i w_i^*,$
and denote by $\lambda_i^{(N)}$ the eigenvalues of $A_N$. 

By Observation \ref{obs:newtonmap} we have that $\Lambda(A_N) \subset \cpm{\alpha^{2^N}}$ and $\sgn(\lambda_i) = \sgn(\lambda_i^{(N)})$. Moreover,   $A_N$ and $\sgn (A)$ have the same eigenvectors. Hence  
\begin{equation}
\label{eq:distancetosignexact}
    \|A_N - \sgn(A) \| \leq \left\|\sum_{\re(\lambda_i) > 0} (\lambda_i^{(N)}-1) v_i w_i^* \right\| + \left\|\sum_{\re(\lambda_i) < 0} (\lambda_i^{(N)}+1) v_i w_i^* \right\|.
\end{equation}

Now we will use that for any matrix $X$ we have that  $\| X \| \leq \kappa_V(X) \spr(X)$ where $\spr(X)$ denotes the spectral radius of $X$. Observe that the spectral radii of the two matrices appearing on the right hand side of (\ref{eq:distancetosignexact}) are bounded by $\max_{i} |\lambda_i- \sgn(\lambda_i)|$, which in turn is bounded by the radius of the circle $\cp{\alpha^{2^N}}$, namely $2\alpha^{2^N}/(\alpha^{2^{N+1}}+1)$. On the other hand, the eigenvector condition number of these matrices is bounded by $\kappa_V(A)$. This concludes the first part of the statement. 

In order to compute $\alpha$ note that if $z = x+ i y$ with $ x > 0$, then 
$$|m(z)|^2 = \frac{(1-x)^2+ y^2}{(1+x)^2+ y^2} = 1- \frac{4x}{(1+x)^2+y^2},$$
and analogously when $x < 0$ and we evaluate $|m(z)|^{-2}$. 
\end{proof}

The above analysis becomes useless when trying to prove the same statement in the framework of finite arithmetic. This is due to the fact that at each step of the iteration the roundoff error can make the eigenvector condition numbers of the $A_k$ grow. In fact, since $\kappa_V(A_k)$ is sensitive to infinitesimal perturbations whenever $A_k$ has a multiple eigenvalue, it seems difficult to control it against adversarial perturbations as the iteration converges to $\sgn(A_k)$ (which has very high multiplicity eigenvalues). A different approach, also due to \cite{roberts1980linear}, yields a proof of convergence in exact arithmetic even when $A$ is not diagonalizable. However, that proof relies heavily on the fact that $m(A_N)$ is an exact power of $m(A_0)$, or more precisely, it requires the sequence $A_k$ to have the same generalized eigenvectors, which is again not the case in the finite arithmetic setting. 

Therefore, a \emph{robust} version, tolerant to perturbations, of the above proof is needed. To this end, instead of simultaneously keeping track of the eigenvector condition number and the spectrum of the matrices $A_k$, we will just show that for certain $\eps_k > 0$, the $\epsilon_k-$pseudospectra of these matrices  are contained in a certain shrinking region dependent on $k$. This invariant is inherently robust to perturbations smaller than $\epsilon_k$, unaffected by clustering of eigenvalues due to convergence, and  allows us to bound the accuracy and other quantities of interest via the functional calculus.  For example, the following lemma shows how to obtain a bound on $\|A_N- \sgn(A)\|$ solely using information from the pseudospectrum of $A_N$. 

\begin{lemma}[Pseudospectral Error Bound] \label{lem:boundforsign}
Let $A$ be any $n \times n$ matrix and let $A_N$ be the $N$th iterate of the Newton iteration under exact arithmetic.  Assume that $\eps_N > 0$ and $\alpha_N \in (0, 1)$ satisfy  $\Lambda_{\epsilon_N}(A_N) \subset \cpm{\alpha_N}$. Then we have the guarantee 
\begin{equation}
\label{eq:boundforsign}
\|A_N - \sgn(A)\| \leq \frac{8  \alpha_N^2}{(1- \alpha_N)^2 (1+\alpha_N) \eps_N}.
\end{equation}
\end{lemma}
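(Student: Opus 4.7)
The plan is to express $A_N-\sgn(A_N)$ as a contour integral using the holomorphic functional calculus, and then bound this integral using the pseudospectral hypothesis together with the geometry of the Apollonian circles.

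First, I would note that in exact arithmetic $\sgn(A_N)=\sgn(A)$. Indeed, by Observation \ref{obs:newtonmap} the Newton map $\newton$ sends $\cp{\alpha}\to\cp{\alpha^2}$ and $\cm{\alpha}\to\cm{\alpha^2}$, so the sign of each eigenvalue (in the sense of which half-plane it lies in) is preserved at every step; since $\Lambda_{\eps_N}(A_N)\subset \cpm{\alpha_N}$ in particular contains no point of the imaginary axis, $\sgn(A_N)$ is well-defined and equals $\sgn(A)$. Hence it suffices to bound $\|A_N-\sgn(A_N)\|$.

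Next I would apply the holomorphic functional calculus with the function $f$ defined on a neighborhood of $\cpm{\alpha_N}$ by $f(z)=z-1$ on $\cp{\alpha_N}$ and $f(z)=z+1$ on $\cm{\alpha_N}$, and with contours $\Gamma^\pm:=\partial\cp{\alpha_N}$ and $\partial\cm{\alpha_N}$ (positively oriented). Because $\Lambda_{\eps_N}(A_N)\subset \cpm{\alpha_N}$ is open and these contours lie on its boundary, we have $\|(z-A_N)^{-1}\|\le 1/\eps_N$ for all $z$ on the contours. Passing norms inside the integral gives
\begin{equation*}
\|A_N-\sgn(A_N)\|\le \frac{1}{2\pi\eps_N}\left(\oint_{\partial\cp{\alpha_N}}|z-1|\,|\dee z|+\oint_{\partial\cm{\alpha_N}}|z+1|\,|\dee z|\right).
\end{equation*}

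Finally I would compute each of these two integrals explicitly using the Apollonian geometry recalled in Section \ref{sec:circlesappolonius}. The circle $\partial\cp{\alpha_N}$ has center $(1+\alpha_N^2)/(1-\alpha_N^2)$ and radius $2\alpha_N/(1-\alpha_N^2)$. The point $1$ lies inside this disk at distance $2\alpha_N^2/(1-\alpha_N^2)$ from the center, so $|z-1|$ attains its maximum $2\alpha_N/(1-\alpha_N)$ on the circle. Multiplying this maximum by the circumference $4\pi\alpha_N/(1-\alpha_N^2)$ yields
\begin{equation*}
\oint_{\partial\cp{\alpha_N}}|z-1|\,|\dee z|\le \frac{2\alpha_N}{1-\alpha_N}\cdot\frac{4\pi\alpha_N}{(1-\alpha_N)(1+\alpha_N)}=\frac{8\pi\alpha_N^2}{(1-\alpha_N)^2(1+\alpha_N)},
\end{equation*}
and an identical bound holds for the left-half-plane integral by symmetry (reflecting across the imaginary axis and replacing $z+1$). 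Summing the two contributions and dividing by $2\pi\eps_N$ gives the claimed bound.

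I do not anticipate a serious obstacle here: the only subtlety is making sure the integrand is analytic in a neighborhood of each contour (handled by the pseudospectral containment, which in particular keeps $0$ away from $A_N$ so $A_N$ stays invertible along the iteration), and that the two-branch definition of $f$ causes no issue since $\cp{\alpha_N}$ and $\cm{\alpha_N}$ are disjoint.
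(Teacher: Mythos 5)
Your proposal is correct and follows essentially the same route as the paper's proof: both express $A_N-\sgn(A_N)$ as contour integrals of $(z\mp 1)(z-A_N)^{-1}$ over $\partial\cp{\alpha_N}$ and $\partial\cm{\alpha_N}$, bound the resolvent by $1/\eps_N$ using the pseudospectral containment, and evaluate the same circumference $4\pi\alpha_N/(1-\alpha_N^2)$ and supremum $2\alpha_N/(1-\alpha_N)$ to arrive at the stated constant.
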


\begin{proof}
Note that $\sgn(A) = \sgn(A_N)$.  Using the functional calculus we get
\begin{align*}
    \Vert A_N - \sgn (A_N) \Vert &= \left\Vert \frac{1}{2\pi i} \oint_{\partial \cpm{\alpha_N}} z(z-A_N)^{-1}\,dz - \frac{1}{2\pi i}\left(  \oint_{\partial \cp{\alpha_N}} (z-A_N)^{-1}\,dz - \oint_{\partial \cm{\alpha_N}} (z-A_N)^{-1}\,dz\right) \right\Vert \\ &= \left\Vert \frac{1}{2\pi i}\oint_{\partial\cp{\alpha_N}} z (z-A_N)^{-1} - (z-A_N)^{-1}\,dz + \frac{1}{2\pi i} \oint_{\partial\cm{\alpha_N}} z (z-A_N)^{-1} + (z-A_N)^{-1}\,dz \right\Vert \\
    &\le  \frac{1}{2\pi } \left\Vert \oint_{\partial\cp{\alpha_N}} (z - 1)  (z-A_N)^{-1} \,dz \right\Vert + \frac{1}{2\pi} \left\Vert \oint_{\partial\cm{\alpha_N}} (z + 1)  (z-A_N)^{-1} \,dz\right\Vert 
    \\ &\le  2 \cdot \frac{1}{2\pi} \ell(\partial \cp{\alpha_{N}} ) \sup \{|z - 1| : z \in \cp{\alpha_N}\} \frac{1}{\eps_N} \\
    & =  \frac{ 4 \alpha_N}{1 - \alpha_N^2} \left( \frac{1+\alpha_N}{1-\alpha_N} - 1\right) \frac{1}{\eps_N} \\
    &= \frac{ 8 \alpha_N^2}{(1- \alpha_N)^2 (1+\alpha_N) \eps_N}.
\end{align*}    
\end{proof}
In view of Lemma \ref{lem:boundforsign}, we would now like to find sequences $\alpha_k$ and $\epsilon_k$ such that 
$$\Lambda_{\epsilon_k}(A_k)\subset \cpm{\alpha_k}$$ and $\alpha_k^2/\epsilon_k$ converges rapidly to zero.  The dependence of this quantity on the \emph{square} of $\alpha_k$ turns out to be crucial.  As we will see below, we can find such a sequence with $\epsilon_k$ shrinking roughly at the same rate as $\alpha_k$.  This yields quadratic convergence, which will be necessary for our bound on the required machine precision in the finite arithmetic analysis of Section \ref{sec:finitearithnewton}.

The lemma below is instrumental in determining the sequences $\alpha_k, \eps_k$.

\begin{lemma}[Key Lemma]  
\label{thm:iteratedpseudospec}
 If $\Lambda_\epsilon(A) \subset \cpm{\alpha}$, then for every $\alpha'>\alpha^2$,  we have $\Lambda_{\epsilon'}(\newton(A))\subset \cpm{\alpha'}$ where
    $$ \epsilon' := \epsilon \, \frac{(\alpha' - \alpha^2)(1-\alpha^2)}{8\alpha}.$$
\end{lemma}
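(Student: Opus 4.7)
The goal is to show $\|(z - g(A))^{-1}\| \le 1/\epsilon'$ for every $z \in \C \setminus \cpm{\alpha'}$, which by \eqref{eqn:pseudodef2} is equivalent to $z \notin \Lambda_{\epsilon'}(g(A))$. I plan to represent the resolvent via the holomorphic functional calculus on the two-component contour $\partial\cpm{\alpha}$,
\[
(z - g(A))^{-1} \;=\; \frac{1}{2\pi i}\oint_{\partial\cpm{\alpha}} \frac{(\zeta - A)^{-1}}{z - g(\zeta)}\,d\zeta,
\]
and then bound each factor of the integrand individually.

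Before invoking the calculus, two hypotheses must be checked. First, $\zeta \mapsto (z - g(\zeta))^{-1}$ must be holomorphic on a neighborhood of $\cpm{\alpha}$; its only possible singularities are at $\zeta = 0$ (excluded since $|m(0)| = 1 > \alpha$) and at the two roots $w_\pm = z \pm \sqrt{z^2 - 1}$ of $g(\zeta) = z$. By Observation~\ref{obs:newtonmap} applied with parameter $\sqrt{\alpha'}$, one has $g^{-1}(\cpm{\alpha'}) = \cpm{\sqrt{\alpha'}}$, and the hypothesis $\alpha^2 < \alpha'$ gives $\alpha < \sqrt{\alpha'}$ so that $\cpm{\alpha} \subset \cpm{\sqrt{\alpha'}}$; hence $z \notin \cpm{\alpha'}$ forces $w_\pm \notin \cpm{\alpha}$ as required. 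Second, the resolvent bound $\|(\zeta - A)^{-1}\| \le 1/\epsilon$ must hold on $\partial\cpm{\alpha}$; this is automatic because $\Lambda_\epsilon(A)$ is open (strict inequality in \eqref{eqn:pseudodef2}) and lies in the closed set $\cpm{\alpha}$, hence in its interior.

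The main geometric step is a lower bound on $|z - g(\zeta)|$ for $\zeta \in \partial\cpm{\alpha}$. By Observation~\ref{obs:newtonmap}, $g$ maps $\partial\cpm{\alpha}$ into $\partial\cpm{\alpha^2} \subset \cpm{\alpha^2} \subset \cpm{\alpha'}$. The coaxial Apollonian disks $\cp{\alpha^2}$ and $\cp{\alpha'}$ have centers on the real axis with the former strictly inside the latter, so the minimum distance between their boundaries is attained along the real axis at the leftmost pair of intersection points $m(\alpha^2) = (1-\alpha^2)/(1+\alpha^2)$ and $m(\alpha') = (1-\alpha')/(1+\alpha')$. A direct calculation will give
\[
\dist(\partial\cp{\alpha'},\, \cp{\alpha^2}) \;=\; m(\alpha^2) - m(\alpha') \;=\; \frac{2(\alpha' - \alpha^2)}{(1+\alpha^2)(1+\alpha')},
\]
and the same bound applies to $\cm$ by the $z \mapsto -z$ symmetry, so $|z - g(\zeta)| \ge 2(\alpha' - \alpha^2)/((1+\alpha^2)(1+\alpha'))$ whenever $z \notin \cpm{\alpha'}$ and $\zeta \in \partial\cpm{\alpha}$.

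Finally, using the length $\ell(\partial\cpm{\alpha}) = 8\pi\alpha/(1-\alpha^2)$ (two circles of radius $2\alpha/(1-\alpha^2)$) and the crude bound $(1+\alpha^2)(1+\alpha') \le 4$ (valid since $\alpha,\alpha' < 1$), I will assemble
\[
\|(z - g(A))^{-1}\| \;\le\; \frac{1}{2\pi}\cdot\frac{8\pi\alpha}{1-\alpha^2}\cdot\frac{(1+\alpha^2)(1+\alpha')}{2(\alpha' - \alpha^2)}\cdot\frac{1}{\epsilon} \;\le\; \frac{8\alpha}{\epsilon(1-\alpha^2)(\alpha' - \alpha^2)} \;=\; \frac{1}{\epsilon'},
\]
completing the proof. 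The most delicate piece is the planar geometry underlying the Apollonian distance computation; once that is in hand, everything else follows from the contour-integral machinery of Section~\ref{sec:holofuncalc}.
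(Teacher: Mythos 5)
Your proof is correct and follows the same core strategy as the paper's: represent $(z-g(A))^{-1}$ as a contour integral over $\partial\cpm{\alpha}$, bound $\|(\zeta-A)^{-1}\|\le 1/\epsilon$ on that contour, and bound $|z-g(\zeta)|^{-1}$ by the separation between the Apollonian regions $\cpm{\alpha^2}$ and $\cpm{\alpha'}$. The one structural difference is how points far outside $\cpm{\alpha}$ are handled: the paper establishes the resolvent bound only on the annulus $\apm{\alpha}{\alpha'}$ and then invokes Proposition \ref{thm:componentsofpseudoespec} (every bounded component of a pseudospectrum contains an eigenvalue) to exclude pseudospectrum elsewhere, whereas you verify the contour representation and the distance bound directly for every $z\notin\cpm{\alpha'}$, using $g^{-1}(\cpm{\alpha'})=\cpm{\sqrt{\alpha'}}\supset\cpm{\alpha}$. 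Your route is self-contained and avoids the topological step, but it obliges you to also handle the cross-half-plane configuration ($z$ at or near the imaginary axis with $g(\zeta)\in\cp{\alpha^2}$), which your appeal to the $z\mapsto -z$ symmetry does not literally cover; the needed inequality $\dist(i\R,\cp{\alpha^2})=\tfrac{1-\alpha^2}{1+\alpha^2}\ge \tfrac{2(\alpha'-\alpha^2)}{(1+\alpha^2)(1+\alpha')}$ reduces to $(1-\alpha')(1+\alpha^2)\ge 0$, so it holds, but you should state it. Finally, where you compute the exact gap between the nested circles by plane geometry, the paper gets the slightly weaker but sufficient bound $|x-y|\ge(\alpha'-\alpha^2)/2$ in one line from $\bigl||m(x)|-|m(y)|\bigr|\le 2|x-y|$ (Lemma \ref{lem:circledist}); after your estimate $(1+\alpha^2)(1+\alpha')\le 4$ the two give the same constant.
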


\begin{proof}
     From the definition of pseudospectrum, our hypothesis implies $\|(z - A)^{-1}\| < 1/\epsilon$ for every $z$ outside of $\cpm{\alpha}$. The proof will hinge on the observation that, for each $\alpha' \in (\alpha^2,\alpha)$, this resolvent bound allows us to bound the resolvent of $\newton(A)$ everywhere in the Appolonian annulus $\apm{\alpha}{\alpha'}$.
     
     Let $w \in \apm{\alpha}{\alpha'}$; see Figure \ref{fig:Calphaprime} for an illustration.  We must show that $w \not\in \Lambda_{\eps'}(g(A))$.  Since $w \not\in \cpm{\alpha^2}$, Observation \ref{obs:newtonmap} ensures no $z \in \cpm{\alpha}$ satisfies $g(z) = w$; in other words, the function $(w - \newton(z))^{-1}$ is holomorphic in $z$ on $\cpm{\alpha}$. As $\Lambda(A) \subset \Lambda_\epsilon(A) \subset \cpm{\alpha}$, Observation \ref{obs:newtonmap} also guarantees that $\Lambda(\newton(A)) \subset \cpm{\alpha^2}$. Thus for $w$ in the union of the two Appolonian annuli in question, we can calculate the resolvent of $\newton(A)$ at $w$ using the holomorphic functional calculus:
    $$
        (w - \newton(A))^{-1} =  \frac{1}{2\pi i}\oint_{\partial \cpm{\alpha}} (w - g(z))^{-1}(z - A)^{-1}\dee z,
    $$
    where by this we mean to sum the integrals over $\partial \cp{\alpha}$ and $\partial\cm{\alpha}$, both positively oriented. Taking norms, passing inside the integral, and applying Observation \ref{obs:newtonmap} one final time, we get:
    \begin{align*}
        \left\| (w - \newton(A))^{-1} \right\|
        &\le \frac{1}{2\pi}\oint_{\partial \cpm{\alpha}}|(w - g(z))^{-1}|\cdot \|(z - A)^{-1}\| \dee z\\
        &\le \frac{\ell\left(\partial\cp{\alpha}\right) \sup_{y \in \cp{\alpha^2}}|(w - y)^{-1}| + \ell\left(\partial \cm{\alpha}\right)\sup_{y \in \cm{\alpha^2}}|(w - y)^{-1}|}{2\pi \epsilon} \\
        &\le \frac{1}{\epsilon} \frac{8\alpha}{(\alpha' - \alpha^2)(1 - \alpha^2)}.
    \end{align*}
    In the last step we also use the forthcoming Lemma \ref{lem:circledist}.  Thus, with $\epsilon'$ defined as in the theorem statement, $\apm{\alpha}{\alpha'}$ contains none of the $\epsilon'$-pseudospectrum of $\newton(A)$. Since $\Lambda(\newton(A)) \subset \cpm{\alpha^2}$, Theorem \ref{thm:componentsofpseudoespec} tells us that there can be no $\epsilon'$-pseudospectrum in the remainder of $\C \setminus \cpm{\alpha'}$, as such a connected component would need to contain an eigenvalue of $\newton(A)$.
\end{proof}

\begin{figure}[H]
    \centering
    \begin{tikzpicture}[line cap=round,line join=round,>=triangle 45,x=3.1286877114840492cm,y=3.067139756504037cm]
\begin{axis}[
x=3.1286877114840492cm,y=3.067139756504037cm,
axis lines=middle,
xmin=0.368234986938463,
xmax=2.285971927565875,
ymin=-0.9932584740266744,
ymax=0.9629614821317215,
xticklabels={,,,1,,2},
]
\clip(0.368234986938463,-0.9932584740266744) rectangle (2.285971927565875,0.9629614821317215);
\draw [line width=1.pt] (1.3172285946008575,0.) ellipse (2.682457312592048cm and 2.6296876605410584cm);
\draw [line width=1.pt] (1.1066491112574062,0.) ellipse (1.4829856170299647cm and 1.453812129481835cm);
\draw [line width=1.pt] (1.0381991332578568,0.) ellipse (0.8729960468458597cm and 0.8558223541210236cm);
\begin{scriptsize}
\draw[color=black] (1.4802770271416008,0.731304896711305) node {$C^+_\alpha$};
\draw[color=black] (1.2706550315257612,0.34088857732328734) node {$C^{+}_{\alpha'}$};
\draw[color=black] (1.1612751645419606,0.1340596637465323) node {$ C^+_{\alpha^2} $};
\draw [fill=black] (1.7664512178975795,-0.3664747414014565) circle (2pt);
\draw[color=black] (1.8498310848813801,-0.40175083897152597) node {$w$};
\draw [fill=black] (0.6702450462687188,-0.5625864010280601) circle (2pt);
\draw[color=black] (0.6440299315771878,-0.6711319476884198) node {$z$};
\draw [fill=black] (0.7727724781569572,0.08605888135057987) circle (2pt);
\draw[color=black] (0.9298248762159127,0.1070113784622618) node {$g(z)$};
\end{scriptsize}
\end{axis}
\end{tikzpicture}
    \caption{  Illustration of the proof of Lemma \ref{thm:iteratedpseudospec}}
    \label{fig:Calphaprime}
\end{figure}
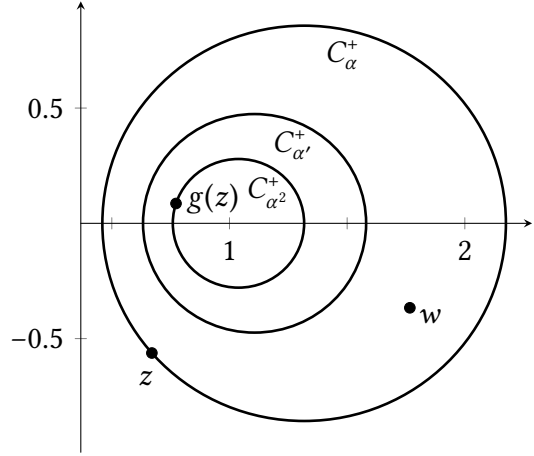

\begin{lemma} \label{lem:circledist}
 Let $1 > \alpha, \beta > 0$ be given.  Then for any $x \in \partial \cpm{\alpha}$ and $y \in \partial \cpm{\beta}$, we have $|x-y| \ge (\alpha-\beta)/2$.
\end{lemma}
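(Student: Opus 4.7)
The plan is to exploit the fact that $\partial\cp{\alpha}, \partial\cp{\beta}, \partial\cm{\alpha}, \partial\cm{\beta}$ are all level sets of the single Möbius transformation $m(z) = (1-z)/(1+z)$, so one algebraic identity will carry most of the work. Direct computation gives
\[
    m(x) - m(y) = \frac{2(y-x)}{(1+x)(1+y)},
\]
and hence $|x-y| = \tfrac{1}{2}|1+x|\,|1+y|\,|m(x)-m(y)|$. I would assume without loss of generality that $\alpha \geq \beta$, since otherwise $(\alpha-\beta)/2 \leq 0$ and the claim is vacuous, and then split into three subcases according to which of $\partial\cp{\cdot}$, $\partial\cm{\cdot}$ each of $x$ and $y$ lies on.

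In the main subcase $x \in \partial\cp{\alpha}$, $y \in \partial\cp{\beta}$, both points sit in the open right half-plane, so by definition $|m(x)| = \alpha$ and $|m(y)| = \beta$, and the reverse triangle inequality gives $|m(x)-m(y)| \geq \alpha - \beta$. Expanding $|1+z|^2 = 1 + 2\re(z) + |z|^2$ together with $\re(x),\re(y) > 0$ yields $|1+x|,|1+y| \geq 1$, and substituting into the displayed identity produces the desired $|x-y| \geq (\alpha-\beta)/2$. The subcase where both points lie on $\partial\cm{\cdot}$ follows from the reflection symmetry $\cm{\gamma} = -\cp{\gamma}$ (which is immediate from $m(-z) = 1/m(z)$): replacing $(x,y)$ by $(-x,-y)$ preserves $|x-y|$ and reduces to the previous subcase.

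The remaining subcase has $x$ and $y$ on opposite sides of the imaginary axis; there the Möbius identity is unnecessary because the real parts already supply a wide gap. Assuming $x \in \partial\cp{\alpha}$, $y \in \partial\cm{\beta}$ (the opposite assignment is covered by the same reflection as above), the defining circles give $\re(x) \geq (1-\alpha)/(1+\alpha)$ and $\re(y) \leq -(1-\beta)/(1+\beta)$, so
\[
    |x-y| \geq \re(x) - \re(y) \geq \tfrac{1-\alpha}{1+\alpha} + \tfrac{1-\beta}{1+\beta} \geq \tfrac{1-\alpha}{2} + \tfrac{1-\beta}{2} = 1 - \tfrac{\alpha+\beta}{2} \geq \tfrac{\alpha-\beta}{2},
\]
where the last step uses $\alpha < 1$. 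If there is any subtle point at all, it is this final rearrangement in the mixed case: one must resist the temptation to lower-bound the gap by just one of the two terms $(1-\alpha)/(1+\alpha)$, since for $\alpha$ close to $1$ that term alone can be smaller than $(\alpha-\beta)/2$; both contributions are needed.
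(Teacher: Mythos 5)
Your proof is correct and rests on the same central device as the paper's: the identity $m(x)-m(y) = \frac{2(y-x)}{(1+x)(1+y)}$ combined with the reverse triangle inequality $|m(x)-m(y)|\ge \bigl||m(x)|-|m(y)|\bigr|$ and the bound $|1+z|\ge 1$ valid on the right half-plane. The only difference is that the paper disposes of all configurations with a terse appeal to symmetry, whereas you correctly note that the mixed case (with $x$ and $y$ on opposite sides of the imaginary axis) is not literally reducible to the same-side case --- there $|1+y|$ can be small and $|m(y)|=1/\beta$ rather than $\beta$ --- and you supply a valid separate argument via the real-part separation $\re(x)-\re(y)\ge \frac{1-\alpha}{1+\alpha}+\frac{1-\beta}{1+\beta}\ge 1-\frac{\alpha+\beta}{2}\ge\frac{\alpha-\beta}{2}$.
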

\begin{proof}


Without loss of generality $x \in \partial\cp{\alpha}$ and $y \in \partial\cp{\beta}$.  Then we have 
$$|\alpha- \beta| = \left||m(x)|- |m(y)|\right| \le |m(x)- m(y)| = \frac{2|x-y|}{{|1+x||1+y|}} \leq 2|x-y|.$$
\end{proof}

Lemma \ref{thm:iteratedpseudospec} will also be useful in bounding the condition numbers of the $A_k$, which is necessary for the finite arithmetic analysis. 

\begin{corollary}[Condition Number Bound] \label{cor:condbounds}
    Using the notation of Lemma \ref{thm:iteratedpseudospec}, if $\Lambda_\epsilon(A) \subset \cpm{\alpha}$, then
        \begin{align*}
            \|A^{-1}\| &\le \frac{1}{\epsilon} \quad \text{and} \quad
            \|A\| \le \frac{4\alpha}{(1 - \alpha)^2 \eps}.
        \end{align*}
\end{corollary}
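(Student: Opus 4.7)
The plan is to obtain both bounds directly from the functional calculus, using the assumed pseudospectral containment to control the resolvent on $\partial \cpm{\alpha}$.

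For the first inequality, the key observation is that $0 \notin \cpm{\alpha}$: indeed $|m(0)| = |1/1| = 1 > \alpha$, so $0$ lies outside both Apollonian disks. Since $\Lambda_\epsilon(A) \subset \cpm{\alpha}$, the origin avoids the $\epsilon$-pseudospectrum, and the equivalent definition \eqref{eqn:pseudodef2} of the pseudospectrum immediately gives $\|(0 - A)^{-1}\| \le 1/\epsilon$, which is exactly $\|A^{-1}\| \le 1/\epsilon$.

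For the second inequality, I would write $A$ itself as a contour integral. Since $\Lambda(A)\subset \Lambda_\epsilon(A) \subset \cpm{\alpha}$, the holomorphic functional calculus applied to $f(z) = z$ on a neighborhood of $\cpm{\alpha}$ yields
\[
    A \;=\; \frac{1}{2\pi i}\oint_{\partial \cpm{\alpha}} z\,(z - A)^{-1}\,\dee z.
\]
Passing norms inside and estimating, the bound becomes
\[
    \|A\| \;\le\; \frac{1}{2\pi}\,\ell(\partial \cpm{\alpha})\,\Bigl(\sup_{z \in \partial \cpm{\alpha}}|z|\Bigr)\,\Bigl(\sup_{z \in \partial \cpm{\alpha}}\|(z - A)^{-1}\|\Bigr).
\]
The resolvent factor is at most $1/\epsilon$ by continuity, since $\Lambda_\epsilon(A)$ is the open set where $\|(z - A)^{-1}\| > 1/\epsilon$ and its complement contains $\partial\cpm{\alpha}$. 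The remaining step is a routine geometric computation on the two Apollonian circles: each has radius $2\alpha/(1-\alpha^2)$ and center on the real axis at $\pm (1+\alpha^2)/(1-\alpha^2)$, so the total contour length is $8\pi\alpha/(1-\alpha^2)$ and the maximum modulus on either circle is $(1+\alpha)/(1-\alpha)$. Multiplying these and simplifying gives exactly $4\alpha/((1-\alpha)^2\epsilon)$.

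There is no real obstacle here; the only subtlety is justifying the resolvent bound on the boundary itself (as opposed to strictly outside $\cpm{\alpha}$), which follows from the fact that $\|(z - A)^{-1}\|$ is continuous on $\C \setminus \Lambda(A)$ and the interior of the complement of $\cpm{\alpha}$ is dense in the complement. Both bounds are thus immediate consequences of combining pseudospectral containment with the Cauchy-type representation of $A$.
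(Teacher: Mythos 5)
Your proof is correct and follows essentially the same route as the paper: the first bound comes from $0 \notin \cpm{\alpha}$ together with the resolvent characterization of the pseudospectrum, and the second from the contour integral $A = \frac{1}{2\pi i}\oint_{\partial\cpm{\alpha}} z(z-A)^{-1}\,\dee z$ with the same length and sup-modulus estimates. Your extra care about extending the resolvent bound to the boundary by continuity is a fine (if slightly pedantic) addition that the paper leaves implicit.
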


\begin{proof}
    The bound $\| A^{-1} \| \le 1/\eps$ follows from the fact that $0 \notin \cpm{\alpha} \supset \Lambda_{\eps}(A).$ In order to bound $A$ we
    use the contour integral bound
    \begin{align*}
        \Vert A \Vert &= \left\Vert \frac{1}{2\pi i} \oint_{\partial\cpm{\alpha}} z (z - A)^{-1}\,dz \right\Vert \\
        &\le \frac{\ell(\partial \cpm{\alpha})}{2\pi} \left(\sup_{z \in \partial\cpm{\alpha}} |z| \right) \frac{1}{\eps} \\
        &= \frac{4 \alpha}{1 - \alpha^2} \frac{1+\alpha}{1-\alpha} \frac{1}{\eps}.
    \end{align*}
\end{proof}

Another direct application of Lemma \ref{thm:iteratedpseudospec} yields the following. 

\begin{lemma} 
\label{lem:speudoincircle}
Let $\epsilon > 0$. If $\Lambda_\epsilon(A) \subset \cpm{\alpha}$, and $ 1/\alpha > D > 1$ then for every $N$ we have the guarantee $$\Lambda_{\epsilon_N}(A_N) \subset \cpm{\alpha_N},$$ 
for $\alpha_N =(D\alpha)^{2^N}/D$ and $\epsilon_N = \frac{\alpha_N \epsilon}{\alpha} \left(\frac{(D-1)(1-\alpha^2)}{8D}\right)^N $. 
\end{lemma}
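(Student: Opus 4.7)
\medskip

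\noindent\textbf{Proof Plan.} The plan is to proceed by induction on $N$, using the Key Lemma (Lemma \ref{thm:iteratedpseudospec}) at each step and verifying that the closed-form expressions for $\alpha_N$ and $\epsilon_N$ are consistent with what the Key Lemma delivers. The base case $N=0$ is immediate, since $\alpha_0 = \alpha$ and $\epsilon_0 = \epsilon$ reduce to the hypothesis.

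For the inductive step, assume $\Lambda_{\epsilon_N}(A_N) \subset \cpm{\alpha_N}$ and apply Lemma \ref{thm:iteratedpseudospec} with $(\alpha,\alpha',\epsilon) \mapsto (\alpha_N,\alpha_{N+1},\epsilon_N)$. First I would verify the hypothesis $\alpha_{N+1} > \alpha_N^2$. A direct calculation from the closed form gives
\[
\alpha_N^2 \;=\; \frac{(D\alpha)^{2^{N+1}}}{D^2}, \qquad \alpha_{N+1} \;=\; \frac{(D\alpha)^{2^{N+1}}}{D},
\]
so $\alpha_{N+1} = D\,\alpha_N^2$, and since $D > 1$ this is strictly larger than $\alpha_N^2$. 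The Key Lemma then produces $\Lambda_{\epsilon^\ast_{N+1}}(A_{N+1}) \subset \cpm{\alpha_{N+1}}$ where
\[
\epsilon^\ast_{N+1} \;=\; \epsilon_N \cdot \frac{(\alpha_{N+1} - \alpha_N^2)(1-\alpha_N^2)}{8\alpha_N} \;=\; \epsilon_N \cdot \frac{\alpha_N(D-1)(1-\alpha_N^2)}{8},
\]
using $\alpha_{N+1} - \alpha_N^2 = (D-1)\alpha_N^2$.

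It remains to check that $\epsilon_{N+1} \le \epsilon^\ast_{N+1}$, since then the inclusion $\Lambda_{\epsilon_{N+1}}(A_{N+1}) \subset \Lambda_{\epsilon^\ast_{N+1}}(A_{N+1}) \subset \cpm{\alpha_{N+1}}$ follows from monotonicity of pseudospectra. Using the inductive formula $\epsilon_N = \tfrac{\alpha_N\epsilon}{\alpha}\bigl(\tfrac{(D-1)(1-\alpha^2)}{8D}\bigr)^N$ and the identity $\alpha_{N+1}/\alpha_N = D\alpha_N$, the ratio works out to
\[
\frac{\epsilon^\ast_{N+1}}{\epsilon_{N+1}} \;=\; \frac{D\alpha_N^2}{\alpha_{N+1}} \cdot \frac{1-\alpha_N^2}{1-\alpha^2} \;=\; \frac{1-\alpha_N^2}{1-\alpha^2},
\]
which is at least $1$ provided $\alpha_N \le \alpha$. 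This last fact follows from $D\alpha < 1$: for $N \ge 1$, $(D\alpha)^{2^N} \le (D\alpha)^2 \le D\alpha$, so $\alpha_N \le \alpha$, and for $N=0$ equality holds. This completes the induction.

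\medskip

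\noindent\textbf{Anticipated difficulty.} I do not expect a real obstacle; the content of the lemma is essentially a clean closed-form book-keeping of the Key Lemma. The only subtlety is the direction of the inequality comparing $\epsilon_{N+1}$ to $\epsilon^\ast_{N+1}$: the stated $\epsilon_{N+1}$ is deliberately slightly smaller than what one step of the Key Lemma provides, and the slack $(1-\alpha_N^2)/(1-\alpha^2) \ge 1$ is exactly what is absorbed by replacing the $N$-dependent factor $1-\alpha_N^2$ by its worst-case value $1-\alpha^2$, allowing the closed form to telescope cleanly.
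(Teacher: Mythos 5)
Your proof is correct and follows essentially the same route as the paper's: induction via the Key Lemma, with the observation that the closed-form $\epsilon_{N+1}$ is dominated by the one-step output $\epsilon^\ast_{N+1}$ because $1-\alpha_N^2 \ge 1-\alpha_0^2$ (equivalently $\alpha_N \le \alpha$), plus monotonicity of pseudospectra. The only cosmetic difference is that you manipulate the closed forms directly, whereas the paper first introduces the recursions $\alpha_{k+1}=D\alpha_k^2$, $\epsilon_{k+1}=\tfrac{1}{8}\epsilon_k\alpha_k(D-1)(1-\alpha_0^2)$ and checks they match the closed forms.
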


\begin{proof}
Define recursively $\alpha_0 = \alpha$, $\epsilon_0 = \epsilon$\marginpar{$\alpha_0,\epsilon_0$}, $\alpha_{k+1} = D \alpha_k^2$ and $\epsilon_{k+1}= \frac{1}{8} \epsilon_k \alpha_k (D-1)(1-\alpha_0^2).$ It is easy to see by induction that this definition is consistent with the definition of $\alpha_N$ and $\epsilon_N$ given in the statement. 

We will now show by induction that $\Lambda_{\epsilon_k}(A_k) \subset \cpm{\alpha_k}$. Assume the statement is true for $k$, so from Lemma \ref{thm:iteratedpseudospec} we have that the statement is also true for $A_{k+1}$ if we pick the pseudospectral parameter to be
$$\epsilon' = \epsilon_k \frac{(\alpha_{k+1}-\alpha_k^2)(1-\alpha_k^2)}{8\alpha_k} = \frac{1}{8} \epsilon_k \alpha_k (D-1)(1-\alpha_k^2).$$
On the other hand 
$$ \frac{1}{8} \epsilon_k \alpha_k (D-1)(1-\alpha_k^2) \geq \frac{1}{8} \epsilon_k \alpha_k (D-1) (1-\alpha_0^2) = \epsilon_{k+1},$$
which concludes the proof of the statement. 
\end{proof}

We are now ready to prove the main result of this section, a pseudospectral version of Proposition \ref{prop:newtonexactarithmetic}.  

\begin{proposition}
Let $A\in \mathbb{C}^{n\times n}$ be a diagonalizable matrix and assume that $\Lambda_\epsilon(A) \subset  \cpm{\alpha}$ for some $\alpha \in (0,1)$. Then, for any $1 < D < \frac{1}{\alpha}$ for every $N$ we have the guarantee
$$\|A_N - \sgn(A) \| \leq  (D\alpha)^{2^N}\cdot \frac{\pi \alpha (1-\alpha^2)^2}{8\epsilon}\cdot \left(\frac{8D}{(D-1)(1-\alpha^2)}\right)^{N+2}. $$
\end{proposition}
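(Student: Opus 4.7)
The plan is to simply chain together the two main lemmas we just established: Lemma \ref{lem:speudoincircle} to control the pseudospectrum of $A_N$, and Lemma \ref{lem:boundforsign} to convert that pseudospectral control into a bound on $\|A_N - \sgn(A)\|$. Almost all of the work has been done; what remains is a calculation.

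First I would invoke Lemma \ref{lem:speudoincircle} directly with the given $\alpha$, $\epsilon$, and $D$, yielding $\Lambda_{\epsilon_N}(A_N) \subset \cpm{\alpha_N}$ with
$$\alpha_N = \frac{(D\alpha)^{2^N}}{D}, \qquad \epsilon_N = \frac{\alpha_N\,\epsilon}{\alpha}\left(\frac{(D-1)(1-\alpha^2)}{8D}\right)^{N}.$$
Since $D\alpha<1$, note in particular that $\alpha_N < 1/D$, a bound I will use to control the denominator in the next step. Next I would apply Lemma \ref{lem:boundforsign} to this containment to obtain
$$\|A_N - \sgn(A)\| \leq \frac{8\alpha_N^2}{(1-\alpha_N)^2(1+\alpha_N)\epsilon_N} = \frac{8\alpha_N\alpha}{(1-\alpha_N)^2(1+\alpha_N)\,\epsilon}\cdot\left(\frac{8D}{(D-1)(1-\alpha^2)}\right)^N,$$
where the equality is obtained by substituting the value of $\epsilon_N$ and canceling one factor of $\alpha_N$.

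The last step is to replace $(1-\alpha_N)^2(1+\alpha_N)$ by something independent of $N$. Using $\alpha_N<1/D$, I would estimate $1-\alpha_N > (D-1)/D$ and $1-\alpha_N^2 > (D-1)(D+1)/D^2$, so that
$$(1-\alpha_N)^2(1+\alpha_N) = (1-\alpha_N)(1-\alpha_N^2) > \frac{(D-1)^2(D+1)}{D^3}.$$
Substituting $\alpha_N = (D\alpha)^{2^N}/D$ and using this lower bound yields
$$\|A_N - \sgn(A)\| \le \frac{8\alpha D^2 (D\alpha)^{2^N}}{(D-1)^2(D+1)\,\epsilon}\cdot\left(\frac{8D}{(D-1)(1-\alpha^2)}\right)^N.$$
Finally, I would compare this with the target bound by expanding the two factors of $\left(\frac{8D}{(D-1)(1-\alpha^2)}\right)$ absorbed into $\left(\frac{8D}{(D-1)(1-\alpha^2)}\right)^{N+2}$ in the statement; this produces $\frac{8\pi\alpha D^2(D\alpha)^{2^N}}{(D-1)^2\epsilon}\left(\frac{8D}{(D-1)(1-\alpha^2)}\right)^N$, which is larger than what we derived by a factor of $\pi(D+1) > 1$. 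So the proposition's bound holds with substantial slack.

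There is no real obstacle here: the bulk of the analytic work is packaged inside Lemma \ref{thm:iteratedpseudospec} and its corollaries. The only mild nuisance is bookkeeping the constants to arrive at the specific form $(D\alpha)^{2^N}\cdot \frac{\pi\alpha(1-\alpha^2)^2}{8\epsilon}\cdot\left(\frac{8D}{(D-1)(1-\alpha^2)}\right)^{N+2}$ displayed in the statement; the factor of $\pi$ and the absorption of $(1-\alpha^2)^2$ into the $(N+2)$-power are simply a convenient repackaging designed to match the form needed later when choosing $N$ to achieve a desired accuracy.
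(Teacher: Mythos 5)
Your proposal is correct and follows essentially the same route as the paper: invoke Lemma \ref{lem:speudoincircle} for the sequences $\alpha_N,\epsilon_N$, feed them into Lemma \ref{lem:boundforsign}, and then do the arithmetic, bounding $(1-\alpha_N)^2(1+\alpha_N)$ from below using $\alpha_N<1/D$. The only (harmless) divergence is that the paper's own computation inserts an extra factor of $\pi$ when applying Lemma \ref{lem:boundforsign} (which is where the $\pi$ in the stated bound originates), whereas you use the lemma as stated and correctly observe that your resulting bound is smaller than the claimed one by a factor of $\pi(D+1)>1$, so the proposition still follows.
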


\begin{proof}
Using the choice of $\alpha_k$ and $\epsilon_k$  given in the proof of Lemma \ref{lem:speudoincircle} and  the bound (\ref{eq:boundforsign}), 
we get that 
\begin{align*}
    \|A_N- \sgn(A)\| &\leq \frac{8 \pi \alpha_N^2}{(1- \alpha_N)^2 (1+\alpha_N) \eps_N} 
    \\ &= \frac{8 \pi \alpha_0 \alpha_N }{\epsilon_0 (1- \alpha_N)^2 (1+\alpha_N) } \left(\frac{8D}{(D-1)(1-\alpha_0^2)}\right)^N 
    \\ &= (D \alpha_0)^{2^N} \frac{8 D^3 \pi \alpha_0}{(D-(D\alpha_0)^{2^{N}})^2(D+(D\alpha_0)^{2^N}) \epsilon_0} \left(\frac{8D}{(D-1)(1-\alpha_0^2)}\right)^N 
    \\& \leq  (D \alpha_0)^{2^N} \frac{8D^2 \pi \alpha_0}{(D-1)^2 \epsilon_0} \left(\frac{8D}{(D-1)(1-\alpha_0^2)}\right)^N
    \\ & = (D\alpha_0)^{2^N}\, \frac{\pi \alpha_0 (1-\alpha_0^2)^2}{8\epsilon_0} \left(\frac{8D}{(D-1)(1-\alpha_0^2)}\right)^{N+2},
\end{align*}
  where the last inequality was taken solely to make the expression more intuitive, since not much is lost by doing so.
\end{proof}

\subsection{Finite Arithmetic}
\label{sec:finitearithnewton}




Finally, we turn to the analysis of $\SGN$ in finite arithmetic. By making the machine precision small enough, we can bound the effect of roundoff to ensure that the parameters $\alpha_k$, $\eps_k$ are not too far from what they would have been in the exact arithmetic analysis above.  We will stop the iteration before any of the quantities involved become prohibitively small, so we will only need $\polylog(1-\alpha_0, \eps_0, \beta)$ bits of precision, where $\beta$ is the accuracy parameter.

In exact arithmetic, recall that the Newton iteration is given by $A_{k+1} = g(A_{k}) = \frac{1}{2} (A_k + A_k^{-1}).$ Here we will consider the finite arithmetic version $\G$ of the Newton map $\newton$, defined as $\G(A) := \newton(A)+E_A$ \marginnote{$\G$} where $E_A$ is an adversarial perturbation coming from the round-off error. Hence,  the sequence of interest is given by $\widetilde{A}_0 := A$ and $\widetilde{A}_{k+1} := \G(\widetilde{A}_k)$ \marginnote{$\widetilde{A}_k$}.

In this subsection we will prove the following theorem concerning the runtime and precision of $\SGN$.  Our assumptions on the size of the parameters $\alpha_0, \beta, \pinv(n)$ and $\cinv$ are in place only to simplify the analysis of constants; these assumptions are not required for the execution of the algorithm.

\begin{theorem}[Main guarantees for $\SGN$] \label{prop:sgnerr}
Assume $\INV$ is a $(\pinv(n), \cinv)$-stable matrix inversion algorithm satisfying Definition \ref{def:inv}. Let $\eps_0\in (0,1), \sgnerr\in (0,1/12)$, assume $\pinv(n) \ge 1$ and $\cinv \log n \ge 1$, and assume $A = \widetilde{A}_0$ is a floating-point matrix with $\eps_0$-pseudospectrum contained in  $\cpm{\alpha_0}$ where $0 < 1 - \alpha_0 < 1/100$.   Run $\SGN$ with

\[ N = \lceil \lg(1/(1-
\alpha_0)) + 3 \lg \lg(1/(1-\alpha_0)) + \lg \lg (1/(\sgnerr \eps_0)) + 7.59 \rceil \]
iterations (as specified in the statement of the algorithm).   
 Then $\widetilde{A_N}=\SGN(A)$ satisfies the advertised accuracy guarantee
\[
    \Vert \widetilde{A_N} - \sgn(A) \Vert \le \sgnerr
\]
when run with machine precision satisfying
\[
    \mach \le  \mach_{\SGN} := \frac{  \alpha_0^{2^{N+1}(\cinv \log n + 3)}}{\pinv(n) \sqrt{n}  N},
\]
corresponding to at most
\newcommand{\sgnfinalprecision}{\lg(1/\mach_{\SGN})) = O(  \log n \log^3(1/(1-\alpha_0)) (\log(1/\sgnerr)+ \log(1/\eps_0)))}
\[
    \sgnfinalprecision
\]
required bits of precision. The number of arithmetic operations is at most
\newcommand{\sgnfinalbitops}{N(4  n^2 + T_\INV(n)) }
\[ 
    \sgnfinalbitops.
\]


\end{theorem}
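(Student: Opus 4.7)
The plan is to carry out a robust, finite-arithmetic variant of the pseudospectral induction from Section \ref{sec:exactarithnewton}. In exact arithmetic Lemma \ref{lem:speudoincircle} (instantiated with a well-chosen $D \in (1, 1/\alpha_0)$, e.g.\ $D = 1 + c(1-\alpha_0)$ for a small absolute constant $c$) gives shrinking parameters $\alpha_k = (D\alpha_0)^{2^k}/D$ and $\epsilon_k = \frac{\alpha_k \epsilon_0}{\alpha_0}\left(\frac{(D-1)(1-\alpha_0^2)}{8D}\right)^k$ with $\Lambda_{\epsilon_k}(A_k) \subset \cpm{\alpha_k}$, and via Lemma \ref{lem:boundforsign} the error $\|A_N-\sgn(A)\|$ becomes doubly-exponentially small in $N$. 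A brief calculation shows that the stated choice $N = \lceil \lg(1/(1-\alpha_0)) + 3\lg\lg(1/(1-\alpha_0)) + \lg\lg(1/(\sgnerr \epsilon_0)) + O(1)\rceil$ is exactly what is needed to drive the right-hand side of \eqref{eq:boundforsign} below $\sgnerr/2$ with these parameters.

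To absorb roundoff I would maintain the slightly weakened inductive invariant
\[
    \Lambda_{\epsilon_k/2}(\widetilde{A}_k) \subset \cpm{\alpha_k}, \qquad 0 \le k \le N.
\]
The inductive step has three parts. (i) By Lemma \ref{thm:iteratedpseudospec} applied to $\widetilde{A}_k$ (with the invariant's pseudospectral radius $\epsilon_k/2$ in place of $\epsilon$), the exact Newton image satisfies $\Lambda_{\epsilon_{k+1}}(\newton(\widetilde{A}_k)) \subset \cpm{\alpha_{k+1}}$, with the factor of $1/2$ absorbed into the recursion for $\epsilon_{k+1}$. (ii) Writing $\widetilde{A}_{k+1} = \newton(\widetilde{A}_k) + E_k$ with
\[
    E_k = \tfrac{1}{2}\bigl(\INV(\widetilde{A}_k) - \widetilde{A}_k^{-1}\bigr) + E_{\mathrm{round}},
\]
Proposition \ref{prop:decrementeps} propagates the containment to $\widetilde{A}_{k+1}$ at pseudospectral radius $\epsilon_{k+1}/2$ provided $\|E_k\| \le \epsilon_{k+1}/2$. (iii) This last inequality is where the precision enters. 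Corollary \ref{cor:condbounds}, applied to the invariant, yields
\[
    \|\widetilde{A}_k^{-1}\| \le 2/\epsilon_k, \qquad \|\widetilde{A}_k\| \le \frac{8\alpha_k}{(1-\alpha_k)^2 \epsilon_k}, \qquad \kappa(\widetilde{A}_k) \le \frac{16\alpha_k}{(1-\alpha_k)^2 \epsilon_k^2}.
\]
Plugging these into Definition \ref{def:inv} and \eqref{eqn:maxnorm} bounds
\[
    \|E_k\| \;\le\; \tfrac{1}{2}\pinv(n)\,\mach\,\kappa(\widetilde{A}_k)^{\cinv \log n}\|\widetilde{A}_k^{-1}\| \;+\; \sqrt{n}\,\mach\,\|\widetilde{A}_{k+1}\|,
\]
and requiring both terms to be at most $\epsilon_{k+1}/4$ for every $k \le N$ gives a precision constraint dominated by the worst (last) step. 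Substituting the closed forms for $\alpha_N, \epsilon_N$, the $\kappa^{\cinv \log n}$ factor contributes roughly $\alpha_0^{-2^{N+1}\cinv \log n}$, the $1/\epsilon_{k+1}$ factor contributes a further $\alpha_0^{-O(2^N)}$, and the ratio $(1-\alpha_0)^{O(N)}$ and the sample-count $N$ get absorbed into the $+3$ in the exponent, yielding the stated bound
\[
    \mach \le \frac{\alpha_0^{2^{N+1}(\cinv \log n + 3)}}{2\pinv(n)\sqrt{n}\,N},
\]
which translates to $\lg(1/\mach) = O(\log n \log^3(1/(1-\alpha_0))(\log(1/\sgnerr)+\log(1/\epsilon_0)))$ after substituting $N$.

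The main obstacle I foresee is bookkeeping in step (iii): since Lemma \ref{thm:iteratedpseudospec} strictly requires $\alpha_{k+1} > \alpha_k^2$ with positive slack, and since the roundoff $E_k$ erodes both the $\alpha$- and $\epsilon$-margins simultaneously, one has to verify that the geometric gap $(D-1)/D$ in the recursion is enough to swallow the perturbation at every step, not just asymptotically. Choosing $D$ as above makes this a one-line check, but the constants have to be reconciled with the $7.59$ appearing in the definition of $N$. Once the induction closes, the final accuracy bound $\|\widetilde{A}_N - \sgn(A)\| \le \sgnerr$ is obtained by applying Lemma \ref{lem:boundforsign} to $\widetilde{A}_N$ with $(\epsilon_N/2, \alpha_N)$ and checking that $N$ iterations suffice. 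The arithmetic-operations count $N(4n^2 + T_\INV(n))$ is immediate from the pseudocode: each iteration uses one call to $\INV$ and $O(n^2)$ additional operations for the scalar-multiplication and addition, and there are $N$ such iterations.
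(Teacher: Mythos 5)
Your inductive framework (maintain $\Lambda_{\eps_k}(\widetilde{A}_k)\subset\cpm{\alpha_k}$ with $D=1+\Theta(1-\alpha_0)$, control $\|E_k\|$ via Corollary \ref{cor:condbounds} and Definition \ref{def:inv}, absorb it with Proposition \ref{prop:decrementeps}) matches the paper's strategy, and the operation count is right. But there is a genuine gap in the final step: you claim that $\|\widetilde{A}_N-\sgn(A)\|\le\sgnerr$ follows by ``applying Lemma \ref{lem:boundforsign} to $\widetilde{A}_N$.'' That lemma only controls $\|\widetilde{A}_N-\sgn(\widetilde{A}_N)\|$; its proof uses the identity $\sgn(A_N)=\sgn(A)$, which holds in exact arithmetic because $\sgn\circ\newton=\sgn$, but fails in finite arithmetic. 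Each roundoff $E_k$ perturbs the invariant subspaces, so $\sgn(\widetilde{A}_N)\neq\sgn(A)$ in general, and the quantity you still owe is the accumulated drift $\|\sgn(\widetilde{A}_N)-\sgn(A)\|$. The paper handles this with a telescoping sum: since $\sgn(g(\widetilde{A}_k))=\sgn(\widetilde{A}_k)$, one writes $\|\sgn(\widetilde{A}_{k+1})-\sgn(\widetilde{A}_k)\|=\|\sgn(\widetilde{A}_{k+1})-\sgn(\widetilde{A}_{k+1}-E_k)\|$ and bounds each term by a contour integral over $\partial\cpm{\alpha_{k+1}}$ using the resolvent identity, giving roughly $\frac{8}{s}\frac{\|E_k\|}{\eps_{k+1}^2}$ per step (the first term of \eqref{eqn:projdiff}). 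Without this, the accuracy guarantee is not established. This is precisely the ``surprisingly difficult'' part of the rounding analysis the paper is addressing, not a formality.

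The omission also changes the precision requirement. Your condition $\|E_k\|\le\eps_{k+1}/4$ suffices to preserve the pseudospectral invariant, but the drift term forces the much stronger $\|E_k\|\lesssim\sgnerr\,\eps_{k+1}^2 s/N$ (note the \emph{square} of $\eps_{k+1}$ and the factors of $\sgnerr$ and $1/N$), which is what Lemma \ref{lem:Nbound2} actually imposes and what the stated $\mach$ bound is calibrated to. Separately, a smaller bookkeeping point you partly anticipated: the invariant $\Lambda_{\eps_k/2}(\widetilde{A}_k)\subset\cpm{\alpha_k}$ with $\eps_k$ the exact-arithmetic sequence does not close as stated, since Lemma \ref{thm:iteratedpseudospec} applied at radius $\eps_k/2$ yields exactly $\eps_{k+1}/2$ for the exact image, leaving zero slack for Proposition \ref{prop:decrementeps} to spend on $E_k$; one needs a genuinely decaying slack, e.g.\ the paper's recursion $\eps_{k+1}=\eps_k s^2\alpha_k/32-\|E_k\|$ together with the explicit lower bound $\eps_k\ge\eps_0(s^2/50)^k\alpha_k$.
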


Later on, we will need to call $\SGN$ on a matrix with shattered pseudospectrum; the lemma below calculates acceptable parameter settings for shattering so that the pseudospectrum is contained in the required pair of Appolonian circles, satisfying the hypothesis of Theorem \ref{prop:sgnerr}.

\begin{lemma} \label{lem:params-for-sgn}
    If $A$ has $\epsilon$-pseudospectrum shattered with respect to a grid $\g = \grid(z_0,\omega,s_1,s_2)$ that includes the imaginary axis as a grid line, then one has $\Lambda_{\eps_0}(A) \subseteq \cpm{\alpha_0}$ where $\epsilon_0 = \epsilon/2$ and
    $$
        \alpha_0 = 1 - \frac{\epsilon}{\diag(\g)^2}.
    $$
    In particular, if $\eps$ is at least $1/\poly(n)$ and $\omega s_1$ and $\omega s_2$ are at most $\poly(n$), then $\eps_0$ and $1-\alpha_0$ are also at least $1/\poly(n)$.
\end{lemma}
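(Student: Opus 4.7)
The plan is to verify, for any $z \in \Lambda_{\eps_0}(A) = \Lambda_{\eps/2}(A)$, that $z$ lies in $\cpm{\alpha_0}$. The argument splits cleanly into two stages: first show that such a $z$ has real part bounded away from zero by $\eps/2$, and then convert this geometric fact into a bound on $|m(z)|$ via the explicit identity
\[ |m(z)|^2 = \frac{|1-z|^2}{|1+z|^2} = 1 - \frac{4\Re(z)}{|1+z|^2} \quad \text{for } \Re(z) > 0, \]
with the mirror-image statement for the left half-plane.

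For the first stage, suppose for contradiction that $z = t + iy$ with $|t| < \eps/2$. By the perturbation definition (\ref{eqn:pseudodef1}), there exists $E$ with $\|E\| < \eps/2$ such that $z \in \Lambda(A + E)$. Shifting by $tI$, the pure-imaginary point $iy$ lies in $\Lambda(A + E - tI)$, and $\|E - tI\| \leq \|E\| + |t| < \eps$, so $iy \in \Lambda_{\eps}(A)$. Since the imaginary axis is a grid line, this contradicts the shattering hypothesis $\Lambda_{\eps}(A) \cap \g = \emptyset$. Hence every $z \in \Lambda_{\eps/2}(A)$ satisfies $|\Re(z)| \geq \eps/2$.

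For the second stage, observe that $\Lambda_{\eps/2}(A) \subseteq \Lambda_\eps(A)$ is contained in the union of grid squares: each connected component of $\Lambda_\eps(A)$ contains an eigenvalue by Proposition \ref{thm:componentsofpseudoespec}, and by shattering must lie entirely in the square of that eigenvalue. Hence $z$ lies inside the rectangular region covered by $\g$. Since this region contains the point $-1$ in the intended deployment of the lemma (the grid is large enough to enclose the unit ball in both applications to come), we have $|z+1| \leq \diag(\g)$. Combining with $\Re(z) \geq \eps/2$ from the first stage,
\[ |m(z)|^2 \leq 1 - \frac{2\eps}{\diag(\g)^2} \leq \left(1 - \frac{\eps}{\diag(\g)^2}\right)^2 = \alpha_0^2, \]
using the trivial expansion $(1-u)^2 = 1 - 2u + u^2 \geq 1 - 2u$. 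This places $z$ in $\cp{\alpha_0}$; the case $\Re(z) < 0$ is symmetric, using $|z - 1| \leq \diag(\g)$ to land in $\cm{\alpha_0}$. The ``in particular'' claim then follows immediately from $\eps_0 = \eps/2$ and $\diag(\g) \leq \sqrt{2}\max(\omega s_1, \omega s_2)$.

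The only delicate bookkeeping is stage two: keeping track of which focal point ($+1$ or $-1$) appears in the denominator and verifying that the relevant distance is uniformly bounded by $\diag(\g)$. Once we fix the convention that the grid is centered so that $\pm 1$ both lie inside its rectangular region, the two identities for $|m(z)|^2$ on the two half-planes each yield the desired inequality by the same computation.
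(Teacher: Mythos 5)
Your proof is correct and follows essentially the same route as the paper's: derive $|\Re z|\ge\eps/2$ on $\Lambda_{\eps/2}(A)$ from shattering together with the imaginary axis being a grid line, then apply the identity $|m(z)|^{\pm 2}=1-4|\Re z|/|z\pm 1|^2$ with the denominator bounded by $\diag(\g)^2$ and finish with $(1-u)^2\ge 1-2u$. Your explicit flagging of the implicit assumption that $\pm 1$ lies inside the grid region is, if anything, slightly more careful than the paper's "any two points inside the grid" phrasing.
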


\begin{proof}
    First, because it is shattered, the $\epsilon/2$-pseudospectrum of $A$ is at least distance $\epsilon/2$ from $\g$. Recycling the calculation from Proposition \ref{prop:newtonexactarithmetic}, it suffices to take
    $$
        \alpha_0^2 = \max_{z \in \Lambda_{\epsilon/2}(A)}\left(1 - \frac{4|\Re z|}{|z - \sgn(z)|^2}\right).
    $$
    From what we just observed about the pseudospectrum, we can take $|\Re z| \ge \epsilon/2$. To bound the denominator, we can use the crude bound that any two points inside the grid are at distance no more than $\diag(\g)$. Finally, we use $\sqrt{1 - x} \le 1 - x/2$ for any $x\in (0,1)$.
\end{proof}


The proof of Theorem \ref{prop:sgnerr} will proceed as in the exact arithmetic case, with the modification that $\eps_k$ must be decreased by an additional factor after each iteration to account for roundoff. At each step, we set the machine precision $\mach$ small enough so that the $\eps_k$ remain close to what they would be in exact arithmetic. For the analysis we will introduce an explicit auxiliary sequence $e_k$ that lower bounds the $\eps_k$, provided that $\mach$ is small enough.

\begin{lemma}[One-step additive error]
\label{lem:formalizenoise}
Assume the matrix inverse is computed by an algorithm $\INV$ satisfying the guarantee in Definition \ref{def:inv}.  Then $\G(A) = g(A) + E$ for some error matrix $E$ with norm \begin{equation}\label{eqn:Ebound}
        \Vert E \Vert \le \left(\Vert A \Vert + \Vert A^{-1} \Vert  + \pinv(n) \kappa(A)^{\cinv \log n}\|A^{-1}\| \right) 2 \sqrt{n} \mach.\end{equation}
\end{lemma}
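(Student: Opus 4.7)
The plan is to track the adversarial roundoff incurred by the three primitive steps that make up one application of $\G$ to $A$: (i) invoking $\INV(A)$ to approximate $A^{-1}$, (ii) summing $A$ with the computed inverse, and (iii) scaling the sum by $1/2$. Write $C := \INV(A)$, so that by Definition \ref{def:inv} we have the decomposition $C = A^{-1} + F$ with
\[ \|F\| \le \pinv(n)\,\mach\,\kappa(A)^{\cinv \log n}\|A^{-1}\|. \]
For the addition step, invoking the additive form of the floating point axiom derived in equation \eqref{eqn:maxnorm}, the output of $\fl(A+C)$ equals $A+C+\Delta_1$ for some $\Delta_1$ satisfying $\|\Delta_1\|\le \mach\sqrt{n}\,\|A+C\|$. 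Similarly, the scaling by $1/2$ contributes an additive error $\Delta_2$ with $\|\Delta_2\|\le \mach\sqrt{n}\cdot\tfrac12\|A+C+\Delta_1\|$.

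Unwinding the composition gives
\[
   \G(A) \;=\; \tfrac12\bigl(A+C+\Delta_1\bigr) + \Delta_2 \;=\; g(A) + E,
   \qquad E \;=\; \tfrac12 F + \tfrac12\Delta_1 + \Delta_2.
\]
The triangle inequality, together with $\|A+C\|\le \|A\|+\|A^{-1}\|+\|F\|$, then yields a bound of the form
\[
   \|E\| \;\le\; \tfrac12\|F\| \;+\; \mach\sqrt{n}\bigl(\|A\|+\|A^{-1}\|+\|F\|\bigr) \;+\; \tfrac12\mach\sqrt{n}\,\|\Delta_1\|.
\]
The last summand is of order $\mach^2$ and thus negligible; the remaining two are each bounded by $2\mach\sqrt{n}(\|A\|+\|A^{-1}\|+\|F\|)$, and plugging in the bound on $\|F\|$ from the $\INV$ guarantee gives the stated inequality, with the factor of $4$ absorbing the factor-of-two slack and the tiny $\mach^2$ term.

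The main thing to verify is simply that the constants actually work out to $4$ without any hidden assumption. The only qualitative step is that we use the additive form \eqref{eqn:maxnorm} of the floating point error, rather than the multiplicative form, so that the error for addition and scaling is controlled by $\mach\sqrt{n}$ times the norm of the \emph{operand}, without any dependence on ambient conditioning. The rest is bookkeeping; there is no obstacle beyond checking that the combination of the inversion error $F$, the addition error $\Delta_1$, and the scaling error $\Delta_2$ fit under a common $4\sqrt n \mach$ envelope.
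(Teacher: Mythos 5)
Your proposal is correct and follows essentially the same route as the paper's proof: decompose one Newton step into the inversion error from Definition \ref{def:inv}, plus the entrywise roundoff from the addition and the division by $2$, convert the latter to additive operator-norm errors via \eqref{eqn:maxnorm}, and absorb everything (including the higher-order $\mach^2$ terms, using $\mach<1$) into the factor $4\sqrt{n}\mach$. The only cosmetic difference is that the paper keeps the roundoff in Hadamard-product form $(J+E_{add})\circ(J+E_{div})$ and converts to operator norm once at the end, whereas you convert step by step.
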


The proof of this lemma is deferred to Appendix \ref{sec:deferredsign}. 

With the error bound for each step in hand, we now move to the analysis of the whole iteration.  It will be convenient to define $s := 1 - \alpha_0$, which should be thought of as a small parameter. \marginnote{$s$}  As in the exact arithmetic case, for $k \ge 1,$ we will recursively define decreasing sequences $\alpha_k$ and $\eps_k$ maintaining the property 
\begin{equation} \label{eqn:pscontained}
    \Lambda_{\eps_k}(\widetilde{A}_k) \subset 
    \cpm{\alpha_k} \qquad \text{for all $k \ge 0$}
\end{equation} by induction as follows:  
\begin{enumerate}
\item The base case $k=0$ holds because by assumption, $\Lambda_{\eps_0} \subset \cpm{\alpha_0}$.
    \item Here we recursively define $\alpha_{k+1}$. Set \marginnote{$\alpha_{k}$}
    \[\alpha_{k+1} := (1 + s/4) \alpha_k^2.\]   
    
    In the notation of Subsection \ref{sec:exactarithnewton}, this corresponds to setting $D = 1+s/4$.  This definition ensures that
    $\alpha_k^2 \le \alpha_{k+1} \le \alpha_k$ for all $k$, and also gives us the bound $(1+s/4)\alpha_0 \le 1-s/2$.  We also have the closed form
    \[ \alpha_k = (1+s/4)^{2^k - 1} \alpha_0^{2^k},\]
    which implies the useful bound 
    \begin{equation}
        \alpha_k \le (1-s/2)^{2^k}. \end{equation}
    
    \item Here we recursively define $\eps_{k+1}$. Combining Lemma \ref{thm:iteratedpseudospec}, the recursive definition of $\alpha_{k+1}$, and the fact that $1 - \alpha_k^2 \ge 1 - \alpha_0^2 \ge 1 - \alpha_0 = s$, we find that $\Lambda_{\epsilon'}\left(g(\widetilde{ A}_k)\right) \subset \cpm{\alpha_{k+1}}$, where 
    \[ 
        \eps'
        = \eps_k \frac{\left(\alpha_{k+1} - \alpha_k^2\right)(1-\alpha_k^2)}{8\alpha_k}
        = \eps_k \frac{s\alpha_k(1-\alpha_k^2)}{32} 
        \ge \eps_k\frac{ \alpha_k s^2}{32}.
    \]
    
    Thus in particular 
    \[
        \Lambda_{\eps_k\alpha_k s^2/32} \left(g(\widetilde{A}_k)\right) \subset \cpm{\alpha_{k+1}}.
    \]
    Since $\widetilde{A}_{k+1} = \G(\widetilde{A}_k) = g(\widetilde{A}_k) + E_k$, \marginnote{$E_k$} for some error matrix $E_k$ arising from roundoff, Proposition~\ref{prop:decrementeps} ensures that if we set 
    \begin{equation} \label{eqn:epsdecrease}
        \eps_{k+1} := \eps_k\frac{ s^2 \alpha_k }{32} - \Vert E_k \Vert
        \marginnote{$\eps_k$}
    \end{equation}\
    we will have $\Lambda_{\eps_{k+1}}(\widetilde{A}_{k+1}) \subset 
    \cpm{\alpha_{k+1}}, $ as desired.
\end{enumerate}

We now need to show that the $\eps_{k}$ do not decrease too fast as $k$ increases. In view of  (\ref{eqn:epsdecrease}), it will be helpful to set the machine precision small enough to guarantee that $\Vert E_k \Vert$ is a small fraction of $\eps_k\frac{ \alpha_k s^2}{32}$.

First, we need to control the quantities $\|\widetilde{A}_k\|$, $\|\widetilde{A}_k^{-1}\|$, and $\kappa(\widetilde{A}_k) =\|\widetilde{A}_k\|\|\widetilde{A}_k^{-1}\|$ appearing in our upper bound (\ref{eqn:Ebound}) on $\Vert E_k \Vert$ from Lemma \ref{lem:formalizenoise}, as functions of $\epsilon_k$. By Corollary \ref{cor:condbounds}, we have 
\begin{align*}
        \|\widetilde{A}_k^{-1}\| &\le \frac{1}{\epsilon_k} \quad \text{and} \quad
        \|\widetilde{A}_k\| \le 4\frac{\alpha_k}{(1 - \alpha_k)^2\eps_k} \le \frac{4}{s^2 \eps_k}.
\end{align*}
Thus, 
we may write the coefficient of $\mach$ in the bound (\ref{eqn:Ebound}) as 
\[
    K_{\eps_k} :=  \left[ \frac{4}{s^2\eps_k} + \frac{1}{\eps_k}  + \pinv(n) \left( \frac{4}{s^2\eps_k^2} \right)^{\cinv \log n}\frac{1}{\eps_k} \right] 2 \sqrt{n} 
    \marginnote{$K_{\eps_k}$}
\] 
so that Lemma \ref{lem:formalizenoise} reads
\begin{equation}
    \label{eqn:nkbound}
    \Vert{E_k}\Vert \le  K_{\eps_k} \mach.
\end{equation}
Plugging this into the definition (\ref{eqn:epsdecrease}) of $\eps_{k+1}$,
we have 
\begin{equation}
\label{eqn:epskbound}
\eps_{k+1} \ge \eps_k  \frac{s^2\alpha_k}{32} - K_{\eps_k} \mach.
\end{equation} 


  Now suppose we take $\mach$ small enough so that 
\begin{equation}
    \label{eqn:setmachk}
      K_{\eps_k} \mach 
     \le  \frac{1}{3} \eps_k \frac{s^2\alpha_k}{32}.
\end{equation}
  For such $\mach$, we then have
\begin{equation}\label{eqn:epsstep} \eps_{k+1} \ge  \frac{2}{3}\eps_k \frac{s^2\alpha_k}{32} = \frac{1}{48} \epsilon_k s^2 \alpha_k, 
\end{equation}
which implies
\begin{equation} \label{eqn:nkeps}
    \Vert E_k \Vert \le \frac{1}{2} \eps_{k+1};
\end{equation}  
this bound is loose but sufficient for our purposes. Inductively, we now have the following bound on $\eps_k$ in terms of $\alpha_k$:

\begin{lemma}[Preliminary lower bound on $\eps_k$] \label{lem:epskbound}
Let $k \ge 0$, and for all $0 \le i \le k-1$, assume $\mach$ satisfies the requirement (\ref{eqn:setmachk}):
\[K_{\eps_i}\mach \le \frac{1}{3} \eps_i \frac{s^2 \alpha_i}{32}.\]
Then we have 
\[ 
    \eps_k \ge e_k := \eps_0 \left(\frac{s^2}{50}\right)^k \alpha_k.
    \marginnote{$e_k$}
\]
In fact, it suffices to assume the hypothesis only for $i=k-1$.
\end{lemma}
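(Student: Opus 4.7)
The plan is to prove the bound $\eps_k \ge e_k$ by straightforward induction on $k$, using the one-step recursion \eqref{eqn:epsstep}, which is a consequence of the precision condition. The base case $k=0$ is immediate since $e_0 = \eps_0 \alpha_0 \le \eps_0$ (using $\alpha_0 \in (0,1)$).

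For the inductive step, assume $\eps_k \ge e_k$. The precision assumption at step $k-1$ (which we will handle separately below) allows us to invoke \eqref{eqn:epsstep} one further time to deduce
$$
    \eps_{k+1} \ge \tfrac{2}{3} \eps_k \cdot \frac{s^2 \alpha_k}{32} = \eps_k \cdot \frac{s^2 \alpha_k}{48}.
$$
Applying the inductive hypothesis and the recursive definition $\alpha_{k+1} = (1 + s/4)\alpha_k^2$ yields
$$
    \eps_{k+1} \ge e_k \cdot \frac{s^2 \alpha_k}{48} = \eps_0 \left(\frac{s^2}{50}\right)^{k+1} \cdot \frac{50}{48}\, \alpha_k^2.
$$
Comparing with $e_{k+1} = \eps_0 (s^2/50)^{k+1}(1+s/4)\alpha_k^2$, the induction closes as long as $\tfrac{50}{48} \ge 1 + s/4$, i.e.\ $s \le 1/6$. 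This is amply guaranteed by the standing assumption $s = 1-\alpha_0 < 1/100$ from Theorem \ref{prop:sgnerr}. The constant $50$ in the definition of $e_k$ was precisely engineered to leave this slack.

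For the final remark that it suffices to assume the precision hypothesis only for $i = k-1$: the key observation is that the sequence $\eps_i \frac{s^2 \alpha_i}{32}$ is decreasing in $i$ (because both $\eps_i$ and $\alpha_i$ are decreasing), while $K_{\eps_i}$ is nondecreasing in $i$ (since $\eps_i$ only shrinks and $K_{\eps}$ is a decreasing function of $\eps$). Hence the condition $K_{\eps_i}\mach \le \tfrac{1}{3}\eps_i \tfrac{s^2 \alpha_i}{32}$ is strictly harder to satisfy as $i$ grows, so verifying it at $i=k-1$ automatically implies it at all earlier indices, and the inductive argument goes through unchanged.

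I do not expect any genuine obstacle here: the work is really in having set up $\alpha_k$, $\eps_k$, $e_k$, and the precision condition \eqref{eqn:setmachk} with the right constants upstream. The only mildly delicate point is verifying the numerical inequality $50 \ge 48(1 + s/4)$ under the hypothesis on $s$, which is the reason the constant $50$ (rather than $48$) appears in the definition of $e_k$.
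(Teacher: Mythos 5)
Your proof is correct and takes essentially the same approach as the paper: the paper telescopes the one-step bound $\eps_{k+1}\ge \eps_k\, s^2\alpha_k/48$ into a product $\prod_{i<k}\alpha_i$ evaluated via the closed form for $\alpha_i$, while you absorb the same $(1+s/4)$ factor per step inside an induction --- the identical computation in different packaging, both resting on the slack $50\ge 48(1+s/4)$ for small $s$, and both justifying the ``only $i=k-1$'' remark by the same monotonicity of $\eps_i$ and $K_{\eps_i}$. One cosmetic indexing slip: deducing $\eps_{k+1}\ge \tfrac{2}{3}\eps_k\, s^2\alpha_k/32$ uses the precision hypothesis at index $i=k$, not $i=k-1$, which is still consistent with the lemma's hypothesis when it is invoked for $k+1$.
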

\begin{proof}
The last statement follows from the fact that $\eps_i$ is decreasing in $i$ and $K_{\eps_i}$ is increasing in $i$.

Since (\ref{eqn:setmachk}) implies (\ref{eqn:epsstep}), we may apply (\ref{eqn:epsstep}) repeatedly to obtain
\begin{align*}
    \eps_k &\ge \eps_0 (s^2/48)^k \prod_{i=0}^{k-1} \alpha_i & \\
         &= \eps_0 (s^2/48)^k  (1+s/4)^{2^k - 1 -  k}\alpha_0^{2^k-1}  &{\text{by the definition of $\alpha_i$}}\\
        &= \eps_0 \left(\frac{s^2}{48(1+s/4)}\right)^k \frac{\alpha_k}{\alpha_0} &\\
        &\ge \eps_0 \left(\frac{s^2}{50}\right)^k \alpha_k. &\alpha_0 \le 1, s < 1/8
    \end{align*} 
\end{proof}
We now show that the conclusion of Lemma \ref{lem:epskbound} still holds if we replace $\eps_i$ everywhere in the hypothesis by $e_i$, which is an explicit function of $\eps_0$ and $\alpha_0$ defined in Lemma \ref{lem:epskbound}.  Note that we do not know $\eps_i \ge e_i$ a priori, so to avoid circularity we must use a short inductive argument.
\begin{corollary}[Lower bound on $\eps_k$ with explicit hypothesis] \label{cor:bootstrapping}
Let $k \ge 0$, and for all $0 \le i \le k-1$, assume $\mach$ satisfies 
\begin{equation} 
    K_{e_i} \mach \le \frac{1}{3} e_i \frac{s^2 \alpha_i}{32}
\end{equation}
where $e_i$ is defined in Lemma \ref{lem:epskbound}.  Then we have 
\[ \eps_k \ge e_k.\]
In fact, it suffices to assume the hypothesis only for $i=k-1$.
\end{corollary}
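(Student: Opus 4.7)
The plan is to bootstrap: we cannot directly substitute $e_i$ for $\eps_i$ in the hypothesis of Lemma \ref{lem:epskbound}, because \emph{a priori} we have no lower bound on $\eps_i$, so we instead prove $\eps_k \ge e_k$ by induction on $k$, using the inductive conclusion at step $k-1$ to recover the hypothesis of Lemma \ref{lem:epskbound} needed at step $k$.

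First I would record the monotonicity fact that drives the argument: the map $\eps \mapsto K_\eps$ defined by
\[
    K_\eps = \left[ \frac{4}{s^2 \eps} + \frac{1}{\eps} + \pinv(n)\left(\frac{4}{s^2 \eps^2}\right)^{\cinv \log n}\frac{1}{\eps}\right] 4\sqrt n
\]
is strictly decreasing in $\eps > 0$, since each term is a negative power of $\eps$. Consequently, whenever $\eps \ge e$, we have $K_\eps \le K_e$.

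The induction itself is then straightforward. For the base case $k=0$, the claim reads $\eps_0 \ge e_0 = \eps_0 \alpha_0$, which is immediate from $\alpha_0 \le 1$. For the inductive step, assume $\eps_{k-1} \ge e_{k-1}$. By the monotonicity just noted,
\[
    K_{\eps_{k-1}}\mach \;\le\; K_{e_{k-1}}\mach \;\le\; \tfrac{1}{3}\, e_{k-1}\tfrac{s^2 \alpha_{k-1}}{32} \;\le\; \tfrac{1}{3}\, \eps_{k-1}\tfrac{s^2 \alpha_{k-1}}{32},
\]
where the middle inequality is the standing hypothesis of the corollary at $i = k-1$, and the last uses the inductive hypothesis. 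This is exactly the condition \eqref{eqn:setmachk} of Lemma \ref{lem:epskbound} at index $i = k-1$; invoking the last sentence of that lemma (which says only the $i = k-1$ hypothesis is needed), we conclude $\eps_k \ge e_k$, closing the induction.

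I do not expect any real obstacle here: the only nontrivial step is noticing that $K_\eps$ is monotone in $\eps$, after which the bootstrap is purely formal. The final sentence of the corollary (that assuming the bound for $i = k-1$ alone suffices) is automatic from the same argument, since the induction used the precision hypothesis only at the single index $k-1$.
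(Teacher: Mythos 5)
Your proof of the main claim is correct and is essentially identical to the paper's: induction on $k$, base case $\eps_0 \ge e_0 = \eps_0\alpha_0$, and in the inductive step the chain $K_{\eps_{k-1}}\mach \le K_{e_{k-1}}\mach \le \tfrac13 e_{k-1}\tfrac{s^2\alpha_{k-1}}{32} \le \tfrac13\eps_{k-1}\tfrac{s^2\alpha_{k-1}}{32}$, which recovers the hypothesis of Lemma \ref{lem:epskbound}.

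One small correction on your last paragraph: your justification for why the hypothesis at $i=k-1$ alone suffices is not right. The induction, once unrolled, uses the precision hypothesis at \emph{every} index $0 \le i \le k-1$ — establishing $\eps_{k-1} \ge e_{k-1}$ required the hypothesis at $i=k-2$, and so on down to $i=0$. The correct reason (and the one the paper gives) is monotonicity of the hypothesis itself: since $e_i$ is decreasing in $i$ and $K_{e_i}$ is therefore increasing in $i$ (and $\alpha_i$ is decreasing), the inequality $K_{e_i}\mach \le \tfrac13 e_i \tfrac{s^2\alpha_i}{32}$ at $i = k-1$ is the most demanding instance and implies all the instances at smaller $i$. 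With that fix the argument is complete.
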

\begin{proof}
The last statement follows from the fact that $e_i$ is decreasing in $i$ and $K_{e_i}$ is increasing in $i$.

Assuming the full hypothesis of this lemma, we prove $\eps_i \ge e_i$ for $0 \le i \le k$ by induction on $i$.  For the base case, we have $\eps_0 \ge e_0 = \eps_0 \alpha_0$.

For the inductive step, assume $\eps_i \ge e_i$. Then as long as $i \le k-1$, the hypothesis of this lemma implies
\[K_{\eps_i} \mach \le \frac{1}{3} \eps_i \frac{s^2 \alpha_i}{32}, \ \] so we may apply Lemma \ref{lem:epskbound} to obtain $\eps_{i+1} \ge e_{i+1}$, as desired.
\end{proof}

\begin{lemma}[Main accuracy bound]
\label{lem:projdiff}
Suppose $\mach$ satisfies the requirement (\ref{eqn:setmachk}) for all $0 \le k \le N$.  
Then
\begin{equation}
    \label{eqn:projdiff} 
    \Vert \widetilde{A}_N - \sgn(A) \Vert \le \frac{8}{s} \sum_{k=0}^{N-1} \frac{\Vert E_k \Vert}{\eps_{k+1}^2} + \frac{8 \cdot 50^N }{s^{2N+2}\eps_0}(1 - s/2)^{2^N}.
\end{equation}
\end{lemma}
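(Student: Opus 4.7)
The plan is to split the total error $\widetilde{A}_N - \sgn(A)$ into an exact-arithmetic convergence piece and a piece capturing the accumulated roundoff. Setting $Y_k := g^k(A)$ to be the iterate produced in exact arithmetic starting from $A$, I would decompose
\[
\widetilde{A}_N - \sgn(A) = (Y_N - \sgn(A)) + (\widetilde{A}_N - Y_N).
\]
A preliminary observation, used throughout, is that $\sgn(\widetilde{A}_k) = \sgn(Y_k) = \sgn(A)$ for all $k \le N$. This follows because $g$ preserves each open half-plane (Observation \ref{obs:newtonmap}), while the containment $\Lambda_{\eps_{k+1}+\|E_k\|}(g(\widetilde{A}_k))\subset \cpm{\alpha_{k+1}}$ (from Lemma \ref{thm:iteratedpseudospec}) combined with $\|E_k\|\le \eps_{k+1}/2$ of \eqref{eqn:nkeps} lets one run a continuity/winding-number argument along $t \mapsto g(\widetilde{A}_k) + tE_k$, which forbids any eigenvalue from crossing the imaginary axis.

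For the first summand, I would apply Lemma \ref{lem:boundforsign} directly to $Y_N$ with its exact-arithmetic pseudospectral parameter $\eps_N^{\mathrm{exact}} \ge e_N = \eps_0(s^2/50)^N\alpha_N$, obtained by iterating Lemma \ref{thm:iteratedpseudospec} without any $\|E_k\|$ subtractions. Together with $(1-\alpha_N) \ge s$ and the closed-form bound $\alpha_N \le (1-s/2)^{2^N}$ from step 2 of the induction preceding this lemma, this yields
\[
\|Y_N - \sgn(A)\| \le \frac{8\alpha_N^2}{s^2 e_N} \le \frac{8\cdot 50^N (1-s/2)^{2^N}}{s^{2N+2}\eps_0},
\]
which is precisely the second term of the claimed bound.

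For the second summand I would telescope through the auxiliary sequence $Z_k := g^{N-k}(\widetilde{A}_k)$ (so $Z_N = \widetilde{A}_N$ and $Z_0 = Y_N$), giving $\widetilde{A}_N - Y_N = \sum_{k=0}^{N-1}(Z_{k+1}-Z_k)$ with $Z_{k+1}-Z_k = g^{N-k-1}(\widetilde{A}_{k+1}) - g^{N-k-1}(g(\widetilde{A}_k))$. The holomorphic functional calculus together with the resolvent identity on $\Gamma_k := \partial\cpm{\alpha_{k+1}}$ expresses each summand as
\[
\frac{1}{2\pi i}\oint_{\Gamma_k}[g^{N-k-1}(z) - \sgn(z)](z-\widetilde{A}_{k+1})^{-1} E_k (z - g(\widetilde{A}_k))^{-1}\, dz,
\]
where the crucial move is subtracting the piecewise-constant $\sgn(z)$ from the weight: the subtracted piece contributes nothing, because the functional calculus sends it to $\sgn(\widetilde{A}_{k+1}) - \sgn(g(\widetilde{A}_k)) = 0$ by the preliminary observation. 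The integrand is then bounded by $\ell(\Gamma_k) \le 8\pi/s$ (from $1-\alpha_{k+1}\ge s$), $|g^{N-k-1}(z)-\sgn(z)|\le 2\beta_k/(1-\beta_k)\le 2$ for $\beta_k := \alpha_{k+1}^{2^{N-k-1}}\le \alpha_N\le 1/2$ (Observation \ref{obs:newtonmap}), and both resolvents $\le 1/\eps_{k+1}$ (the latter because Lemma \ref{thm:iteratedpseudospec} and the defining identity $\eps_{k+1}=\eps_ks^2\alpha_k/32-\|E_k\|$ make $\eps_{k+1}+\|E_k\|$ a valid pseudospectral parameter for $g(\widetilde{A}_k)$). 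This yields $\|Z_{k+1}-Z_k\|\le 8\|E_k\|/(s\eps_{k+1}^2)$, and summation recovers the first term. The main delicate point will be justifying the $\sgn(z)$-subtraction in the contour integral via the preserved sign function; every other estimate is a routine application of the pseudospectral tools from Sections \ref{sec:circlesappolonius}--\ref{sec:exactarithnewton}.
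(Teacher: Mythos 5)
Your proof has a genuine gap, and it sits at exactly the point this lemma is designed to handle. The ``preliminary observation'' that $\sgn(\widetilde{A}_{k+1}) = \sgn(g(\widetilde{A}_k))$ is false. Your continuity/winding-number argument shows only that no eigenvalue of $g(\widetilde{A}_k) + tE_k$ crosses the imaginary axis, i.e.\ that the eigenvalue counts in each half-plane are preserved. But $\sgn(M) = P_+ - P_-$ depends on the spectral projectors, not merely on which half-plane each eigenvalue occupies, and the perturbation $E_k$ moves the invariant subspaces. (Already for $M = \mathrm{diag}(1,-1)$ and $E$ a small strictly upper-triangular matrix one has $\sgn(M+E) = M+E \neq M = \sgn(M)$.) Consequently the $\sgn(z)$ piece you subtract from the contour integral does not ``contribute nothing'': by the functional calculus it contributes exactly $\sgn(\widetilde{A}_{k+1}) - \sgn(g(\widetilde{A}_k))$, a matrix whose norm is itself of order $\Vert E_k\Vert/\eps_{k+1}^2$ --- the same order as the term you are trying to bound. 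This difference of sign functions is precisely the quantity the paper's proof estimates (via the resolvent identity on $\partial\cpm{\alpha_{k+1}}$, yielding $\frac{4\alpha_{k+1}}{1-\alpha_{k+1}^2}\Vert E_k\Vert\frac{1}{\eps_{k+1}-\Vert E_k\Vert}\frac{1}{\eps_{k+1}} \le \frac{8}{s}\frac{\Vert E_k\Vert}{\eps_{k+1}^2}$, using \eqref{eqn:nkeps}); the paper's decomposition $\widetilde{A}_N - \sgn(A) = (\widetilde{A}_N - \sgn(\widetilde{A}_N)) + \sum_k\bigl(\sgn(\widetilde{A}_{k+1}) - \sgn(\widetilde{A}_k)\bigr)$ is built entirely around it, whereas your argument assumes the sum telescopes to zero.

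There is also a secondary quantitative problem: even after restoring the discarded term, the remaining contour term with weight $g^{N-k-1}(z)-\sgn(z)$ does not fit the budget of \eqref{eqn:projdiff}. For $k=N-1$ that weight is $|z-\sgn(z)|$ on $\partial\cpm{\alpha_N}$, which can be as large as $2\alpha_N/(1-\alpha_N)$; your bound $\beta_k \le \alpha_N \le 1/2$ is unjustified since the lemma imposes no lower bound on $N$ and $\alpha_N$ can be as close to $1-s$ as $\alpha_0$ is. In the worst case this extra term is of order $\frac{1}{s^2}\frac{\Vert E_{N-1}\Vert}{\eps_N^2}$, exceeding the $\frac{8}{s}\frac{\Vert E_{N-1}\Vert}{\eps_N^2}$ allowance. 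The exact-arithmetic half of your argument (bounding $\Vert Y_N - \sgn(A)\Vert$ via Lemma \ref{lem:boundforsign}, which matches the second term of \eqref{eqn:projdiff}) is sound, but the roundoff half must be replaced by a direct estimate of $\sgn(\widetilde{A}_{k+1}) - \sgn(\widetilde{A}_k)$ rather than an assertion that it vanishes.
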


\begin{proof}
Since $\sgn = \sgn\circ \newton$, for every $k$ we have
\begin{align*}
    \Vert \sgn(\widetilde{A_{k+1}}) - \sgn(\widetilde{A_k})\Vert &= \Vert \sgn(\widetilde{A_{k+1}}) - \sgn(g(\widetilde{A_k})) \Vert 
    = \Vert \sgn(\widetilde{A_{k+1}}) - \sgn(\widetilde{A_{k+1}} - E_k) \Vert . 
\end{align*}
From the holomorphic functional calculus we can rewrite $\Vert \sgn(\widetilde{A_{k+1}}) - \sgn(\widetilde{A_{k+1}} - E_k) \Vert$ as the norm of a certain contour integral, which in turn can be bounded as follows:
\begin{align*}
   & \frac{1}{2\pi}\left\Vert \oint_{\partial\cp{\alpha_{k+1}}} [(z-\widetilde{A_{k+1}})^{-1} - (z - (\widetilde{A_{k+1}} - E_k))^{-1} ]\, dz -\oint_{\partial\cm{\alpha_{k+1}}} [(z-\widetilde{A_{k+1}})^{-1} - (z - (\widetilde{A_{k+1}} - E_k))^{-1} ]\, dz  \right\Vert\\
    = & \frac{1}{2\pi}\left\Vert \oint_{\partial\cp{\alpha_{k+1}}} [(z - (\widetilde{A_{k+1}} - E_k))^{-1}E_k(z-\widetilde{A_{k+1}})^{-1}  ]\, dz - \oint_{\partial\cm{\alpha_{k+1}}} [(z - (\widetilde{A_{k+1}} - E_k))^{-1}E_k(z-\widetilde{A_{k+1}})^{-1}  ]\, dz  \right\Vert\\
    \le & \frac{1}{\pi}  \oint_{\partial\cp{\alpha_{k+1}}} \Vert (z - (\widetilde{A_{k+1}} - E_k))^{-1}\Vert \Vert E_k \Vert \Vert (z - \widetilde{A_{k+1}})^{-1} \Vert\,dz\\
    \le & \frac{1}{\pi}\ell(\partial \cp{\alpha_{k+1}}) \Vert E_k \Vert \frac{1}{\eps_{k+1} - \Vert E_k \Vert}\frac{1}{\eps_{k+1}} \\
    = & \frac{4\alpha_{k+1}}{1 - \alpha_{k+1}^2} \Vert E_k \Vert \frac{1}{\eps_{k+1} - \Vert E_k \Vert}\frac{1}{\eps_{k+1}},
\end{align*}
where we use the definition (\ref{eqn:pseudodef2}) of pseudospectrum and Proposition \ref{prop:decrementeps}, together with the property (\ref{eqn:pscontained}). Ultimately, this chain of inequalities implies
$$\Vert \sgn(\widetilde{A_{k+1}}) - \sgn(\widetilde{A_{k}}) \Vert\leq \frac{4\alpha_{k+1}}{1 - \alpha_{k+1}^2} \Vert E_k \Vert \frac{1}{\eps_{k+1} - \Vert E_k \Vert}\frac{1}{\eps_{k+1}}. $$
Summing over all $k$ and using the triangle inequality, we obtain
\begin{align*}
    \Vert \sgn(\widetilde{A_N}) - \sgn(\widetilde{A_0}) \Vert &\le \sum_{k=1}^{N-1}  \frac{4\alpha_{k+1}}{1 - \alpha_{k+1}^2} \Vert E_k \Vert \frac{1}{\eps_{k+1} - \Vert E_k \Vert}\frac{1}{\eps_{k+1}} \\
    &\le \frac{8}{s} \sum_{k=0}^{N-1} \frac{\Vert E_k \Vert}{\eps_{k+1}^2},
\end{align*}
where in the last step we use $\alpha_k \le 1$ and $1 - \alpha_{k+1}^2 \ge s$, as well as (\ref{eqn:nkeps}).

By Lemma \ref{lem:boundforsign} (to be precise, by repeating the proof of that lemma with $\widetilde{A_N}$ substituted for $A_N$), we have
\begin{align*}
     \|\widetilde{A_N} - \sgn(\widetilde{A_N})\| &\leq \frac{8  \alpha_N^2}{(1- \alpha_N)^2 (1+\alpha_N) \eps_N} \\
     &\leq \frac{8 }{s^2} \alpha_N \frac{\alpha_N}{\eps_N} \\
     &\le \frac{8}{s^2} \alpha_N\frac{1}{\eps_0}\left(\frac{50}{s^2}\right)^N \\
     &\le \frac{8}{s^2\eps_0} (1-s/2)^{2^N}\left(\frac{50}{s^2}\right)^N \\
     &\le \frac{8 \cdot 50^N }{s^{2N+2}\eps_0}(1 - s/2)^{2^N}.
\end{align*}
where we use $s < 1/2$ in the last step.

Combining the above with the triangle inequality, we obtain the desired bound.

\end{proof}
We would like to apply Lemma \ref{lem:projdiff} to ensure $\Vert \widetilde{A_N} - \sgn(A) \Vert$ is at most $\sgnerr$, the desired accuracy parameter.  
The upper bound \eqref{eqn:projdiff} in Lemma $\ref{lem:projdiff}$ is the sum of two terms; we will make each term less than $\sgnerr/2$.  The bound for the second term will yield a sufficient condition on the number of iterations $N$.  Given that, the bound on the first term will then give a sufficient condition on the machine precision $\mach$.  This will be the content of Lemmas \ref{lem:Nbound1} and \ref{lem:Nbound2}.

 We start with the second term.  The following preliminary lemma will be useful:
 \begin{lemma} \label{lem:prelimN}
 Let $1/800 > t > 0$ and $1/2 > c > 0$ be given.  Then for
 \[j \ge \lg(1/t) + 2 \lg \lg (1/t) + \lg \lg(1/c) + 1.62,\]
 we have
 \[ \frac{(1-t)^{2^j}}{t^{2j}} < c.\]
 \end{lemma}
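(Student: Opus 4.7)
The natural first move is to take $\lg$ of both sides and convert the target inequality $(1-t)^{2^j}/t^{2j} < c$ into
\[
    -2^j \lg(1-t) > 2j \lg(1/t) + \lg(1/c).
\]
Since $-\ln(1-t) > t$ for $t \in (0,1)$, we have $-\lg(1-t) > t/\ln 2$, so it suffices to establish the cleaner inequality
\[
    \frac{t \cdot 2^j}{\ln 2} > 2j \lg(1/t) + \lg(1/c).
\]
Introduce the shorthand $L := \lg(1/t)$ and $M := \lg(1/c)$. Then $t = 2^{-L}$, and the target becomes $2^{j-L}/\ln 2 > 2jL + M$. Taking $\lg$ once more reduces the problem to verifying
\begin{equation}\label{eq:planN}
    j \;\ge\; L - \lg(1/\ln 2) + \lg(2jL + M),
\end{equation}
where $\lg(1/\ln 2) \approx 0.5288$.

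The next step is to substitute the hypothesized value $j = \lg(1/t) + 2\lg\lg(1/t) + \lg\lg(1/c) + 1.62 = L + 2\lg L + \lg M + 1.62$ into \eqref{eq:planN} and check it directly. After substitution, \eqref{eq:planN} rearranges to the explicit algebraic inequality
\[
    2jL + M \;\le\; 2^{2.148}\, L^2 M,
\]
where the constant $2.148 \approx 1.62 + 0.5288$ is what 1.62 was chosen to produce. Expanding the left-hand side gives
\[
    2jL + M \;=\; 2L^2 + 4L\lg L + 2L\lg M + 3.24\,L + M,
\]
so dividing through by $L^2 M$ reduces the claim to
\[
    \frac{2}{M} + \frac{4\lg L}{LM} + \frac{2\lg M}{LM} + \frac{3.24}{LM} + \frac{1}{L^2} \;\le\; 2^{2.148} \;\approx\; 4.43.
\]

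The hypotheses $t < 1/800$ and $c < 1/2$ enter exactly here: they give $L > \lg 800 > 9.6$ and $M > 1$. Each of the five summands is then bounded termwise. The first term is at most $2$; the second is at most $4\lg L/L \le 4\lg(9.6)/9.6 \approx 1.36$ since $\lg L/L$ is decreasing past $L = e\log 2$; the third is at most $2\cdot\max_{M\ge 1}(\lg M/M)/L \le 2/(e\ln 2\cdot L)$, which is tiny; and the remaining terms are $O(1/L)$ and $O(1/L^2)$. The sum is comfortably below $4.43$.

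\textbf{Main obstacle.} None of the steps is deep — the lemma is really a careful constant-chasing exercise. The only real difficulty is that the constant $1.62$ is essentially tight: it is chosen just large enough so that $2^{1.62 + \lg(1/\ln 2)} = 2^{2.148}$ dominates the sum above at the extreme case $L = \lg 800$, $M = 1$. One must therefore be disciplined about keeping the estimate $-\lg(1-t) \ge t/\ln 2$ (rather than the lossier $-\lg(1-t) \ge t$) and about using the hypothesis $t < 1/800$ rather than some weaker cutoff, else the constant $1.62$ would need to be enlarged. With those bounds in hand the verification is mechanical.
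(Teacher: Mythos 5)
Your proposal is correct, and it takes a genuinely different route from the paper. The paper first proves the contrapositive in the special case $c=1/2$: assuming $(1-t)^{2^j}/t^{2j}\ge 1/2$, it derives $j-\lg j\le K:=\lg(1/t)+\lg\lg(1/t)+0.49$ and then bootstraps ($j\ge 9$ gives $j-\lg j\ge 0.64j$, hence $j\le K+\lg K+0.65$) to conclude $j\le \lg(1/t)+2\lg\lg(1/t)+1.62$; general $c$ is then handled by observing that each further increment of $j$ squares the ratio and multiplies it by $t^{2j-2}<1$, so $\lg\lg(1/c)$ extra increments drive it from $1/2$ down to $c$. You instead substitute the threshold value of $j$ directly into the sufficient condition $j\ge L-\lg(1/\ln 2)+\lg(2jL+M)$ and verify the resulting inequality $2jL+M\le 2^{2.148}L^2M$ termwise. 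Your route is a single self-contained computation that treats $c$ uniformly through $M=\lg(1/c)$ and avoids both the two-stage inversion of $j-\lg j$ and the squaring trick; the price is that you genuinely need the sharper estimate $-\lg(1-t)\ge t/\ln 2$ (your sum of $\approx 3.81$ exceeds $2^{1.62}\approx 3.07$ but sits below $2^{2.148}\approx 4.43$), whereas the paper gets away with the lossier $-\lg(1-t)\ge t$ because its slack is absorbed elsewhere. Two small points to make explicit in a full write-up: the lemma asserts the conclusion for \emph{all} $j$ above the threshold, so you should note that $j-\lg(2jL+M)$ is increasing in $j$ on the relevant range (its derivative is $1-\tfrac{2L}{(2jL+M)\ln 2}>0$ since $j>1/\ln 2$), so checking the threshold value suffices; and the function $\lg L/L$ is decreasing for $L>e$ (not $L>e\log 2$), which is all you need since $L>\lg 800>9.6$.
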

The proof is deferred to Appendix \ref{sec:deferredsign}.

\begin{lemma}[Bound on second term of \eqref{eqn:projdiff}] \label{lem:Nbound1}Suppose we have 
\[ N \ge \lg(8/s) + 2 \lg \lg(8/s) + \lg \lg (16/(\sgnerr s^2 \eps_0)) + 1.62.\]
Then
\[  \frac{8 \cdot 50^N }{s^{2N+2}\eps_0}(1 - s/2)^{2^N} \le \sgnerr/2.\]
\end{lemma}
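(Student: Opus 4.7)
The plan is to reduce directly to Lemma \ref{lem:prelimN} by choosing the parameters $t$ and $c$ that convert the target inequality into its normal form $(1-t)^{2^N}/t^{2N} < c$.

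First I would absorb the factor $50^N$ into the denominator $s^{2N+2}$ using the crude bound $50 \le 64 = 8^2$:
\[
    \frac{8 \cdot 50^N}{s^{2N+2}\eps_0} \le \frac{8}{s^2 \eps_0} \cdot \left(\frac{8}{s}\right)^{2N} = \frac{8}{s^2 \eps_0} \cdot \frac{1}{(s/8)^{2N}}.
\]
Next I would weaken $(1-s/2)^{2^N}$ to $(1-s/8)^{2^N}$; this is a genuine upper bound since $s/2 > s/8$ implies $(1-s/2)^{2^N} \le (1-s/8)^{2^N}$, and it puts the whole expression into the normal form required by Lemma \ref{lem:prelimN}. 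Combined, these two estimates bound the left-hand side of the target inequality by
\[
    \frac{8}{s^2 \eps_0} \cdot \frac{(1-s/8)^{2^N}}{(s/8)^{2N}}.
\]

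Now I would invoke Lemma \ref{lem:prelimN} with $t = s/8$ and $c = \sgnerr s^2 \eps_0 / 16$. The hypotheses $t < 1/800$ and $c < 1/2$ are immediate from $s = 1 - \alpha_0 < 1/100$, $\sgnerr < 1/12$, and $\eps_0 < 1$. With these choices, the lemma's required lower bound on $N$ becomes
\[
    N \ge \lg(1/t) + 2\lg\lg(1/t) + \lg\lg(1/c) + 1.62 = \lg(8/s) + 2\lg\lg(8/s) + \lg\lg(16/(\sgnerr s^2 \eps_0)) + 1.62,
\]
which is precisely the hypothesis of Lemma \ref{lem:Nbound1}. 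Plugging the resulting estimate $(1-s/8)^{2^N}/(s/8)^{2N} < c$ into the previous display and simplifying $8c/(s^2 \eps_0) = \sgnerr/2$ completes the argument.

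The only step requiring any thought is choosing $t = s/8$ rather than $t = s/2$: the natural choice $t = s/2$ would leave an uncompensated factor of $(50/4)^N = 12.5^N$ growing geometrically in $N$, which cannot be absorbed into the additive constant $1.62$ supplied by Lemma \ref{lem:prelimN}. By instead taking $t = s/8$, the whole of $50^N \le 64^N$ folds cleanly into $1/t^{2N}$, at the modest cost of replacing $1-s/2$ by the weaker $1-s/8$ in the base — a trade-off that is harmless because the polynomial correction sits comfortably below the doubly-exponential decay.
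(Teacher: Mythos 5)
Your proof is correct and is essentially the paper's own argument: the paper likewise bounds $50^N$ by $64^N$, weakens $(1-s/2)^{2^N}$ to $(1-s/8)^{2^N}$, and invokes Lemma \ref{lem:prelimN} with $t = s/8$ and $c = \sgnerr s^2 \eps_0/16$. Your version just spells out the bookkeeping (and the rationale for $t=s/8$ over $t=s/2$) that the paper leaves implicit.
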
 
\begin{proof}
It is sufficient that 
\[  
    \frac{8 \cdot 64^N }{s^{2N+2}\eps_0}(1 - s/8)^{2^N} \le \sgnerr/2.
\]
The result now follows from applying Lemma \ref{lem:prelimN} with $c = \sgnerr s^2 \eps_0/16$ and $t=s/8$. 

\end{proof}

Now we move to the first term in the bound of Lemma \ref{lem:projdiff}.  
\begin{lemma}[Bound on first term of \eqref{eqn:projdiff}]
\label{lem:Nbound2}

Suppose \[ N \ge \lg(8/s) + 2 \lg \lg(8/s) + \lg \lg (16/(\sgnerr s^2 \eps_0)) + 1.62,\]
and suppose the machine precision $\mach$ satisfies 
\[\mach \le  \frac{  (1-s)^{2^{N+1}(\cinv \log n + 3)}}{\pinv(n) \sqrt{n}  N} .\]
Then we have 
\[
    \frac{8}{s} \sum_{k=0}^{N-1} \frac{\Vert E_k \Vert}{\eps_{k+1}^2}  \le \sgnerr/2.
\]
\end{lemma}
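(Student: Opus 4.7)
The plan is to bound each summand $\|E_k\|/\eps_{k+1}^2$ by at most $\sgnerr s/(16 N)$ and then sum over the $N$ indices, so that the prefactor $8/s$ multiplied against the total sum yields exactly the target $\sgnerr/2$.

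First, I would verify that the hypothesized bound on $\mach$ implies the hypothesis of Corollary \ref{cor:bootstrapping}, which gives the explicit lower bound $\eps_k \ge e_k = \eps_0 (s^2/50)^k \alpha_k$ for all $k \le N$. Since the function $\eps \mapsto K_{\eps}$ is monotonically decreasing, the per-step estimate (\ref{eqn:nkbound}) then yields
$$\Vert E_k\Vert \le K_{\eps_k} \mach \le K_{e_k} \mach.$$
Next, using $\alpha_k \ge \alpha_0^{2^k} = (1-s)^{2^k}$, I would substitute the explicit formula into $e_k$ and $e_{k+1}$ to get crude lower bounds of the form $e_k \ge \eps_0 (s^2/50)^k (1-s)^{2^k}$. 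Plugging these into the dominant (third) term of $K_{e_k}$, I would bound
$$\frac{K_{e_k}}{e_{k+1}^2} \le \pinv(n)\sqrt{n}\cdot C \cdot (1-s)^{-2^k(2\cinv\log n + 1) - 2^{k+2}} \cdot (50/s^2)^{O(k\cinv\log n)} \cdot \eps_0^{-O(\cinv\log n)},$$
for some absolute constant $C$, with the two leading-order terms in $K_{e_k}$ absorbed into the constant since they are smaller than the third for our ranges.

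The worst value of $k$ in the sum is $k = N-1$, where the exponent of $(1-s)^{-1}$ satisfies
$$2^{N-1}(2\cinv\log n + 1) + 2^{N+1} \le 2^{N+1}(\cinv\log n + 3).$$
The polynomial factors in $1/s$, $1/\eps_0$, and $50$ can be absorbed into this exponent by invoking the lower bound on $N$ from the hypothesis, which is large enough that $(1-s)^{2^N}$ dominates the inverses of those factors (this is essentially the same calculation as in Lemma \ref{lem:Nbound1}). Thus the product $K_{e_k}\mach/e_{k+1}^2$ is controlled by the precision hypothesis
$$\mach \le \frac{(1-s)^{2^{N+1}(\cinv\log n + 3)}}{2 \pinv(n)\sqrt n\, N},$$
which is tailored exactly to absorb the worst-case $(1-s)$ exponent and the $\pinv(n)\sqrt n$ factor, with the $N$ in the denominator creating room to distribute the bound across the $N$ summands. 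Summing and multiplying by $8/s$ delivers $\sgnerr/2$.

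The main technical obstacle will be honest bookkeeping of the exponents: verifying that the polynomial-in-$(1/s, 1/\eps_0, 50)$ prefactors are really dominated by the saved $(1-s)^{-O(1)}$ slack afforded by the lower bound on $N$, and that the constants match. A secondary point to check carefully is that the hypothesis of Corollary \ref{cor:bootstrapping} is indeed implied by the stated precision bound uniformly in $k \le N-1$; this reduces to checking the inequality at $k = N-1$, where $e_{N-1}$ and $\alpha_{N-1}$ take their smallest values and $K_{e_{N-1}}$ its largest.
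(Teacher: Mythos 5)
Your proposal is correct and follows essentially the same route as the paper's proof: reduce to the per-term bound $\Vert E_k\Vert/\eps_{k+1}^2 \le \sgnerr s/(16N)$, pass from $\eps_k$ to the explicit lower bounds $e_k$ via Corollary \ref{cor:bootstrapping} (checking its hypothesis non-circularly at the extremal index), reduce to the single worst-case condition by monotonicity, and absorb the $(s^2/50)^N\eps_0$ factors into the $(1-s)$-exponent using the hypothesis on $N$ via Lemma \ref{lem:Nbound1}. The remaining work is exactly the exponent bookkeeping you flag, which the paper carries out by the same chain of sufficient conditions.
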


\begin{proof}
It suffices to show that for all $0 \le k \le N-1$,
\[ 
    \Vert E_k \Vert \le \frac{ \sgnerr \eps_{k+1}^2 s}{16N}.
\]
In view of (\ref{eqn:nkbound}), which says $\Vert{E_k}\Vert \le  K_{\eps_k} \mach $, 
it is sufficient to have for all $0 \le k \le N-1$
\begin{align} \label{eqn:machsgnbound}     \mach 
    &\le \frac{1}{K_{\eps_k}}\frac{     \sgnerr \eps_{k+1}^2 s}{16N}.
\end{align}
For this, we claim it is sufficient to have for all $0 \le k \le N-1$ 
\begin{align} \label{eqn:esgnbound}     
    \mach &\le \frac{1}{K_{e_k}}\frac{ \sgnerr e_{k+1}^2 s}{16N}. 
\end{align}
Indeed, on  the one hand, since $\sgnerr < 1/6$ and by the loose bound $e_{k+1} < s \alpha_{k+1} < s \alpha_k$ we have that (\ref{eqn:esgnbound}) implies $\mach \leq \frac{1}{3K_{e_k}} \frac{ s^2 e_k}{32}$, which means that the assumption in Corollary \ref{cor:bootstrapping} is satisfied. On the other hand Corollary \ref{cor:bootstrapping} yields $e_k\leq \epsilon_k$ for all $0\leq k \leq N$, which in turn, combined with (\ref{eqn:esgnbound}) would give (\ref{eqn:machsgnbound}) and conclude the proof. 

We now show that (\ref{eqn:esgnbound}) holds for all $0\leq k\leq N-1$. Because $1/K_{e_k}$ and $e_k$ are decreasing in $k$, it is sufficient to have the single condition
\[  
    \mach \le \frac{1}{K_{e_N}}\frac{ \sgnerr e_{N}^2 s}{16N}.
\]
We continue the chain of sufficient conditions on $\mach$, where each line implies the line above:
\begin{align*} 
    \mach &\le \frac{1}{K_{e_N}}\frac{ \sgnerr e_N^2 s}{16N} \\
    \mach &\le \frac{1}{\left[ \frac{4}{s^2e_N} + \frac{1}{e_N}  + \pinv(n) \left( \frac{4}{s^2e_N^2} \right)^{\cinv \log n}\frac{1}{e_N} \right] 2 \sqrt{n}} \frac{ \sgnerr e_N^2 s}{16N} \\
    \mach &\le \frac{1}{6 \pinv(n) \left( \frac{4}{s^2 e_N} \right)^{\cinv \log n + 1} 2\sqrt{n}}\frac{ \sgnerr e_N^2 s}{16 N} \\
    \mach &\le \frac{ \sgnerr }{6 \cdot 2 \cdot 16 \pinv(n) \sqrt{n}  N} \left(\frac{e_N s^2}{4} \right)^{\cinv \log n + 3}.
\end{align*}
where we use the bound $\frac{1}{e_N} \le \frac{4}{s^2 e_N^2}$ without much loss, and we also use our assumption $\pinv(n) \ge 1$ and $\cinv \log n \ge 1$ for simplicity.

Substituting the value of $e_N$ as defined in Lemma \ref{lem:epskbound}, we get the sufficient condition 

\[\mach \le  \frac{ \sgnerr }{192 \pinv(n) \sqrt{n}  N} \left(\frac{\eps_0 (s^2/50)^N \alpha_N s^2}{4} \right)^{\cinv \log n + 3}. \]

Replacing $\alpha_N$ by the smaller quantity $\alpha_0^{2^N} = (1-s)^{2^N}$ and cleaning up the constants yields the sufficient condition 

\[ \mach \le  \frac{ \sgnerr }{192 \pinv(n) \sqrt{n}  N} \left(\frac{\eps_0 (s^2/50)^N (1-s)^{2^N} s^2}{4} \right)^{\cinv \log n + 3}.\]

Now we finally will use our hypothesis on the size of $N$ to simplify this expression. Applying Lemma \ref{lem:Nbound1}, we have 
\[ \eps_0 (s^2/50)^N /4 \ge \frac{4 (1-s)^{2^N}}{s^2 \sgnerr}.\]

Thus, our sufficient condition becomes
\[ \mach \le  \frac{ \sgnerr }{192 \pinv(n) \sqrt{n}  N} \left(\frac{4(1-s)^{2^{N+1}}}{\sgnerr} \right)^{\cinv \log n + 3}.\]

To make the expression simpler, since $\cinv \log n + 3 \ge 4$ we may pull out a factor of $4^4 > 192$ and remove the occurrences of $\sgnerr$ to yield the sufficient condition

\[\mach \le  \frac{  (1-s)^{2^{N+1}(\cinv \log n + 3)}}{\pinv(n) \sqrt{n}  N}.  \]

\end{proof}

Matching the statement of Theorem \ref{prop:sgnerr}, we give a slightly cleaner sufficient condition on $N$ that implies the hypothesis on $N$ appearing in the above lemmas. The proof is deferred to Appendix \ref{sec:deferredsign}.
\begin{lemma}[Final sufficient condition on $N$] \label{lem:cleanN}
If \[ N = \lceil \lg(1/s) + 3 \lg \lg(1/s) + \lg \lg (1/(\sgnerr \eps_0)) + 7.59 \rceil, \]
then
\[  N \ge \lg(8/s) + 2 \lg \lg(8/s) + \lg \lg (16/(\sgnerr s^2 \eps_0)) + 1.62.\]
\end{lemma}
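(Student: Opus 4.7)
The statement to be verified is purely arithmetic: we need to check that the clean expression on the left majorizes the messier right-hand side by at least the slack built into the constants. The key observation is that the paper's standing assumptions on $s = 1-\alpha_0$ and on the accuracy parameters give us useful a priori lower bounds on the relevant iterated logarithms.

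My plan is as follows. Introduce shorthand $a := \lg(1/s)$, $b := \lg\lg(1/s) = \lg a$, and $d := \lg(1/(\sgnerr \eps_0))$. The hypothesis $0 < s < 1/100$ yields $a > \lg 100 > 6.64$ and hence $b > 2.73$, while the hypotheses $\sgnerr < 1/12$ and $\eps_0 < 1$ force $\sgnerr \eps_0 < 1/12$ and so $d > \lg 12 > 3.58$. Expanding $\lg(8/s) = 3 + a$ and $\lg(16/(\sgnerr s^2 \eps_0)) = 4 + 2a + d$, the desired inequality reduces, after cancelling $a$ and constants, to the single arithmetic claim
\[
  3b + \lg d + 2.97 \;\geq\; 2\lg(3+a) + \lg(4 + 2a + d).
\]

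Next I would control the two logarithms on the right. Since $a \geq 6.64$, we have $3+a \leq a(1 + 3/6.64) \leq 1.452\,a$, so $2\lg(3+a) \leq 2b + 2\lg(1.452) \leq 2b + 1.08$. For the remaining term $\lg(4+2a+d)$, I would split into two cases. If $d \geq 4 + 2a$, then $\lg(4+2a+d) \leq 1 + \lg d$, and the claim becomes $3b + 2.97 \geq 2b + 1.08 + 1$, i.e.\ $b \geq -0.89$, which is immediate since $b > 2.73$. If instead $d < 4+2a$, then using $4+2a \leq 4a$ (valid for $a \geq 2$) gives $\lg(4+2a+d) \leq 1 + \lg(4a) = 3 + b$, so the claim becomes $3b + \lg d + 2.97 \geq 2b + 1.08 + 3 + b$, i.e.\ $\lg d \geq 1.11$, which holds because $d > 3.58$.

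Since both cases are covered with strict slack, the inequality holds as stated. There is no real obstacle here beyond bookkeeping — the constants $7.59$ and $1.62$ and the coefficient $3$ on $\lg\lg(1/s)$ appear to have been chosen precisely so that this two-case analysis goes through comfortably under the standing assumptions on $s$, $\sgnerr$, and $\eps_0$.
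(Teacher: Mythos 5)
Your proof is correct and is, in substance, the same elementary bookkeeping as the paper's: both reduce the claim to bounding $2\lg\lg(8/s)$ and $\lg\lg(16/(\sgnerr s^2\eps_0))$ in terms of $\lg\lg(1/s)$ and $\lg\lg(1/(\sgnerr\eps_0))$ using the standing assumptions $s<1/100$ and $\sgnerr\eps_0<1/12$, and both confirm that the constants $7.59$ and $1.62$ leave slack. The only organizational difference is that the paper splits the combined double-log term via the subadditivity $\lg\lg(ab)\le\lg\lg a+\lg\lg b$ (for $a,b\ge 4$) together with the concavity bound $\lg(x+y)\le\lg x+ y/(x\log 2)$, whereas you achieve the same effect with a two-case comparison of $d$ against $4+2a$; both routes are sound.
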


Taking the logarithm of the machine precision yields the number of bits required:
\begin{lemma}[Bit length computation] \label{lem:sgnbitlength}
Suppose
\[ N = \lceil \lg(1/s) + 3 \lg \lg(1/s) + \lg \lg (1/(\sgnerr \eps_0)) + 7.59 \rceil \]
and 
\[
    \mach_{\SGN} =  \frac{  (1-s)^{2^{N+1}(\cinv \log n + 3)}}{\pinv(n) \sqrt{n}  N}.
\]
 Then
\[
    \lg(1/\mach_{\SGN}) = O\big(\log n \log(1/s)^3 (\log(1/\sgnerr) + \log(1/\eps_0))\big).
\]
\end{lemma}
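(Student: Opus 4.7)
The plan is to take the logarithm of the given bound on $\mach$ directly and then substitute the explicit formula for $N$. Writing
\[
    \log(1/\mach) \;=\; 2^{N+1}(\cinv \log n + 3)\log\!\left(\tfrac{1}{1-s}\right) + \log\!\bigl(2\pinv(n)\sqrt{n}\,N\bigr),
\]
the problem reduces to estimating each of the two summands above.

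First, I would control the ``main'' term $2^{N+1}(\cinv \log n+3)\log(1/(1-s))$. Because $s < 1/100$, the elementary estimate $\log(1/(1-s)) \le 2s$ applies, so this term is bounded by $O(s \cdot 2^{N} \log n)$. Then I would substitute the definition
\[
    N \;=\; \bigl\lceil \lg(1/s) + 3\lg\lg(1/s) + \lg\lg(1/(\sgnerr\,\eps_0)) + 7.59 \bigr\rceil,
\]
which gives
\[
    2^{N} \;=\; O\!\left(\tfrac{1}{s}\cdot \log^3(1/s)\cdot \log(1/(\sgnerr\,\eps_0))\right),
\]
so the factor of $s$ cancels and the main term becomes
\[
    O\!\Bigl(\log n \cdot \log^3(1/s) \cdot \log(1/(\sgnerr\,\eps_0))\Bigr)
    \;=\; O\!\Bigl(\log n \cdot \log^3(1/s) \cdot \bigl(\log(1/\sgnerr) + \log(1/\eps_0)\bigr)\Bigr),
\]
using $\log(xy) = \log x + \log y$ on the final factor.

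Second, I would dispose of the lower-order term $\log(2\pinv(n)\sqrt{n}\,N)$. By the standing assumption on $\pinv$ from Theorem~\ref{thm:instantiate}, $\log \pinv(n) = O(\log n)$, and $\log N = O(\log\log(1/s) + \log\log(1/(\sgnerr\eps_0)))$, both of which are absorbed into the main-term bound above. Combining the two estimates yields the claim.

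The main obstacle---if one can call it that---is simply bookkeeping: one must verify that $s \cdot 2^N$ is polylogarithmic in the relevant parameters after the substitution, which is where the three extra $\lg\lg$ factors in the definition of $N$ come into play. No new idea beyond the explicit definition of $N$ and the inequality $\log(1/(1-s)) = O(s)$ is required.
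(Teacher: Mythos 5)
Your proposal is correct and follows essentially the same route as the paper's proof: take logarithms of the precision bound, use $\log(1/(1-s)) = O(s)$ to cancel the $1/s$ factor hidden in $2^{N}$, expand $2^{N}$ via the explicit definition of $N$ to get the $\log^3(1/s)\,(\log(1/\sgnerr)+\log(1/\eps_0))$ factor, and absorb $\log\pinv(n)$, $\log n$, and $\log N$ as subdominant terms using $\pinv(n)=\poly(n)$. No gaps.
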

\begin{proof}
In the course of the proof, for convenience we also record a nonasymptotic bound (for $s<1/100$, $\beta < 1/12$, $\eps_0 < 1$ and $\cinv \log n > 1$ as in the hypothesis of Theorem \ref{prop:sgnerr}), at the cost of making the computation somewhat messier.  

Immediately we have
\[ 
    \lg(1/\mach_{\SGN}) \le  \lg \pinv(n) + \frac{1}{2}\lg n + \lg N +  (\cinv \log n + 3) 2^{N+1} \log(1/(1-s)).
\]
Note that $\log(1/(1-s)) < s$ for $s < 1/2$.  Also, $2^{N+1} \le (1/s) \lg(1/s)^3 (\lg(1/\beta) + \lg(1/\eps_0))2^{9.59}.$  Putting this together, we have
\[ 
    \lg(1/\mach_{\SGN}) \le \lg \pinv(n) + \frac{1}{2}\lg n + \lg N +  1000 (\cinv \log n + 3)\lg(1/s)^3 (\lg(1/\beta) + \lg(1/\eps_0)).
\]


We now crudely bound $\lg N$.  Note that for $s < 1/100$ we have $\lg(1/s) + 3 \lg \lg(1/s) + 7.59 \le 1/s$.  Thus, 
\begin{align*} 
\lg N &\le \lg(1/s + \lg \lg (1/(\beta \eps_0)))& \\
&\le \lg(1/s + \lg (1/(\beta \eps_0)))& \\
&\le \lg(1/s) + \lg \lg (1/(\beta \eps_0))& \lg(a+b) \le  \lg a + \lg b \text{ for } a,b>2\\
&\le \lg(1/s)^3 \lg (1/(\beta \eps_0)).&
\end{align*}

Combining the above, we may fold the $\lg N$ and $\lg n$ terms into the final term to obtain
\begin{equation} \label{eqn:nonasy-sgn}
    \lg(1/\mach_{\SGN}) \le \lg \pinv(n) + 5000\cinv \log n \lg(1/s)^3 (\lg(1/\sgnerr) + \lg(1/\eps_0)) 
\end{equation} 
where we use that  $\cinv \log n > 1$ and therefore $\cinv \log n + 3 < 4 \cinv \log n.$

Using  that $\mu_{\INV}(n) = \poly(n)$ and discarding subdominant terms, we obtain the desired asymptotic bound.


\end{proof}
This completes the proof of Theorem \ref{prop:sgnerr}.  Finally, we may prove the theorem advertised in Section \ref{sec:intro}.
\begin{proof}[Proof of Theorem \ref{thm:signintro}]
Set $\eps := \min\{ \frac{1}{K}, 1\}$.  Then $\Lambda_\eps(A)$ does not intersect the imaginary axis, and furthermore $\Lambda_\eps(A) \subseteq D(0, 2)$ because $\Vert A \Vert \le 1$.  Thus, we may apply Lemma \ref{lem:params-for-sgn} with $\diag(\g) = 4\sqrt{2}$ to obtain parameters $\alpha_0, \eps_0$ with the property that $\log(1/(1-\alpha_0))$ and $\log(1/\eps_0)$ are both $O(\log K)$.   Theorem \ref{prop:sgnerr} now yields the desired conclusion. 
\end{proof}

	\section{Spectral Bisection Algorithm}
\label{sec:spectralbisec}


In this section we will prove Theorem \ref{thm:bkwd}.  
As discussed in Section \ref{sec:intro}, our algorithm is not new, and in its idealized form it reduces to the two following tasks:
\begin{enumerate}[leftmargin=3\parindent]
    \item[\textit{Split:}] Given an $n\times n$ matrix $A$, find a partition of the spectrum into pieces of roughly equal size, and output spectral projectors $P_{\pm}$ onto each of these pieces.
    \item[\textit{Deflate:}] Given an $n\times n$ rank-$k$ projector $P$, output an $n\times k$ matrix $Q$ with orthogonal columns that span the range of $P$.
\end{enumerate}
These routines in hand, on input $A$ one can compute $P_{\pm}$ and the corresponding $Q_{\pm}$, and then find the eigenvectors and eigenvalues of $A_{\pm} := Q_{\pm}^\ast A Q_{\pm}$. The observation below verifies that this recursion is sound.

\begin{observation}
    The spectrum of $A$ is exactly $\Lambda(A_+) \sqcup \Lambda(A_-)$, and every eigenvector of $A$ is of the form $Q_{\pm}v$ for some eigenvector $v$ of one of $A_{\pm}$.
\end{observation}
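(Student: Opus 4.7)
The plan is to exploit the fact that $P_+$ and $P_-$ are complementary spectral projectors that commute with $A$, so that the ranges $V_\pm := \range(P_\pm)$ are complementary $A$-invariant subspaces on which $A$ acts as $A_\pm$ in the orthonormal coordinates supplied by $Q_\pm$. The argument has three steps.

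First, I would record the structural facts. In the ``Split'' step, $P_+$ and $P_-$ are spectral projectors associated with a partition of $\Lambda(A)$ into two disjoint pieces, so $P_+P_- = P_-P_+ = 0$ and $P_+ + P_- = I$; both commute with $A$. Consequently $AV_\pm \subseteq V_\pm$, and since the columns of $Q_\pm$ form an orthonormal basis of $V_\pm$, one has $AQ_\pm = Q_\pm (Q_\pm^\ast A Q_\pm) = Q_\pm A_\pm$. Thus $A_\pm$ is the matrix of $A|_{V_\pm}$ in the basis $Q_\pm$, and in particular $\Lambda(A_\pm) = \Lambda(A|_{V_\pm})$. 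Note also that because $P_\pm$ are the spectral projectors associated with disjoint spectral subsets, the sets $\Lambda(A_+)$ and $\Lambda(A_-)$ are disjoint (this is where I use that ``Split'' separates the spectrum).

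Second, for the spectrum equality, suppose $\lambda\in \Lambda(A)$ with eigenvector $v\neq 0$. Write $v = v_+ + v_-$ with $v_\pm = P_\pm v \in V_\pm$. Applying $A$ and using that $A$ commutes with $P_\pm$,
\begin{equation*}
    \lambda v_+ + \lambda v_- = \lambda v = Av = AP_+ v + AP_- v = P_+(Av) + P_-(Av),
\end{equation*}
so $Av_\pm = \lambda v_\pm$. Hence each nonzero $v_\pm$ is a $\lambda$-eigenvector of $A|_{V_\pm}$, giving $\lambda\in \Lambda(A_+)\cup \Lambda(A_-)$. Conversely, any eigenpair of $A_\pm$ lifts through $Q_\pm$: if $A_\pm w = \mu w$, then $A(Q_\pm w) = Q_\pm A_\pm w = \mu(Q_\pm w)$, and $Q_\pm w\neq 0$ since $Q_\pm$ is an isometry. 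This proves $\Lambda(A) = \Lambda(A_+)\cup \Lambda(A_-)$, and the union is disjoint by the first step.

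Third, for the eigenvector characterization, take any eigenpair $(\lambda,v)$ of $A$ and decompose $v = v_+ + v_-$ as above. By the previous step each nonzero $v_\pm$ would be a $\lambda$-eigenvector of $A|_{V_\pm}$, i.e., $\lambda\in \Lambda(A_\pm)$. Since $\Lambda(A_+)\cap \Lambda(A_-) = \emptyset$, exactly one of $v_+, v_-$ is nonzero, so $v\in V_\pm$ for exactly one sign. Writing $v = Q_\pm w$ with $w := Q_\pm^\ast v$, the identity $AQ_\pm = Q_\pm A_\pm$ together with the injectivity of $Q_\pm$ gives $A_\pm w = \lambda w$, so $w$ is an eigenvector of $A_\pm$ and $v = Q_\pm w$, as claimed. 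The only subtle point in the whole argument is the disjointness of $\Lambda(A_+)$ and $\Lambda(A_-)$, which is exactly what is guaranteed by performing the splitting along a line separated from the spectrum (as will be arranged by the pseudospectral shattering of Section~\ref{sec:shatter}); everything else is bookkeeping with commuting projectors and the isometry property of $Q_\pm$.
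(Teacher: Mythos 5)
Your proof is correct and fills in exactly the standard argument that the paper leaves implicit (the Observation is stated without proof): commuting complementary spectral projectors give an $A$-invariant direct sum, the intertwining relation $AQ_{\pm}=Q_{\pm}A_{\pm}$ identifies $A_{\pm}$ with $A|_{\range(P_{\pm})}$, and disjointness of the two spectral pieces forces each eigenvector wholly into one subspace. No gaps; the one genuinely load-bearing point, the disjointness of $\Lambda(A_+)$ and $\Lambda(A_-)$, is correctly identified and correctly attributed to the definition of the splitting step.
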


The difficulty, of course, is that neither of these routines can be executed exactly: we will never have access to true projectors $P_{\pm}$, nor to the actual orthogonal matrices $Q_{\pm}$ whose columns span their range, and must instead make do with approximations. Because our algorithm is recursive and our matrices nonnormal, we must take care that the errors in the sub-instances $A_{\pm}$ do not corrupt the eigenvectors and eigenvalues we are hoping to find. Additionally, the Newton iteration we will use to split the spectrum behaves poorly when an eigenvalue is close to the imaginary axis, and it is not clear how to find a splitting which is balanced.

Our tactic in resolving these issues will be to pass to our algorithms a matrix \textit{and} a grid with respect to which its $\epsilon$-pseudospectrum is shattered. To find an approximate eigenvalue, then, one can settle for locating the grid square it lies in; containment in a grid square is robust to perturbations of size smaller than $\epsilon$. The shattering property is robust to small perturbations, inherited by the subproblems we pass to, and---because the spectrum is quantifiably far from the grid lines---allows us to run the Newton iteration in the first place.

Let us now sketch the implementations and state carefully the guarantees for $\SPLIT$ and $\SPAN$; the analysis of these will be deferred to Appendices \ref{sec:split} and \ref{sec:deflate}. Our splitting algorithm is presented a matrix $A$ whose $\epsilon$-pseudospectrum is shattered with respect to a grid $\g$. For any vertical grid line with real part $h$, $\Tr\, \sgn(A-h)$ gives the difference between the number of eigenvalues lying to its left and right. As
$$
    |\Tr \,\SGN(A-h) - \Tr\, \sgn(A-h)| \le n\| \SGN(A-h) - \sgn(A-h)\|,
$$
we can determine these eigenvalue counts \textit{exactly} by running $\SGN$ to accuracy $O(1/n)$ and rounding $\Tr\, \SGN(A-h)$ to the nearest integer. We will show in Appendix \ref{sec:split} that, by mounting a binary search over horizontal and vertical lines of $\g$, we will always arrive at a partition of the eigenvalues into two parts with size at least $\min\{n/5,1\}$. Having found it, we run $\SGN$ one final time at the desired precision to find the approximate spectral projectors.


\begin{figure}[ht]
    \begin{boxedminipage}{\textwidth}
        $$ \SPLIT $$ {\small
        \textbf{Input:} Matrix $A \in \C^{n\times n}$, pseudospectral parameter $\epsilon$, grid $\g = \grid(z_0,\omega,s_1,s_2)$, and desired accuracy $\sgnerr$ \\
        \textbf{Requires:} $\Lambda_\epsilon(A)$ is shattered with respect to $\g$, and $\sgnerr \le 0.05/n$ \\
        \textbf{Algorithm:} $(\widetilde{P_{\pm}},\g_{\pm}, n_{\pm}) = \SPLIT(A,\epsilon,\g,\sgnerr)$
        \begin{enumerate}
            \item Execute a binary search over horizontal grid shifts $h$ until 
            $$
                \Tr\, \SGN \left(A - h,\epsilon/4,1 - \frac{\epsilon}{2\diag(\g)^2},\sgnerr\right) \le 3n/5.
            $$
            \item If this fails, set $A \gets iA$ and repeat with vertical grid shifts 
            \item Once a shift is found,
            $$
                \widetilde{P_{\pm}} \gets \tfrac{1}{2}\left(\SGN\left(A-h,\epsilon/4,1 - \frac{\epsilon}{2\diag(\g)^2},\sgnerr\right) \pm I\right),
            $$
            and $\g_{\pm}$ are set to the two subgrids
        \end{enumerate}
        \textbf{Output:} Two matrices $\widetilde{P_{\pm}} \in \C^{n\times n}$, two subgrids $\g_{\pm}$, and two numbers $n_{\pm}$  \\
        \textbf{Ensures:} Each subgrid $\g_{\pm}$ contains $n_\pm$ eigenvalues of $A$, $n_\pm \geq n/5$, and $\|\widetilde{P_{\pm}} - P_{\pm}\| \le \sgnerr$, where $P_{\pm}$ are the true spectral projectors for the eigenvalues in the subgrids $\g_{\pm}$ respectively.}
    \end{boxedminipage}
\end{figure}

\begin{theorem}[Guarantees for $\SPLIT$] \label{thm:split-guarantee}
    Assume $\INV$ is a $(\mu_\INV,\cinv)$-stable matrix inversion algorithm satisfying Definition \ref{def:inv}. Let $\epsilon \le 0.5$, $\sgnerr \le 0.05/n$, and $\|A\| \le 4$ and $\g$ have side lengths of at most $8$, and define 
    $$
        N_{\SPLIT} := \lg \frac{256}{\epsilon} + 3\lg\lg \frac{256}{\epsilon} + \lg\lg \frac{4}{\sgnerr \epsilon} + 7.59.
        \marginnote{$N_\SPLIT$}
    $$
    Then $\SPLIT$ has the advertised guarantees when run on a floating point machine with precision 
    $$
       \mach \le \mach_\SPLIT:= \min\left\{\frac{\left(1 - \frac{\epsilon}{256}\right)^{2^{N_{\SPLIT}+1} (c_{\INV} \log n + 3)}}{\mu_{\INV}(n)\sqrt n N_{\SPLIT}},\, \frac{\epsilon}{100 n},\, \frac{\epsilon^2}{512} \right\},
       \marginnote{$\mach_\SPLIT$}
    $$
    Using at most 
    $$
       T_{\SPLIT}(n,\g,\epsilon,\sgnerr) \le 12 \lg \frac{1}{\omega(\g)}\cdot N_{\SPLIT} \cdot \left(T_{\INV}(n) + O(n^2) \right)
       \marginnote{$T_\SPLIT$}
    $$
    arithmetic operations. The number of bits required is
    $$
        \lg 1/\mach_{\SPLIT} = O\left(\log n \log^3 \frac{256}{\epsilon}\left(\log\frac{1}{\beta} + \log\frac{4}{\epsilon}\right)\right).
    $$
\end{theorem}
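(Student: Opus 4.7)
The plan is to verify, in order, four facts: (i) each invocation of $\SGN$ inside $\SPLIT$ meets the hypothesis of Theorem \ref{prop:sgnerr}; (ii) the binary search always succeeds in at least one of the two orientations, producing a partition with $n_{\pm}\ge n/5$; (iii) the computed trace recovers the exact eigenvalue count, and the output projectors satisfy $\|\widetilde{P_\pm}-P_\pm\|\le\beta$; (iv) the runtime and precision bounds assemble from Theorem \ref{prop:sgnerr}.

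For (i), translation by a real $h$ preserves pseudospectra, so $\Lambda_\epsilon(A-h)$ is shattered with respect to $\g-h$. Since $h$ is a grid coordinate of $\g$, the imaginary axis is a grid line of $\g-h$, and Lemma \ref{lem:params-for-sgn} applied with pseudospectral parameter $\epsilon/2$ yields $\Lambda_{\epsilon/4}(A-h)\subseteq\cpm{1-\epsilon/(2\diag(\g)^2)}$, which matches the hypothesis of $\SGN$. The same argument applies to $iA$ for vertical shifts.

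For (ii), I argue by contradiction. As $h$ sweeps across the grid lines along the real axis, $h\mapsto \Tr\,\sgn(A-h)$ is an integer-valued step function that jumps by $2k$ across a grid line with $k$ eigenvalues of $A$ immediately to its right. If no horizontal shift yields $|\Tr\,\sgn(A-h)|\le 3n/5$, monotonicity forces a single jump exceeding $3n/5$, so a vertical strip of width $\omega$ contains more than $3n/5$ eigenvalues; symmetrically, if no vertical shift of $iA$ works, a horizontal strip of width $\omega$ contains more than $3n/5$ eigenvalues. Shattering means the intersection of such strips is a single $\omega\times\omega$ cell holding at most one eigenvalue, so inclusion-exclusion gives $|V\cup H|\ge 3n/5+3n/5-1>n$ for $n\ge 2$, a contradiction. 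So one orientation succeeds, and monotonicity of the binary search guarantees the selected shift yields $n_\pm\ge n/5$.

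For (iii), Theorem \ref{prop:sgnerr} gives $\|S-\sgn(A-h)\|\le\beta$, so $\widetilde{P_\pm}=(S\pm I)/2$ (the addition/scaling adding negligible roundoff under $\mach\le\epsilon/(100n)$) satisfies $\|\widetilde{P_\pm}-P_\pm\|\le\beta$. For the eigenvalue counting, $|\Tr S-\Tr\,\sgn(A-h)|\le n\beta\le 0.05$, and the floating-point trace adds error at most $n\|S\|\mach$. A contour integral over $\partial\cpm{1-\epsilon/(2\diag(\g)^2)}$ using $\Lambda_{\epsilon/4}(A-h)\cap\partial\cpm{\,\cdot\,}=\emptyset$ bounds $\|\sgn(A-h)\|$, and hence $\|S\|$, polynomially in $1/\epsilon$; together with $\mach\le\epsilon/(100n)$ (and the explicit constants in $\diag(\g)\le 8\sqrt 2$), the total error stays well below $1/2$, so rounding the computed trace recovers $\Tr\,\sgn(A-h)$ exactly. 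For (iv), the binary search over either axis has depth $\lceil\log_2(\max(s_1,s_2))\rceil\le \lceil\log_2(8/\omega(\g))\rceil$, and the two orientations plus the final projector call contribute the factor $12\lg(1/\omega(\g))$ after a routine constant bookkeeping. Each $\SGN$ call costs $N_\SPLIT(T_\INV(n)+O(n^2))$ by Theorem \ref{prop:sgnerr}. The precision bound is obtained by taking the minimum of the precision supplied by Theorem \ref{prop:sgnerr} with $1-\alpha_0=\epsilon/(2\diag(\g)^2)$ and $\epsilon_0=\epsilon/4$, and the trace-accuracy threshold $\epsilon/(100n)$; converting to bits via Lemma \ref{lem:sgnbitlength} yields the advertised $O(\log n\log^3(256/\epsilon)(\log(1/\beta)+\log(4/\epsilon)))$.

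The main obstacle is the combined horizontal/vertical argument in (ii): one must certify that the pseudospectral shattering assumption (which controls geometry in $\mathbb{C}$) plays well against the monotonicity of one-dimensional eigenvalue counts used by the binary search. The remaining steps are essentially bookkeeping: substituting parameters into Theorem \ref{prop:sgnerr} and Lemma \ref{lem:sgnbitlength}, and controlling the trace computation's floating-point error via a contour bound on $\|\sgn(A-h)\|$.
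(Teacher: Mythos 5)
Your proposal follows the paper's proof closely in parts (i), (iii), and (iv): the reduction to Theorem \ref{prop:sgnerr} via Lemma \ref{lem:params-for-sgn} with parameters $\epsilon_0=\epsilon/4$, $\alpha_0=1-\epsilon/(2\diag(\g)^2)$, the contour bound $\|\sgn(A-h)\|\le 8/\epsilon$ feeding into the trace-rounding argument under $\mach\le\epsilon/(100n)$, and the $12\lg(1/\omega(\g))\cdot N_\SPLIT$ accounting are all exactly the paper's steps. (One small omission: the reason the lemma must be applied with parameter $\epsilon/2$ rather than $\epsilon$ is the roundoff incurred in forming $M=A-h$ in floating point; you use the right parameter but don't say why.)

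Part (ii) is where you genuinely diverge. The paper proves existence of a balanced grid-line split directly: either the cumulative row counts admit a horizontal grid line with $\ge n/5$ eigenvalues on each side, or some single row holds $\ge 3n/5$ eigenvalues, in which case shattering (at most one eigenvalue per square) spreads that row across $\ge 3n/5$ distinct columns and a vertical grid line splits it; small $n$ is handled separately. Your argument instead assumes both orientations fail, extracts a heavy vertical strip $V$ and a heavy horizontal strip $H$ each with more than $3n/5$ eigenvalues, and applies inclusion-exclusion using shattering only at the single cell $V\cap H$. This is a clean alternative, but the inequality $|V\cup H|\ge 3n/5+3n/5-1>n$ is false as a real-number inequality for all $n\le 5$, and even after rounding $|V|,|H|$ up to $\lfloor 3n/5\rfloor+1$ it fails at $n=3$ (you get $2+2-1=3=n$, no contradiction). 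The fix is available but must be stated: the trace values have the parity of $n$, so a jump from above $3n/5$ to below $-3n/5$ is a jump of at least $2\lceil 3n/5\rceil$ rounded up to the correct parity, which at $n=3$ forces $|V|=|H|=3$ and restores the contradiction; alternatively, handle $n\le 5$ separately as the paper does. As written, "$>n$ for $n\ge 2$" is not justified.
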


Deflation of the approximate projectors we obtain from $\SPLIT$ amounts to a standard rank-revealing QR factorization. This can be achieved deterministically in $O(n^3)$ time with the classic algorithm of Gu and Eisenstat \cite{gu1996efficient}, or probabilistically in matrix-multiplication time with a variant of the method of \cite{demmel2007fast}; we will use the latter.

\begin{figure}[ht]
    \begin{boxedminipage}{\textwidth}
        $$\SPAN$$ {\small
        \textbf{Input:} Matrix $\widetilde{P} \in \C^{n\times n}$, desired rank $k$, input precision $\sgnerr$, and desired accuracy $\spnerr$ \\
        \textbf{Requires:}  $\|\widetilde{P} - P\| \le \sgnerr \leq \frac{1}{4}$ for some rank-$k$ projector $P$. \\
        \textbf{Algorithm:} $\widetilde{Q} = \SPAN(P,k,\sgnerr,\spnerr)$
        \begin{enumerate}
            \item $H \gets n\times n$ Haar unitary $+E_1$
            \item $(U,R) \gets \QR(PH^\ast)$
            \item $\widetilde{Q} \gets$ first $k$ columns of $U$.
        \end{enumerate}
        \textbf{Output:} A tall matrix $\widetilde{Q} \in \C^{n\times k}$\\
        \textbf{Ensures:} There exists a matrix $Q \in \C^{n\times k}$ whose orthogonal columns span $\text{range}(P)$, such that  $\|\widetilde{Q} - Q\| \le \spnerr$, with probability at least $ 1 - \frac{(20n)^3\sqrt{\sgnerr}}{\spnerr^2}$.}
    \end{boxedminipage}
\end{figure}

\begin{theorem}[Guarantees for $\SPAN$] \label{thm:deflate-guarantee-usable}
    Assume $\MM$ and $\QR$ are matrix multiplication and QR factorization algorithms satisfying Definitions \ref{def:MM} and \ref{def:qr}. Then $\SPAN$ has the advertised guarantees when run on a machine with precision: 
    $$
       \mach \le \mach_\SPAN:= \min \left\{ \frac{\sgnerr}{ 4\|\widetilde{P}\| \max(\mu_{\QR}(n),\mu_{\MM}(n))}, \frac{\spnerr}{2\mu_{\QR}(n)}\right\}.
       \marginnote{$\mach_\SPAN$}
    $$
    The number of arithmetic operations is at most: 
    $$
        T_{\SPAN}(n) = n^2 T_\N+ 2T_\QR(n)+T_\MM(n).
        \marginnote{$T_\SPAN$}
    $$
\end{theorem}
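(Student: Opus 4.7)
The plan is to analyze $\SPAN$ by first treating an idealized version (exact arithmetic, exact projector $P$, genuine Haar unitary $H$), and then layering in the finite-precision and perturbation errors. Because $\mathrm{rank}(P)=k$ and $H$ is invertible, $PH^*$ has rank exactly $k$ and $\mathrm{range}(PH^*) = \mathrm{range}(P)$; for Haar $H$ the first $k$ columns of $PH^*$ are almost surely linearly independent, so in the factorization $PH^* = UR$ the first $k$ columns $Q$ of $U$ form an orthonormal basis of $\mathrm{range}(P)$. This $Q$ will be the target basis that the theorem requires the finite-arithmetic output to approximate.

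Next I would collect the additive errors from the three sources: sampling $H$ (via a Ginibre matrix followed by a QR, contributing $\|E_1\| = O(\mu_\QR(n)\mach)$), the call to $\MM$ (Definition \ref{def:MM} gives $O(\mu_\MM(n)\mach\,\|\widetilde P\|)$), and the hypothesis $\|\widetilde P - P\| \le \sgnerr$. These combine to give a perturbation $\Delta$ of the input to QR with $\|\Delta\| \le \sgnerr + O(\max(\mu_\MM(n),\mu_\QR(n))\mach\,\|\widetilde P\|)$. Definition \ref{def:qr} then supplies a truly unitary $Q'$ and a matrix $A' = PH^* + \Delta'$ satisfying $Q'A' = R$ exactly, together with $\|Q' - U\| \le \mu_\QR(n)\mach$ and $\|\Delta'\| \le \mu_\QR(n)\mach\,\|\widetilde P H^*\|$. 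Choosing $\mach \le \mach_\SPAN$ as in the theorem statement makes the total input perturbation to QR bounded by $O(\sgnerr)$ and the direct output displacement of $U$ bounded by $\spnerr/2$.

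The hard part is a perturbation analysis for the first $k$ columns of the QR factorization of a rank-$k$ matrix under a small perturbation. Concretely, one must compare the rank-$k$ top block of the QR factor $U$ of $PH^*$ with that of $PH^* + \Delta$. This is a $\sin\theta$-style bound between invariant subspaces, governed by the smallest non-zero singular value $\sigma_k(PH^*)$: the smaller $\sigma_k$, the more fragile the rank-revealing step. Following the template of \cite{demmel2007fast}, the displacement $\|\widetilde Q - Q\|$ comes out as a polynomial function of $\sqrt{\sgnerr}$ and $1/\sigma_k(PH^*)$, with the square-root arising because a perturbation of size $\sgnerr$ on $PH^*$ spreads across the orthogonally complementary range and co-range blocks of the QR factor.

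Finally, I would combine this with a small-ball bound for $\sigma_k(PH^*)$ with $H$ Haar distributed. By rotation invariance, this reduces to estimating the least singular value of a $k \times k$ Haar-compressed submatrix, yielding a tail of the form $\P[\sigma_k(PH^*) < t] \le \mathrm{poly}(n)\,t^c$ for some absolute $c$. Setting $t$ so that the stability bound from the previous step is exactly $\spnerr$ and bookkeeping the constants produces the failure probability $(20n)^3\sqrt{\sgnerr}/\spnerr^2$ claimed in the statement. The runtime statement is then immediate: $n^2$ Gaussian samples to build the Ginibre matrix underlying $H$, one $\QR$ on the Ginibre to produce $H$, one $\MM$ for $\widetilde P H^*$, and one more $\QR$ on $\widetilde P H^*$, totalling $T_\SPAN(n) = n^2 T_\N + 2 T_\QR(n) + T_\MM(n)$.
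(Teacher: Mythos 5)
Your overall architecture (exact-arithmetic warm-up, error accumulation through the Gaussian sampling, $\MM$, and $\QR$ calls, a rank-revealing perturbation bound, and a probabilistic estimate on a least singular value) matches the paper's, and your runtime accounting is exactly right. But the core of your plan misidentifies the random quantity that governs stability. You propose a small-ball bound for $\sigma_k(PH^\ast)$ "with $H$ Haar distributed" — yet $\sigma_k(PH^\ast)=\sigma_k(P)$ deterministically, since right-multiplication by a unitary does not change singular values; there is nothing random to bound, and "rotation invariance" does not reduce it to a Haar corner. The quantity that actually controls the rank-revealing step is the smallest singular value of the leading $k\times k$ corner of the Haar unitary (written in the SVD basis of the input matrix), because that is what measures how well the \emph{first $k$ columns} of $PH^\ast$ condition a basis of $\range(P)$; it enters through the bound $\|R_{22}\|\le \sigma_{k+1}(\widetilde P H^\ast)/\sigma_{\min}(X_{11})$ with $X_{11}$ the Haar corner, and then the paper converts a bound on $\|R_{22}\|+\sgnerr$ into $\|\widetilde Q - QW^\ast\|\le 2\sqrt{\|R_{22}\|+\sgnerr}$ (for a projector) by showing $\widetilde Q^\ast Q$ is nearly unitary and invoking the polar decomposition — which is the $\sqrt{\cdot}$ you correctly anticipate. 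A bound phrased in terms of $1/\sigma_k(PH^\ast)$ alone cannot be correct, since the QR of a rank-$k$ matrix whose first $k$ columns are nearly dependent is genuinely unstable even when $\sigma_k$ of the full matrix is of order one.

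The second gap is the tail exponent. You write $\P[\,\cdot< t]\le \mathrm{poly}(n)\,t^c$ "for some absolute $c$," but the advertised failure probability $(20n)^3\sqrt{\sgnerr}/\spnerr^2$ forces $c=2$: the paper proves the exact identity $\P[\sigma_{\min}(X_{11})\le\theta] = 1-(1-\theta^2)^{k(n-k)}\le k(n-k)\theta^2$ for the corner of a complex Haar unitary, derived from the correspondence with $\beta$-Jacobi ensembles and an explicit density formula. This is one of the genuinely new technical ingredients of the appendix (prior analyses required $n-k\ge 30$ and real orthogonal matrices), so it cannot be waved away as a generic polynomial tail; with a weaker exponent the claimed probability, and hence the parameter settings in $\EIG$ that depend on it, would not follow.
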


\begin{remark}
The proof of the above theorem, which is deferred to Appendix \ref{sec:deflate}, closely follows and builds on the analysis of the randomized rank revealing factorization algorithm ($\RURV$)  introduced in \cite{demmel2007fast} and further studied in \cite{ballard2019generalized}. The parameters in the theorem are optimized for the particular application of finding a basis for a deflating subspace given an approximate spectral projector.

The main difference with the analysis in \cite{demmel2007fast} and \cite{ballard2019generalized} is that here, to make it applicable to complex matrices, we make use of Haar unitary random matrices instead of Haar orthogonal random matrices. In our analysis of the unitary case, we discovered a strikingly simple formula  (Corollary \ref{cor:densitysigmar}) for the density of the smallest singular value of an $r\times r$  sub-matrix of an $n\times n$ Haar unitary; this formula is leveraged to obtain guarantees that work for any $n$ and $r$, and not only for when $n-r \geq 30$, as was the case in \cite{ballard2019generalized}.  Finally, we explicitly account for finite arithmetic considerations in the Gaussian randomness used in the algorithm, where true Haar unitary matrices can never be produced. 
\end{remark}

We are ready now to state completely an algorithm $\EIG$ which accepts a shattered matrix and grid and outputs approximate eigenvectors and eigenvalues with a \textit{forward-error} guarantee. Aside from the a priori un-motivated parameter settings in lines 2 and 3---which we promise to justify in the analysis to come---$\EIG$ implements an approximate version of the split and deflate framework that began this section. 

\begin{figure}[ht]
\begin{boxedminipage}{\textwidth}
    $$ \EIG $$ {\small
    \noindent \textbf{Input:} Matrix $A\in\C^{m\times m}$, desired eigenvector accuracy $\eigerr$, grid $\g = \grid(z_0,\omega,s_1,s_2)$, pseudospectral guarantee $\epsilon$, acceptable failure probability $\eigprob$, and global instance size $n$ \\
    \noindent \textbf{Requires:} $\Lambda_\epsilon(A)$ is shattered with respect to $\g$, and $m\le n$.\\
    \noindent \textbf{Algorithm:} $\EIG(A,\eigerr,\g,\epsilon, \theta,n)$
    \begin{enumerate}
        \item If $A$ is $1\times 1$, $(\widetilde{V},\widetilde D) \gets (1, A)$
        \item $\spnerr \gets \frac{\delta\epsilon^2}{200}$
        \item $\sgnerr \gets \frac{\spnerr^4}{(20 n)^6}\frac{\eigprob^2}{4n^8}$
        \item $(\widetilde P_+, \widetilde P_-,\g_+,\g_-,n_+,n_-) \gets \SPLIT(A,\epsilon,\g,\sgnerr)$
        \item $\widetilde Q_{\pm} \gets \SPAN(\widetilde P_{\pm},n_{\pm},\sgnerr, \spnerr)$
        \item $\widetilde A_{\pm} \gets \widetilde Q_{\pm}^\ast \widetilde A \widetilde Q_{\pm} + E_{6,\pm}$
        \item $(\widetilde V_{\pm},\widetilde D_{\pm}) \gets \EIG(\widetilde A_{\pm},4\eigerr/5,\g_{\pm},4\epsilon/5,\eigprob,n)$.
        \item $\widetilde V \gets \begin{pmatrix} \widetilde{Q}_+ \widetilde{V}_+  & \widetilde{Q}_- \widetilde{V}_-  \end{pmatrix} + E_8$ 
        \item $\widetilde{V} \gets \text{normalize}(\widetilde{V}) + E_9$
        \item $ \widetilde{D} \gets \begin{pmatrix} \widetilde{D}_+ & \\ & \widetilde{D}_- \end{pmatrix}$
    \end{enumerate}
    \noindent \textbf{Output:} Eigenvectors and eigenvalues $(\widetilde{V},\widetilde{D})$ \\
    \noindent \textbf{Ensures:} With probability at least $1 - \eigprob$, each entry $\widetilde{\lambda_i} = \widetilde{D}_{i,i}$ lies in the same square as exactly one eigenvalue $\lambda_i \in \Lambda(A)$, and each column $\widetilde{v}_i$ of $\widetilde{V}$ has norm $1 \pm n\mach$, and satisfies $\|\widetilde{v}_i - v_i\| \le \eigerr$ for some exact unit right eigenvector $Av_i = \lambda_i v_i$. }
\end{boxedminipage}
\end{figure}

\begin{theorem}[$\EIG$: Finite Arithmetic Guarantee] \label{thm:eig-finite-guarantee}
    Assume  $\MM, \QR$, and $\INV$ are numerically stable algorithms for matrix multiplication, QR factorization, and inversion satisfying Definitions \ref{def:MM}, \ref{def:qr}, and \ref{def:inv}. Let $\delta < 1$, $A \in \C^{n\times n}$ have $\|A\| \le 3.5$ and, for some $\epsilon < 1/2$, have $\epsilon$-pseudospectrum shattered with respect to a grid $\g = \grid(z_0,\omega,s_1,s_2)$ with side lengths at most $8$ and $\omega \le 1$. Define 
    \begin{equation*}
        N_{\EIG} := \lg \frac{256 n}{\epsilon} + 3\lg\lg \frac{256 n}{\epsilon} + \lg\lg \frac{(5n)^{26}}{\theta^2\delta^4\epsilon^9} + 7.59.
        \marginnote{$N_\EIG$}
    \end{equation*}
    Then $\EIG$ has the advertised guarantees when run on a floating point machine with precision satisfying:
    \begin{align*}
        \lg 1/\mach 
        &\ge \max\left\{\lg^3 \frac{ n}{\epsilon}\lg \left( \frac{(5n)^{26}}{\eigprob^2\eigerr^4\epsilon^8} \right)2^{9.59}(c_{\INV}\log n + 3) + \lg N_{\EIG}, \lg\frac{(5n)^{30}}{\eigprob^2\eigerr^4\epsilon^8} + \lg \max\{\mu_{\MM}(n),\mu_{\QR}(n),n\} \right\} \\
        &= O\left(\log^3 \frac{n}{\epsilon}\log \frac{n}{\theta\delta\epsilon}\log n\right).
    \end{align*}
    The number of arithmetic operations is at most
    \begin{align*}
        T_{\EIG}(n,\eigerr,\g,\epsilon,\theta, n) 
        &= 60 N_{\EIG}\lg\frac{1}{\omega(\g)}\left( T_{\INV}(n) + O(n^2)\right) + 10 T_{\QR}(n) + 25 T_{\MM}(n) \\
        &= O\left(\log \frac{1}{\omega(\g)}\left(\log \frac{n}{\epsilon} + \log\log \frac{1}{\theta\delta}\right) T_{\MM}(n) \right).
    \end{align*}
\end{theorem}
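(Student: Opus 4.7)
The plan is to prove Theorem \ref{thm:eig-finite-guarantee} by induction on the matrix size $m$, with the base case $m=1$ handled trivially in line 1 of $\EIG$. For the inductive step, the strategy is to verify that each recursive subcall on $\widetilde{A}_\pm$ genuinely satisfies the hypothesis of $\EIG$ (i.e., has $(4\epsilon/5)$-pseudospectrum shattered with respect to $\g_\pm$), so that the induction applies, and then to bookkeep how the forward errors compose as one moves back up the tree.

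The core technical step is \emph{preservation of shattering}. Let $P_\pm$ be the true spectral projectors onto the invariant subspaces corresponding to the eigenvalues in $\g_\pm$, and let $V_\pm\in\C^{m\times n_\pm}$ be exact matrices whose orthonormal columns span $\mathrm{range}(P_\pm)$; then $V_\pm^\ast A V_\pm$ has spectrum exactly $\Lambda(A)\cap \g_\pm$ and its $\epsilon$-pseudospectrum is shattered with respect to $\g_\pm$, as follows from the contour integral definition and the fact that $\|(z-V_\pm^\ast A V_\pm)^{-1}\|\le \|(z-A)^{-1}\|$ for $z\in\g_\pm$ (via the resolvent identity together with $V_\pm V_\pm^\ast=P_\pm$ commuting with $A$). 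We will then bound
\[
\|\widetilde{A}_\pm - V_\pm^\ast A V_\pm\| \;\le\; 2\|\widetilde{Q}_\pm - V_\pm\|\cdot \|A\| + \|E_{6,\pm}\| \;\lesssim\; \spnerr\,\|A\| + \sqrt{m}\,\mach\,\|A\|,
\]
using the $\SPAN$ guarantee (Theorem \ref{thm:deflate-guarantee-usable}) together with the additive rounding model. With $\spnerr = \delta\epsilon^2/200$ and the stated precision, this perturbation is $\le \epsilon/5$, so Proposition \ref{prop:decrementeps} yields shattering of $\Lambda_{4\epsilon/5}(\widetilde{A}_\pm)$ with respect to $\g_\pm$. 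The geometric factor $4/5$ is chosen to leave a $1/5$ buffer for each recursion level; since $\SPLIT$ guarantees $\min\{n_+,n_-\}\ge m/5$, the recursion has depth $O(\log n)$ and $\epsilon$ contracts only by a $\mathrm{poly}(1/n)$ factor overall, which is harmless for the precision bound.

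The second task is \emph{propagation of eigenvector accuracy}. By induction, each $\widetilde V_\pm$ has columns within $4\eigerr/5$ of some exact unit eigenvectors $u_{i,\pm}$ of $\widetilde{A}_\pm$. Applying Proposition \ref{prop:eigperturb} to the perturbation from $V_\pm^\ast A V_\pm$ to $\widetilde{A}_\pm$, and using that shattering gives $\kappa_V(V_\pm^\ast A V_\pm)\lesssim \omega/\epsilon$ (via the warm-up lemma bounding $\kappa(\lambda_i)$ by $2\omega/\pi\epsilon$ in the shattered setting) and $\gap\gtrsim \epsilon$, we convert $u_{i,\pm}$ into nearby exact eigenvectors of $V_\pm^\ast A V_\pm$; lifting by $V_\pm$ gives exact eigenvectors $v_i=V_\pm u_{i,\pm}$ of $A$. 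Then $\widetilde{Q}_\pm u_{i,\pm}$ is within $\spnerr$ of $v_i$, and the steps 8--9 contribute only additional $O(\sqrt{n}\,\mach)$ error (normalization in finite arithmetic uses \eqref{eqn:flnorm}). Feeding in $\spnerr \sim \delta\epsilon^2$ and the claimed $\mach$ makes the total eigenvector error at this level at most $\eigerr$, and $\widetilde{D}_{i,i}\in \g_\pm$ lies in the correct square by Proposition \ref{prop:decrementeps} applied to $\widetilde{A}_\pm - V_\pm^\ast A V_\pm$ combined with shattering.

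The remaining accounting is routine. For the failure probability, $\SPLIT$ is deterministic, and the recursion tree has at most $2n-1$ nodes and hence at most $O(n)$ calls to $\SPAN$; the setting $\sgnerr = \spnerr^4\eigprob^2/(4(20n)^6 n^8)$ is tuned so that each $\SPAN$ call fails with probability $\le \eigprob/n$ (by Theorem \ref{thm:deflate-guarantee-usable}, whose failure probability scales as $(20n)^3\sqrt{\sgnerr}/\spnerr^2$), yielding total failure $\le \eigprob$ by a union bound. The precision $\mach$ is set as the maximum of $\mach_\SPLIT$ at the deepest level (where $\epsilon$ has shrunk to $\Omega(\epsilon/\mathrm{poly}(n))$) and $\mach_\SPAN$, plus mild rounding margins for steps 6, 8, 9; substituting the definitions of $\mach_\SPLIT$ and $\mach_\SPAN$ from Theorems \ref{thm:split-guarantee} and \ref{thm:deflate-guarantee-usable} gives the stated bit length. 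The arithmetic cost at each level is dominated by one $\SPLIT$ and two $\SPAN$ calls, i.e., $O(\log(1/\omega)\,N_{\EIG}\,T_\INV(n)) + O(T_\MM(n))$; summing the geometric series from the $O(\log n)$ recursion levels (where subproblem sizes shrink but, conservatively, we bound each by $n$ and each $\omega$ by $\omega(\g)$) gives the advertised runtime after invoking Theorem \ref{thm:instantiate} to replace $T_\INV, T_\QR$ by $T_\MM$.

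The main obstacle is the shattering-preservation step: one must squeeze the combined $\SPLIT$, $\SPAN$, and rounding errors of the restriction $\widetilde{A}_\pm$ below a fixed fraction of $\epsilon$, which forces the delicate chain $\sgnerr \ll \spnerr^4\eigprob^2/\mathrm{poly}(n) \ll (\delta\epsilon^2)^4 \eigprob^2/\mathrm{poly}(n)$ in lines 2--3 of the pseudocode. Everything else is careful bookkeeping of error accumulation along a logarithmic-depth recursion tree.
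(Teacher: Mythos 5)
Your plan follows the paper's proof almost step for step: the compression lemma for preserving shattering is exactly Lemma \ref{lem:compress-shattered} (though your justification via ``$V_\pm V_\pm^\ast = P_\pm$ commuting with $A$'' is off---$P_\pm$ is in general an oblique projector, so $V_\pm V_\pm^\ast$ is only the orthogonal projector onto its range, and what the paper actually uses is that $\range(V_\pm)$ is $A$-invariant so that $(z-A)V_\pm v \in \range(V_\pm)$); the perturbation bound on $\widetilde{A}_\pm$ is Lemma \ref{lem:subproblem-shattered}; and the union bound over $\SPAN$ calls and the level-by-level cost accounting match Lemmas \ref{lem:eig-recursion-depth}--\ref{lem:param-lbs} and the surrounding computation.

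The one genuine gap is in your eigenvector-accuracy propagation. You invoke Proposition \ref{prop:eigperturb} with ``$\kappa_V(V_\pm^\ast A V_\pm)\lesssim \omega/\epsilon$,'' but shattering only bounds the \emph{individual} eigenvalue condition numbers by $2\omega/\pi\epsilon$; converting to $\kappa_V$ via \eqref{eqn:kappav} costs a factor of $n$, and Proposition \ref{prop:eigperturb} contributes a further factor of $6n$, so a perturbation of size $\eta\approx 15\spnerr = 15\eigerr\epsilon^2/200$ yields a forward eigenvector error of order $n^2\omega\eigerr$, which is not $\le \eigerr/10$ once $n\ge 3$ and $\omega$ is of order one. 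Worse, the hypothesis $\eta < \gap/8\kappa_V \approx \epsilon^2/(n\omega)$ of that proposition can fail outright with the pseudocode's setting of $\spnerr$, and the proposition also assumes $\|A\|\le 1$, which the iterates violate. The paper avoids all of this with the dimension-free Lemma \ref{lem:perturb-shattered}: a contour integral around a single grid square gives $\|\widetilde v - v\|\le \frac{\sqrt 8\,\omega}{\pi}\frac{\eta}{\epsilon(\epsilon-\eta)}$, which does close with $\spnerr = \eigerr\epsilon^2/200$. You either need that lemma or must shrink $\spnerr$ by a $\poly(n)$ factor, which changes the pseudocode and the stated precision constants. A smaller point: in the runtime, ``conservatively bounding each subproblem by $n$'' does not give a convergent sum, since the tree has $\Theta(n)$ nodes; you need the convexity observation that $T(n_+)+T(n_-)\le \tfrac{16}{25}T(n)$ because the per-call cost is superquadratic in the dimension.
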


\begin{remark}
    We have not fully optimized the large constant $2^{9.59}$ appearing in the bit length above.
\end{remark}

Theorem \ref{thm:eig-finite-guarantee} easily implies Theorem \ref{thm:bkwd} when combined with $\SHATTER$.
\begin{theorem}[Restatement of Theorem \ref{thm:bkwd}] 
 There is a randomized algorithm $\EIG$ which on input any matrix $A\in \C^{n\times n}$ with $\|A\|\le 1$ and a desired accuracy parameter $\delta \in (0,1)$ outputs a diagonal $D$ and invertible $V$ such that 
$$ \|A-VDV^{-1}\|\le \delta \quad\mathrm{and}\quad \kappa(V) \le 32n^{2.5}/\delta$$
in 
$$O\left(T_\MM(n)\log^2\frac{n}{\delta}\right)$$
arithmetic operations on a floating point machine with $$O\left(\log^4\frac{n}{\delta}\log n\right)$$ bits of precision, with probability at least $1-14/n$. Here $T_\MM(n)$ refers to the running time of a numerically stable matrix multiplication algorithm (detailed in Section \ref{sec:logstable}). 
\end{theorem}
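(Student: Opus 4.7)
The plan is to reduce Theorem~\ref{thm:bkwd} to Theorems~\ref{thm:finiteshatter} and~\ref{thm:eig-finite-guarantee} by first perturbing $A$ into a pseudospectrally shattered nearby matrix $X$, and then calling $\EIG$ on $X$ to extract an approximate diagonalization whose forward-error guarantees are converted into the claimed backward-error bound on $A$.

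\emph{Step 1 (shatter).} I would invoke $\SHATTER(A,\gamma)$ with perturbation size $\gamma := \delta/8$. By Theorem~\ref{thm:finiteshatter}, with probability at least $1-1/n-12/n^2$ the output is a matrix $X$ together with a grid $\g$ of square width $\omega = \gamma^4/(4n^5)$ and diameter $O(1)$, and a shattering parameter $\epsilon = \gamma^5/(32n^9)$, satisfying
\[
    \|X-A\|\le 4\gamma = \delta/2,\qquad \kappa_V(X)\le n^2/\gamma = 8n^2/\delta,\qquad \Lambda_\epsilon(X)\text{ shattered w.r.t.\ }\g.
\]
In particular $\log(1/\omega)$ and $\log(1/\epsilon)$ are both $O(\log(n/\delta))$, and $\|X\|\le \|A\| + 4\gamma \le 3.5$.

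\emph{Step 2 (diagonalize $X$).} I would then call $\EIG(X,\delta',\g,\epsilon,1/n^2,n)$ with a forward-error target $\delta' := \delta^2/(cn^3)$ for a small absolute constant $c$. By Theorem~\ref{thm:eig-finite-guarantee}, with conditional probability at least $1-1/n^2$ we obtain $(\tilde V,\tilde D)$ where $\tilde V$ has unit-norm columns (up to $n\mach$), each $\tilde v_i$ is within $\delta'$ of some true unit right eigenvector $v_i$ of $X$, and each $\tilde\lambda_i$ lies in the same square of $\g$ as its matching $\lambda_i$, so $|\tilde\lambda_i-\lambda_i|\le \omega\sqrt{2}\le \delta'$.

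\emph{Step 3 (forward-to-backward conversion and condition number).} Using $\sum_i \kappa(\lambda_i)^2 \le n^3/\gamma^2$ (established inside the proof of Theorem~\ref{thm:smoothed}), the matrix $V$ whose columns are the unit eigenvectors $v_i$ satisfies $\|V\|\le \sqrt n$ and $\|V^{-1}\|\le \|V^{-1}\|_F \le n^{3/2}/\gamma$, so $\kappa(V)\le n^2/\gamma$. From $\tilde V = V + (\tilde V - V)$ with $\|\tilde V - V\|\le \sqrt n\,\delta'$ and the identity $\tilde V^{-1} - V^{-1} = -\tilde V^{-1}(\tilde V - V)V^{-1}$, choosing $c$ large enough gives $\|\tilde V^{-1}\|\le 2\|V^{-1}\|$ and
\[
    \kappa(\tilde V) \le 2\kappa(V) + O(\sqrt n\,\delta'\,\kappa(V)^2) \le 4n^2/\gamma = 32n^{2.5}/\delta.
\]
Writing $X = VDV^{-1}$ and splitting
\[
    \tilde V\tilde D\tilde V^{-1} - X = \tilde V(\tilde D - D)\tilde V^{-1} + (\tilde V - V) D\,\tilde V^{-1} + V D (\tilde V^{-1} - V^{-1}),
\]
and using $\|D\|\le \|X\|+\epsilon\le 4$, each of the three summands is bounded by $O(\sqrt n\,\delta'\cdot \kappa(\tilde V))\le \delta/6$ for our choice of $\delta'$. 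The triangle inequality combined with $\|X-A\|\le \delta/2$ yields $\|\tilde V\tilde D\tilde V^{-1} - A\|\le \delta$, as required.

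\emph{Step 4 (resources).} Plugging $\omega,\epsilon,\delta' = \poly(\delta/n)$ into Theorem~\ref{thm:eig-finite-guarantee}, the runtime is
\[
    O\!\left(T_\MM(n)\,\log(1/\omega)\,\bigl(\log(n/\epsilon) + \log\log(1/(\theta\delta'))\bigr)\right) \;=\; O(T_\MM(n)\log^2(n/\delta)),
\]
and the precision requirement becomes $O(\log^3(n/\epsilon)\log(n/(\theta\delta'\epsilon))\log n) = O(\log^4(n/\delta)\log n)$. The $O(n^2)$ arithmetic operations and mild precision needed for $\SHATTER$ are dominated by these. A union bound over the failure events of $\SHATTER$ and $\EIG$ gives the claimed success probability (the extra $1/n^2$ from $\EIG$ is absorbed into the constant $12/n^2$).

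The main obstacle is Step~3: keeping a clean bookkeeping of how the forward-error guarantees on $\tilde V$ and $\tilde D$ transfer through the similarity $\tilde V\tilde D\tilde V^{-1}$ without blowing up $\kappa(\tilde V)$ or the required accuracy $\delta'$. The key points are that $\gamma = \Theta(\delta)$ already controls $\kappa_V(X)$ polynomially in $n/\delta$, and that the forward error $\delta'$ need only be inverse-polynomial in $n/\delta$ for the backward-error identity to close, so that the $\log\log(1/\delta')$ dependence in the runtime and the $\log(1/\delta')$ dependence in the precision of $\EIG$ still yield the advertised $\log^2(n/\delta)$ and $\log^4(n/\delta)\log n$ bounds.
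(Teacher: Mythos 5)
Your overall route is the same as the paper's: run $\SHATTER(A,\delta/8)$, call $\EIG$ on the shattered matrix $X$ with an inverse-polynomial forward accuracy $\delta'$, and convert the forward guarantees into a backward bound by telescoping $\widetilde V\widetilde D\widetilde V^{-1}-VDV^{-1}$ into three summands. Steps 1, 2 and 4 are sound, and your bound $\kappa(\widetilde V)\le 32n^{2.5}/\delta$ via $\|V^{-1}\|_F^2=\sum_i\kappa(\lambda_i)^2\le n^3/\gamma^2$ is a legitimate (in fact slightly sharper) variant of the paper's argument through $\sigma_n(V)\ge\sigma_n(W)-\sqrt n\,\delta'$.

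The gap is in Step 3: the third summand $VD(\widetilde V^{-1}-V^{-1})$ is \emph{not} $O(\sqrt n\,\delta'\,\kappa(\widetilde V))$. Bounded the way you set it up, via $\|\widetilde V^{-1}-V^{-1}\|\le\|\widetilde V^{-1}\|\,\|\widetilde V-V\|\,\|V^{-1}\|$, it costs \emph{two} factors of $\|V^{-1}\|\approx n^{1.5}/\gamma$ rather than one: the term is of order $\|V\|\,\|D\|\cdot n^{3.5}\delta'/\gamma^2 \approx n^4\delta'/\delta^2$. With your choice $\delta'=\delta^2/(cn^3)$ this is $\Theta(n/c)$, which is nowhere near $\delta/6$ for any absolute constant $c$, so the backward error does not close. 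Two repairs are available: (i) take $\delta'=\delta^3/(c\,n^4)$ — this is essentially what the paper does, with $\delta'=\delta^3/(1536\,n^{2.5})$ — which costs nothing in Step 4 since $\log(1/\delta')$ remains $O(\log(n/\delta))$; or (ii) regroup the third summand as $-X\,(V\widetilde V^{-1})\,(\widetilde V-V)\,V^{-1}$, noting $\|V\widetilde V^{-1}\|=\|I-(\widetilde V-V)\widetilde V^{-1}\|\le 2$, which gives a bound of order $n^2\delta'/\gamma$ and does close with your stated $\delta'$. As written, however, the claim that all three summands are $\le\delta/6$ is false, and this is exactly the bookkeeping you flagged as the main obstacle.
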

\begin{proof}
Given $A$ and $\delta$, consider the following two step algorithm:
    \begin{enumerate}
        \item $(X, \g, \epsilon)\gets \SHATTER(A,\delta/8)$.
        \item $(V,D)\gets \EIG(X,\delta',\g,\epsilon,1/n,n)$, where
        \begin{equation}\label{eqn:eigdeltasetting}
          \delta' := \frac{\delta^3}{n^{4.5} \cdot 6 \cdot 128 \cdot 2}.
        \end{equation}
    \end{enumerate}
With probability at least $1 - 13/n$, $\SHATTER(A,\delta/8)$ succeeds, in which case the output $(X,\grid,\epsilon)$ output easily satisfy the assumptions in Theorem \ref{thm:eig-finite-guarantee}: $\delta' \le \delta < 1$, $\epsilon = \tfrac{(\delta/8)^5}{32 n^9} \le 1/2$, $\g$ is defined by $\SHATTER$ to have side length $8$, $\|X\| \le \|A\| + \|X - A\| \le 1 + 4(\delta/8) \le 3.5$, and $X$ has $\epsilon$-pseudospectrum shattered with respect to $\g$. On this event, $X = WCW^{-1}$, and (using the proof of Theorem \ref{thm:polygap}) if we normalize $W$ to have unit length columns, then $\kappa(W) = \|W\|\|W^{-1}\| \le 8n^2/\delta$.

We will show that the choice of $\delta'$ in \eqref{eqn:eigdeltasetting} guarantees 
$$\|X-VDV^{-1}\|\le \delta/2.$$
Since $\|X\| \le \|A\| + \|A - X\| \le 1 + 4\gamma \le 3$ from Theorem \ref{thm:finiteshatter}, the hypotheses of Theorem \ref{thm:eig-finite-guarantee} are satisfied. Thus $\EIG$ succeeds with probability at least $1-1/n$, and by a union bound, both $\EIG$ and $\SHATTER$ succeed with probabiility at least $1 - 14/n$. On this event, we have $V=W+E$ for some $\|E\|\le \delta'\sqrt{n}$, so
$$\|V-W\|\le \delta'\sqrt{n},$$
as well as
$$
    \sigma_n(V)\ge \sigma_n(W)-\|E\|\ge \frac{\delta}{8n^2}-\delta'\sqrt{n}\ge \frac{\delta}{16n^2},
$$
since our choice of $\delta'$ satisfies the much cruder bound of
\begin{equation*}\label{eqn:deltaprime}
    \delta'\le \frac{\delta}{16n^{2.5}},
\end{equation*}
This implies that
$$
    \kappa(V)=\|V\|\|V^{-1}\|\le 2\sqrt{n}\cdot \frac{16n^2}{\delta},
$$
establishing the last item of the theorem.We can control the perturbation of the inverse as:
 \begin{align*}
     \Vert V^{-1} - W^{-1} \Vert &= \Vert W^{-1} (W - V) V^{-1} \Vert\\
     &\le \kappa(W)\|W - V\| \|V^{-1}\| \\
     &\le \frac{8n^2}{\delta}\cdot \delta'\sqrt{n} \cdot \frac{16 n^2}{\delta}\\
     &\le \frac{128 n^{4.5}\delta'}{\delta^2}.
 \end{align*}

The grid output by $\SHATTER(A,\delta/8)$ has $\omega = \tfrac{\delta^4}{4*8^4*n^5} \le \tfrac{\delta}{\sqrt{2}}$ provided $\delta < 1$. Thus the guarantees on $\EIG$ in Theorem \ref{thm:eig-finite-guarantee} tell us each eigenvalue of $X = WCW^{-1}$ shares a grid square with exactly one diagonal entry of $D$, which means that $\|C - D\| \le \sqrt{2}\omega \le \delta$. So, we have:
\begin{align*}
    \|VDV^{-1}-WCW^{-1}\| &\le \| (V-W) D V^{-1} \| + \| W (D-C) V^{-1} \| + \| WC (V^{-1} - W^{-1}) \| \\
    &\le \delta' \sqrt{n} \cdot 5 \cdot \frac{16n^2}{\delta} + \sqrt{n} \delta' \frac{16 n^2}{\delta} + \sqrt{n} \cdot 5 \cdot \frac{128n^{4.5} \delta'}{\delta^2} \\
    &= \frac{\delta' n^{4.5}}{\delta} \left( 5 \cdot 16 + 16 + \frac{5 \cdot 128}{\delta} \right) \\
    &\le \frac{\delta' n^{4.5}}{\delta^2} \cdot 6\cdot 128
\end{align*}
    which is at most $\delta/2$, for $\delta'$ chosen as above. We conclude that
$$ \|A-VDV^{-1}\|\le \|A-X\|+\|X-VDV^{-1}\|\le \delta,$$
with probability $1-14/n$ as desired.

To compute the running time and precision, we observe that $\SHATTER$ outputs a grid with parameters
$$ \omega = \Omega\left(\frac{\delta^4}{n^5}\right),\quad \epsilon =\Omega \left(\frac{\delta^5}{n^9}\right).$$
Plugging this into the guarantees of $\EIG$, we see that it takes
$$O\left(\log \frac{n}{\delta}\left(\log \frac{n}{\delta} + \log\log \frac{n}{\delta}\right) T_{\MM}(n) \right) = O(T_\MM(n)\log^2(n/\delta))$$
arithmetic operations, on a floating point machine with precision
$$O\left(\log^3 \frac{n}{\delta}\log \frac{n}{\delta}\log n\right) = O(\log^4(n/\delta)\log(n))$$
bits, as advertised.
\end{proof}
\subsection{Proof of Theorem \ref{thm:eig-finite-guarantee}}\label{sec:eigproof}

A key stepping-stone in our proof will be the following elementary result controlling the spectrum, pseudospectrum, and eigenvectors after perturbing a shattered matrix.

\begin{lemma}[Eigenvector Perturbation for a Shattered Matrix] \label{lem:perturb-shattered}
    Let $\Lambda_{\epsilon}(A)$ be shattered with respect to a grid whose squares have side length $\omega$, and assume that $\|\widetilde A - A\| \le \eta < \epsilon$. Then, (i) each eigenvalue of $\widetilde A$ lies in the same grid square as exactly one eigenvalue of $A$, (ii) $\Lambda_{\epsilon - \eta}(\widetilde A)$ is shattered with respect to the same grid, and (iii) for any right unit eigenvector $\widetilde v$ of $\widetilde{A}$, there exists a right unit eigenvector of $A$ corresponding to the same grid square, and for which
    $$
        \|\widetilde v - v\| \le \frac{\sqrt{8}\omega}{\pi}\frac{\eta}{\epsilon(\epsilon - \eta)}.
    $$
\end{lemma}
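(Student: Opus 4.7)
The three claims have very different flavors, so I would handle (i), (ii) as a unit using the general pseudospectral perturbation machinery already laid out in Section~\ref{sec:prelimspectral}, and then treat (iii) separately via a contour-integral bound on the spectral projectors followed by a Pythagorean argument to pass from projectors to unit eigenvectors.

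For (i) and (ii), Proposition~\ref{prop:decrementeps} immediately gives $\Lambda_{\epsilon-\eta}(\widetilde A)\subseteq \Lambda_\epsilon(A)$, and since the latter is shattered with respect to $\g$ and has empty intersection with $\g$, the same holds for the former. This already shows that every square contains at most one eigenvalue of $\widetilde A$. To show each square of $\g$ containing an eigenvalue of $A$ also contains one of $\widetilde A$, I would use a homotopy $A_t:=(1-t)A+t\widetilde A$: since $\|A_t-A\|\le\eta<\epsilon$, Proposition~\ref{prop:decrementeps} ensures the $(\epsilon-\eta)$-pseudospectrum of $A_t$ stays inside $\Lambda_\epsilon(A)$ (hence is shattered and its bounded components each contain at least one eigenvalue by Theorem~\ref{thm:componentsofpseudoespec}), so eigenvalue trajectories of $A_t$ are continuous and trapped inside their initial grid squares.

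For (iii), let $\widetilde\lambda,\lambda$ be the unique eigenvalues of $\widetilde A,A$ inside the common grid square $S$ containing $\widetilde v$, and let $\Gamma=\partial S$. Since $\Lambda_\epsilon(A)\cap\Gamma=\emptyset$ and $\Lambda_{\epsilon-\eta}(\widetilde A)\cap\Gamma=\emptyset$, the resolvents satisfy $\|(z-A)^{-1}\|\le 1/\epsilon$ and $\|(z-\widetilde A)^{-1}\|\le 1/(\epsilon-\eta)$ on $\Gamma$. Writing the rank-one spectral projectors as contour integrals and applying the resolvent identity,
$$
\|\widetilde P-P\|=\left\|\frac{1}{2\pi i}\oint_\Gamma (z-\widetilde A)^{-1}(\widetilde A-A)(z-A)^{-1}\,dz\right\|\le \frac{\ell(\Gamma)}{2\pi}\frac{\eta}{\epsilon(\epsilon-\eta)}=\frac{2\omega\eta}{\pi\epsilon(\epsilon-\eta)}=:\mu,
$$
using $\ell(\Gamma)=4\omega$.

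The main obstacle is converting this into the advertised $\sqrt 8$-factor eigenvector bound; a naive triangle-inequality argument applied to $\widetilde v=cv+e$ with $c=w^*\widetilde v$ gives only the weaker $2\mu$. To sharpen this, let $E$ denote the (necessarily one-dimensional, by the shattering hypothesis) eigenspace of $A$ for $\lambda$, and let $\Pi_E$ be the orthogonal projection onto $E$. Since $P\widetilde v\in E$ and $\Pi_E\widetilde v$ is the closest point in $E$ to $\widetilde v$,
$$
\|(I-\Pi_E)\widetilde v\|\le\|(I-P)\widetilde v\|=\|(\widetilde P-P)\widetilde v\|\le\mu,
$$
where the middle equality uses $\widetilde P\widetilde v=\widetilde v$. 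Setting $v:=\Pi_E\widetilde v/\|\Pi_E\widetilde v\|$ (a unit eigenvector of $A$) and $a:=\|\Pi_E\widetilde v\|$, the Pythagorean identity $a^2+\|(I-\Pi_E)\widetilde v\|^2=1$ yields $a\ge\sqrt{1-\mu^2}$, and a direct computation gives $\|\widetilde v-v\|^2=2(1-a)\le 2\mu^2$. This furnishes $\|\widetilde v-v\|\le\sqrt 2\,\mu=\frac{\sqrt 8\,\omega\eta}{\pi\epsilon(\epsilon-\eta)}$, as required.
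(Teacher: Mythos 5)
Your proposal is correct and follows essentially the same route as the paper: parts (i) and (ii) via the homotopy $A_t$ and the inclusion $\Lambda_{\epsilon-\eta}(\widetilde A)\subseteq\Lambda_\epsilon(A)$, and part (iii) via the contour-integral bound $\frac{2\omega\eta}{\pi\epsilon(\epsilon-\eta)}$ on the difference of rank-one spectral projectors followed by an ``oblique projection dominates orthogonal projection'' plus Pythagoras step to extract the $\sqrt{2}$ factor. The only cosmetic difference is the direction of that last step --- the paper projects $v$ onto $\mathrm{span}(\widetilde v)$ using the normalization $w^*v=1$, whereas you project $\widetilde v$ onto the eigenspace of $A$ using $\widetilde P\widetilde v=\widetilde v$ --- and both yield the identical constant.
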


\begin{proof}
    For (i), consider $A_t = A + t(\widetilde A - A)$ for $t \in [0,1]$. By continuity, the entire trajectory of each eigenvalue is contained in a unique connected component of $\Lambda_\eta(A) \subset \Lambda_\epsilon(A)$. For (ii), $\Lambda_{\epsilon - \eta}(\widetilde A) \subset \Lambda_{\epsilon}(A)$, which is shattered by hypothesis. Finally, for (iii), let $w^\ast$ and $\widetilde{w}^\ast$ be the corresponding left eigenvectors to $v$ and $\widetilde{v}$ respectively, normalized so that $w^\ast v = \widetilde{w}^\ast \widetilde{v} = 1$. Let $\Gamma$ be the boundary of the grid square containing the eigenvalues associated to $v$ and $\widetilde{v}$ respectively. Then, using a contour integral along $\Gamma$ as in (\ref{eq:projectorstability}) above, one gets
    \begin{align*}
        \| \widetilde{v}\widetilde{w}^\ast - vw^\ast\| \le \frac{2\omega}{\pi}\frac{\eta}{\epsilon(\epsilon - \eta)}.
    \end{align*}
    $$
    $$
    Thus, using that $\|v\|=1$ and $w^\ast v  = 1$,
    
    $$
        \|\widetilde{v}\widetilde{w}^*-vw^*\|\geq  \|(\widetilde{v}\widetilde{w}^*-vw^*) v\| = \|(\widetilde{w}^* v) \widetilde{v}-v\|.
    $$
    Now, since $(\widetilde{v}^*v) \widetilde{v}$ is the orthogonal projection of $v$ onto the span of $\widetilde{v}$, we have that
    $$
        \|(\widetilde{w}^*v)\widetilde{v}- v\| \geq \|(\widetilde{v}^*v) \widetilde{v}- v\| = \sqrt{1-|\widetilde{v}^*v|^2}.
    $$
    Multiplying $v$ by a phase we can assume without loss of generality that $\widetilde{v}^* v\geq 0$ which implies that 
    $$\sqrt{1-(\widetilde{v}^*v)^2} = \sqrt{(1-\widetilde{v}^*v)(1+\widetilde{v}^*v)} \geq \sqrt{1-\widetilde{v}^*v}.$$
    The above discussion can now be summarized in the following chain of inequalities
   $$\sqrt{1-\widetilde{v}^*v}\leq \sqrt{1-(\widetilde{v}^*v)^2}\leq \|(\widetilde{w}^*v)\widetilde{v}- v\| \leq \|\widetilde{v}\widetilde{w}^*-vw^*\| \leq \frac{2\omega}{\pi}\frac{\eta}{\epsilon(\epsilon - \eta)}.$$
   Finally, note that $\|v-\widetilde{v}\| = \sqrt{2-2\widetilde{v}^*v} \leq \frac{\sqrt{8}\omega}{\pi} \frac{\eta}{\epsilon(\epsilon- \eta)}$
   as we wanted to show. 
\end{proof}

The algorithm $\EIG$ works by recursively reducing to subinstances of smaller size, but requires a pseudospectral guarantee to ensure speed and stability. We thus need to verify that the pseudospectrum does not deteriorate too subtantially when we pass to a sub-problem.

\begin{lemma}[Shattering is preserved after compression] \label{lem:compress-shattered} 
    Suppose $P$ is a spectral projector of $A\in\C^{n\times n}$ of rank $k$. Let $Q\in \C^{n\times k}$ be such that $Q^*Q=I_k$ and that its columns span the same space as the columns of $P$. Then for every $\epsilon>0$,
    $$\Lambda_\epsilon(Q^*AQ)\subset \Lambda_\epsilon(A).$$
    Alternatively, the same pseudospectral inclusion holds if again $Q^*Q=I_k$ and, instead, the columns of $Q$ span the same space as the rows of $P$. 
\end{lemma}
\begin{proof} We will first analyze the case when the columns of $Q$ span the same space as the columns of $P$.  To begin, note that if $z\in \Lambda_\epsilon(Q^*AQ)$ then there exists $v\in \C^k$ satisfying $\|(z-Q^*AQ)v\|\le \epsilon \|v\|$. Since $I_k=Q^*I_nQ$ we have
    $$ \|Q^*(z-A)Qv\|\le \epsilon\|v\|.$$
    And, because $Q^*$ acts as an isometry on $\range(Q)$ (the span of the columns of $Q$) and by assumption this space is invariant under $P$ (and hence under $(z-A)$), we have that  $(z-A)Qv\in \range(Q)$, and therefore $\|Q^*(z-A)Qv\| = \|(z-A)Qv\|$. From where we obtain
    $$
        \|(z-A)Qv\|\le \epsilon\|v\|=\epsilon\|Qv\|,
    $$
    showing that $z\in \Lambda_\epsilon(A)$.
    
    For the case in which the columns of $Q$ span the rows of $P$, the above proof can be easily modified by now taking $v$ with the property that $\|v^* Q^* (z-A)Q\|\leq \epsilon \|v\|$.
\end{proof}

\begin{observation}
    Since $\eigerr,\omega(\g),\epsilon \le 1$, our assumption on $\spnerr$ in Line 2 of the pseudocode of $\EIG$ implies the following bounds on $\spnerr$ which we will use below: 
    $$
        \spnerr \le \min\left\{0.02,\epsilon/75,\eigerr/100,\frac{\eigerr\epsilon^2}{200\omega(\g)}\right\}.
    $$
\end{observation}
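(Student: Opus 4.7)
The plan is to substitute the definition $\spnerr := \eigerr\epsilon^2/200$ from Line~2 of the pseudocode for $\EIG$ and verify each of the four listed inequalities by elementary arithmetic, using only the hypotheses $\eigerr,\epsilon,\omega(\g)\le 1$. There is no analytic content here; the observation is purely a bookkeeping step that repackages the single definition of $\spnerr$ into the specific forms in which it will be invoked in the subsequent error analysis of $\EIG$.

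Concretely, I would handle the four bounds as follows. For $\spnerr\le 0.02$, bound $\eigerr\epsilon^2\le 1$ to get $\spnerr\le 1/200=0.005$. For $\spnerr\le \epsilon/75$, use $\eigerr\epsilon\le 1$ so that $\spnerr\le \epsilon/200\le\epsilon/75$. For $\spnerr\le \eigerr/100$, use $\epsilon^2\le 1$ so that $\spnerr\le\eigerr/200\le\eigerr/100$. For $\spnerr\le \eigerr\epsilon^2/(200\omega(\g))$, use $\omega(\g)\le 1$ so that $1/(200\omega(\g))\ge 1/200$, which gives the bound directly from the definition.

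The main ``obstacle'' is really just choosing the cleanest presentation; since all four inequalities follow from the same substitution and the same hypotheses, I would write the proof as a single line-by-line chain, perhaps in an \texttt{align*} block, rather than as four separate computations. No auxiliary lemmas or forward references are needed.
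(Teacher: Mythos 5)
Your proposal is correct and is exactly the intended justification: the paper states this as an observation without proof precisely because it follows by substituting $\spnerr=\eigerr\epsilon^2/200$ and applying $\eigerr,\epsilon,\omega(\g)\le 1$ termwise, as you do. All four elementary bounds check out.
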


Initial lemmas in hand, let us begin to analyze the algorithm. At several points we will make an assumption on the machine precision in the margin. These will be collected at the end of the proof, where we will verify that they follow from the precision hypothesis of Theorem \ref{thm:eig-finite-guarantee}.\\

\noindent {\bf Correctness.}

\begin{lemma}[Accuracy of $\widetilde{\lambda_i}$] \label{lem:subproblem-shattered}
    When $\SPAN$ succeeds, each eigenvalue of $A$ shares a square of $\g$ with a unique eigenvalue of either $\widetilde{A_{+}}$ or $\widetilde{A_{-}}$, and furthermore $\Lambda_{4\epsilon/5} (\widetilde{A_{\pm}}) \subset \Lambda_\epsilon(A)$.
\end{lemma}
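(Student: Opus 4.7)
The plan is to reduce the statement to the exact-arithmetic case by comparing $\widetilde{A_\pm}$ to the idealized compressions $A_\pm := Q_\pm^* A Q_\pm$, where $Q_\pm$ are the (unknown, true) isometries onto $\range(P_\pm)$ that $\SPAN$ is trying to approximate. Lemma \ref{lem:compress-shattered} will immediately give $\Lambda_\epsilon(A_\pm)\subset\Lambda_\epsilon(A)$, so the $\epsilon$-pseudospectrum of $A_\pm$ is shattered by $\g$ (a fortiori, by $\g_\pm$, which contains exactly the eigenvalues of $A_\pm$). The only real work is then to show $\|\widetilde{A_\pm}-A_\pm\|\le\epsilon/5$, after which Lemma \ref{lem:perturb-shattered}(i) places each eigenvalue of $\widetilde{A_\pm}$ in the same grid square as a unique eigenvalue of $A_\pm$ (equivalently, of $A$ in $\g_\pm$), and Proposition \ref{prop:decrementeps} applied to $M=\widetilde{A_\pm}$, $E = A_\pm-\widetilde{A_\pm}$ gives $\Lambda_{4\epsilon/5}(\widetilde{A_\pm})\subset\Lambda_\epsilon(A_\pm)\subset\Lambda_\epsilon(A)$.

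The perturbation bound will be obtained by the standard telescoping
\[
\|\widetilde{A_\pm}-A_\pm\|\le \|(\widetilde{Q_\pm}-Q_\pm)^* A \widetilde{Q_\pm}\| + \|Q_\pm^* A (\widetilde{Q_\pm}-Q_\pm)\| + \|E_{6,\pm}\|,
\]
which, using $\|A\|\le 3.5$, $\|\widetilde{Q_\pm}\|\le 1+\spnerr$, and the guarantee $\|\widetilde{Q_\pm}-Q_\pm\|\le\spnerr$ from a successful call to $\SPAN$, is at most $2\cdot 3.5\cdot\spnerr(1+\spnerr) + \|E_{6,\pm}\|$. The assumption $\spnerr\le \epsilon/75$ (recorded in the observation just before the lemma) makes the first term at most $\epsilon/10$ with room to spare, and a mild constraint on the machine precision (via \eqref{eqn:maxnorm} applied to the matrix-matrix multiplications implicit in Step 6 of $\EIG$, giving $\|E_{6,\pm}\|\le n\mach\cdot\|A\|$) will make $\|E_{6,\pm}\|\le\epsilon/10$ as well. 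We therefore land with $\|\widetilde{A_\pm}-A_\pm\|\le\epsilon/5$, i.e. strictly below the shattering parameter $\epsilon$, as required for Lemma \ref{lem:perturb-shattered}.

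The one genuine subtlety is verifying that the hypotheses of Lemma \ref{lem:compress-shattered} hold for the (non-orthogonal) spectral projectors $P_\pm$ and the orthonormal matrices $Q_\pm$ produced by $\SPAN$. Since $\range(Q_\pm)=\range(P_\pm)$ and $AP_\pm=P_\pm A$, the space $\range(Q_\pm)$ is $A$-invariant, which is all that is needed to push the proof of that lemma through; we will verify this commutation hypothesis explicitly when invoking the lemma. The other minor bookkeeping item is that the lemma is stated for the \emph{true} $Q_\pm$, not the computed $\widetilde{Q_\pm}$, which is precisely why we introduced $A_\pm$ as an intermediate object rather than working with $\widetilde{A_\pm}$ directly.

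The main obstacle, therefore, is not conceptual but just making sure the constants line up: the two error contributions ($\spnerr$-error from $\SPAN$ and roundoff $E_{6,\pm}$) must together leave the compressed matrix's perturbation strictly smaller than $\epsilon$, and in fact at most $\epsilon/5$ in order to obtain the $4\epsilon/5$-pseudospectrum inclusion in the statement. This motivates the specific choices of $\spnerr$ and the precision floor made in the algorithm, and dictates one of the machine-precision constraints that will be collected at the end of the proof of Theorem \ref{thm:eig-finite-guarantee}.
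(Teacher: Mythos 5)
Your proposal is correct and takes essentially the same route as the paper's proof: both introduce the exact compressions $A_\pm = Q_\pm^*AQ_\pm$, invoke Lemma \ref{lem:compress-shattered} (whose proof indeed only needs $A$-invariance of $\range(Q_\pm)$, as you note), telescope $\|\widetilde{A_\pm}-A_\pm\|$ into the two cross terms plus $E_{6,\pm}$, and use $\spnerr\le\epsilon/75$ together with a machine-precision constraint to land at $\epsilon/5$ before applying Lemma \ref{lem:perturb-shattered}. The only cosmetic difference is that the roundoff $E_{6,\pm}$ from Step 6 is governed by the $\mu_{\MM}(n)$ guarantee of Definition \ref{def:MM} rather than the entrywise bound \eqref{eqn:maxnorm}, but your deferral of this to a precision floor matches what the paper does.
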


\begin{proof}
   Let $P_{\pm}$  be the true projectors onto the two bisection regions found by $\SPLIT(A,\sgnerr)$, $Q_{\pm}$ \marginnote{$P_{\pm}, Q_{\pm}$} be the matrices whose orthogonal columns span their ranges, and $A_{\pm} := Q_{\pm}^\ast A Q_{\pm}$. \marginnote{$A_{\pm}$}
   From Theorem \ref{thm:deflate-guarantee-usable}, on the event that $\SPAN$ succeeds, the approximation $\widetilde{Q_{\pm}}$ that it outputs satisfies $\|\widetilde{Q_{\pm}} - Q_{\pm}\| \le \spnerr$, so in particular $\|\widetilde{Q_{\pm}}\| \le 2$ as $\spnerr \le 1$. The error $E_{6,\pm}$ from performing the matrix multiplications necessary to compute $\widetilde{A_{\pm}}$ admits the bound
   \begin{align*}
       \|E_{6,\pm}\| 
       &\le \mu_{\MM}(n)\|\widetilde{Q_{\pm}}\|\|A\widetilde{Q_{\pm}}\|\mach + \mu_{\MM}(n)^2\|\widetilde{Q_{\pm}}A\|\mach + \mu_{\MM}(n)^2\|\widetilde{Q_{\pm}}\|^2\|A\|\mach \\
       &\le 16\left(\mu_{\MM}(n)\mach + \mu_{\MM}(n)^2\mach^2\right) & & \|A\| \le 4 \text{ and } \|\widetilde{Q_{\pm}}\| \le 1 + \eta \le 1.02 \le \sqrt{2} \\
       &\le 3\eta & & \mach \le \frac{\eta}{10 \mu_{\MM}(n)^2}.
   \end{align*}
   Iterating the triangle inequality, we obtain
   \begin{align*}
       \|\widetilde{A_{\pm}} - A_{\pm}\| 
       &\le \|E_{6,\pm}\| + \|(\widetilde{Q_{\pm}} - Q_{\pm})A\widetilde{Q_{\pm}}\| + \|Q_{\pm}A(\widetilde{Q_{\pm}} - Q_{\pm})\| \\
       &\le 3\spnerr + 8\spnerr + 4\spnerr & & \|\widetilde{Q_{\pm}} - Q_{\pm}\| \le \spnerr \\
       &\le \epsilon/5 & & \spnerr \le \epsilon/75.
   \end{align*}
   We can now apply Lemma \ref{lem:perturb-shattered}.
\end{proof}

Everything is now in place to show that, if every call to $\SPAN$ succeeds, $\EIG$ has the advertised accuracy guarantees. After we show this, we will lower bound this success probability and compute the running time.

When $A \in \C^{1\times 1}$, the algorithm works as promised. Assume inductively that $\EIG$ has the desired guarantees on instances of size strictly smaller than $n$. In particular, maintaining the notation from the above lemmas, we may assume that
$$
    (\widetilde{V}_{\pm}, \widetilde{D_{\pm}}) = \EIG(\widetilde{A_{\pm}},4\epsilon/5,\g_{\pm},4\eigerr/5,\eigprob,n)
$$
satisfy (i) each eigenvalue of $\widetilde{D_{\pm}}$ shares a square of $\g_{\pm}$ with exactly one eigenvalue of $\widetilde{A_{\pm}}$, and (ii) each column of $\widetilde{V_{\pm}}$ is $4\eigerr/5$-close to a true eigenvector of $\widetilde{A_{\pm}}$. From Lemma \ref{lem:perturb-shattered}, each eigenvalue of $\widetilde{A_{\pm}}$ shares a grid square with exactly one eigenvalue of $A$, and thus the output
$$
    \widetilde{D} = \begin{pmatrix} \widetilde{D_+} &  \\  & \widetilde{D_-} \end{pmatrix}
$$
satisfies the eigenvalue guarantee.

To verify that the computed eigenvectors are close to the true ones, let $\widetilde{\widetilde{v}_{\pm}}$ \marginnote{$\widetilde{\widetilde{v}_{\pm}}$} be some approximate right unit eigenvector of one of $\widetilde{A_{\pm}}$ output by $\EIG$ (with norm $1 \pm n\mach$), $\widetilde{v}_{\pm}$ \marginnote{$\widetilde{v}_{\pm}$} the exact unit eigenvector of $\widetilde{A_\pm}$ that it approximates, and $v_{\pm}$ \marginnote{$v_{\pm}$} the corresponding exact unit eigenvector of $A_{\pm}$. Recursively, $\EIG(A,\epsilon,\g,\eigerr,\eigprob,n)$ will output an approximate unit eigenvector 
$$
    \widetilde{v} := \frac{\widetilde{Q_{\pm}} \widetilde{\widetilde{v}_{\pm}} + e}{\|\widetilde{Q_{\pm}} \widetilde{\widetilde{v}_{\pm}} + e\|} + e',
    \marginnote{$\widetilde{v}$} 
$$ 
 whose proximity to the actual eigenvector $v := Q v_{\pm}$ \marginnote{$v$} we need now to quantify. The error terms here are $e$, a column of the error matrix $E_{8}$ whose norm we can crudely bound by
$$
    \|e\| \le \|E_{8}\| \le \mu_{\MM}(n) \|\widetilde{Q_{\pm}}\|\|\widetilde{V_{\pm}}\|\mach \le 4\mu_{\MM}(n)\mach \le \spnerr,
$$
and $e'$, a column $E_9$ incurred by performing the normalization in floating point; in our initial discussion of floating point arithmetic we assumed in \eqref{eqn:flnorm} that $\|e'\| \le n\mach$. 

First, since $\widetilde{v} - e'$ and $\widetilde{Q_{\pm}}\widetilde{\widetilde{v}_{\pm}} + e$ are parallel, the distance between them is just the difference in their norms:
$$
    \left\|\frac{\widetilde{Q_{\pm}}\widetilde{\widetilde{v}_{\pm}} + e}{\|\widetilde{Q_{\pm}}\widetilde{\widetilde{v}_{\pm}} + e\|} - \widetilde{Q_\pm}\widetilde{\widetilde{v}_{\pm}} + e \right\| 
    \le \left|\|\widetilde{Q_{\pm}}\widetilde{\widetilde{v}_{\pm}} + e\| - 1\right| 
    \le (1 + \spnerr)(1 + \mach) + 4\mu_{\MM}\mach - 1 
    \le 4\spnerr.
$$
Inductively $\| \widetilde{\widetilde{v}_{\pm}} - \widetilde{\widetilde{v}_{\pm}} \| \le 4\eigerr/5$, and since $\|A_{\pm} - \widetilde{A_{\pm}}\| \le \epsilon/5$ and $A_{\pm}$ has shattered $\epsilon$-pseudospectrum from Lemma \ref{lem:compress-shattered}, Lemma \ref{lem:perturb-shattered} ensures
\begin{align*}
    \| \widetilde{\widetilde{v}_{\pm}} - v_{\pm}\| 
    &\le \frac{\sqrt{8}\omega(\g) \cdot 15\spnerr}{\pi\cdot \epsilon(\epsilon - 15\spnerr)} \\
    &\le \frac{\sqrt{8}\omega(\g)\cdot 15\eta}{\pi \cdot 4\epsilon^2/5} & & \eta \le \epsilon/75 \\
    &\le \delta/10 & & \eta \le \frac{\delta\epsilon^2}{200 \omega(\g)}.
\end{align*}
Thus putting together the above, iterating the triangle identity, and using $\|Q_{\pm}\| = 1$, 
\begin{align*}
    \left\| \widetilde{v} - v \right\| 
    &=\left\|\frac{\widetilde{Q_{\pm}}\widetilde{\widetilde{v}_{\pm}} + e}{\|\widetilde{Q_{\pm}}\widetilde{\widetilde{v}_{\pm}} + e\|} + e' - Q_{\pm}v_{\pm} \right\| \\
    &\le \left\|\frac{\widetilde{Q_{\pm}}\widetilde{\widetilde{v}_{\pm}} + e}{\|\widetilde{Q_{\pm}}\widetilde{\widetilde{v}_{\pm}} + e\|} - \widetilde{Q_{\pm}}\widetilde{\widetilde{v}_{\pm}} + e \right\| 
        + \|e'\| 
        + \|e\| 
        + \|(\widetilde{Q_{\pm}} - Q_{\pm})\widetilde{\widetilde{v}_{\pm}}\| 
        + \|Q_{\pm}(\widetilde{\widetilde{v}_{\pm}} - \widetilde{v}_{\pm})\| 
        + \|Q_{\pm}(\widetilde{v}_{\pm} - v_{\pm})\| \\
    &\le 4\spnerr + n\mach + \mu_{\MM}(n)\mach + \eta(1 + n\mach) + 4\eigerr/5 + \eigerr/10  \\
    &\le 8\spnerr + 4\eigerr/5 + \eigerr/10 \hspace{10cm}  n\mach,\mu_{\MM}(n)\mach \le \spnerr \\
    &\le \eigerr \hspace{13.3cm} \spnerr \le \eigerr/200.
\end{align*}

This concludes the proof of correctness of $\EIG$.\\

\noindent {\bf Running Time and Failure Probability.} Let's begin with a simple lemma bounding the depth of $\EIG$'s recursion tree.

\begin{lemma}[Recursion Depth] \label{lem:eig-recursion-depth}
    The recursion tree of $\EIG$ has depth at most $\log_{5/4} n$, and every branch ends with an instance of size $1\times 1$.
\end{lemma}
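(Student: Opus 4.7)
The plan is short because this is essentially a size-recurrence argument. The key is to read off a contraction factor from the guarantees of $\SPLIT$ collected in Theorem \ref{thm:split-guarantee}: on an input of size $m$, $\SPLIT$ returns two subgrids containing $n_+$ and $n_-$ eigenvalues with $n_+ + n_- = m$ and $n_\pm \ge m/5$. I would then observe that $\EIG$ recurses on matrices $\widetilde{A_\pm}$ of sizes exactly $n_\pm$ (step 6 of the pseudocode), so the maximum subproblem size satisfies
$$ \max\{n_+, n_-\} \le m - \min\{n_+, n_-\} \le m - m/5 = (4/5)\,m. $$

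From here I would set up a straightforward induction on recursion depth: if $m_k$ denotes the largest matrix size appearing at depth $k$ of the recursion tree (with $m_0 = n$ at the root), then $m_k \le (4/5)^k\, n$. Taking $k \ge \log_{5/4} n$ forces $m_k \le 1$, and the first line of $\EIG$ handles the $1\times 1$ base case by returning directly, so no branch can descend further. This gives the depth bound.

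Finally, to verify that every branch actually reaches a $1\times 1$ leaf rather than terminating at some larger matrix, I would note that $n_+$ and $n_-$ are integer eigenvalue counts, and that the $\SPLIT$ guarantee $n_\pm \ge m/5 > 0$ combined with $n_+ + n_- = m$ forces $1 \le n_\pm \le m-1$ whenever $m \ge 2$. Thus every recursive call strictly shrinks the instance size, so each branch continues until it reaches the $1\times 1$ base case. No part of this argument is delicate; the only thing to be careful about is that the $n_\pm$ are integers (so the bound $n_\pm \ge m/5$ is nonvacuous for all $m \ge 2$), and this is immediate from their definition as eigenvalue counts inside a grid region.
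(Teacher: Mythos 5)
Your argument is correct and matches the paper's proof in essence: both extract the balance guarantee $n_\pm \ge m/5$ from $\SPLIT$ and conclude that subproblem sizes contract by a factor of $4/5$ per level, giving depth at most $\log_{5/4} n$, with termination at $1\times 1$ leaves because the integer counts $n_\pm$ strictly decrease the instance size. The only cosmetic difference is that the paper handles instances of size $m \le 5$ as a separate case in its recurrence (peeling off at least one eigenvalue and checking $m \le \log_{5/4} m$ there), whereas you fold those into the uniform $4/5$ contraction via integrality of the eigenvalue counts, which works equally well.
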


\begin{proof}
    By Theorem \ref{thm:split-guarantee}, $\SPLIT$ can always find a bisection of the spectrum into two regions containing $n_\pm$ eigenvalues respectively, with $n_+ + n_- = n$ and $n_{\pm} \ge 4n/5$, and when $n\le 5$ can always peel off at least one eigenvalue. Thus the depth $d(n)$ satisfies
    \begin{equation}
        d(n) = \begin{cases} n & n\le 5 \\ 1 + \max_{\theta \in [1/5,4/5]} d(\theta n) & n > 5 \end{cases}
    \end{equation}
    As $n \le \log_{5/4}n$ for $n \le 5$, the result is immediate from induction.
\end{proof}

We pause briefly to verify that the assumptions $\eigerr < 1$, $\epsilon < 1/2$, $\grid$ has side lengths at most $9$, and $\|A\| \le 3.5$ in Theorem \ref{thm:eig-finite-guarantee} ensure that every call to $\SPLIT$ throughout the algorithm satisfies the hypotheses of Theorem \ref{thm:split-guarantee}, namely that $\epsilon \le 0.5, \beta \le 0.05/n, \|A\| \le 4,$ and $\grid$ has side lengths of at most $8$. Since $\eigerr,\epsilon,$ and $\beta$ are non-increasing as we travel down the recursion tree of $\EIG$ --- with $\beta$ monotonically decreasing in $\eigerr$ and $\epsilon$ --- we need only verify that the hypotheses of Theorem \ref{thm:split-guarantee} hold on the initial call to $\EIG$. The condition on $\epsilon$ is immediately satisfied; for the one on $\beta$, we have
$$
    \beta = \frac{\eta^4 \theta^2}{(20 n)^6 \cdot 4 n^8} = \frac{\theta^2\delta^4 \epsilon^8}{200^4 (20 n)^6 \cdot 4 n^8},
$$
which is clearly at most $0.05/n$.

On each new call to $\EIG$ the grid only decreases in size, so the initial assumption is sufficient. Finally, we need that every matrix passed to $\SPLIT$ throughout the course of the algorithm has norm at most $4$. Lemma \ref{lem:subproblem-shattered} shows that if $\|A\| \le 4$ and has its $\epsilon$-pseudospectrum shattered, then $\|\widetilde{A_{\pm}} - A_{\pm}\| \le \epsilon/5$, and since $\|A_{\pm}\| = \|A\|$, this means $\|\widetilde{A_{\pm}}\| \le \|A\| + \epsilon/5$. Thus each time we pass to a subproblem, the  norm of the matrix we pass to $\EIG$ (and thus to $\SPLIT$) increases by at most an additive $\epsilon/5$, where $\epsilon$ is the input to the outermost call to $\EIG$. Since $\epsilon$ decreases by a factor of $4/5$ on each recursion step, this means that by the end of the algorithm the norm of the matrix passed to $\EIG$ will increase by at most an additive $(\epsilon + (4/5)\epsilon + (4/5)^2 \epsilon + \cdots)/5 = \epsilon \le 1/2$. Thus we will be safe if our initial matrix has norm at most $3.5$, as assumed.

\begin{lemma}[Lower Bounds on the Parameters] \label{lem:param-lbs}
    Assume $\EIG$ is run on an $n\times n$ matrix, with some parameters $\eigerr$ and $\epsilon$. Throughout the algorithm, on every recursive call to $\EIG$, the corresponding parameters $\eigerr'$ and $\epsilon'$ satisfy
    $$
        \eigerr' \ge \eigerr/n 
        \qquad \epsilon' \ge \epsilon/n.
    $$
    On each such call to $\EIG$, the parameters $\spnerr'$ and $\sgnerr'$ passed to $\SPLIT$ and $\SPAN$ satisfy
    $$
        \spnerr' \ge \frac{\eigerr\epsilon^2}{200n^3}
        \qquad \sgnerr' \ge \frac{\eigprob^2\eigerr^4\epsilon^8}{(5n)^{26}}.
    $$
\end{lemma}

\begin{proof}
    Along each branch of the recursion tree, we replace $\epsilon \gets 4\epsilon/5$ and $\eigerr \gets 4\eigerr/5$ at most $\log_{5/4}n$ times, so each can only decrease by a factor of $n$ from their initial settings. The parameters $\eta'$ and $\beta'$ are computed directly from $\epsilon'$ and $\eigerr'$.
\end{proof}

\begin{lemma}[Failure Probability]
    $\EIG$ fails with probability no more than $\eigprob$.
\end{lemma}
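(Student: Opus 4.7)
The plan is to observe that the only randomness in $\EIG$ comes from the two calls to $\SPAN$ made at each internal node of the recursion tree, and that the specific settings of $\spnerr$ and $\sgnerr$ chosen in Lines 2--3 of the algorithm were designed so that each individual $\SPAN$ call fails with probability at most $\eigprob/(2n^4)$. A union bound over all $\SPAN$ calls (of which there are at most $2n$) then gives the desired $\eigprob$.

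First I would count the total number of $\SPAN$ invocations. By Lemma \ref{lem:eig-recursion-depth}, the recursion tree is a binary tree whose leaves are $1\times 1$ instances; since the $n$ original eigenvalues are partitioned among the leaves, there are at most $n$ leaves, hence at most $n-1$ internal nodes, each of which makes exactly two calls to $\SPAN$. Therefore the algorithm invokes $\SPAN$ at most $2(n-1) < 2n$ times in total, regardless of the specific branching pattern.

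Next I would bound the per-call failure probability. By Theorem \ref{thm:deflate-guarantee-usable}, a single call $\SPAN(\widetilde P_{\pm}, n_{\pm}, \sgnerr, \spnerr)$ fails with probability at most $(20n)^3\sqrt{\sgnerr}/\spnerr^2$ (using the global $n$ as an upper bound on the sub-instance size $m$). The crucial point is that $\EIG$ chooses $\sgnerr$ in Line 3 precisely as $\sgnerr = \spnerr^4\eigprob^2/((20n)^6 \cdot 4n^8)$, so plugging in
\[
    \frac{(20n)^3\sqrt{\sgnerr}}{\spnerr^2} \;=\; \frac{(20n)^3}{\spnerr^2}\cdot \frac{\spnerr^2 \,\eigprob}{(20n)^3 \cdot 2n^4} \;=\; \frac{\eigprob}{2n^4}.
\]
This bound is uniform in the recursion level, since $\sgnerr$ is set as this explicit function of $\spnerr$ and $\eigprob$ at every node (the lower bounds in Lemma \ref{lem:param-lbs} are only needed downstream for the precision analysis, not here).

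Finally, a union bound over the at most $2n$ $\SPAN$ calls gives a total failure probability of at most $2n \cdot \eigprob/(2n^4) = \eigprob/n^3 \le \eigprob$, as desired. The only subtlety to check is that the $\SPAN$ calls use independent randomness (independent Haar unitaries $H$ in Line 1 of $\SPAN$), which is guaranteed since each invocation draws its own fresh random matrix; there is no obstacle here because the failure analysis of $\SPAN$ treats $H$ as independent of its input $\widetilde P_\pm$, which is satisfied as $\widetilde P_\pm$ depends only on the previously sampled randomness.
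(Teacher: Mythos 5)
Your proof is correct and follows essentially the same route as the paper: identify the calls to $\SPAN$ as the only source of failure, verify from the setting of $\sgnerr$ in Line 3 that each call fails with probability at most $\eigprob/2n^4$, and union bound. The only difference is that you count the $\SPAN$ calls more tightly ($2(n-1)$, via the number of internal nodes of a binary tree with $n$ leaves) whereas the paper crudely bounds them by $2\cdot 2^{\log_{5/4}n}\le 2n^4$; both counts suffice since the per-call failure probability is $\eigprob/2n^4$.
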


\begin{proof}
    Since each recursion splits into at most two subproblems, and the recursion tree has depth $\log_{5/4}n$, there are at most 
    $$
        2\cdot 2^{\log_{5/4}n} = 2n^{\frac{\log 2}{\log 5/4}} \le 2n^4
    $$ 
    calls to $\SPAN$. We have set every $\spnerr$ and $\sgnerr$ so that the failure probability of each is $\eigprob/2n^4$, so a crude union bound finishes the proof.
\end{proof}

The arithmetic operations required for $\EIG$ satisfy the recursive relationship
\begin{align*}
    T_{\EIG}(n,\eigerr,\g,\epsilon,\eigprob,n)
    &\le T_{\SPLIT}(n,\epsilon,\sgnerr) + T_{\SPAN}(n,\sgnerr,\eta) + 2 T_{\MM}(n) \\ 
        &\qquad + T_{\EIG}(n_+,4\eigerr/5,\g_{+},4\epsilon/5,\eigprob,n) + T_{\EIG}(n_-,4\eigerr/5,\g_{-},4\epsilon/5,\eigprob,n) \\
        &\qquad + 2T_{\MM}(n) + O(n^2).
\end{align*}
All of $T_{\SPLIT}$, $T_{\SPAN}$, and $T_{\MM}$ are of the form $\polylog(n)\poly(n)$, with all coefficients nonnegative and exponents in the $\poly(n)$ no smaller than $2$. So, for any $n_+ + n_- = n$ and $n_{\pm} \ge 4 n/5$, holding all other parameters fixed, $T_{\SPLIT}(n_+,...) + T_{\SPLIT}(n_-,...) \le \left((4/5)^2 + (1/5)^2\right)T_{\SPLIT}(n,...) = (17/25)T_{\SPLIT}(n,...)$ and the same holds for $T_{\SPAN}$ and $T_{\MM}$. Applying this recursively, with all parameters other than $n$ set to their lower bounds from Lemma \ref{lem:param-lbs}, we then have
\begin{align*}
    T_{\EIG}(n,\eigerr,\g,\epsilon,\eigprob,n) 
    & \le \frac{1}{1 - 17/25}\left(
        T_{\SPLIT}\left(n,\epsilon/n,\g,\frac{\eigerr^4\epsilon^8 \eigprob^2}{(5n)^{26}}\right) \right. \\
        &\qquad\qquad + \left. T_{\SPAN}\left(n,\sgnerr/n,\epsilon/n,\frac{\eigerr^4\epsilon^8 \eigprob^2}{(5n)^{26}}\right) + 4T_{\MM}(n) 
        + O(n^2 ) \right) \\
    & = \frac{25}{8}\left(
        12 N_{\EIG} \lg\frac{1}{\omega(\g)} \left(T_{\INV}(n) 
        + O(n^2)\right) + 2 T_{\QR}(n) \right. \\
        &\qquad \qquad +  5 T_{\MM}(n) + n^2 T_{\N} + O(n^2)\bigg) \\
    &\le 60 N_{\EIG}\lg\frac{1}{\omega(\g)}\left( T_{\INV}(n) + O(n^2)\right) + 10 T_{\QR}(n) + 25 T_{\MM}(n),
\end{align*}
where
\begin{equation*}
    N_{\EIG} := \lg \frac{256 n}{\epsilon} + 3\lg\lg \frac{256 n}{\epsilon} + \lg\lg \frac{(5n)^{26}}{\theta^2\delta^4\epsilon^9} + 7.59.
\end{equation*}
In the above inequalities,  we've substituted in the expressions for $T_{\SPLIT}$ and $T_{\SPAN}$ from Theorems \ref{thm:split-guarantee} and \ref{thm:deflate-guarantee-usable}, respectively; $N_{\EIG}$ is defined by recomputing $N_{\SPLIT}$ with the parameter lower bounds, and the $\epsilon^9$ is not an error. The final inequality uses our assumption $T_\N = O(1)$. Thus using the fast and stable instantiations of $\MM$, $\INV$, and $\QR$ from Theorem \ref{thm:instantiate}, we have
\begin{equation}
    T_{\EIG}(n,\delta,\g,\epsilon,\theta,n) = O\left(\log \frac{1}{\omega(\g)}\left(\log \frac{n}{\epsilon} +  \log\log \frac{1}{\theta\delta}\right) T_{\MM}(n,\mach)\right);
\end{equation}
exact constants can be extracted by analyzing $N_{\EIG}$ and opening Theorem \ref{thm:instantiate}. \\

\noindent {\bf Required Bits of Precision.} We will need the following bound on the norms of all spectral projectors.
\begin{lemma}[Sizes of Spectral Projectors] \label{lem:proj-size}
    Throughout the algorithm, every approximate spectral projector $\widetilde{P}$ given to $\SPAN$ satisfies $\|\widetilde{P}\| \le 10n/\epsilon$.
\end{lemma}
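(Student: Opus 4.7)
The plan is to reduce the bound on $\|\widetilde P\|$ to a bound on the exact spectral projector $P$ that $\widetilde P$ approximates, and then estimate $\|P\|$ using a contour integral that runs along the current grid. Recall that every $\widetilde P$ reaching $\SPAN$ is produced in step 3 of $\SPLIT$ in the form $\widetilde P_\pm = \tfrac{1}{2}(\widetilde S \pm I)$, where $\widetilde S = \SGN(A-h,\epsilon/4,\alpha_0,\sgnerr)$ for the current $A$, grid $\g$, pseudospectral parameter $\epsilon$, and some horizontal/vertical shift $h$ on a gridline. By Theorem~\ref{prop:sgnerr}, $\|\widetilde S - \sgn(A-h)\|\le\sgnerr$, so $\|\widetilde P_\pm - P_\pm\|\le \sgnerr/2$, where $P_\pm$ is the exact spectral projector of $A-h$ onto its right/left half-plane eigenvalues.

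To bound $\|P_\pm\|$, I would apply the holomorphic functional calculus with a contour $\Gamma$ equal to the (positively oriented) boundary of the union of grid squares of $\g-h$ containing the eigenvalues of $A-h$ that lie on the corresponding side of the imaginary axis. Since the shift $h$ is chosen to be a gridline of $\g$, this region is composed only of grid squares, and since $\Lambda_\epsilon(A)$ is shattered with respect to $\g$ (so $\Lambda_\epsilon(A-h)$ is shattered with respect to $\g-h$), each grid square contains at most one eigenvalue. Hence $\Gamma$ is the boundary of at most $n_\pm\le n$ squares of side length $\omega\le 1$, and so $\ell(\Gamma)\le 4n\omega \le 4n$. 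Moreover, on $\Gamma$, which lies entirely within $\g-h$, the shattering property gives $\|(z-(A-h))^{-1}\|\le 1/\epsilon$. Passing norms inside the contour integral,
$$
\|P_\pm\|\le \frac{1}{2\pi}\ell(\Gamma)\sup_{z\in\Gamma}\|(z-(A-h))^{-1}\|\le \frac{2n\omega}{\pi\epsilon}\le \frac{2n}{\pi\epsilon}.
$$
Combining this with $\|\widetilde P_\pm - P_\pm\|\le\sgnerr/2\le 1/2$ and using $n/\epsilon\ge 1$, we get $\|\widetilde P_\pm\|\le 2n/(\pi\epsilon)+1/2\le 10n/\epsilon$, as claimed.

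The only point requiring care is that this argument must apply at every recursive level, not just at the top call. At deeper levels, $\widetilde P$ is produced by $\SPLIT$ operating on a compressed matrix $\widetilde A_\pm$ with some parameter $\epsilon'\le\epsilon$ and subgrid $\g_\pm\subset\g$ of the same cell width $\omega$. Lemma~\ref{lem:subproblem-shattered} guarantees that at each level the $\epsilon'$ used is consistent with the (shattered) pseudospectrum of the matrix being passed in, so the resolvent bound $\|(z-(\widetilde A_\pm-h))^{-1}\|\le 1/\epsilon'$ on the grid lines is legitimate, and the perimeter estimate $\ell(\Gamma)\le 4n\omega\le 4n$ still holds (since at most $n$ eigenvalues remain to enclose, and $\omega\le 1$ throughout). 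I do not anticipate any substantive obstacle beyond this bookkeeping; the nontrivial ingredients — the shattering assumption, the $\SGN$ accuracy guarantee of Theorem~\ref{prop:sgnerr}, and Lemma~\ref{lem:subproblem-shattered} — are already in hand.
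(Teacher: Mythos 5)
Your overall strategy is the same as the paper's: pass from $\widetilde{P}$ to the exact projector $P$ via the $\SGN$ accuracy guarantee, and bound $\|P\|$ by a contour integral along grid lines where shattering controls the resolvent. The top-level computation is fine. The problem is in the last paragraph, where you dismiss the recursion as bookkeeping: at depth $d$ of the recursion the pseudospectral parameter has decayed to $\epsilon' = (4/5)^d\epsilon$, which can be as small as $\epsilon/n$, while you keep the perimeter bound $\ell(\Gamma)\le 4n\omega$ with the \emph{global} $n$. Combining these gives
$$
\|P_{\pm}\|\ \le\ \frac{1}{2\pi}\cdot 4n\omega\cdot\frac{1}{\epsilon'}\ \le\ \frac{2n\omega}{\pi\,(\epsilon/n)}\ =\ \frac{2n^2\omega}{\pi\epsilon},
$$
which exceeds $10n/\epsilon$ for moderately large $n$. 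So the inequality you actually establish at the deeper levels does not imply the lemma.

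The gap is repairable in either of two ways. (i) Keep your contour but track both decaying quantities: at depth $d$ the subinstance has size $m\le (4/5)^d n$, so the contour encloses at most $m$ squares and $\ell(\Gamma)\le 4m\omega$, while $\epsilon'=(4/5)^d\epsilon$; the factors $(4/5)^d$ cancel and $m/\epsilon'\le n/\epsilon$, giving $\|P_\pm\|\le 2n\omega/(\pi\epsilon)$ uniformly over the recursion. (ii) Do what the paper does: take $\Gamma$ to be the boundary of the rectangular subgrid $\g_\pm$, whose length is at most the perimeter of the initial grid, i.e.\ $\ell(\Gamma)\le 32$ independent of $n$ and of the recursion depth, and use only the crude lower bound $\epsilon'\ge\epsilon/n$ from Lemma \ref{lem:param-lbs} to get $\|P\|\le \frac{32}{2\pi}\cdot\frac{n}{\epsilon}$. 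Your occupied-squares contour is a perfectly legitimate alternative to the paper's rectangular one, but only if you pair the local eigenvalue count with the local $\epsilon'$; as written, you pair a global count with a local resolvent bound, and that mismatch is exactly where a factor of $n$ is lost.
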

\begin{proof}
    Every such $\widetilde{P}$ is $\sgnerr$-close to a true spectral projector $P$ of a matrix whose $\epsilon/n$-pseudosepctrum is shattered with respect to the initial $8\times 8$ unit grid $\g$. Since we can generate $P$ by a contour integral around the boundary of a rectangular subgrid, we have
    $$
        \|\widetilde{P}\| \le 2 + \|P\| \le 2 + \frac{32}{2\pi}\frac{n}{\epsilon} \le 10n/\epsilon,
    $$
    with the last inequality following from $\epsilon < 1$.
\end{proof}

Collecting the machine precision requirements $\mach \le \mach_{\SPLIT},\mach_{\SPAN}$ from Theorems \ref{thm:split-guarantee} and \ref{thm:deflate-guarantee-usable}, as well as those we used in the course of our proof so far, and substituting in the parameter lower bounds from Lemma \ref{lem:param-lbs}, we need $\mach$ to satisfy
\begin{align*}
    \mach 
    &\le \min \left\{ \frac{\left(1 - \frac{\epsilon}{256n}\right)^{2^{N_{\EIG} + 1}(c_{\INV}\log n + 3)}}{\mu_{\INV}(n)\sqrt{n} N_{\EIG}}, \right. \\
    &\qquad \left. \frac{\epsilon}{100 n^2}, \frac{\theta^2\delta^4\epsilon^8}{(5n)^{26}}\frac{1}{4\|\widetilde{P}\|\max\{\mu_{\QR}(n),\mu_{\MM}(n)\}}, \right. \\
    &\qquad \left. \frac{\delta\epsilon^2}{100 n^3 \cdot 2\mu_{\QR}(n)}, \frac{\delta\epsilon^2}{100 n^3 \max\{ 4\mu_{\MM}(n),n, 2\mu_{\QR}(n)\}} \right\}
\end{align*}
From Lemma \ref{lem:proj-size}, $\|\widetilde{P}\| \le 10n/\epsilon$, so the conditions in the second two lines are all satisfied if we make the crass upper bound
\begin{equation}
    \label{eq:precision-first-two-lines}
    \mach \le \frac{\eigprob^2\eigerr^4\epsilon^8}{(5n)^{30}}\frac{1}{\max\{\mu_{\QR}(n),\mu_{MM}(n),n\}},
\end{equation}
i.e. if $\lg 1/\mach \ge O\left(\lg\frac{n}{\eigprob\eigerr\epsilon}\right)$. Unpacking the first requirement, using the definition $ N_{\EIG} := \lg \tfrac{256 n}{\epsilon} + 3\lg\lg \tfrac{256 n}{\epsilon} + \lg\lg \tfrac{(5n)^{26}}{\theta^2\delta^4\epsilon^9} + 7.59$ from Theorem \ref{thm:eig-finite-guarantee}, and recalling that $\epsilon \le 1/2$, $n \ge 1$, and $(1 - x)^{1/x} \ge 1/4$ for $x \in (0,1/512)$, we have
\begin{align*}
    \frac{\left(1 - \frac{\epsilon}{256n}\right)^{2^{N_{\EIG} + 1}(c_{\INV}\log n + 3)}}{\mu_{\INV}(n)\sqrt{n} N_{\EIG}}
    &= \frac{\left(\left(1 - \frac{\epsilon}{256n}\right)^{\frac{256n}{\epsilon}}\right)^{\lg^3\frac{256 n}{\epsilon} \lg \frac{(5n)^{26}}{\eigprob^2\eigerr^4\epsilon^8}2^{8.59}(c_{\INV}\log n + 3)}}{\mu_{\INV}(n)\sqrt{n}N_{\EIG}} \\
    &\ge \frac{4^{-\lg^3\frac{256 n}{\epsilon} \lg \frac{(5n)^{26}}{\eigprob^2\eigerr^4\epsilon^8}2^{8.59}(c_{\INV}\log n + 3)}}{\mu_{\INV}(n)\sqrt{n} N_{\EIG}},
\end{align*}
so setting $\mach$ smaller than the final expression is sufficient to guarantee $\EIG$ and all subroutines can execute as advertised.  This gives
\begin{align*}
    \lg 1/\mach 
    &\ge \lg^3 \frac{n}{\epsilon}\lg \frac{(5n)^{26}}{\eigprob^2\eigerr^4\epsilon^8}2^{9.59}(c_{\INV}\log n + 3) + \lg N_{\EIG} \\
    &= O\left(\log^3\frac{n}{\epsilon}\log \frac{n}{\eigprob\eigerr\epsilon}\log n\right)\nonumber.
\end{align*}
This dominates the precision requirement from \eqref{eq:precision-first-two-lines}, and completes the proof of Theorem \ref{thm:eig-finite-guarantee}.

\begin{remark}
    A constant may be extracted directly from the expression above --- leaving $\epsilon,\delta,\theta$ fixed, a crude bound on it is $2^{9.59} \cdot 26 \cdot 8 \cdot c_{\INV} \approx 160303 c_{\INV}$. This can certainly be optimized, the improvement with the highest impact would be tighter analysis of $\SPLIT$, with the aim of eliminating the additive $7.59$ term in $N_{\SPLIT}$.
\end{remark}
	\section{Conclusion and Open Questions}
    \label{sec:conclusion}

    In this paper, we reduced the approximate diagonalization problem to a polylogarithmic number of matrix multiplications, inversions, and $QR$ factorizations 
	on a floating point machine with precision depending only polylogarithmically on $n$ and $1/\delta$. The key phenomena enabling this were: (a) every 
	matrix is $\delta$-close to a matrix with well-behaved pseudospectrum, and such a matrix can be found by a complex Gaussian perturbation; and (b) the spectral bisection
	algorithm can be shown to converge rapidly to a forward approximate solution on such a well-behaved matrix, using a polylogarithmic in $n$ and $1/\delta$ amount of precision and number of iterations. The combination of these facts yields a $\delta$-backward approximate solution for the original problem. 
	
	Using fast matrix multiplication, we obtain algorithms with nearly optimal asymptotic computational complexity (as a function of $n$, compared to matrix multiplication), for general
	complex matrices with no assumptions. Using na\"ive matrix multiplication, we get easily implementable algorithms with $O(n^3)$ type complexity and much better constants which are likely faster in practice. The constants in our bit complexity and precision estimates (see Theorem \ref{thm:eig-finite-guarantee} and equations \eqref{eqn:nonasy-sgn} and \eqref{eqn:eigdeltasetting}), while not huge, are likely suboptimal. 
	The reasonable practical performance of spectral bisection based algorithms is witnessed by the many empirical papers (see e.g. \cite{bai1997inverse}) which have studied it. The more recent of these works further show that such algorithms are communication-avoiding and have good parallelizability properties.
	
    \begin{remark}[Hermitian Matrices] A curious feature of our algorithm is that even when the input matrix is Hermitian or real symmetric,
    it begins by adding a complex non-Hermitian perturbation to regularize the spectrum. If one is only interested in this special case, one
    can replace this first step by a Hermitian GUE or symmetric GOE perturbation and appeal to the result of \cite{aizenman2017matrix} instead of Theorem \ref{thm:smoothed},
    which also yields a polynomial lower bound on the minimum gap of the perturbed matrix. It is also possible to obtain a much stronger analysis of the Newton iteration in the Hermitian case, since the iterates are all Hermitian and $\kappa_V=1$ for such matrices. By combining these observations, one can obtain a running
    time for Hermitian matrices which is significantly better (in logarithmic factors) than our main theorem. We do not pursue this further since our 
    main goal was to address the more difficult non-Hermitian case.
    \end{remark}
    
	We conclude by listing several directions for future research.
	\begin{enumerate}
		\item Devise a {deterministic} algorithm with similar guarantees. The main bottleneck to doing this is deterministically finding a regularizing perturbation,
	    which seems quite mysterious. Another bottleneck is computing a rank-revealing QR factorization in near matrix multiplication time deterministically (all of the currently known deterministic algorithms require $\Omega(n^3)$ time).
	    \item Determine the correct exponent for smoothed analysis of the eigenvalue gap of $A+\gamma G$ where $G$ is a complex Ginibre matrix. We currently obtain roughly $(\gamma/n)^{8/3}$ in Theorem \ref{thm:polygap}.
	    Is it possible to match the $n^{-4/3}$ type dependence \cite{vinson2011closest} which is known for a pure Ginibre matrix?
	    \item Reduce the dependence of the running time and precision to a smaller power of $\log(1/\delta)$. The bottleneck in the current algorithm is the number of bits
	    of precision required for stable convergence of the Newton iteration for computing the sign function. Other, ``inverse-free'' iterative schemes
	    have been proposed for this, which conceivably require lower precision.
	    \item Study the convergence of ``scaled Newton iteration'' and other rational approximation methods (see \cite{higham2008functions, nakatsukasa2016computing}) for computing the sign function on non-Hermitian matrices. Perhaps these have even faster convergence and better stability properties?
	\end{enumerate}
	More broadly, we hope that the techniques introduced in this paper---pseudospectral shattering and pseudospectral analysis of matrix iterations using contour integrals---are useful in attacking other problems in numerical linear algebra.
	
 	\subsection*{Acknowledgments} We thank Peter B\"urgisser for introducing us to this problem, and Ming Gu, Olga Holtz, Vishesh Jain, Ravi Kannan, Pravesh Kothari, Lin Lin,  Satyaki Mukherjee, Yuji Nakatsukasa, and Nick Trefethen for helpful conversations. We thank the referees for a careful reading of the paper and many helpful comments which improved it. We thank the Institute for Pure and Applied Mathematics, where part of this work was carried out. 
	\bibliographystyle{abbrv}
	\bibliography{diag}
	\appendix 
	
	\section{Deferred Proofs from Section \ref{sec:matrix-sign}}
\label{sec:deferredsign}

\begin{lemma}[Restatement of Lemma \ref{lem:formalizenoise}]
    Assume the matrix inverse is computed by an algorithm $\INV$ satisfying the guarantee in Definition \ref{def:inv}.  Then $\G(A) = g(A) + E$ for some error matrix $E$ with norm \begin{equation}
        \Vert E \Vert \le \left(\Vert A \Vert + \Vert A^{-1} \Vert  + \pinv(n) \kappa(A)^{\cinv \log n}\|A^{-1}\| \right) 2 \sqrt{n} \mach.\end{equation}
\end{lemma}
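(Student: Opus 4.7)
The plan is to trace the three elementary floating-point operations that make up one step of the Newton iteration, quantify the roundoff each one introduces, and then combine them by the triangle inequality. Concretely, $\G(A)$ is computed by first invoking $\INV(A)$, then forming the floating-point sum with $A$, then multiplying by the scalar $1/2$; each step introduces an additive error term which I will name $E_1, E_2, E_3$ respectively.

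For the first step, the inversion guarantee in Definition \ref{def:inv} gives
\[
 \INV(A) = A^{-1} + E_1, \qquad \|E_1\| \le \pinv(n)\,\mach\,\kappa(A)^{\cinv \log n}\|A^{-1}\|.
\]
For the addition step, the entrywise multiplicative error of floating-point matrix addition, together with the bound \eqref{eqn:maxnorm} converting entrywise multiplicative noise into an operator-norm additive error, yields
\[
 \fl(A + \INV(A)) = A + A^{-1} + E_1 + E_2, \qquad \|E_2\| \le \mach\sqrt{n}\bigl(\|A\| + \|A^{-1}\| + \|E_1\|\bigr).
\]
The scalar multiplication by $1/2$ is handled the same way and produces a third error $E_3$ of norm at most $\mach\sqrt{n}$ times the norm of the matrix being scaled. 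Writing the output as
\[
 \G(A) = \tfrac12\bigl(A + A^{-1}\bigr) + \tfrac12 E_1 + \tfrac12 E_2 + E_3,
\]
we read off $E = \tfrac12 E_1 + \tfrac12 E_2 + E_3$.

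It then remains to bound $\|E\|$ by assembling the three pieces. The triangle inequality gives $\|E\| \le \tfrac12\|E_1\| + \tfrac12\|E_2\| + \|E_3\|$, and substituting the three bounds above produces a sum whose leading terms (linear in $\mach$) are controlled by
\[
 \bigl(\|A\| + \|A^{-1}\| + \pinv(n)\,\kappa(A)^{\cinv\log n}\|A^{-1}\|\bigr)\cdot\sqrt{n}\,\mach,
\]
together with higher-order terms proportional to $\mach^2$ which are absorbed into the final constant. Using $\sqrt{n}\ge 1$ and $\mach\le 1$ to fold the $\mach^2$ contributions into the leading term, and using the trivial $1 + \tfrac12 + \tfrac12 \le 4$ slack (together with the cross terms from multiplying $E_1$ through the addition and scaling stages), yields the advertised factor of $4\sqrt{n}\mach$.

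The only mildly subtle point is the bookkeeping for the $\mach^2$ terms: one must verify that $\mach$ is small enough (which will hold throughout the regime considered in Theorem \ref{prop:sgnerr}) that the contributions like $\mach\sqrt{n}\|E_1\|$ and $\mach\sqrt{n}\|E_2\|$ can be absorbed into the leading linear-in-$\mach$ bound with room to spare. Since the ratio $\kappa(A)^{\cinv\log n}$ can be enormous, the inversion term dominates and the constant $4$ in front is comfortably sufficient. No deeper ideas are needed; the lemma is essentially a forward error analysis for the three-line formula $\newton(z)=\tfrac12(z+z^{-1})$ applied to a matrix, with Definition \ref{def:inv} providing the only nontrivial input.
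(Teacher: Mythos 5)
Your proposal is correct and follows essentially the same route as the paper: decompose $\G(A)$ into the inversion, addition, and halving steps, bound each roundoff contribution (the inversion via Definition \ref{def:inv}, the others via the entrywise model converted to operator norm by \eqref{eqn:maxnorm}), and combine with the triangle inequality, absorbing the higher-order terms into the constant $4$. The only cosmetic difference is that the paper keeps the addition and division errors in multiplicative (Hadamard) form until the final step, which lets it close the argument assuming only $\mach<1$, whereas your immediate conversion to additive errors leaves a $\mach^2 n$ cross term that needs $\mach\sqrt{n}\lesssim 1$ — harmless here, as you correctly note, since the precision regime of Theorem \ref{prop:sgnerr} guarantees far more.
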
 
\begin{proof} The computation of $\G(A)$ consists of three steps:

\begin{enumerate}
    \item Form $A^{-1}$ according to Definition \ref{def:inv}.  This incurs an additive error of $E_{\INV} = \pinv(n)\cdot \u\cdot \kappa(A)^{\cinv \log n}\|A^{-1}\|$.  The result is $\INV(A) = A^{-1} + E_\INV.$
    \item Add $A$ to $\INV(A)$.  This incurs an entry-wise relative error of size $\mach$: The result is \[(A + A^{-1} + E_{\INV}) \circ (J + E_{add})\] where $J$ denotes the all-ones matrix, $\Vert E_{add} \Vert_{max} \le \mach$, and where $\circ$ denotes the entrywise (Hadamard) product of matrices.
    \item Divide the resulting matrix by 2, which is an exact operation in our floating-point model as we can simply decrement the exponent. The final result is 
    \[\G(A) = \frac{1}{2}(A + A^{-1} + E_{\INV}) \circ (J + E_{add}). \]
    
\end{enumerate}
Finally, recall that for any $n \times n$ matrices $M$ and $E$, we have the relation (\ref{eqn:maxnorm})
\[
    \Vert M \circ E\Vert 
    \le  \Vert M \Vert \Vert E \Vert_{max} \sqrt{n}.
\]
Putting it all together, we have 
\begin{align*}
    \Vert \G(A) - \newton(A) \Vert 
    &\le \frac{1}{2} \left( \Vert A \Vert + \Vert A^{-1} \Vert \right) \mach \sqrt{n}  + \Vert E_\INV \Vert ( 1 + \mach) \sqrt{n} \\
    &\le  \frac{1}{2} \left( \Vert A \Vert + \Vert A^{-1} \Vert \right) \mach \sqrt{n}  + \pinv(n)\cdot \u\cdot \kappa(A)^{\cinv \log n}\|A^{-1}\| ( 1 + \mach) \sqrt{n} \\
    &\le \left(\Vert A \Vert + \Vert A^{-1} \Vert  + \pinv(n) \kappa(A)^{\cinv \log n}\|A^{-1}\| \right) 2 \sqrt{n} \mach
\end{align*}
where we use $\mach < 1$ in the last line.

\end{proof}

In what remains of this section we will repeatedly use the following simple calculus fact. 

\begin{lemma} \label{lem:log}
Let $x, y >0$, then $$\log(x+y) \leq \log(x)+ \frac{y}{x} \quad \mathrm{and} \quad \lg(x+y) \leq \lg(x)+ \frac{1}{\log 2}\frac{y}{ x}.$$
\end{lemma}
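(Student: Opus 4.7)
The plan is to prove both inequalities by reducing to the standard scalar bound $\log(1+t) \le t$ for $t \ge 0$, which is itself a one-line consequence of the concavity of $\log$ (the tangent line to $\log(1+t)$ at $t=0$ lies above the graph) or of the Taylor expansion with remainder.

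First I would handle the natural logarithm case. Since $x, y > 0$, I can factor out $x$ inside the logarithm to write $\log(x+y) = \log(x) + \log(1 + y/x)$, and then apply $\log(1+t) \le t$ with $t = y/x \ge 0$ to conclude $\log(x+y) \le \log(x) + y/x$. An equivalent and equally short route, which I mention only as an alternative, is to observe that $\log(x+y) - \log(x) = \int_x^{x+y} \tfrac{1}{s}\,ds \le \int_x^{x+y} \tfrac{1}{x}\,ds = y/x$, using monotonicity of $1/s$ on $[x, x+y]$.

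For the second inequality, I would simply use the change-of-base identity $\lg(u) = \log(u)/\log 2$ and divide the first inequality through by $\log 2$, which yields $\lg(x+y) \le \lg(x) + \tfrac{1}{\log 2}\cdot \tfrac{y}{x}$ exactly as stated.

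There is no real obstacle here: the whole argument is essentially a two-line calculus manipulation, and the only thing to be careful about is that the inequality $\log(1+t) \le t$ requires $t \ge 0$, which is guaranteed by the hypothesis $x,y > 0$.
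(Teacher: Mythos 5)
Your proof is correct and matches the paper's, which simply invokes concavity of the logarithm; your tangent-line bound $\log(1+t)\le t$ (equivalently, the tangent line at $x$ lying above the concave graph) is exactly the intended instantiation, and the change-of-base step for $\lg$ is the obvious one. Nothing further is needed.
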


\begin{proof}
This follows directly from the concavity of the logarithm. 
\end{proof}

\begin{lemma}[Restatement of Lemma \ref{lem:prelimN}]

 Let $1/800 > t > 0$ and $1/2 > c > 0$ be given.  Then for
 \[j \ge \lg(1/t) + 2 \lg \lg (1/t) + \lg \lg(1/c) + 1.62,\]
 we have
 \[ \frac{(1-t)^{2^j}}{t^{2j}} < c.\]
\end{lemma}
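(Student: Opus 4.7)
The plan is to take base-$2$ logarithms, linearize the troublesome factor $(1-t)^{2^j}$ using a crude but safe bound, reparametrize to isolate the quantities $\lg(1/t)$ and $\lg\lg(1/c)$ appearing in the hypothesis, and then finish by a direct numerical check.

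First, the conclusion $(1-t)^{2^j}/t^{2j} < c$ is equivalent (after taking $\lg$ and rearranging) to the inequality
\[
    2^j \lg\!\Big(\tfrac{1}{1-t}\Big) \;>\; \lg(1/c) \,+\, 2j \lg(1/t).
\]
From the Taylor expansion $\ln\!\big(\tfrac{1}{1-t}\big) = t + t^2/2 + \cdots \ge t$, valid for $t\in(0,1)$, we obtain $\lg(1/(1-t)) \ge t/\ln 2$. Hence it suffices to establish
\[
    \frac{2^j\, t}{\ln 2} \;>\; \lg(1/c) \,+\, 2j \lg(1/t). \tag{$\star$}
\]

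Second, I would set $a := \lg(1/t)$ and $b := \lg\lg(1/c)$, so that $t = 2^{-a}$, $\lg(1/c) = 2^b$, and the hypothesis becomes $j \ge a + 2\lg a + b + 1.62$. The assumption $t < 1/800$ gives $a > \lg 800 > 9.64$, and $c < 1/2$ gives $b \ge 0$. Rewriting $(\star)$, I need
\[
    \frac{2^{j-a}}{\ln 2} \;>\; 2^b + 2ja.
\]

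Third, I would verify the inequality at the worst case $j = a + 2\lg a + b + 1.62$. At that value, the left-hand side equals $(2^{1.62}/\ln 2)\,a^2\,2^b \approx 4.436\, a^2\, 2^b$, while the right-hand side equals $2^b + 2a^2 + 4a\lg a + 2ab + 3.24 a$. The required inequality then reduces, after pulling out factors of $2^b$ and $a^2$ and using $a > 9.64$ together with $b \ge 0$, to an elementary check of the form $2.425 a \ge 4\lg a + 3.24$, which I would carry out by noting that both sides are increasing and verifying at $a = 9.64$.

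Finally, I would observe that for any $j$ strictly larger than the lower bound, the left-hand side of $(\star)$ grows exponentially in $j$ while the right-hand side grows only linearly, so the inequality is preserved. More concretely, the derivative with respect to $j$ of the LHS is $2^{j-a}\ln 2 /\ln 2 = 2^{j-a}$, which already dominates $2a$ at the base value of $j$ thanks to $a > 9.64$, so monotonicity gives the full range. The main obstacle here is purely bookkeeping: squeezing the additive constant down to $1.62$ requires the Taylor bound $\ln(1/(1-t)) \ge t$ to be applied globally (rather than the looser $\ln(1/(1-t)) \ge t + t^2/2$ one might prefer for small $t$) and a careful accounting of the factor $2^{1.62}/\ln 2$ against the slack produced by $a > 9.64$; no conceptually hard step is involved.
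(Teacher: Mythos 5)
Your proposal is correct, and it takes a genuinely different route from the paper. The paper argues by contrapositive and in two stages: it first treats the case $c=1/2$, deriving from $(1-t)^{2^j}/t^{2j}\ge 1/2$ the inequality $j-\lg j\le K$ with $K=\lg(1/t)+\lg\lg(1/t)+0.49$, and then bootstraps this (using $j\ge 9$ so that $j-\lg j\ge 0.64j$) to $j\le K+\lg K+0.65$, which is essentially an inversion of a Lambert-$W$-type relation; general $c$ is then handled by observing that each increment of $j$ squares the quantity (and multiplies by $t^{2j-2}<1$), so $\lg\lg(1/c)$ further increments drive it from $1/2$ down to $c$. This two-stage structure is what produces the additive $\lg\lg(1/c)$ in the threshold. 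Your argument instead works forward and in one shot: after the substitution $a=\lg(1/t)$, $b=\lg\lg(1/c)$ and the linearization $\lg(1/(1-t))\ge t/\ln 2$, the claim reduces to $2^{j-a}/\ln 2>2^b+2ja$, which you verify at the boundary value of $j$ and extend by comparing derivatives in $j$. I checked the boundary computation: dividing $4.436\,a^2 2^b \ge 2^b+2a^2+4a\lg a+2ab+3.24a$ through by $a^2 2^b$ and bounding each term (using $a>9.64$, $b\ge 0$, and $\max_b b/2^b=1/(e\ln 2)$) gives a right-hand side of about $3.82$, comfortably below $4.436$, so your claimed elementary check does go through. One small presentational point: your justification for the final scalar inequality ("both sides are increasing and verifying at $a=9.64$") should instead say that the left side grows linearly while the right side grows logarithmically (equivalently, compare derivatives), since monotonicity of both sides alone proves nothing; the intended argument is clearly the correct one. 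Your approach is arguably more transparent and handles $t$ and $c$ simultaneously, at the cost of a slightly messier numerical verification; the paper's approach makes the origin of the $\lg\lg(1/c)$ term structurally explicit.
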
 

\begin{proof}[Proof of Lemma \ref{lem:prelimN}]
An exact solution for $j$ can be written in terms of the \emph{Lambert $W$-function}; see \cite{corless1996lambertw} for further discussion and a useful series expansion.  For our purposes, it is simpler to derive the necessary quantitative bound from scratch.

Immediately from the assumption $t < 1/800$, we have $j > \log(1/t) \ge 9$.

First let us solve the case $c = 1/2$.  We will prove the contrapositive, so assume 
\[\frac{(1-t)^{2^j}}{t^{2j}} \ge 1/2.\]
Then taking $\log$ on both sides, we have
\[ 2j \log (1/t) + 1 \ge - 2^j \log (1-t) \ge 2^j t.\]
Taking $\lg$ of both sides and applying the second inequality in Lemma \ref{lem:log} with $x=2j \log(1/t)$ and $y=1$, using $\lg x =  1 + \lg j + \lg \log (1/t)$, we obtain
\[ 1 + \lg j + \lg \log (1/t) + \frac{1}{\log 2} \frac{1}{2 j \log(1/t)} \ge j + \lg t.\]
Since $t < 1/800$ we have $\frac{1}{\log 2} \frac{1}{2 j \log(1/t)} < 0.01$, so $$j - \lg j \le \lg (1/t) + \lg \log (1/t) + 1.01 \le \lg(1/t) + \lg \lg (1/t) + 0.49 =: K.$$  But since $j \ge 9$, we have $j - \lg j \ge 0.64 j$, so \[j \le \frac{1}{0.64}(j-\lg j) \le \frac{1}{0.64}K\] which implies \[j \le K + \lg j \le K + \lg (1.57 K) = K + \lg K + 0.65.\] 

Note $K \le 1.39 \lg (1/t)$, because $K - \lg (1/t) = \lg \lg (1/t) + 0.49 \le  0.39 \lg (1/t)$ for $t \le 1/800$. Thus \[\lg K \le \lg (1.39 \lg (1/t)) \le \lg \lg (1/t) + 0.48,\] so for the case $c=1/2$ we conclude the proof of the contrapositive of the lemma:
\begin{align*}
    j &\le K + \lg K + 0.65 \\
    &\le \lg(1/t) + \lg \lg (1/t) + 0.49 + (\lg \lg (1/t) + 0.48) + 0.65 \\
    &= \lg(1/t) + 2 \lg \lg(1/t)  + 1.62.
\end{align*}

For the general case, once $(1-t)^{2^j}/t^{2j} \le 1/2$, consider the effect of incrementing $j$ on the left hand side.  This has the effect of squaring and then multiplying by $t^{2j-2}$, which makes it even smaller.  At most $\lg \lg (1/c)$ increments are required to bring the left hand side down to $c$, since $(1/2)^{2^{\lg \lg (1/c)}} = c$.  This gives the value of $j$ stated in the lemma, as desired.
\end{proof}

\begin{lemma}[Restatement of Lemma \ref{lem:cleanN}]

If \[ N = \lceil \lg(1/s) + 3 \lg \lg(1/s) + \lg \lg (1/(\sgnerr \eps_0)) + 7.59 \rceil, \]
then
\[  N \ge \lg(8/s) + 2 \lg \lg(8/s) + \lg \lg (16/(\sgnerr s^2 \eps_0)) + 1.62.\]
\end{lemma}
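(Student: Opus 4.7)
The plan is to verify the inequality by direct computation, dropping the ceiling in $N$ (since it only adds a positive amount) and then bounding the two sides explicitly using the hypothesis $s < 1/100$ (which follows from $0<1-\alpha_0<1/100$) and $\sgnerr\eps_0 < 1/12$.

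First I would rewrite the claimed inequality by substituting $\lg(8/s) = \lg(1/s)+3$ and $\lg(16/(\sgnerr s^2\eps_0)) = \lg(1/(\sgnerr\eps_0)) + 2\lg(1/s) + 4$. After cancelling the common $\lg(1/s)$ terms, the inequality $\mathrm{LHS} - \mathrm{RHS}\ge 0$ reduces to showing
\[ 3\lg\lg(1/s) - 2\lg\lg(8/s) + \lg\lg(1/(\sgnerr\eps_0)) - \lg\bigl(\lg(1/(\sgnerr\eps_0)) + 2\lg(1/s) + 4\bigr) + 2.97 \ge 0. \]
Since $s < 1/100$, we have $\lg(1/s) > 6.64$ and thus $\lg\lg(1/s) > 2.73$. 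Apply Lemma \ref{lem:log} to $\lg\lg(8/s) = \lg(\lg(1/s)+3)$ to obtain $\lg\lg(8/s) \le \lg\lg(1/s) + \frac{3}{(\log 2)\lg(1/s)} \le \lg\lg(1/s) + 0.66$. Hence $3\lg\lg(1/s) - 2\lg\lg(8/s) \ge \lg\lg(1/s) - 1.32$.

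Now I would split into two cases on the relative size of $\lg(1/(\sgnerr\eps_0))$ versus $2\lg(1/s)+4$. In the \emph{first case}, where $\lg(1/(\sgnerr\eps_0)) \ge 2\lg(1/s)+4$, the argument of the outer logarithm is at most $2\lg(1/(\sgnerr\eps_0))$, so
\[ \lg\bigl(\lg(1/(\sgnerr\eps_0)) + 2\lg(1/s) + 4\bigr) \le \lg\lg(1/(\sgnerr\eps_0)) + 1, \]
and the displayed quantity is at least $(\lg\lg(1/s) - 1.32) - 1 + 2.97 > 0$. In the \emph{second case}, $\lg(1/(\sgnerr\eps_0)) < 2\lg(1/s)+4$, so the argument is at most $4\lg(1/s) + 8 \le 5.21\,\lg(1/s)$ (using $\lg(1/s) > 6.64$ so that $8 \le 1.21\lg(1/s)$), yielding
\[ \lg\bigl(\lg(1/(\sgnerr\eps_0)) + 2\lg(1/s) + 4\bigr) \le \lg\lg(1/s) + \lg(5.21) \le \lg\lg(1/s) + 2.39. \]
Then the displayed quantity is at least $(\lg\lg(1/s) - 1.32) + \lg\lg(1/(\sgnerr\eps_0)) - \lg\lg(1/s) - 2.39 + 2.97 = \lg\lg(1/(\sgnerr\eps_0)) - 0.74$, which is nonnegative because $\sgnerr\eps_0 < 1/12$ forces $\lg\lg(1/(\sgnerr\eps_0)) > \lg\lg 12 > 1.84$.

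There is no real obstacle here beyond bookkeeping of the constants; the only mild subtlety is ensuring that the term $2\lg(1/s)$ inside the outer logarithm on the RHS is absorbed either by the ``reserve'' $\lg\lg(1/s)$ coming from the coefficient $3$ (rather than $2$) on $\lg\lg(1/s)$ in the LHS, or by $\lg\lg(1/(\sgnerr\eps_0))$ when that term already dominates. The case split handles exactly this trade-off, and the numerical slack $7.59 - 1.62 = 5.97$ easily accommodates the constants produced along the way.
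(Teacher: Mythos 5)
Your proof is correct, and I have checked the arithmetic: the reduction to $3\lg\lg(1/s) - 2\lg\lg(8/s) + \lg\lg(1/(\sgnerr\eps_0)) - \lg\bigl(\lg(1/(\sgnerr\eps_0)) + 2\lg(1/s) + 4\bigr) + 2.97 \ge 0$ is right, the bound $\lg\lg(8/s)\le \lg\lg(1/s)+0.66$ follows from Lemma \ref{lem:log} with $\lg(1/s)>6.64$, and both cases close with room to spare. The route differs from the paper's in how the troublesome term $\lg\lg(16/(\sgnerr s^2\eps_0))$ is handled. The paper never expands the right-hand side; it splits the double logarithm via the sub-additivity $\lg\lg(ab)\le \lg\lg a+\lg\lg b$ for $a,b\ge 4$ (applied to $a=16/s^2$, $b=1/(\sgnerr\eps_0)$, implicitly using $\sgnerr\eps_0\le 1/4$), then bounds $\lg\lg(16/s^2)$ and $\lg(8/s)+2\lg\lg(8/s)$ separately with Lemma \ref{lem:log}, so no case analysis is needed. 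You instead expand everything and split on which of $\lg(1/(\sgnerr\eps_0))$ and $2\lg(1/s)+4$ dominates inside the outer logarithm; this is essentially a hand-rolled version of the same sub-additivity, at the cost of a case split and of invoking $\sgnerr\eps_0<1/12$ in the second case (which is legitimate, since it is a standing hypothesis of Theorem \ref{prop:sgnerr}). Both arguments are elementary bookkeeping of comparable length; the paper's is slightly more modular, yours is more self-contained in that it makes the trade-off between the two sources of slack explicit.
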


\begin{proof}[Proof of Lemma \ref{lem:cleanN}]
We aim to provide a slightly cleaner sufficient condition on $N$ than the current condition \[ N \ge \lg(8/s) + 2 \lg \lg(8/s) + \lg \lg (16/(\sgnerr s^2 \eps_0)) + 1.62.\]  Repeatedly using Lemma \ref{lem:log}, as well as the cruder fact $\lg \lg (ab) \le \lg \lg a + \lg \lg b$ provided $a, b \ge 4$,  we have
\begin{align*}
    \lg \lg (16 / ( \sgnerr s^2 \eps_0)) &\le \lg \lg (16/s^2) + \lg \lg (1/(\sgnerr \eps_0))  \\
    &= 1 + \lg(3 + \lg (1/s)) + \lg \lg (1/(\sgnerr \eps_0))  \\
    &\le 1 + \lg \lg (1/s) + \frac{ 3}{\log 2 \lg (1/s)} + \lg \lg (1/(\sgnerr \eps_0))  \\
    &\le \lg \lg(1/s) + \lg \lg(1/(\sgnerr \eps_0)) + 1.66 
\end{align*}
where in the last line we use the assumption $s < 1/100$.  Similarly, 
\begin{align*}
    \lg(8/s) + 2 \lg \lg (8/s) &\le 3 + \lg(1/s) + 2 \lg (3 + \lg (1/s)) \\
    &\le 3 + \lg(1/s) + 2 \left(\lg \lg(1/s) + \frac{3}{\log 2 \lg (1/s)} \right) \\
    &\le \lg(1/s) + 2 \lg \lg (2/s) + 4.31
\end{align*}
Thus, a sufficient condition is
\[ N = \lceil \lg(1/s) + 3 \lg \lg(1/s) + \lg \lg (1/(\sgnerr \eps_0)) + 7.59 \rceil. \]

\end{proof}
	\section{Analysis of $\SPLIT$} \label{sec:split}

Although it has many potential uses in its own right, the purpose of the approximate matrix sign function in our algorithm is to split the spectrum of a matrix into two roughly equal pieces, so that approximately diagonalizing $A$ may be recursively reduced to two sub-problems of smaller size.

First, we need a lemma ensuring that a shattered pseudospectrum can be bisected by a grid line with at least $n/5$ eigenvalues on each side.
\begin{lemma}
    Let $A$ have $\eps$-pseudospectrum shattered with respect to some grid $\g$. Then there exists a horizontal or vertical grid line of $\g$ partitioning $\g$ into two grids $\g_\pm$, each containing at least $\min\{n/5,1\}$ eigenvalues.
\end{lemma}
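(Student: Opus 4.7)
The plan is a two-dimensional pigeonhole argument, closed off using the shattering hypothesis as a ``one eigenvalue per square'' constraint. Let $\g = \grid(z_0,\omega,s_1,s_2)$, and for each row index $i = 1,\ldots,s_2$ let $r_i$ denote the number of eigenvalues of $A$ in the $i$-th horizontal strip, with cumulative sums $R_j := \sum_{i\le j} r_i$, so $R_0 = 0$ and $R_{s_2} = n$. First I would search for a horizontal grid line that works: if some $R_j$ lands in the interval $[n/5,\, 4n/5]$, then that horizontal line splits $\g$ into sub-grids containing $R_j$ and $n-R_j$ eigenvalues, each at least $n/5$, and we are done.

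If no such $j$ exists, then the nondecreasing sequence $R_0,\ldots,R_{s_2}$ skips the window $[n/5,4n/5]$, forcing some single increment $r_{i^\ast} = R_{i^\ast}-R_{i^\ast-1}$ to exceed $3n/5$; that is, one row contains strictly more than $3n/5$ eigenvalues. I would then run the same pigeonhole argument for vertical lines: either we find a vertical line certifying the lemma, or there is some column $k^\ast$ whose strip likewise contains more than $3n/5$ eigenvalues.

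Now consider the single grid square $S$ at the intersection of row $i^\ast$ and column $k^\ast$. By inclusion–exclusion on the strips,
\[
    \#(\Lambda(A)\cap S) \;>\; \tfrac{3n}{5} + \tfrac{3n}{5} - n \;=\; \tfrac{n}{5}.
\]
On the other hand, because $\Lambda_\epsilon(A)$ is shattered with respect to $\g$, the boundary of $S$ lies in the complement of $\Lambda_\epsilon(A)$, so Proposition~\ref{thm:componentsofpseudoespec} forces $S$ to contain at most one eigenvalue of $A$. Combining the two bounds gives $1 > n/5$, a contradiction whenever $n \ge 5$. Thus for $n\ge 5$ one of the two bisection attempts must succeed, and both sides of the split contain at least $\lceil n/5\rceil \ge 1$ eigenvalues.

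For the remaining small cases $n < 5$, the required threshold $\min\{n/5,1\} < 1$ only demands a nontrivial split (one eigenvalue on each side). The very same pigeonhole argument run with threshold $1$ succeeds unless all eigenvalues of $A$ lie inside a single grid square, and that possibility is again ruled out by shattering as soon as $n \ge 2$. The only conceptual step is the inclusion–exclusion bound on the intersection square; everything else is bookkeeping on a one-dimensional monotone sequence. I expect the main thing to be careful about is the strict inequality $r_{i^\ast} > 3n/5$, which is what makes the shattering ``at most one per square'' bound bite rather than merely match $n/5$.
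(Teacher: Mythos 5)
Your proof is correct, and the first half coincides with the paper's: compute row sums, and either a horizontal line lands in the window $[n/5,4n/5]$ or some row carries more than $3n/5$ eigenvalues. You diverge in how you finish. The paper, having found a heavy row, immediately produces a \emph{vertical} line splitting that row's eigenvalues in a balanced way, using the fact that shattering puts at most one eigenvalue per square (so the cumulative count along the row increases by at most one per column and must pass through the window). You instead run the vertical search over the whole grid independently and show that simultaneous failure of both searches is impossible: a heavy row and a heavy column would, by inclusion--exclusion, force their single intersection square to contain more than $n/5$ eigenvalues, contradicting the one-per-square property once $n\ge 5$. Both arguments lean on exactly the same consequence of shattering; yours is a touch cleaner and more symmetric, while the paper's is slightly more explicit about which line the binary search in $\SPLIT$ will actually find. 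Your handling of $n<5$ (rerun with threshold $1$; failure would put all eigenvalues in one square) matches the paper's ``trivially pick a grid line'' remark and is fine for $n\ge 2$; the case $n=1$ is vacuous for both proofs and is excluded by the base case of $\EIG$.

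One small correction: the ``at most one eigenvalue per square'' fact is not a consequence of the proposition you cite (which only guarantees that each bounded component of $\Lambda_\epsilon(A)$ contains \emph{at least} one eigenvalue); it is literally condition (1) in the definition of shattering, so you should invoke the definition directly. This does not affect the validity of the argument.
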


\begin{proof}
    We will view $\g$ as a $s_1 \times s_2$ array of squares.  Write $r_1,r_2,...,r_{s_1}$ for the number of eigenvalues in each row of the grid. Either there exists $1 \le i < s_2$ such that $r_1 + \cdots + r_i \ge n/5$ and $r_{i+1} + \cdots + r_{s_1} \ge n/5$---in which case we can bisect at the grid line dividing the $i$th from $(i+1)$st rows---or there exists some $i$ for which $r_{i} \ge 3/5$. In the latter case, we can always find a vertical grid line so that at least $n/5$ of the eigenvalues in the $i$th row are on each of the left and right sides. Finally, if $n\le 5$, we may trivially pick a grid line to bisect along so that both sides contain at least one eigenvalue.
\end{proof}

\begin{figure}[ht]
    \begin{boxedminipage}{\textwidth}
        $$\SPLIT$$  {\small
        \textbf{Input:} Matrix $A \in \C^{n\times n}$, grid $\g = \grid(z_0,\omega,s_1,s_2)$ pseudospectral guarantee $\epsilon$, and a desired accuracy $\nu$. \\
        \textbf{Requires:} $\Lambda_\epsilon(A)$ is shattered with respect to $\g$, and  $\sgnerr \le 0.05/n$. \\
        \textbf{Algorithm:} $(\widetilde{P}_+,\widetilde{P}_-,\g_+,\g_-) = \SPLIT(A,\g,\epsilon,\sgnerr)$
        \begin{enumerate}
            \item $h \gets \Re z_0 + \omega s_1/2$
            \item $M \gets A - h + E_2$
            \item $\alpha_0 \gets 1 - \frac{\epsilon}{2\diag(\g)^2}$
            \item $\phi \gets \text{round} \left( \Tr\, \SGN(M,\epsilon/4,\alpha_0, \sgnerr) + e_4\right)$
            \item If $|\phi| < \min(3n/5,n-1)$
            \begin{enumerate}
                \item $\g_- = \grid(z_0,\omega,s_1/2,s_2)$
                \item $z_0 \gets z_0 + h$
                \item $\g_+ = \grid(z_0,\omega,s_1/2,s_2)$
                \item $(\widetilde{P}_+, \widetilde{P}_-) = \frac{1}{2}(1 \pm \SGN(A - h,\sgnerr))$
            \end{enumerate}
            \item Else, execute a binary search over horizontal grid-line shifts $h$ until $\Tr\,\SGN(A - h,\epsilon/4,\alpha_0, \sgnerr)\leq \frac{3n}{5}$, at which point output $\g_{\pm}$, the subgrids on either side of the shift $h$, and set \\$\widetilde{P}_{\pm} \gets \tfrac{1}{2}\left(\SGN(h-A,\epsilon/4,\alpha_0,\sgnerr)\right)$.
            \item If this fails, set $A \gets iA$, and execute a binary search among vertical shifts from the original grid.
        \end{enumerate}
        \textbf{Output:} Sub-grids $\g_{\pm}$, approximate spectral projectors $\widetilde{P}_{\pm}$, and ranks $n_{\pm}$. \\
        \textbf{Ensures:} There exist true spectral projectors $P_{\pm}$ satisfying (i) $P_+ + P_- = 1$, (ii)$\rank (P_{\pm}) = n_{\pm} \ge n/5$, (iii) $\|P_{\pm} - \widetilde{P}_{\pm}\| \le \sgnerr$, and (iv) $P_\pm$ are the spectral projectors onto the interiors of $\g_{\pm}$.}
    \end{boxedminipage}
\end{figure}

\begin{proof}[Proof of Theorem \ref{thm:split-guarantee}]
    
    The main observation is that, given any matrix $A$, we can determine how many eigenvalues are on either side of any horizontal or vertical line by approximating the sign function of a shift of the matrix. To be precise,  in exact arithmetic $\Tr\,\sgn(A - h) = n_+ - n_-$, where $n_\pm$ are the eigenvalue counts for $A$ on either side of the line $\Re z = h$. We will now show that under the shattered pseudospectrum  assumption, one can exactly compute $n_+-n_-$ using the advertised precision. 
    
    Running $\SGN$ to a final accuracy of $\sgnerr$,
    \begin{align*}
        |\Tr\, \SGN(M) + e_4 - \Tr\,\sgn(M)| 
        &\le |\Tr\, \SGN(M) - \Tr\,\sgn(M)| + |e_4| \\
        &\le n\big(\|\SGN(M) - \sgn(M)\| + \|\SGN(M)\|\mach \big) & & \text{Using \eqref{eqn:fltrace} to bound $|e_4|$} \\
        &\le n\big(\sgnerr + (\sgnerr + \|\sgn(M)\|)\mach\big).
    \end{align*}
   It remains to control $\|\sgn(M)\|$ and quantify the distance between $\sgn(M) = \sgn(A-h+E_2)$ and $\sgn(A-h)$. We first do the latter. Since we need only to modify the diagonal entries of $A$ when creating $M$, the incurred  \textit{diagonal} error matrix $E_2$ has norm at most $\mach \max_i |A_{i,i} - h|$. Using $|A_{i,i}| \le \|A\| \le 4$ and $|h| \le 4$, the fact that $\mach \le \epsilon/100 n \le \epsilon/16$ ensures that the $\epsilon/2$-pseudospectrum of $M$ will still be shattered with respect to  $\g$. We can then form $\sgn(A-h)$ and $\sgn(M)$ by integrating around the boundary of the portions of $\g$ on either side of the line $\Re z = h$, then using the resolvent identity as in Section \ref{sec:matrix-sign}, and the fact that $\Lambda_\epsilon(A)$ and $\Lambda_{\epsilon/2}(M)$ are shattered we get
    $$
       \|\sgn(A)-\sgn(M)\| \leq \frac{\|E_2\|}{2\pi}\cdot \frac{1}{\epsilon}\cdot \frac{2}{\epsilon}\omega(2s_1 + 4s_2) \leq \frac{128\mach }{\epsilon^2}
    $$
    where in the last inequality  we have used that $\g$ has side lengths of at most $8$ and $\|E_2\|\leq 8  \mach$. 
    
    Now, using the contour integral again and the shattered pseudospectrum assumption 
    $$
        \|\sgn(A - h)\| \le \frac{1}{2\pi}\frac{1}{\epsilon}\omega(2s_1 + 4s_2) \le 8/\epsilon.
    $$
    Combining the above bounds we get a a total additive error of $n(\sgnerr + \sgnerr\mach + 8\mach/\epsilon)+\frac{128 \mach }{\epsilon^2}$ in computing the trace of the sign function. If $\sgnerr \le 0.1/n$ and $\mach \le \min\{\epsilon/100n, \frac{\epsilon^2}{512}$,  this error will strictly be less than $0.5$ and we can round $\Tr\, \SGN(A - h)$ to the nearest real integer. Horizontal bisections work similarly, with $iA - h$ instead. 
    
Now that we have shown that it is possible to compute $n_{+}-n_-$ exactly, recall that   from the above discussion, the $\epsilon/2$-pseudospectrum of $M$ will still be shattered with respect to the translation of the original grid $\g$.  Using Lemma \ref{lem:params-for-sgn} and the fact that $\diag(\g)^2 = 128$, we can safely call $\SGN$ with parameters $\epsilon_0 = \epsilon/4$ and
    $$
        \alpha_0 = 1 - \frac{\epsilon}{256}.
    $$
    Plugging these in to the Theorem \ref{prop:sgnerr} ($\epsilon < 1/2$ so $1-\alpha_0 \le 1/100$, and $\beta \le 0.05/n \le 1/12$ so the hypotheses are satisfied) for final accuracy $\sgnerr$ a sufficient number of iterations is
    \begin{align*}
       N_{\SPLIT} := \lg \frac{256}{\epsilon} + 3\lg\lg \frac{256}{\epsilon} + \lg\lg \frac{4}{\sgnerr\epsilon} + 7.59. 
    \end{align*}
    In the course of these binary searches, we make at most $\lg s_1 s_2$ calls to $\SGN$ at accuracy $\sgnerr$. These  require at most
    $$
        \lg s_1s_2\, T_{\SGN}\left(n,\epsilon/2,1 - \frac{\epsilon}{2\diag(\g)^2},\sgnerr\right) 
    $$
    arithmetic operations. In addition, creating $M$ and computing the trace of the approximate sign function cost us $O(n \lg s_1s_2)$ scalar addition operations. We are assuming that $\g$ has side lengths at most $8$, so $\lg s_1 s_2 \le 12\lg 1/\omega(\g)$. Combining all of this with the runtime analysis and machine precision of $\SGN$ appearing in Theorem \ref{prop:sgnerr}, we obtain
    $$
        T_{\SPLIT}(n,\g,\epsilon,\sgnerr) \le 12 \lg \frac{1}{\omega(\g)}\cdot N_{\SPLIT} \cdot \left(T_{\INV}(n,\mach) + O(n^2) \right).
    $$
\end{proof}

	\section{Analysis of $\SPAN$} \label{sec:deflate}

The algorithm $\SPAN$, defined in Section \ref{sec:spectralbisec}, can be viewed as a small variation of the randomized rank revealing algorithm introduced in \cite{demmel2007fast} and revisited subsequently in \cite{ballard2019generalized}. Following these works, we will call this algorithm $\RURV$.

Roughly speaking, in finite arithmetic,  $\RURV$ takes a matrix $A$ with $\sigma_r(A)/\sigma_{r+1}(A) \gg 1$, for some $1\leq r \leq n-1$, and finds nearly unitary matrices $U,V$ and an upper triangular matrix $R$ such that $ URV\approx A$. Crucially, $R$ has the block decomposition
\begin{equation}
    \label{eq:decofR}
    R =  \begin{pmatrix}  R_{11} & R_{12} \\ & R_{22}  \end{pmatrix},
\end{equation}
where $R_{11} \in \mathbb{C}^{r\times r}$ has smallest singular value close to $\sigma_r(A)$, and $R_{22}$ has largest singular value roughly $\sigma_{r+1}(A)$. We will use and analyze the following implementation of $\RURV$. 

\begin{figure}[ht]
    \begin{boxedminipage}{\textwidth}
    $$\RURV$$
{\small        \textbf{Input}: Matrix $A \in \C^{n\times n}$ \\
        \textbf{Algorithm}: $\RURV(A)$
        \begin{enumerate}
             \item $G \gets n\times n$ complex Ginibre matrix $+  E_1$
            \item  $(V, R) \gets \QR (G)$
            \item $B \gets A V^*+E_3$
            \item $(U, R) \gets \QR (B) $
    \end{enumerate}
        \textbf{Output}: A pair of matrices $(U,R)$. \\ 
        \textbf{Ensures: } $\|R_{22}\| \leq \frac{\sqrt{r(n-r)}}{\theta}\sigma_{r+1}(A)$ with probability $1-\theta^2$, for every $1\leq r \leq n-1$ and $\theta > 0$, where $R_{22}$ is the $(n-r)\times (n-r)$ lower-right corner of $R$.  }
    \end{boxedminipage}
\end{figure}

As discussed in Section \ref{sec:spectralbisec}, we hope to use $\SPAN$ to approximate the range of a projector $P$ with rank $r<n$, given an approximation $\widetilde{P}$ close to $P$ in operator norm. We will show that from the output of $\RURV(\widetilde{P})$ we can obtain a good approximation to such a subspace.  More specifically, under certain conditions, if $(U,  R) = \RURV(\widetilde P)$, then the first $r$ columns of $U$ carry all the information we need. For a formal statement see Proposition \ref{prop:spanexact} and Proposition \ref{prop:spanfinite} below. 

Since it may be of broader use, we will work in somewhat greater generality, and define the subroutine $\SPAN$ which receives a matrix $A$ and an integer $r$ and returns a matrix $S \in \mathbb{C}^{n\times r}$ with nearly orthonormal columns. Intuitively, if $A$ is diagonalizable, then under the  guarantee that $r$ is the smallest integer $k$ such that $\sigma_k(A)/\sigma_{k+1}(A) \gg 1$, the columns of the output $S$ span a space close to the span of the top $r$ eigenvectors of $A$. Our implementation of $\SPAN$ is as follows.

\begin{figure}[ht]
    \begin{boxedminipage}{\textwidth}
  $$\SPAN$$ { \small 
       \noindent \textbf{Input}: Matrix $\widetilde{A} \in \C^{n\times n}$ and parameter $r \le n$ \\
          \textbf{Requires:} $1/3 \le \|A\|$, and $\|\widetilde{A} - A\| \le \indef$ for some $A\in \mathbb{C}^{n\times n}$ with $\rank(A)= \rank(A^2)=r$, as well as $\indef \le 1/4 \le \|\widetilde{A}\|$ and $ 1 \le \mu_{\MM}(n),\mu_{\QR}(n),\cn$. \\
          
          \textbf{Algorithm:} $\widetilde{S} = \SPAN(A, r)$. 
        \begin{enumerate}
         \item $(U, R) \gets \RURV(A)$
            \item $\widetilde{S} \gets$  first $r$ columns of $U$.
            \item Output $\widetilde{S}$ 
        \end{enumerate}
        \textbf{Output}: Matrix $S \in \mathbb{C}^{n\times r}$. \\
         \textbf{Ensures:} There exists a matrix $S \in \C^{n\times k}$ whose orthogonal columns span $\text{range}(A)$, such that  $\|\widetilde{S} - S\| \le \outdef$, with probability at least $ 1 - \frac{ (20n)^3\sqrt{\indef} }{\outdef ^2 \sigma_r(A)} $. }
    \end{boxedminipage}
\end{figure}

Throughout this section we use $\rurv(\cdot)$ and $\spa(\cdot , \cdot) $ \marginnote{$\rurv(\cdot)$ \\ $\spa(\cdot, \cdot)$} to denote the exact arithmetic versions of $\RURV$ and $\SPAN$ respectively. In Subsection \ref{sec:smallestsingval} we present a random matrix result that will be needed in the analysis of $\SPAN$. In Subsection  \ref{sec:rurv} we state the properties of $\RURV$ that will be needed.  Finally in Subsections \ref{sec:deflateguaranties} and \ref{sec:deflatefiniteguaranties} we prove the main guarantees of $\spa$ and $\SPAN$, respectively, that are used throughout this paper.  

\subsection{Smallest Singular Value of the Corner of a Haar Unitary}
\label{sec:smallestsingval}
We recall the defining property of the Haar measure on the unitary group:
\begin{definition}
    A random $n\times n$ unitary matrix $V$ is \textit{Haar-distributed} if, for any other unitary matrix $W$, $VW$ and $WV$ are Haar-distributed as well.
\end{definition}
For short, we will often refer to such a matrix as a \emph{Haar unitary.}

Let $n >  r$ be positive integers. In what follows we will consider an $n\times n$ Haar unitary matrix $V$ and  denote by $X$ its upper-left $r \times r$ corner. The  purpose of the present subsection is to derive a  tail bound for the random variable $\sigma_{r}(X)$. We begin by showing a fact that allows us to reduce our analysis to the case when $r\leq n/2$. 

\begin{observation}
\label{obs:equalsingvals}
    Let $n > r>0$ and $V\in \mathbb{C}^{n\times n}$ be a unitary matrix and denote by $V_{11}$ and $V_{22}$ its upper-left $r\times r$ corner and its lower-right $(n-r)\times (n-r)$ corner respectively. If $r\geq n/2$, then $2r-n$ of the singular values of $V_{11}$ are equal to 1, while the remaining $n-r$ are equal to those of $V_{22}$.  
\end{observation}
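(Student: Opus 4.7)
The strategy is a direct calculation using only the defining relation $VV^* = V^*V = I_n$ in block form, together with the fact that the two Gram matrices $MM^*$ and $M^*M$ of a rectangular matrix $M$ share the same multiset of nonzero eigenvalues. Writing
\[
    V = \begin{pmatrix} V_{11} & V_{12} \\ V_{21} & V_{22}\end{pmatrix},
\]
the relation $VV^* = I_n$ gives, among others, the block identity $V_{11}V_{11}^* + V_{12}V_{12}^* = I_r$, while $V^*V = I_n$ gives $V_{12}^*V_{12} + V_{22}^*V_{22} = I_{n-r}$. Rearranging,
\[
    V_{11}V_{11}^* = I_r - V_{12}V_{12}^*, \qquad V_{22}^*V_{22} = I_{n-r} - V_{12}^*V_{12}.
\]

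First, I would use these two identities to read off the squared singular values. The nonzero eigenvalues of $V_{12}V_{12}^*$ (an $r\times r$ matrix) and $V_{12}^*V_{12}$ (an $(n-r)\times(n-r)$ matrix) coincide and equal $\sigma_1(V_{12})^2,\ldots,\sigma_{n-r}(V_{12})^2$; the rest are zeros. So the eigenvalues of $V_{22}^*V_{22}$ are exactly $\{1-\sigma_i(V_{12})^2 : 1\le i\le n-r\}$, giving the singular values of $V_{22}$.

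Next, for the eigenvalues of $V_{11}V_{11}^*$, the counting argument uses the assumption $r\ge n/2$. Since $V_{12}$ has rank at most $\min(r,n-r)=n-r$, the matrix $V_{12}V_{12}^*$ has rank at most $n-r$, hence at least $r-(n-r)=2r-n$ of its eigenvalues are zero. Therefore $V_{11}V_{11}^*$ has eigenvalue $1$ with multiplicity (at least) $2r-n$, and the remaining $n-r$ eigenvalues are $\{1-\sigma_i(V_{12})^2\}$, matching those of $V_{22}^*V_{22}$.

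There is no real obstacle here; the whole argument is a one-paragraph exercise in the block structure of a unitary. The only bookkeeping care needed is to confirm that the multiplicities add up to $r$ on the $V_{11}$ side (they do: $(2r-n)+(n-r)=r$) and to $n-r$ on the $V_{22}$ side, and that the shared singular values align correctly so that the non-unit singular values of the larger block coincide exactly with those of the smaller block. (One can alternatively obtain the same conclusion from the CS decomposition of a block unitary, which makes the statement manifest; the block-identity argument above is essentially its coordinate-free shadow.)
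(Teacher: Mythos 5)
Your proof is correct. The paper actually states this observation as a ``well-known fact'' and provides no proof at all, so there is nothing to compare against; your block-identity argument (the $(1,1)$ block of $VV^\ast = I_n$ and the $(2,2)$ block of $V^\ast V = I_n$, plus the coincidence of the nonzero spectra of $V_{12}V_{12}^\ast$ and $V_{12}^\ast V_{12}$) is the standard one and is indeed the coordinate-level shadow of the CS decomposition. One point worth making explicit: as literally printed, the observation's conclusion does not add up dimensionally --- $V_{22}$ has only $n-r$ singular values, so it cannot have $2r-n$ unit singular values \emph{plus} $n-r$ others; the roles of $V_{11}$ and $V_{22}$ are evidently swapped in the statement, and what you prove (the $r\times r$ block $V_{11}$ has $2r-n$ unit singular values, with the remaining $n-r$ equal to those of $V_{22}$) is the intended, consistent version, which is also the one used later in the proof of Corollary \ref{cor:densitysigmar}.
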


\begin{proof}
Decompose $V$ as follows 
$$V = \left(\begin{array}{cc}
    V_{11} & V_{12} \\
   V_{21}  & V_{22}
\end{array} \right).$$
Since $V$ is unitary $VV^*=I_n$, and looking at the upper-left corner of this equation we get $V_{11} V_{11}^*+V_{12}V_{12}^*=I_r$. Then, since $V_{11}V_{11}^*=I_r-V_{12}V_{12}^*$, we have $\Lambda(V_{11}V_{11}^*) = 1 -\Lambda(V_{12}V_{12}^*)$.  

Now, looking at the lower-right corner of the equation $V^*V=I_n$ we get $V_{12}^*V_{12}+V_{22}^*V_{22}=I_{n-r}$ and hence $\Lambda(V_{22}^*V_{22})=1-\Lambda(V_{12}^*V_{12})$. 

Now recall that for any two matrices $X$ and $Y$, the symmetric difference of the sets $\Lambda(XY)$ and $\Lambda(YX)$ is $\{0\}$, with multiplicity equal to the difference between the dimensions. Hence $\Lambda(V_{12} V_{12}^*) = \Lambda(V_{12}^* V_{12})\cup \{0\}$ where the multiplicity of 0 is $r-(n-r)=2r-n$. Combining this with $\Lambda(V_{11}V_{11}^*) = 1 -\Lambda(V_{12}V_{12}^*)$ and $\Lambda(V_{22}^*V_{22})=1-\Lambda(V_{12}^*V_{12})$ we get the desired result. 
\end{proof}

\begin{proposition}[$\sigma_{\min}$ of a submatrix of a Haar unitary]
\label{prop:sigmamin}
Let $n> r>0$  and let $V$ be an $n\times n$ Haar unitary. Let $X$ be the upper left $r\times r$ corner of $V$. Then, for all $\theta\in (0, 1]$
\begin{equation}
\label{eq:tailformulasigma}
\mathbb{P}\left[ \frac{1}{\sigma_{r}(X)} \leq \frac{1}{\theta}  \right]= (1-\theta^2)^{r(n-r)}.
\end{equation}
In particular, for every $\theta \in (0, 1]$ we have  
\begin{equation}
\label{eq:tailboundsigma}    
\mathbb{P}\left[ \frac{1}{\sigma_{r}(X)} \leq \frac{\sqrt{r(n-r)}}{\theta}  \right]\geq 1-\theta^2.
\end{equation}
\end{proposition}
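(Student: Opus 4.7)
The plan is to compute the distribution of $\sigma_r(X)$ exactly by leveraging the well-known description of the squared singular values of truncations of Haar unitaries as a Jacobi $\beta=2$ ensemble, and then to perform a clean change of variables that isolates the factor $(1-\theta^2)^{r(n-r)}$. Note first that the statement involves the smallest singular value of the $r\times r$ matrix $X$, which we denote $\sigma_r(X)$ (the subscript $n$ in the proposition statement must be read this way). By Observation \ref{obs:equalsingvals}, when $r > n/2$ the nontrivial singular values of $X=V_{11}$ coincide with the singular values of the $(n-r)\times(n-r)$ block $V_{22}$, which, by the invariance of Haar measure under coordinate permutations, has the same distribution as the top-left $(n-r)\times(n-r)$ corner of an $n\times n$ Haar unitary. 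Since the exponent $r(n-r)$ is symmetric under $r \leftrightarrow n-r$, it suffices to treat the case $r \le n/2$.

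In this regime, the joint density of the ordered squared singular values $\lambda_1 \ge \cdots \ge \lambda_r$ of $X$, supported on $\{1 \ge \lambda_1 \ge \cdots \ge \lambda_r \ge 0\}$, is proportional (up to a known normalization constant) to
$$
\prod_{1 \le i < j \le r}(\lambda_i - \lambda_j)^2 \prod_{i=1}^r (1-\lambda_i)^{n-2r},
$$
a complex Jacobi ensemble. Writing $t := \theta^2$ and making the substitution $\lambda_i = t + (1-t)x_i$ with $x_i \in [0,1]$, the differences transform as $\lambda_i - \lambda_j = (1-t)(x_i-x_j)$, the weights as $1-\lambda_i = (1-t)(1-x_i)$, and the Jacobian contributes $(1-t)^r$. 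The total power of $(1-t)$ extracted is therefore $r(r-1) + r(n-2r) + r = r(n-r)$, and the residual integrand is precisely the unnormalized Jacobi density on $[0,1]^r$. Dividing by the normalization constant, which is the same integral without the change of variables, yields
$$
\mathbb{P}[\sigma_r(X) \ge \theta] = \mathbb{P}[\lambda_r \ge \theta^2] = (1-\theta^2)^{r(n-r)},
$$
which is exactly \eqref{eq:tailformulasigma}.

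The tail bound \eqref{eq:tailboundsigma} then follows by applying Bernoulli's inequality $(1-u)^k \ge 1 - ku$ to $u = \theta^2/(r(n-r))$ and $k=r(n-r)$, noting that the bound is trivial when $\theta^2 \ge r(n-r)$. The only substantive step is invoking the Jacobi density for truncated Haar unitaries; everything else is a Selberg-integral style change of variables together with an elementary probabilistic inequality. The potential obstacle is simply ensuring the density above is correctly normalized in the appropriate convention (the main check being that the same Selberg integral appears in the denominator), which is why the calculation above is phrased as an invariance of the integral under the affine substitution rather than as a direct evaluation.
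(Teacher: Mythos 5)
Your proof is correct, and while it rests on the same two external inputs as the paper's argument---the reduction to $r \le n/2$ via Observation \ref{obs:equalsingvals} and the identification of the squared singular values of a truncated Haar unitary with a $\beta=2$ Jacobi ensemble (Theorem \ref{thm:haarvsjacobi})---it extracts the distribution of $\sigma_{\min}(X)$ by a genuinely different and more self-contained calculation. The paper quotes Dumitriu's general formula (Theorem \ref{thm:formula}) for the marginal density of the smallest eigenvalue of a $\beta$-Jacobi ensemble, which involves a generalized hypergeometric function ${}_2F_1^{2/\beta}$, and then observes that for the parameters $a=0$, $b=n-2r$ the hypergeometric factor collapses to $1$, leaving the density $r(n-r)(1-x)^{r(n-r)-1}$ to be integrated. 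You instead compute $\mathbb{P}[\lambda_r \ge t]$ directly from the joint density by the affine substitution $\lambda_i = t + (1-t)x_i$, which carries the restricted integral over $[t,1]^r$ onto the normalizing integral over $[0,1]^r$ with the exact factor $(1-t)^{r+r(r-1)+r(n-2r)} = (1-t)^{r(n-r)}$; your exponent bookkeeping is correct, and the argument cleanly sidesteps both the hypergeometric machinery and the determination of the normalization constant, since the same Selberg-type integral appears in numerator and denominator. Your final step (Bernoulli's inequality, with the trivial case $\theta^2 \ge r(n-r)$ handled by noting $1-\theta^2 \le 0$) is essentially identical to the paper's convexity argument. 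The one point worth stating explicitly if this were written up is that the joint density you start from---$\prod_{i<j}(\lambda_i-\lambda_j)^2\prod_i(1-\lambda_i)^{n-2r}$ for $r \le n/2$---is exactly the eigenvalue density of the matrix model invoked in Theorem \ref{thm:haarvsjacobi}, so that your starting point and the paper's coincide; granting that, your route arguably yields a cleaner proof of \eqref{eq:tailformulasigma} than the one in the paper.
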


This exact formula for the CDF of the smallest singular value of $X$ is remarkably simple, and we have not seen it anywhere in the literature. It is an immediate consequence of substantially more general results of Dumitriu \cite{dumitriu2012smallest}, from which one can extract and simplify the density of $\sigma_r(X)$. We will begin by introducing the relevant pieces of \cite{dumitriu2012smallest}, deferring the final proof until the end of this subsection.

Some of the formulas presented here are written in terms of the generalized  hypergeometric function which we denote by ${}_2 F_1^\beta(a, b; c; (x_1, \dots, x_m)).$ For our application it is sufficient to know that
\begin{equation}
\label{eq:defgenhypergeom}
{}_2F_1^\beta(0, b;c, (x_1, \dots, x_m)) = 1,
\end{equation}
whenever $c >0$ and  ${}_2F_1$ is well defined. The above equation can be derived directly from the definition of ${}_2F_1^\beta$ \marginnote{${}_2F_1^\beta$} (see Definition 13.1.1 in \cite{forrester2010log} or Definition 2.2 in \cite{dumitriu2012smallest}). 

The generic results in \cite{dumitriu2012smallest} concern the \textit{$\beta$-Jacobi} random matrices, which we have no cause here to define in full. Of particular use to us will be \cite[Theorem 3.1]{dumitriu2012smallest}, which expresses the density of the smallest singular value of such a matrix in terms of the generalized hypergeometric function:

\begin{theorem}[\cite{dumitriu2012smallest}]
\label{thm:formula}
The density of the probability distribution of the smallest eigenvalue $\lambda$, of the $\beta$-Jacobi ensembles of parameters $a, b$ and size $m$, which we denote by $f_{\lambda_{\min}}(\lambda)$ \marginnote{$f_{\lambda_{\min}}$}, is given by 
\begin{equation}
\label{eq:supergeneralformula}
   C_{\beta, a, b, m}\lambda^{\frac{\beta}{2}(a+1)-1}(1-\lambda)^{\frac{\beta}{2}m(b+m)-1} {}_2{F}_1^{2/\beta} \left(1-\frac{\beta(a+1)}{2}, \frac{\beta(b+m-1)}{2}; \frac{\beta(b+2m-1)}{2}+1; (1-\lambda)^{m-1}\right),
\end{equation}
for some normalizing constant $C_{\beta, a, b, m}$. 
\end{theorem}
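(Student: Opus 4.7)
The natural starting point is the explicit joint density of the eigenvalues of the $\beta$-Jacobi ensemble of parameters $(a,b,m)$, namely
\[
    p(\lambda_1,\ldots,\lambda_m) \;=\; \frac{1}{Z_{\beta,a,b,m}}\,\prod_{i=1}^m \lambda_i^{\frac{\beta}{2}(a+1)-1}(1-\lambda_i)^{\frac{\beta}{2}(b+1)-1} \prod_{i<j}|\lambda_i-\lambda_j|^{\beta},
\]
on the ordered simplex $0\le\lambda_1\le\cdots\le\lambda_m\le 1$, where $Z_{\beta,a,b,m}$ is the Selberg normalization constant (known in closed form). The density of $\lambda_{\min}=\lambda$ is obtained by fixing $\lambda_1=\lambda$ and integrating out the remaining $m-1$ eigenvalues over $[\lambda,1]^{m-1}$ (the combinatorial factor $m$ from the choice of which variable is the minimum cancels into the constant).

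The main computational step is to perform the affine change of variables $\lambda_i=\lambda+(1-\lambda)\mu_i$ for $i\ge 2$, which maps $[\lambda,1]$ to $[0,1]$. This substitution rescales each Vandermonde difference $|\lambda_i-\lambda_j|$ and each factor $|\lambda_i-\lambda|$ by $(1-\lambda)$, and turns $(1-\lambda_i)$ into $(1-\lambda)(1-\mu_i)$. Collecting all the powers of $\lambda$ and of $(1-\lambda)$ that are pulled outside the integral gives precisely the prefactor
\[
    \lambda^{\frac{\beta}{2}(a+1)-1}(1-\lambda)^{\frac{\beta}{2}m(b+m)-1},
\]
and a residual integral over the unit cube whose integrand is the $\beta$-Jacobi weight in the $\mu_i$ together with the extra factor $\prod_i\bigl(1+\tfrac{1-\lambda}{\lambda}\mu_i\bigr)^{\frac{\beta}{2}(a+1)-1}$ coming from $\lambda_i^{\frac{\beta}{2}(a+1)-1}$. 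Any dependence on $\lambda$ is now confined to this last factor, through the argument $1-\lambda$ (after a further substitution $\mu_i\mapsto 1-\mu_i$ that flips the Jacobi weight into the right orientation for Kaneko's integral representation below).

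The remaining step, and the main obstacle, is to recognize the $\lambda$-dependent integral as a generalized hypergeometric function with Jack-polynomial parameter $\alpha=2/\beta$. The identification follows from Kaneko's Selberg-type integral representation of ${}_2F_1^{\alpha}$ (essentially the multivariate analogue of Euler's integral representation), which expresses
\[
    \int_{[0,1]^{m-1}}\!\!\!\prod_i \mu_i^{A-1}(1-\mu_i)^{B-1}\!\!\prod_{i<j}|\mu_i-\mu_j|^{2/\alpha}\prod_i(1-x\mu_i)^{-c}\,d\mu
\]
in closed form as a multiple of ${}_2F_1^{\alpha}(\,\cdot\,,\,\cdot\,;\,\cdot\,;x,\ldots,x)$. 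Matching the exponents from the change of variables against the parameters $(a,b,c)$ that Kaneko's identity demands gives the shift of the first hypergeometric parameter to $1-\tfrac{\beta}{2}(a+1)$ and of the second and third to the values appearing in the statement, with argument $(1-\lambda)^{m-1}$. Absorbing the Selberg constant from Kaneko's formula together with the original normalization $1/Z_{\beta,a,b,m}$ into a single constant $C_{\beta,a,b,m}$ yields the advertised formula. The delicate point is purely bookkeeping: verifying that the three hypergeometric parameters coming out of Kaneko's identity align with the ones in \eqref{eq:supergeneralformula}, and that $C_{\beta,a,b,m}$ is indeed $\lambda$-independent.
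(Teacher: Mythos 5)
This theorem is imported verbatim from \cite{dumitriu2012smallest} (their Theorem 3.1); the paper supplies no proof of it, so there is no internal argument to compare yours against. Your sketch is a faithful reconstruction of the standard derivation, and essentially the route taken in the cited source: write the Selberg-type joint eigenvalue density, condition on the minimum and integrate out the remaining $m-1$ eigenvalues, rescale $[\lambda,1]$ to $[0,1]$, and identify the surviving $\lambda$-dependent integral via Kaneko's Euler-type integral representation of ${}_2F_1^{2/\beta}$. The exponent bookkeeping for the prefactor checks out: the total power of $(1-\lambda)$ collected from the Jacobian, the weights $(1-\lambda_i)^{\frac{\beta}{2}(b+1)-1}$, and the Vandermonde factors is $\frac{\beta}{2}(b+1)-1+(m-1)\cdot\frac{\beta}{2}(b+m+1)=\frac{\beta}{2}m(b+m)-1$, matching \eqref{eq:supergeneralformula}; and after your flip $\mu_i\mapsto 1-\mu_i$ the residual factor $\prod_{i}\bigl(1-(1-\lambda)\nu_i\bigr)^{\frac{\beta}{2}(a+1)-1}$ is exactly of the form $\prod_i(1-x\nu_i)^{-c}$ with $c=1-\frac{\beta}{2}(a+1)$, which is the first hypergeometric parameter in the statement. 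The only substantive incompleteness is the one you name yourself: the identification of the second and third parameters, and the $\lambda$-independence of $C_{\beta,a,b,m}$, rest entirely on Kaneko's identity, which you invoke but neither state nor verify, so as written this is a plan for that step rather than a proof of it. For the purposes of this paper the point is moot: only the specialization $\beta=2$, $a=0$ of Corollary \ref{cor:densitysigmar} is used, where the hypergeometric factor is identically $1$ by \eqref{eq:defgenhypergeom} and the constant is then fixed by normalization.
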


For a particular choice of parameters, the above theorem can be applied to describe the the distribution of $\sigma_{r}^2(X)$. The connection between singular values of corners of Haar unitary matrices and $\beta$-Jacobi ensembles is the content of \cite[Theorem 1.5]{edelman2008beta}, which we rephrase below to match our context. 

\begin{theorem}[\cite{edelman2008beta}]
\label{thm:haarvsjacobi}
Let $V$ be an $n\times n$ Haar unitary matrix and let $r\leq \frac{n}{2}$. Let $X$ be the $r\times r$ upper-left corner of $V$. Then, the eigenvalues of $XX^*$ distribute as the eigenvalues of a $\beta-$Jacobi matrix of size $r$ with parameters $\beta=2, a=0$ and $b=n-2r$. 
\end{theorem}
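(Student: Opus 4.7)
The plan is to realize the joint law of the singular values of $X$ via a Gaussian model that makes the Jacobi structure transparent. Since Haar measure on $U(n)$ is $U(n)$-invariant under both left and right multiplication, the first $r$ columns $V_1$ of $V$ are uniformly distributed on the complex Stiefel manifold $\{A \in \mathbb{C}^{n\times r} : A^*A = I_r\}$. This uniform measure is the pushforward of the i.i.d.\ standard complex Gaussian law on $\mathbb{C}^{n\times r}$ under the normalizer $G \mapsto G(G^*G)^{-1/2}$, as one verifies from $U(n)$-equivariance on the left and $U(r)$-invariance on the right (the latter because $(GU)^*(GU) = U^*(G^*G)U$ conjugates cleanly through the inverse square root).

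Partitioning $G$ into its top $r\times r$ block $G_1$ and its bottom $(n-r)\times r$ block $G_2$, the top $r\times r$ block of $V_1 = G(G^*G)^{-1/2}$ is $X = G_1(A+B)^{-1/2}$, where $A := G_1^*G_1$ and $B := G_2^*G_2$ are independent complex Wishart matrices with $r$ and $n-r$ degrees of freedom respectively. The eigenvalues of $XX^*$ therefore coincide with those of $X^*X = (A+B)^{-1/2}A(A+B)^{-1/2}$, which in turn coincide with those of $A(A+B)^{-1}$; by definition the latter is the complex matrix-variate Beta distribution $B^{\mathbb{C}}_r(r,\,n-r)$.

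The final step is density identification. The standard Weyl/Wishart calculation shows that the joint eigenvalue density of a $B^{\mathbb{C}}_r(p,q)$ matrix on $[0,1]^r$ is proportional to $\prod_i \lambda_i^{p-r}(1-\lambda_i)^{q-r}\prod_{i<j}(\lambda_i-\lambda_j)^2$. Substituting $p=r$ and $q=n-r$ yields $\prod_i(1-\lambda_i)^{n-2r}\prod_{i<j}(\lambda_i-\lambda_j)^2$, which is precisely the $\beta=2$ Jacobi ensemble of \cite{dumitriu2012smallest} with $a=0$ and $b=n-2r$: in their conventions the joint density carries factors $\lambda_i^{(\beta/2)(a+1)-1}(1-\lambda_i)^{(\beta/2)(b+1)-1}$, which evaluate to $\lambda_i^0(1-\lambda_i)^{n-2r}$ here. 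The hypothesis $r \le n/2$ ensures $b = n - 2r \ge 0$, so the density is integrable without recourse to Observation \ref{obs:equalsingvals}.

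The main obstacle is purely bookkeeping: the several different parametrizations of the $\beta$-Jacobi ensemble in the literature differ by shifts of $\pm 1$ or $\pm 2/\beta$ in $a$ and $b$, so one must carefully track conventions to confirm that the Wishart-derived density agrees with the one appearing in Theorem \ref{thm:formula}. A self-contained alternative that sidesteps this is the CS decomposition approach: write $X = U_1\cos(\Theta)W_1^*$ via the CSD of $V$, compute the Jacobian of the change of variables from Haar measure on $U(n)$ to Haar measures on $U(r)^2 \times U(n-r)^2$ together with the angles $\theta_1,\ldots,\theta_r$, and identify the density of $\lambda_i = \cos^2 \theta_i$ directly; this replaces the identification problem with an explicit (though tedious) Jacobian computation.
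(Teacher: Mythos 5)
Your argument is correct, but it is worth noting that the paper does not actually prove this statement: Theorem \ref{thm:haarvsjacobi} is quoted verbatim (after rephrasing) from Theorem 1.5 of \cite{edelman2008beta}, whose proof goes through the CS decomposition of the $2\times 2$ block partition of $V$ — essentially the ``self-contained alternative'' you sketch at the end. Your primary route is the classical Wishart/MANOVA derivation: realizing the first $r$ columns of $V$ as $G(G^*G)^{-1/2}$ for Gaussian $G$, identifying $X^*X$ with the complex matrix-variate Beta $B^{\C}_r(r,\,n-r)$, and reading off the Jacobi eigenvalue density. Each step is sound: the left $U(n)$-equivariance of the normalization map does pin down the uniform Stiefel measure; $X$ being square means $XX^*$ and $X^*X$ share eigenvalues; the degrees-of-freedom count $p=r$, $q=n-r$ is the boundary case where $A$ is still a.s.\ invertible and the Beta density exists precisely when $r\le n/2$; and the exponents $\lambda_i^{p-r}(1-\lambda_i)^{q-r}=\lambda_i^{0}(1-\lambda_i)^{n-2r}$ match the $\beta=2$, $a=0$, $b=n-2r$ Jacobi weight in the convention used by Theorem \ref{thm:formula} (one can sanity-check this against Corollary \ref{cor:densitysigmar}, whose exponent $r(n-r)-1$ comes out of exactly these parameters). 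What the two approaches buy: the Wishart route is shorter and leans on standard complex Wishart eigenvalue densities, but requires the convention bookkeeping you flag; the CSD route of \cite{edelman2008beta} computes the Jacobian once and identifies the laws of all four blocks of $V$ simultaneously (and generalizes to $\beta=1,4$), at the cost of a more involved change of variables. Either suffices for the paper's purposes, since only the law of $\Lambda(XX^*)$ is used downstream.
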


In view of the above result, Theorem \ref{thm:formula} gives a formula for the density of $\sigma_{r}^2(X)$. 

\begin{corollary} [Density of $\sigma_{r}^2(X)$] 
\label{cor:densitysigmar}
    Let $V$ be an $n\times n$ Haar unitary and $X$ be its upper-left $r\times r$ corner with $r<n$, then $\sigma_{r}^2(X)$ has the following density 
    \begin{equation} \label{eq:densityminsigma}
        f_{\sigma_{r}^2}(x) := \begin{cases} r(n-r) \left(1-x\right)^{r(n-r)-1} & \text{if}\quad  0\leq x \leq 1,\\ 0 & \text{otherwise.}\end{cases}
        \marginnote{$f_{\sigma_{r}^2}$}
    \end{equation}
\end{corollary}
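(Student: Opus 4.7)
The plan is to derive the density by combining Theorem~\ref{thm:haarvsjacobi} with Theorem~\ref{thm:formula}, and then handle the case $r > n/2$ via Observation~\ref{obs:equalsingvals}. First I would reduce to $r \le n/2$: when $r > n/2$, Observation~\ref{obs:equalsingvals} shows that the nonzero/nontrivial singular values of $X$ coincide with those of the complementary $(n-r)\times (n-r)$ corner $X'$ (up to a collection of $2r-n$ singular values equal to $1$, which are the largest), so in particular the smallest singular value of $X$ equals the smallest singular value of $X'$. Since the target formula $r(n-r)(1-x)^{r(n-r)-1}$ is invariant under $r \leftrightarrow n-r$, the case $r>n/2$ reduces to the case $r \le n/2$ applied to $X'$.

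For $r \le n/2$, Theorem~\ref{thm:haarvsjacobi} says the eigenvalues of $XX^\ast$ are distributed as the eigenvalues of a $\beta$-Jacobi ensemble with parameters $\beta=2$, $a=0$, $b=n-2r$, and size $m=r$. Substituting these into the formula \eqref{eq:supergeneralformula} of Theorem~\ref{thm:formula} for the density of the smallest eigenvalue, I would compute:
\begin{itemize}
    \item $\tfrac{\beta}{2}(a+1) - 1 = 0$, so the factor $\lambda^{\frac{\beta}{2}(a+1)-1}$ is identically $1$;
    \item $\tfrac{\beta}{2}m(b+m) - 1 = r(n-r) - 1$, giving the $(1-\lambda)^{r(n-r)-1}$ factor;
    \item The first parameter of ${}_2F_1^{2/\beta}$ is $1 - \tfrac{\beta(a+1)}{2} = 0$, while the third parameter $\tfrac{\beta(b+2m-1)}{2}+1 = n > 0$ is strictly positive. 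Thus \eqref{eq:defgenhypergeom} applies and the hypergeometric factor equals $1$.
\end{itemize}
This collapses \eqref{eq:supergeneralformula} into $C \cdot (1-x)^{r(n-r)-1}$ on $[0,1]$ for some normalizing constant $C$.

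Finally I would fix $C$ by integrating: $\int_0^1 (1-x)^{r(n-r)-1}\,dx = \tfrac{1}{r(n-r)}$, so $C = r(n-r)$, yielding exactly the density stated in \eqref{eq:densityminsigma}. There is no substantive obstacle here; the proof is essentially a careful specialization of two cited results, and the only point that requires attention is checking that the degenerate argument $0$ in the hypergeometric function is covered by \eqref{eq:defgenhypergeom} (which requires the positivity of the third parameter, verified above) so that the generalized hypergeometric factor genuinely trivializes.
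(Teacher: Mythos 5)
Your proposal is correct and follows essentially the same route as the paper's proof: reduce to $r \le n/2$ via Observation \ref{obs:equalsingvals}, specialize Theorem \ref{thm:formula} with $\beta=2$, $a=0$, $b=n-2r$, $m=r$ using Theorem \ref{thm:haarvsjacobi}, trivialize the hypergeometric factor via \eqref{eq:defgenhypergeom}, and normalize by integration. Your parameter computations all check out, and your explicit verification that the third parameter equals $n>0$ is a detail the paper leaves implicit.
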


\begin{proof}
    If $r > n/2$, since we care only about the smallest singular value of $X$, we can use Observation \ref{obs:equalsingvals} to analyse the $(n-r)\times (n-r)$ lower right corner of $V$ instead. Hence, we can assume without loss of generality that $r\leq n/2$. Now, substitute $\beta=2, a=0, b=n-2r, m=r$ in Theorem \ref{thm:formula} and observe that in this case 
    \begin{equation} \label{eq:formfordensity}
        f_{\lambda_{\min}}(x) = C  (1-x)^{r(n-r)-1} {}_2F_1^{1}(0, n-r-1;n;(1-x)^{r-1}) = C (1-x)^{r(n-r)-1}   
    \end{equation}
    where the last equality follows from (\ref{eq:defgenhypergeom}). Using the relation between the distribution of $\sigma_{r}^2(X)$ and the distribution of the minimum eigenvalue of the respective $\beta$-Jacobi ensemble described in Theorem \ref{thm:haarvsjacobi} we have $f_{\sigma^2_{r}}(x) =  f_{\lambda_{\min}}(x)$. By integrating on $[0,1]$ the right side of (\ref{eq:formfordensity}) we find $C= r(n-r)$. 
\end{proof}

\begin{proof}[Proof of Proposition \ref{prop:sigmamin}]
    From (\ref{eq:densityminsigma}) we have that
    $$\mathbb{P}\left[\sigma_{r}^2(X)\leq \theta \right] = r(n-r) \int_0^\theta (1-x)^{r(n-r)-1} dx = 1-(1-\theta)^{r(n-r)},$$
    from where (\ref{eq:tailformulasigma}) follows. To prove (\ref{eq:tailboundsigma}) note that $g(t):=(1-t)^{r(n-r)}$ is convex in $[0, 1]$, and hence $g(t) \geq g(0)+tg'(0)$ for every $t\in [0, 1]$. 
\end{proof}

\subsection{Sampling Haar Unitaries in Finite Precision}

It is a well-known fact that Haar unitary matrices can be numerically generated from complex Ginibre matrices. We refer the reader to \cite[Section 4.6]{edelman2005random} and \cite{mezzadri2006generate} for a detailed discussion. In this subsection we carefully analyze this process in finite arithmetic. 

The following fact (see \cite[Section 5]{mezzadri2006generate}) is the starting point of our discussion. 

\begin{lemma}[Haar from Ginibre]
\label{lem:HaarfromGinibre}
Let $G_n$ be a complex $n\times n$ Ginibre matrix and $U, R\in \mathbb{C}^{n\times n}$ be defined implicitly, as a function of $G_n$, by the equation $G_n = UR$ and the constraints  that $U$ is unitary and $R$ is upper-triangular with nonnegative diagonal entries\footnote{$G_n$ is almost surely invertible and under this event $U$ and $R$ are uniquely determined by these conditions. }. Then, $U$ is Haar distributed in the unitary group. 
\end{lemma}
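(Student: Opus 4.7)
The plan is to prove this by exploiting the unitary invariance of the complex Ginibre distribution, together with the uniqueness of the QR factorization under the nonnegative-diagonal normalization. The key observation is that the entries of $G_n$ are i.i.d.\ standard complex Gaussians, so the joint density of $G_n$ depends only on $\Tr(G_n^* G_n)$, which is invariant under $G_n \mapsto W G_n$ for any fixed unitary $W$. Hence $W G_n$ and $G_n$ are equal in distribution.

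Next I would use the almost-sure invertibility of $G_n$ (its entries are continuous random variables, so $\det G_n \neq 0$ with probability one) to guarantee that the pair $(U, R)$ is almost surely uniquely determined by the two conditions that $U$ is unitary and $R$ is upper triangular with strictly positive diagonal. Writing $G_n = UR$, we then also have $W G_n = (WU)R$, and $WU$ is unitary while $R$ still has positive diagonal, so by the uniqueness of the factorization, $WU$ is the unitary factor in the QR decomposition of $W G_n$.

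Combining the two previous steps, $WU$ (viewed as a function of $W G_n$) has the same distribution as $U$ (viewed as the same function of $G_n$), for every fixed unitary $W$. By the Definition of Haar-distributedness recalled earlier in the excerpt, this means $U$ is Haar-distributed. A symmetric argument with right-multiplication, or simply invoking the uniqueness of Haar measure as the unique left-invariant probability measure on the compact group $U(n)$, covers the other side.

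The main potential obstacle is handling the measure-zero event on which $G_n$ fails to be invertible or on which some diagonal entry of $R$ vanishes, but since complex Ginibre matrices are almost surely invertible and the diagonal entries of $R$ in the QR decomposition of an invertible matrix are almost surely nonzero (we can then renormalize their phases to force positivity), this is a routine null-set argument and does not affect the distributional identity.
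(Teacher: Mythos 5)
Your proof is correct, and it is the standard argument: the paper itself does not prove this lemma but simply cites \cite{mezzadri2006generate}, where exactly this reasoning (unitary invariance of the Ginibre density via $\Tr(G^*G)$, plus uniqueness of the positive-diagonal QR factorization, giving $WU \overset{d}{=} U$ for every fixed unitary $W$) is carried out. One small caution on your last step: the ``symmetric argument with right-multiplication'' does not literally go through, since $G_nW = U(RW)$ and $RW$ is no longer upper triangular, so right-multiplication does not act nicely on the QR factor; however, your alternative --- that a left-invariant Borel probability measure on the compact group $U(n)$ is necessarily the Haar measure, hence automatically right-invariant --- is the correct way to close the argument, so the proof stands.
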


The above lemma suggests that  $\QR(\cdot)$ can be used to generate random matrices that are approximately Haar unitaries. While doing this, one should keep in mind that when working with finite arithmetic, the matrix $\widetilde{G_n}$ passed to $\QR$ is not exactly Ginibre-distributed, and the algorithm $\QR$ itself incurs round-off errors.

Following  the discussion in Section \ref{sec:gaussians} we can assume that we have access to a random matrix $\widetilde{G}_n$, with 
$$
    \widetilde{G}_n = G_n +E,
    \marginnote{$\widetilde{G}_n$}
$$  
where $G_n$ is a complex $n\times n$ Ginibre matrix and $E\in \mathbb{C}^{n\times n}$ is an adversarial perturbation whose entries are bounded by $\frac{1}{\sqrt{n}} \cn \textbf{u}$. Hence, we have $\|E\|\leq \Vert E \Vert_F \le \sqrt{n} \cn \u$. 

In what follows we use $\qr (\cdot)$ to denote the exact arithmetic version of $\QR(\cdot)$. \marginnote{$\qr$} Furthermore, we assume that for any $A\in \mathbb{C}^{n\times n}$,  $\qr(A)$ returns a pair $(U, R)$ with the property that $R$ has nonnegative entries on the diagonal.  Since we want to compare $\qr(G_n)$ with $\QR(\widetilde{G}_n)$ it is necessary to have a bound on the condition number of the $QR$ decomposition. For this, we cite the following consequence of a result of Sun \cite[Theorem 1.6]{sun1991perturbation}:

\begin{lemma}[Condition number for the $QR$ decomposition \cite{sun1991perturbation}]
\label{lem:condQR}
Let $A, E\in \mathbb{C}^{n\times n}$ with $A$ invertible. Furthermore assume that $\|E\|\|A^{-1}\|\leq \frac{1}{2}$. If $(U, R) = \qr(A)$ and $(\widetilde{U},  \widetilde{R}) = \qr(A+E) $,  then 
\[ \Vert \widetilde{U} - U \Vert_F \le 4 \Vert A^{-1} \Vert \Vert E \Vert_F.\]
\end{lemma}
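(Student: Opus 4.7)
The statement is a direct quantitative consequence of Sun's perturbation theorem for the QR factorization, so the plan is to produce a clean reduction to a local analysis at the identity and then verify that the constant works out to $4$.

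First, the assumption $\|E\|\|A^{-1}\| \le 1/2$, combined with a Neumann series argument, shows that $A+E$ is invertible, so both $(U,R) = \qr(A)$ and $(\widetilde U, \widetilde R) = \qr(A+E)$ are uniquely determined by our convention that $R$ and $\widetilde R$ have nonnegative diagonal entries. I would then introduce $W := U^* \widetilde U$, which is unitary, and $F := U^* E$, which satisfies $\|F\|_F = \|E\|_F$ and $\|F\| = \|E\|$. The defining identities $UR = A$ and $\widetilde U \widetilde R = A+E$ combine to give
$$W \widetilde R = R + F.$$

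Second, I would right-multiply this relation by $R^{-1}$, using $\|R^{-1}\| = \|A^{-1}\|$ (since $U$ is unitary), to obtain
$$W \cdot (\widetilde R R^{-1}) = I + N, \qquad N := F R^{-1},$$
with $\|N\| \le \|E\|\|A^{-1}\| \le 1/2$ and $\|N\|_F \le \|A^{-1}\|\|E\|_F$. Since $\widetilde R R^{-1}$ is upper triangular with positive diagonal, the pair $(W, \widetilde R R^{-1})$ is the QR factorization of $I + N$. The problem has therefore been reduced to bounding $\|W - I\|_F$ for the $Q$ factor of a matrix close to the identity.

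Third, for this local problem I would pass through a Cholesky-type estimate: forming $(I+N)^*(I+N) = (\widetilde R R^{-1})^*(\widetilde R R^{-1})$ expresses $\widetilde R R^{-1}$ as the Cholesky factor of a matrix within roughly $3\|N\|_F$ of the identity. The Lipschitz property of the Cholesky map at the identity (exactly the content of Sun's companion Cholesky perturbation theorem) then yields $\|\widetilde R R^{-1} - I\|_F \le c_1 \|N\|_F$ for a small explicit $c_1$. Writing $W - I = N (\widetilde R R^{-1})^{-1} - (\widetilde R R^{-1} - I)(\widetilde R R^{-1})^{-1}$ and bounding $\|(\widetilde R R^{-1})^{-1}\|$ close to $1$ using the smallness hypothesis finally gives $\|W - I\|_F \le 4\|N\|_F \le 4 \|A^{-1}\|\|E\|_F$, which is exactly $\|\widetilde U - U\|_F$ since $W = U^*\widetilde U$ and the Frobenius norm is unitarily invariant.

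The only real obstacle is bookkeeping on the constant: producing any bound of the form $O(\|A^{-1}\|\|E\|_F)$ is easy, but extracting the explicit factor $4$ requires using the hypothesis $\|E\|\|A^{-1}\| \le 1/2$ sharply in two places, namely to bound the higher-order Cholesky error terms and to keep $\|(\widetilde R R^{-1})^{-1}\|$ close to $1$. The exact constant $4$ is then essentially an artifact of this choice of $1/2$ as the smallness threshold in Sun's theorem.
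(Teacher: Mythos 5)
First, a point of comparison: the paper does not prove this lemma at all --- it is imported verbatim as a stated consequence of Sun's Theorem 1.6, so any argument you supply is necessarily a different route from the paper's. Your first half is correct and is the standard reduction: setting $W := U^*\widetilde{U}$ and $S := \widetilde{R}R^{-1}$, the pair $(W,S)$ is the QR factorization of $I+N$ with $N := U^*ER^{-1}$, and one has $\|N\|\le\|E\|\|A^{-1}\|\le 1/2$, $\|N\|_F\le\|A^{-1}\|\|E\|_F$, and $\|\widetilde{U}-U\|_F=\|W-I\|_F$ by unitary invariance. So the lemma does reduce to showing $\|W-I\|_F\le 4\|N\|_F$ for the orthogonal factor of $I+N$.

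The gap is in the second half, which you dismiss as bookkeeping; it is not, and as sketched the argument does not reach the constant $4$. Writing $S=I+T$, your final estimate is $\|W-I\|_F\le(\|N\|_F+\|T\|_F)\,\|S^{-1}\|$. The only available bound on $\|S^{-1}\|=1/\sigma_n(I+N)$ is $2$, so you would need $\|T\|_F\le\|N\|_F$; but already the linearization $T+T^*\approx N+N^*$ shows $\|T\|_F$ can be as large as $\sqrt{2}\,\|N\|_F$ (take $N$ Hermitian with zero diagonal), giving $2(1+\sqrt{2})\approx 4.83$ rather than $4$, before any higher-order losses. The cancellation you are discarding by the triangle inequality is essential: to leading order $N-T$ equals $L-L^*+i\,\im(D)$ where $N=L+D+U$ is its strictly-lower/diagonal/strictly-upper decomposition, a skew-Hermitian matrix of Frobenius norm at most $\sqrt{2}\,\|N\|_F$, and this is exactly what produces a constant below $4$. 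Separately, the Cholesky detour has its own problem: the induced Gram perturbation $M=N+N^*+N^*N$ satisfies only $\|M\|\le 2\|N\|+\|N\|^2\le 5/4$, which exceeds the operator-norm threshold $\|M\|<1$ under which a black-box Cholesky perturbation theorem at the identity applies (and its bound, with a $1/(1-\|M\|)$-type denominator, degenerates); moreover the hypothesis controls only $\|N\|$, while $\|N\|_F$ and $\|T\|_F$ may be of order $\sqrt{n}$, so the quadratic self-bounding step $\|T\|_F\lesssim\|M\|_F+\|T\|\|T\|_F$ does not close. A correct quantitative proof either exploits the skew-Hermitian cancellation above nonperturbatively, or (as Sun does) differentiates the factorization along $A+tE$, bounds $\|\dot{U}(t)\|_F\le\sqrt{2}\,\|R(t)^{-1}\|\|E\|_F$ using that $U^*\dot{U}$ is skew-Hermitian and determined by the strictly lower part of $U^*\dot{A}R^{-1}$, and integrates.
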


We are now ready to prove the main result of this subsection. As in the other sections devoted to finite arithmetic analysis, we will assume that $\u$ is small compared to $\mu_{\QR}(n)$; precisely, let us assume that
\begin{equation}
\label{eq:finiteassump2}
\u \mu_\QR(n)\leq  1. 
\end{equation}

\begin{proposition}[Guarantees for finite-arithmetic Haar unitary matrices]
\label{prop:finiteHaar}
    Suppose that $\QR$ satisfies the assumptions in Definition \ref{def:qr} and that it is designed to output upper triangular matrices with nonnegative entries on the diagonal\footnote{Any algorithm that yields the $QR$ decomposition can be modified in a stable way to satisfy this last condition at the cost of $O^*(n\log(1/\u))$ operations}. If $(V, R )=\QR(\widetilde{G_n})$, then there is a Haar unitary matrix $U$ and a random matrix $E$ such that $\widetilde{V} = U+E$. Moreover, for every $1>\alpha > 0$ and $ t > 2\sqrt{2}+1$ we have  
    $$
        \mathbb{P}\left[\|E\|< \frac{8t n^{\frac{3}{2}}}{\alpha}\cn \mu_\QR(n)  \u + \frac{10 n^2}{\alpha } \cn \u  \right] \ge 1-2e\alpha^2-\expboundd. 
    $$
\end{proposition}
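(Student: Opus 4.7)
The plan is to compare the finite-arithmetic output $V$ to the exact QR factor $U$ of the true (random) Ginibre matrix $G_n$, which by Lemma \ref{lem:HaarfromGinibre} is precisely Haar-distributed, and to bound the discrepancy by propagating two small perturbations through the QR factorization.

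First, let $U$ denote the unitary factor of $\qr(G_n)$ in exact arithmetic, with the nonnegative-diagonal convention; this is our candidate Haar matrix. By Definition \ref{def:qr} applied to $\widetilde G_n$, there exist a unitary $V'$ and a matrix $A'$ with $V' A' = R$, $\|V' - V\| \le \mu_\QR(n)\u$, and $\|A' - \widetilde G_n\| \le \mu_\QR(n)\u\|\widetilde G_n\|$. In particular, $V'$ is the exact QR factor of $A'$ (under the same diagonal convention), so by Lemma \ref{lem:condQR},
$$\|V' - U\|_F \le 4\,\|G_n^{-1}\|\,\|A' - G_n\|_F,$$
provided the hypothesis $\|G_n^{-1}\|\,\|A' - G_n\| \le 1/2$ holds.

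Next I would derive a priori control on $\|G_n\|$ and $\|G_n^{-1}\|$ via random matrix theory. For the operator norm, the tail bound of \cite[Lemma 2.2]{banks2019gaussian} gives $\|G_n\| \le t$ with probability at least $1 - \expboundd$ for $t > 2\sqrt{2}+1$. For the inverse, Theorem \ref{thm:szarek} applied with $j=n$ yields $\sigma_n(G_n) \ge \alpha/n$, and hence $\|G_n^{-1}\| \le n/\alpha$, with probability at least $1 - 2e\alpha^2$. On the intersection of these two events, I would combine the triangle inequality with $|(\widetilde G_n)_{ij} - (G_n)_{ij}| \le \cn\u/\sqrt n$ and $\|M\|_F \le \sqrt n \|M\|$ to obtain
$$\|A' - G_n\|_F \le \|A' - \widetilde G_n\|_F + \|\widetilde G_n - G_n\|_F \le \sqrt n\,\mu_\QR(n)\u\,\|\widetilde G_n\| + \sqrt n\,\cn\,\u,$$
and then bound $\|\widetilde G_n\| \le t + \sqrt n\,\cn\,\u \le t+1$ under the standing smallness assumption on $\u$. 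Substituting back and adding the roundoff contribution $\|V - V'\| \le \mu_\QR(n)\u$,
$$\|V - U\| \le \|V - V'\| + \|V' - U\|_F \le \mu_\QR(n)\u + \frac{4 n^{3/2}}{\alpha}\u\bigl((t+1)\mu_\QR(n) + \cn\bigr).$$
Using $\cn, \mu_\QR(n) \ge 1$ and absorbing constants loosely (and bounding $n^{3/2} \le n^2$ where convenient) yields the advertised inequality $\|E\| \le \tfrac{8tn^{3/2}}{\alpha}\cn\mu_\QR(n)\u + \tfrac{10 n^2}{\alpha}\cn\u$. Setting $E := V - U$ and union-bounding over the two random events completes the argument.

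The main obstacle I anticipate is verifying the hypothesis $\|G_n^{-1}\|\,\|A' - G_n\| \le 1/2$ of Lemma \ref{lem:condQR}, since $\|G_n^{-1}\|$ is only polynomially controlled while $\|A' - G_n\|$ has a factor of $\mu_\QR(n)\u$; the needed smallness of $\u$ is provided by assumption (\ref{eq:finiteassump2}) combined with the high-probability lower bound on $\sigma_n(G_n)$, but this must be checked cleanly on the good event before Lemma \ref{lem:condQR} can be invoked. A secondary subtlety is the nonnegative-diagonal convention: both $U$ (to apply Lemma \ref{lem:HaarfromGinibre}) and $V'$ (to match $U$ when applying Lemma \ref{lem:condQR}) must be produced under this normalization, which is consistent with our hypothesis on $\QR$.
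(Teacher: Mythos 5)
Your proposal is correct and follows essentially the same route as the paper's proof: compare $V$ to the exact QR factor $V'$ of the nearby matrix $A'$ guaranteed by Definition \ref{def:qr}, compare $V'$ to $U=\qr(G_n)$ via Sun's perturbation bound (Lemma \ref{lem:condQR}), and control $\|G_n\|$ and $\|G_n^{-1}\|$ by the Ginibre norm tail bound and Theorem \ref{thm:szarek} with $j=n$, exactly as the paper does. The only nitpick is that the cited norm bound gives $\|G_n\|\le 2\sqrt{2}+t$ rather than $\|G_n\|\le t$, but your use of $t>2\sqrt{2}+1$ to absorb constants recovers the stated bound, matching the paper's own final step.
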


\begin{proof}
    From our Gaussian sampling assumption, $\widetilde{G}_n = G_n +E$ where  $\|E\|\leq \sqrt{n} \cn \u$. Also, by the assumptions on $\QR$ from Definition \ref{def:qr}, there are matrices $\widetilde{\widetilde{G_n}}$ and $\widetilde{V}$ such that $(\widetilde{V}, R) = \qr(\widetilde{\widetilde{G_n}})$, and \begin{align*}
        \|\widetilde{V}- V\|
        &< \mu_\QR(n)\u \\ \|\widetilde{\widetilde{G}_n}- \widetilde{G_n}\| &\leq \mu_\QR(n)\u \|\widetilde{G}_n\| \leq \mu_\QR(n)\u\left(\|G_n\|+\sqrt{n}\cn\u\right).
    \end{align*}
    The latter inequality implies, using  $\eqref{eq:finiteassump2}$, that
    \begin{equation}
    \label{eq:boundnoisegin}
        \|\widetilde{\widetilde{G_n}}-G_n\| \leq \mu_\QR(n)\u\left(\|G_n\|+\sqrt{n}\cn\u\right)+ \sqrt{n}\cn\u \leq \mu_\QR(n)\u \|G_n\|+ 2\sqrt{n}\cn\u.
    \end{equation}
    
    Let $(U, R') := \qr(G_n)$. From Lemma \ref{lem:HaarfromGinibre} we know that $U$ is Haar distributed on the unitary group, so using (\ref{eq:boundnoisegin})  and   Lemma \ref{lem:condQR}, and the fact that $\Vert M \Vert \le \Vert M \Vert_F \le \sqrt{n}\Vert M \Vert$ for any $n \times n$ matrix $M$, we know that 
    \begin{equation}
    \label{eq:thenewineq}
    \|U-V\|- \mu_\QR(n)\u \leq \|U-V\|-\|\widetilde{V}-V \| \leq \|U- \widetilde{V}\| \leq 4\sqrt{n} \cn \mu_\QR(n)\u \|G_n\|\|G_n^{-1}\|+ 10n \cn \u \|G_n^{-1}\|. 
    \end{equation}
    Now, from $\|G_n^{-1}\| = 1/\sigma_{n}(G_n)$ and from  Theorem \ref{thm:szarek} we have that
    $$P \left[\|G_n^{-1} \|\geq \frac{n}{\alpha} \right]\leq (\sqrt{2e}\alpha)^{2}= 2e \alpha^2.$$
    On the other hand, from Lemma 2.2 of \cite{banks2019gaussian} we have $P\left[\left\|G_n\right\| > 2\sqrt{2}+ t \right] \leq e^{-n t^2} $. Hence, under the events $\|G_n^{-1}\|\leq \frac{n}{\alpha}$ and $\|G_n\|\leq 2\sqrt{2}+t$, inequality (\ref{eq:thenewineq}) yields
    $$\|U-V\|\leq \frac{4n^\frac{3}{2}}{\alpha} \cn \mu_\QR(n) \u\left(2\sqrt{2}+t+1\right)+\frac{10 n^2}{\alpha} \cn \u .$$
    Finally, if  $t>2\sqrt{2}+1$ we can exchange the term  $2\sqrt{2}+t+1$ for $2t$ in the bound. Then, using a union bound we obtain the advertised guarantee. 
\end{proof}

\subsection{Preliminaries of $\RURV$}
\label{sec:rurv}
Let $A\in \mathbb{C}^{n\times n}$ and $(U, R) =\rurv(A)$. As will become clear later, in order to analyze $\SPAN(A, r)$ it is of fundamental importance to bound the quantity $\|R_{22}\|$, where $R_{22}$ \marginnote{$R_{22}$} is the lower-right $(n-r)\times (n-r)$ block of $R$. To this end, it will suffice to use Corollary \ref{cor:boundonR22} below, which is the complex analog  to the upper bound given in equation (4) of  \cite[Theorem 5.1]{ballard2019generalized}.  Actually, Corollary \ref{cor:boundonR22} is a direct consequence  of  Lemma 4.1 in the aforementioned paper and Proposition \ref{prop:sigmamin} proved above. We elaborate below.  

\begin{lemma}[\cite{ballard2019generalized}]
\label{lem:boundonR22}
Let $n>r>0$,  $A\in \mathbb{C}^{n\times n}$ and $A = P \Sigma Q^*$ be its singular value decomposition. Let $(U, R) = \rurv(A)$, $R_{22}$ be the lower right $(n-r)\times (n-r)$ corner of $R$,  and $V$ be such that $A = URV$. Then, if $X = Q^* V^*$, 
$$\|R_{22}\| \leq \frac{\sigma_{r+1}(A)}{\sigma_{r}(X_{11}) },$$
where $X_{11}$ is the upper left $r\times r$ block of $X$. 
\end{lemma}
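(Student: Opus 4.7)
My plan is to reduce the bound on $\|R_{22}\|$ to a calculation involving the SVD of $A$ together with the fact that $X = Q^\ast V^\ast$ is itself unitary. Since $V$ is unitary we have $R = U^\ast A V^\ast$, so substituting $A = P\Sigma Q^\ast$ gives $R = (U^\ast P)\,\Sigma\, X$. The key observation is that $R$ is upper triangular, which means that for the partition into $r$ and $n-r$ blocks, the lower-left block vanishes; this gives a linear relation among the blocks of the unitary matrix $W := P^\ast U$ that we can exploit.

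Writing $W = [W^{(1)}\ W^{(2)}]$ with $W^{(2)} \in \mathbb{C}^{n\times(n-r)}$ (corresponding to the last $n-r$ columns of $U$), and further partitioning $W^{(2)} = \bigl(\begin{smallmatrix}W_1\\ W_2\end{smallmatrix}\bigr)$ and $X$ into $2\times 2$ blocks, the identity $R_{21}=0$ becomes
\[
  W_1^\ast \Sigma_1 X_{11} + W_2^\ast \Sigma_2 X_{21} = 0,
\]
which (assuming $X_{11}$ and $\Sigma_1$ are invertible, the generic case) lets me solve for $W_1^\ast$ in terms of $W_2^\ast$. Plugging this into the analogous expression for $R_{22} = (W^{(2)})^\ast \Sigma X^{(2)}$ collapses it to
\[
  R_{22} = W_2^\ast\,\Sigma_2\,\bigl(X_{22} - X_{21} X_{11}^{-1} X_{12}\bigr),
\]
i.e., a product of $W_2^\ast$, the tail singular values $\Sigma_2$ of $A$, and the Schur complement of $X$ at the $(1,1)$ block.

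At this point I need two facts to finish. First, $\|W_2^\ast\|\le 1$ because $W^{(2)}$ has orthonormal columns. Second, and this is the step that takes a little thought, I need to control the Schur complement. For this I invoke the standard identity that the Schur complement of an invertible block matrix at position $(1,1)$ is the inverse of the $(2,2)$ block of the inverse; applied to the unitary $X$, whose inverse is $X^\ast$, this yields $X_{22}-X_{21}X_{11}^{-1}X_{12} = (X_{22}^\ast)^{-1}$, so its operator norm equals $1/\sigma_{\min}(X_{22})$. Finally, Observation \ref{obs:equalsingvals} tells us that the nontrivial singular values of $X_{11}$ and $X_{22}$ coincide, so $\sigma_{\min}(X_{22}) = \sigma_{\min}(X_{11})$, which in the paper's notation is $\sigma_n(X_{11})$. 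Stringing these together gives the desired bound $\|R_{22}\|\le \sigma_{r+1}(A)/\sigma_n(X_{11})$.

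The main obstacle I anticipate is bookkeeping rather than a genuinely hard step: identifying the Schur-complement structure in $R_{22}$ requires carefully tracking the block decompositions of three different unitaries ($U$, $V$, and the orthogonal factors $P,Q$ of the SVD). A secondary subtlety is the invertibility assumption on $X_{11}$ and $\Sigma_1$; generically this is fine, but to get a statement that applies to all $A$ one should either interpret the bound as vacuous when $\sigma_n(X_{11}) = 0$ (which is the convention of the cited paper) or perform a standard perturbation-and-limit argument.
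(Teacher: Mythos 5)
Your proof is correct. Note first that the paper does not actually prove this lemma --- it is imported verbatim as Lemma~4.1 of \cite{ballard2019generalized} --- so there is no in-paper argument to compare against; what you have written is essentially a faithful reconstruction of the argument in that reference. The chain $R = (U^*P)\Sigma X$, the use of $R_{21}=0$ to eliminate $W_1^*$, the resulting identity $R_{22} = W_2^*\Sigma_2\bigl(X_{22}-X_{21}X_{11}^{-1}X_{12}\bigr)$, and the bound $\|W_2^*\|\le 1$, $\|\Sigma_2\| = \sigma_{r+1}(A)$ are all right. Your handling of the Schur complement is the correct way to finish: the identity $X_{22}-X_{21}X_{11}^{-1}X_{12} = \bigl((X^{-1})_{22}\bigr)^{-1} = (X_{22}^*)^{-1}$ gives norm exactly $1/\sigma_{\min}(X_{22})$, whereas the naive triangle-inequality bound on the three terms would cost a factor of $2$. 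Two small remarks. First, Observation~\ref{obs:equalsingvals} as literally stated only treats $r\ge n/2$ (and its count of singular values appears to swap the roles of the two blocks); what you actually need is the general CS-decomposition fact that the singular values of $X_{11}$ and $X_{22}$ agree after padding with $1$'s, which in particular gives $\sigma_{\min}(X_{22})=\sigma_{\min}(X_{11})$ for every $r$ --- this is true, but worth stating as such rather than leaning on the observation's restricted hypothesis. Second, your genericity caveat is handled appropriately: in the intended application $X$ is Haar-distributed so $X_{11}$ is almost surely invertible (and the bound is vacuous when $\sigma_n(X_{11})=0$), while $\Sigma_1$ singular forces $\Sigma_2=0$, in which case $R_{21}=0$ and the invertibility of $X_{11}$ directly give $R_{22}=0$ without the limiting argument.
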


This lemma reduces the problem to obtaining a lower bound on $\sigma_{r}(X_{11})$. But, since $V$ is a Haar unitary matrix by construction and $X= Q^*V$ with $Q^*$ unitary, we have that $X$ is distributed as a Haar unitary. Combining Lemma \ref{lem:boundonR22} and Proposition \ref{prop:sigmamin} gives the following result. 

\begin{corollary}
\label{cor:boundonR22}
Let $n > r>0$,  $A\in \mathbb{C}^{n\times n}$, $(U, R)= \rurv(A)$ and $R_{22}$ be the lower right $(n-r)\times (n-r)$ corner of $R$. Then for any $\theta > 0$
$$\mathbb{P}\left[ \|R_{22}\| \leq \frac{\sqrt{r(n-r)}}{\theta}\sigma_{r+1}(A) \right] \geq 1-\theta^2. $$
\end{corollary}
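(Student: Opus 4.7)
The plan is to combine the deterministic inequality from Lemma \ref{lem:boundonR22} with the tail estimate from Proposition \ref{prop:sigmamin}. First I would invoke Lemma \ref{lem:boundonR22} to obtain
\[
    \|R_{22}\| \;\leq\; \frac{\sigma_{r+1}(A)}{\sigma_{n}(X_{11})},
\]
where $X = Q^{\ast}V^{\ast}$ and $X_{11}$ is the upper-left $r\times r$ block of $X$. The key observation is then that $X$ is distributed as a Haar unitary. Indeed, $V$ is the unitary factor obtained by running $\qr$ on an exact $n\times n$ Ginibre matrix (this is precisely how $\rurv$ is defined in exact arithmetic), so by Lemma \ref{lem:HaarfromGinibre}, $V$ is Haar. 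Right-multiplying a Haar unitary by the fixed unitary $Q^{\ast}$ (and then taking adjoints) preserves the distribution by definition of the Haar measure; hence $X$ is Haar, and in particular its upper-left $r\times r$ block $X_{11}$ has the same distribution as the corner considered in Proposition \ref{prop:sigmamin}.

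With this in hand, I would apply \eqref{eq:tailboundsigma} from Proposition \ref{prop:sigmamin} directly to $X_{11}$, which yields
\[
    \mathbb{P}\!\left[\frac{1}{\sigma_{n}(X_{11})} \leq \frac{\sqrt{r(n-r)}}{\theta}\right] \;\geq\; 1-\theta^{2}.
\]
Multiplying both sides of the inequality inside the probability by the deterministic nonnegative quantity $\sigma_{r+1}(A)$ and combining with the deterministic bound from Lemma \ref{lem:boundonR22} finishes the proof.

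There is no real obstacle here: the statement is explicitly advertised in the paragraph preceding it as a direct consequence of the two cited results, and the only content beyond citation is the rotational-invariance argument that $X = Q^{\ast}V^{\ast}$ is Haar-distributed, which is a one-line appeal to the defining property of the Haar measure. The proof is therefore essentially a two-step composition, and I would keep it correspondingly short.
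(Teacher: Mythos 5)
Your proposal is correct and follows exactly the paper's own argument: Lemma \ref{lem:boundonR22} reduces the bound to controlling $\sigma_n(X_{11})$, the invariance of the Haar measure (with $V$ Haar by Lemma \ref{lem:HaarfromGinibre}) shows $X$ is Haar-distributed, and the tail bound \eqref{eq:tailboundsigma} of Proposition \ref{prop:sigmamin} finishes the job. No differences worth noting.
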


\subsection{Exact Arithmetic Analysis of $\SPAN$}
\label{sec:deflateguaranties}

It is a standard consequence of the properties of the $QR$ decomposition that if $A$ is a matrix of rank $r$, then almost surely $\spa (A, r)$ is a $n\times r$ matrix with orthonormal columns that span the range of $A$. As a warm-up let's recall this argument. 


Let $(U, R) = \rurv (A)$ and $V$ be the unitary matrix used by the algorithm to produce this output. Since we are working in exact arithmetic, $V$ is a Haar unitary matrix, and hence it is almost surely invertible. Therefore, with probability 1 we have that  $\rank(AV^*) = r$ and that the first $r$ columns of $AV^*$ are linearly independent, so since $UR$ is the QR decomposition of $AV^*$, almost surely,  $R_{22} =0$ and $R_{11} \in \mathbb{C}^{r\times r}$, where $R_{11}$ and $R_{22}$ are as in (\ref{eq:decofR}). Writing
$$
    U = \begin{pmatrix}  U_{11} & U_{12} \\ U_{21} & U_{22}  \end{pmatrix}
$$ 
for the block decomposition of $U$ with $U_{11} \in \mathbb{C}^{r\times r}$, note that 
\begin{equation}
\label{eq:RURV}
AV^* =  UR = \begin{pmatrix}  U_{11} R_{11} & U_{11}R_{12}+U_{12} R_{22} \\ U_{21} R_{11}  & U_{21} R_{12}+U_{22} R_{22}  \end{pmatrix}.
\end{equation}
On the other hand, almost surely the first $r$ columns of $AV^*$ span the range of $A$. Using the right side of equation \eqref{eq:RURV} we see that this subspace also coincides with the span of the first $r$ columns of $U$, since $R_{11}$ is invertible. 

We will now prove a robust version of the above observation for a large class of matrices, namely those $A$ for which $\rank(A) = \rank(A^2)$.\footnote{For example, diagonalizable matrices satisfy this criterion.} We make this precise below and defer the proof to the end of the subsection. 

\begin{proposition}[Main guarantee for $\spa $]
\label{prop:spanexact}
Let $\indef > 0$ and $A, \widetilde{A}\in \mathbb{C}^{n\times n}$ be such that $\|A-\widetilde{A}\|\leq \indef$ and  $\rank(A)= \rank(A^2) =r$. Denote $S := \spa(\widetilde{A}, r)$ and $T := \spa(A, r)$. Then, for any $\theta\in (0, 1) $, with  probability $1-\theta^2$ there exists a unitary $\mathcal{}$ $W\in \mathbb{C}^{r\times r}$ such that
\begin{equation}
\label{eq:mainguaranteeSPAN}
\|S- TW^*\| \leq  \sqrt{\frac{8\sqrt{r(n-r)} }{ \sigma_r(T^*AT)}}\cdot \sqrt{\frac{\indef}{\theta}}. 
\end{equation}
\end{proposition}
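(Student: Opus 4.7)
My plan is to couple the two calls $\spa(\widetilde A,r)$ and $\spa(A,r)$ so that they share the same Haar unitary $V$ (legitimate since the proposition only asserts the existence of some $W$). Writing $V_1^*\in\mathbb C^{n\times r}$ for the first $r$ columns of $V^*$, and using the QR factorizations $AV^*=UR$ and $\widetilde AV^*=\widetilde U\widetilde R$, the outputs are $T=U_{*,1:r}$ and $S=\widetilde U_{*,1:r}$, and one has the identities $AV_1^*=TR_{11}$ and $\widetilde AV_1^*=S\widetilde R_{11}$ for the upper-left $r\times r$ blocks. Because $\rank(A)=r$, the range of $T$ coincides almost surely with $\range(A)$, so $TT^*$ is the orthogonal projector onto $\range(A)$.

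The first step will be to control how far $\range(S)$ lies from $\range(T)$. Since $AV_1^*\in\range(A)$ we get $(I-TT^*)AV_1^*=0$, and therefore
$$(I-TT^*)S\,\widetilde R_{11} \;=\; (I-TT^*)\widetilde AV_1^* \;=\; (I-TT^*)(\widetilde A-A)V_1^*,$$
whose operator norm is at most $\beta$. Provided $\widetilde R_{11}$ is invertible, this yields $\|(I-TT^*)S\|\le \beta/\sigma_r(\widetilde R_{11})$.

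The main technical step will be the lower bound on $\sigma_r(\widetilde R_{11})$. I would first use $\sigma_r(\widetilde R_{11})=\sigma_r(\widetilde AV_1^*)\ge \sigma_r(AV_1^*)-\beta=\sigma_r(R_{11})-\beta$ (since $S$ has orthonormal columns whose range contains $\widetilde AV_1^*$). To lower-bound $\sigma_r(R_{11})$, write a ``thin'' SVD $A=T'\Sigma (T'')^*$ with $T',T''\in\mathbb C^{n\times r}$; because both $T$ and $T'$ are orthonormal bases of $\range(A)$, $T^*T'$ is an $r\times r$ unitary, giving $R_{11}=(T^*T')\,\Sigma\,(T''^*V_1^*)$ and hence $\sigma_r(R_{11})\ge \sigma_r(\Sigma)\,\sigma_r(T''^*V_1^*)\ge \sigma_r(T^*AT)\,\sigma_r(T''^*V_1^*)$, where the last inequality comes from the analogous identity $T^*AT=(T^*T')\Sigma(T''^*T)$ which shows $\sigma_r(T^*AT)\le \sigma_r(\Sigma)$. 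By rotation-invariance of Haar measure, $T''^*V_1^*$ has the same distribution as the top-left $r\times r$ corner of a Haar unitary on $\mathbb C^n$, so Proposition~\ref{prop:sigmamin} gives $\sigma_r(T''^*V_1^*)\ge \theta/\sqrt{r(n-r)}$ with probability at least $1-\theta^2$.

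The final step will convert the subspace bound into the matrix bound $\|S-TW^*\|$. Setting $M:=T^*S\in\mathbb C^{r\times r}$ and writing $S=TM+(I-TT^*)S$, the relation $M^*M=I_r-((I-TT^*)S)^*(I-TT^*)S$ shows that $M$ is within $\|(I-TT^*)S\|^2$ of its unitary polar factor $W$; combining this with the triangle inequality gives $\|S-TW\|\le 2\|(I-TT^*)S\|$. Substituting the bounds on $\|(I-TT^*)S\|$ and $\sigma_r(\widetilde R_{11})$ above (and using the small-$\beta$ regime to absorb the $-\beta$ term into $\sigma_r(\widetilde R_{11})$) will yield the stated inequality. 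The main obstacle I anticipate is the quantitative handling of the lower bound on $\sigma_r(\widetilde R_{11})$: one must verify that the threshold on $\beta$ needed to discard the additive $-\beta$ term is automatically implied by the hypothesis (or, if not, that the square-root form of the stated bound is exactly the clean way to sidestep the need for such a regime assumption by absorbing lower-order terms).
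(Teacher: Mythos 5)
Your proposal is correct, and it takes a genuinely different route from the paper's. The paper controls the \emph{bottom-right} block $R_{22}$ of the QR factorization of $\widetilde A V^*$ (via Corollary \ref{cor:boundonR22}, imported from \cite{ballard2019generalized}, plus Weyl's inequality $\sigma_{r+1}(\widetilde A)\le\beta$), deduces $\|(SS^*-I)A\|\le\|R_{22}\|+\beta$, and then uses the hypothesis $\rank(A)=\rank(A^2)$ to convert this into $\|T^*SS^*T-I_r\|\le(\|R_{22}\|+\beta)/\sigma_r(T^*AT)$ before applying the polar decomposition of $S^*T$. You instead lower-bound $\sigma_r$ of the \emph{top-left} block $\widetilde R_{11}$ directly from the thin SVD of $A$, which is where your $\sigma_r(T^*AT)$ loss appears, and you only need the one-sided containment $\|(I-TT^*)S\|\le\beta/\sigma_r(\widetilde R_{11})$; both arguments ultimately rest on the same random-matrix input, Proposition \ref{prop:sigmamin} applied to an $r\times r$ corner of a Haar unitary. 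Your version is self-contained (it does not route through Lemma \ref{lem:boundonR22}) and in fact yields the \emph{linear-in-$\beta$} bound $2\beta/(c-\beta)$ with $c:=\theta\,\sigma_r(T^*AT)/\sqrt{r(n-r)}$, which is sharper than the stated square-root bound when $\beta\ll c$. The one step you left open closes exactly as you guessed: if $\beta\ge c/2$ the right-hand side of \eqref{eq:mainguaranteeSPAN} is at least $2\ge\|S-TW^*\|$ and the claim is vacuous, while for $\beta\le c/2$ one checks $\beta c\le 2(c-\beta)^2$, i.e. $2\beta/(c-\beta)\le\sqrt{8\beta/c}$, with equality at $\beta=c/2$; so no regime assumption on $\beta$ is needed. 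The only other caveat is that your derivation uses the hypothesis $\rank(A)=\rank(A^2)$ solely to guarantee $\sigma_r(T^*AT)>0$, whereas in the paper it is load-bearing in Lemma \ref{lem:boundintermsofR22}; that is a feature, not a bug.
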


\begin{remark}[The projector case]
\label{rem:projector}
In the case in which the matrix $A$ of Proposition \ref{prop:spanexact} is a (not necessarily orthogonal) projector, $T^*AT = I_r$, and the $\sigma_r$ term in the denominator of \eqref{eq:mainguaranteeSPAN} becomes a $1$.
\end{remark}

We begin by recalling a result about the  stability of singular values which will be important throughout this section. This fact is a consequence of Weyl's inequalities; see for example \cite[Theorem 3.3.16]{horn2012matrix} .

 \begin{lemma}[Stability of singular values]
 \label{lem:Weylineq}
 Let $X, E \in \mathbb{C}^{n\times n}$. Then, for any $k=1, \dots, n$ we have 
 $$|\sigma_k(X+E)- \sigma_k(X)|\leq \|E\|.$$
 \end{lemma}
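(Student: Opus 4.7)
The plan is to derive this from the Courant--Fischer min--max characterization of singular values, which is the standard route to Weyl-type inequalities in the singular-value setting. Recall that for any matrix $M \in \C^{n \times n}$ and any $1 \le k \le n$,
\[
    \sigma_k(M) \;=\; \max_{\substack{S \subseteq \C^n \\ \dim S = k}} \; \min_{\substack{v \in S \\ \|v\| = 1}} \|Mv\|,
\]
so that $\sigma_k(M)$ measures the best guaranteed stretch of $M$ on some $k$-dimensional subspace. I would take this identity as a black box (it is standard and already used implicitly elsewhere in the paper, e.g.\ in the proof of Lemma 3.1.5).

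Next I would establish the one-sided bound $\sigma_k(X+E) \le \sigma_k(X) + \|E\|$. Fix any $k$-dimensional subspace $S \subseteq \C^n$ and any unit vector $v \in S$. By the triangle inequality and the definition of the operator norm,
\[
    \|(X+E)v\| \;\le\; \|Xv\| + \|Ev\| \;\le\; \|Xv\| + \|E\|.
\]
Taking the minimum over unit $v \in S$ on both sides yields
$\min_{v \in S, \|v\|=1}\|(X+E)v\| \le \min_{v \in S,\|v\|=1}\|Xv\| + \|E\|$, and then maximizing over $k$-dimensional subspaces $S$ gives $\sigma_k(X+E) \le \sigma_k(X) + \|E\|$.

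For the reverse inequality, I would apply the same argument with the roles of $X$ and $X+E$ swapped, using the identity $X = (X+E) + (-E)$ together with $\|-E\| = \|E\|$, which gives $\sigma_k(X) \le \sigma_k(X+E) + \|E\|$. Combining both one-sided bounds yields $|\sigma_k(X+E) - \sigma_k(X)| \le \|E\|$, as claimed.

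There is essentially no obstacle here: the argument is a two-line application of the variational principle plus the triangle inequality, and holds for arbitrary (not necessarily square) complex matrices. The only minor care point is to cite or briefly recall the Courant--Fischer formula for singular values, since the paper has not stated it explicitly; I would include a one-sentence pointer to a standard reference such as \cite{horn2012matrix} for completeness.
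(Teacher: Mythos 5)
Your proof is correct. The paper does not actually prove this lemma---it simply cites Weyl's inequalities via \cite[Theorem 3.3.16]{horn2012matrix}---and your argument via the Courant--Fischer max--min characterization of $\sigma_k$ plus the triangle inequality is precisely the standard proof of that cited result, with each step (passing the bound through the inner minimum by evaluating at the minimizer for $X$, then through the outer maximum, and symmetrizing) carried out correctly. The only cosmetic point is that the paper's own use of Courant--Fischer (in Lemma \ref{lem:sigma2}) is stated in the dual min--max form for the small singular values, so a one-line statement or citation of the max--min form you use would be appropriate, exactly as you suggest.
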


We now show that the orthogonal projection $P:=\spa(\widetilde{A}, r)\spa(\widetilde{A}, r)^*$ is close to a projection onto the range of $A$, in the sense that $P A \approx A$. 
\begin{lemma}
\label{lem:intermediatespan}
    Let $\indef > 0$ and $A, \widetilde{A}\in \mathbb{C}^{n\times n}$ be such that $\rank(A)=r$  and $\|A-\widetilde{A}\|\leq \indef$. Let $(U, R) := \rurv(\widetilde{A})$ and $S := \spa(\widetilde{A}, r)$. Then, almost surely
    \begin{equation} \label{eq:intermediatebound}
        \|(SS^* -I_n)A\| \leq \|R_{22}\|+ \indef,
    \end{equation}
    where $R_{22}$ is the lower right $(n-r)\times (n-r)$ block of $R$. 
\end{lemma}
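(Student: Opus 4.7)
The plan is to exploit the block structure that the $QR$-factorisation imposes on $\widetilde{A}V^{\ast}$, reducing the bound to a one-line application of the triangle inequality.

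First I would unpack the definitions. By construction of $\rurv$, there is a Haar unitary $V$ (almost surely an exact unitary) such that $\widetilde{A}V^{\ast}=UR$, where $U$ is unitary and $R$ is upper triangular with the block form in \eqref{eq:decofR}. Write $U=[\,S\ U_{2}\,]$, where $S\in\mathbb{C}^{n\times r}$ is the first $r$ columns of $U$ (so $S=\spa(\widetilde{A},r)$) and $U_{2}\in\mathbb{C}^{n\times (n-r)}$ collects the remaining columns. Because $U$ is unitary, $S$ and $U_{2}$ each have orthonormal columns and $S^{\ast}U_{2}=0$; in particular $SS^{\ast}$ is the orthogonal projector onto $\range(S)$ and therefore $(I_{n}-SS^{\ast})S=0$ while $(I_{n}-SS^{\ast})U_{2}=U_{2}$.

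The key computation is then the following. Using the block form of $R$,
\[
\widetilde{A}V^{\ast}=UR=S\,[\,R_{11}\ R_{12}\,]+U_{2}\,[\,0\ R_{22}\,].
\]
Left-multiplying by $I_{n}-SS^{\ast}$ kills the first summand and leaves the second unchanged, so
\[
(I_{n}-SS^{\ast})\,\widetilde{A}V^{\ast}=U_{2}\,[\,0\ R_{22}\,].
\]
Since $V$ is unitary and $U_{2}$ has orthonormal columns, taking operator norms gives $\|(I_{n}-SS^{\ast})\widetilde{A}\|=\|U_{2}\,[\,0\ R_{22}\,]\|=\|R_{22}\|$.

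Finally, since $SS^{\ast}-I_{n}=-(I_{n}-SS^{\ast})$ and the orthogonal projector $I_{n}-SS^{\ast}$ has operator norm at most $1$, the triangle inequality together with the hypothesis $\|A-\widetilde{A}\|\le\indef$ yields
\[
\|(SS^{\ast}-I_{n})A\|\le\|(I_{n}-SS^{\ast})\widetilde{A}\|+\|(I_{n}-SS^{\ast})(A-\widetilde{A})\|\le\|R_{22}\|+\indef,
\]
which is the claimed bound. There is no real obstacle here; the only subtlety is to notice that one should project $\widetilde{A}V^{\ast}$ rather than $\widetilde{A}$ directly, so that the block decomposition of $R$ does the work.
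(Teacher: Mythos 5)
Your proof is correct and is essentially identical to the paper's: both use the block decomposition $U=[\,S\ U']$, observe that left-multiplying $UR$ by $SS^\ast-I_n$ annihilates the columns spanned by $S$ and leaves only the $U'R_{22}$ block, and then finish with the same triangle inequality against $\|A-\widetilde{A}\|\le\indef$. The only cosmetic difference is that you work with $\widetilde{A}V^{\ast}=UR$ while the paper writes $\widetilde{A}=URV$, which is the same identity since $V$ is unitary.
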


\begin{proof}
We will begin by showing that $\|(SS^*  -I_n) \widetilde{A}\|$ is small. Let  $V$ be the  unitary matrix that was used to generate $(U, R)$. As $\spa(\cdot, \cdot)$ outputs the first $r$ columns of $U$, we have the block decomposition $U = \begin{pmatrix} S & U' \end{pmatrix}$, where $S \in \mathbb{C}^{n\times r}$ and $U' \in \mathbb{C}^{n\times (n-r)}$. 

On the other hand we have  $\widetilde{A} = U RV$, so
$$
    (SS^* - I_n) \widetilde{A} = (SS^* - I) \begin{pmatrix} S & U' \end{pmatrix} R V = \begin{pmatrix} 0 & -U' \end{pmatrix} R V = \begin{pmatrix} 0 & -U' R_{2,2} \end{pmatrix} V .
$$
Since $\|U'\| = \|V\| =1$ from the above equation we get $\|(SS^*-I_n)\widetilde{A}\| \leq \|R_{22}\|$. 
Now we can conclude that
$$
    \|(SS^* -I_n) A\| \leq \|(SS^* -I_n) \widetilde{A}\|+ \|(SS^* -I_n) (A-\widetilde{A})\| \leq \|R_{22}\|+\indef. 
$$
\end{proof}

The inequality (\ref{eq:intermediatebound}) can be applied to quantify the distance between the ranges of $\spa(\widetilde{A}, r)$ and $\spa(A, r)$ in terms of $\|R_{22}\|$, as the following result shows.

\begin{lemma}[Bound in terms of $\|R_{22}\|$]
\label{lem:boundintermsofR22}
    Let $\indef > 0$ and $A, \widetilde{A}\in \mathbb{C}^{n\times n}$ be such that $\rank(A)= \rank(A^2) =r$  and $\|A-\widetilde{A}\|\leq \indef$. Denote by $(U, R) :=\rurv(\widetilde{A})$, $S := \spa(\widetilde{A}, r)$ and $T := \spa(A, r)$. Then, almost surely there exists a unitary $W\in \mathbb{C}^{r\times r}$ such that
    \begin{equation}
    \label{eq:boundintermsofR22}
    \|S- TW^*\| \leq 2\sqrt{\frac{\|R_{22}\|+\indef}{\sigma_r(T^*AT)}}  ,
    \end{equation}
    where $R_{22}$ is the lower right $(n-r)\times (n-r)$ block of $R$. 
\end{lemma}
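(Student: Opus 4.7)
The strategy is to first upgrade the bound of Lemma \ref{lem:intermediatespan}, which controls $(I-SS^*)A$, to a comparable bound on $(I-SS^*)T$, and then convert a bound on $\|(I-SS^*)T\|$ into a bound on $\|S-TW^*\|$ for a suitable unitary $W$ obtained from the polar decomposition of $S^*T$.

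\medskip

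First I would invoke Lemma \ref{lem:intermediatespan} to get $\|(I_n - SS^*)A\| \le \|R_{22}\| + \indef$ almost surely. To pass from $A$ to $T$, I will use the rank hypothesis: since $\rank(A) = \rank(A^2) = r$, the restriction of $A$ to $\range(A)$ is a bijection, and $T$ has orthonormal columns spanning $\range(A)$ (almost surely, by the exact-arithmetic analysis of $\spa$ already sketched at the start of Subsection \ref{sec:deflateguaranties}). This makes $T^*AT$ an invertible $r \times r$ matrix, and a direct check gives the identity of $n \times r$ matrices
\[
    T = AT\,(T^*AT)^{-1},
\]
since for any $z \in \mathbb C^r$ the unique pre-image of $Tz$ under $A$ inside $\range(A)$ is $T\alpha$ with $\alpha = (T^*AT)^{-1}z$. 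Combining this with the bound above and $\|T\|=1$ gives
\[
    \|(I_n - SS^*)T\| \;\le\; \frac{\|R_{22}\| + \indef}{\sigma_r(T^*AT)} \;=:\; \eta_0.
\]

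\medskip

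Next, if $\eta_0 > 1$, any unitary $W$ works since $\|S - TW^*\| \le 2 \le 2\sqrt{\eta_0}$, so assume $\eta_0 \le 1$. Take the SVD $S^*T = U_1 \Sigma V_1^*$ with $U_1, V_1 \in \mathbb C^{r\times r}$ unitary, and set $W := U_1 V_1^*$. The key facts about $\Sigma$ are that $\sigma_{\max}(\Sigma) \le \|S^*T\| \le 1$ and, for unit $x \in \mathbb C^r$,
\[
    \|S^*Tx\|^2 = \|SS^*Tx\|^2 = \|Tx\|^2 - \|(I_n-SS^*)Tx\|^2 \;\ge\; 1 - \eta_0^2,
\]
so $\sigma_{\min}(\Sigma) \ge \sqrt{1-\eta_0^2}$. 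Writing $T = SS^*T + (I_n-SS^*)T = SU_1\Sigma V_1^* + (I_n-SS^*)T$ and multiplying by $V_1 U_1^* = W^*$,
\[
    S - TW^* \;=\; S\bigl(I_r - U_1 \Sigma U_1^*\bigr) \;-\; (I_n-SS^*)T\,V_1U_1^*.
\]
The first term has norm at most $1 - \sigma_{\min}(\Sigma) \le 1 - \sqrt{1-\eta_0^2} \le \eta_0^2$, while the second has norm at most $\|(I_n-SS^*)T\| = \eta_0$. Thus $\|S - TW^*\| \le \eta_0 + \eta_0^2 \le 2\eta_0 \le 2\sqrt{\eta_0}$ (using $\eta_0 \le 1$), which is exactly the advertised bound.

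\medskip

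The main obstacle to flag is the passage from the $A$-bound of Lemma \ref{lem:intermediatespan} to a $T$-bound, which is where the hypothesis $\rank(A)=\rank(A^2)=r$ is essential and where the factor $\sigma_r(T^*AT)$ enters. Everything afterwards is a fairly standard perturbation argument relating two subspaces via the polar decomposition of $S^*T$; the only mild subtlety there is to verify that $\sqrt{\eta_0}$ (rather than $\eta_0$) is enough to absorb both the $\eta_0 \le 1$ regime and the trivial $\eta_0 > 1$ regime, so that a single clean square-root bound can be stated.
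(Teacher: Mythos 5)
Your proof is correct and follows essentially the same route as the paper's: both start from Lemma \ref{lem:intermediatespan}, use the hypothesis $\rank(A)=\rank(A^2)=r$ to pass from the bound on $(I-SS^*)A$ to a bound on $T$ at the cost of a factor $1/\sigma_r(T^*AT)$, and take $W$ to be the unitary polar factor of $S^*T$. The only differences are cosmetic: you deploy the rank condition via the identity $T = AT(T^*AT)^{-1}$ rather than the paper's Courant--Fischer supremum over unit vectors in $\range(A)$, and your decomposition of $S-TW^*$ in fact yields the slightly stronger bound $2\eta_0$ in the regime $\eta_0\le 1$ before being relaxed to the advertised $2\sqrt{\eta_0}$.
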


\begin{proof}
    From Lemma \ref{lem:intermediatespan} we know that almost surely $\|(SS^*-I_n)A\| \leq \|R_{22}\|+ \indef$. We will use this to show that $\|T^*SS^*T- I_r\|$ is small, which can be interpreted as  $S^*T$ being close to unitary. First note that
    \begin{equation}
    \label{eq:TSST}
        \|T^*SS^*T-I_r\| = \sup_{w\in \mathbb{C}^r, \|w\|=1}\|T^*(SS^*-I_r)T w\| = \sup_{ w \in \range(A), \|w\|=1  } \|T^*(SS^*-I_r) w\|.
    \end{equation}
    Now, since $\rank(A) = \rank(A^2)$,  if $w \in \range(A)$ then $w = Av$ for some $v\in \range(A)$. So by the Courant-Fischer formula 
    $$
        \frac{\|w\|}{\|v\|} = \frac{\|Av\|}{\|v\|} \geq \inf_{u\in \range(A)} \frac{\|Au\|}{\|u\|} =  \sigma_r(T^*AT).
    $$
    We can then revisit (\ref{eq:TSST}) and get
    \begin{equation} \label{eq:sigmarTSST}
        \sup_{ w \in \range(A), \|w\|=1  } \|T^*(SS^*-I_r) w\| = \sup_{v \in \range(A), \|v\|\leq 1} \frac{\|T^*(SS^*-I_r) A v\|}{\sigma_r(T^*AT)} \leq \frac{\|T^*(SS^*-I_r) A T \|}{\sigma_r(T^*AT)}.   
    \end{equation}
    
    On the other hand $\|T^*(SS^*-I_r) A T \|\leq \|(SS^*-I_r) A\| \leq \|R_{22}\|+\indef$, so combining this fact with (\ref{eq:TSST}) and (\ref{eq:sigmarTSST}) we obtain
    $$
        \|T^*SS^*T- I_r\| \leq \frac{\|R_{22}\|+\indef}{\sigma_r(T^*AT)}.
    $$
    Now define $X:= S^*T$, $\indef' :=\frac{\|R_{22}\|+\indef}{\sigma_r(T^*AT)} $ and let $X= W | X|$ be the polar
    decomposition of $X$. Observe that
    $$\| |X| - I_r \| \le \sigma_1(X) - 1 \le |\sigma_1(X)^2 - 1| = \| X^* X - I_r \| \le \indef'.$$
    Thus  $\|S^*T- W\| = \|X-W\| = \| (|X|-I_n) W \| \leq \indef '.$ 
    Finally note that
    \begin{align*}
        \|S - TW^\ast\|^2 &= \|(S^\ast - WT^\ast)(S - TW^\ast)\| \\
        &= \|2I_r - S^\ast TW^\ast - WT^\ast S\| \\
        &= \|2I_r - S^\ast T(T^\ast S + W^\ast - T^\ast S) - (S^\ast T + W - S^\ast T)T^\ast S\| \\
        &\le 2\|I_r - S^\ast T T^\ast S\| +  \|S^\ast T(W^\ast - T^\ast S)\| + \|(W - S^\ast T)T^\ast S\| \le 4\indef', 
    \end{align*}
    which concludes the proof. 
\end{proof}

Note that so far our results have been deterministic. The possibility of failure of the guarantee given in Proposition \ref{prop:spanexact} comes from the non-deterministic bound on $\|R_{22}\|$. 

\begin{proof}[Proof of Proposition \ref{prop:spanexact}]
From Lemma \ref{lem:Weylineq} we have $\sigma_{r+1}(\widetilde{A})\leq \indef$. Now combine Lemma \ref{lem:boundintermsofR22} with Corollary \ref{cor:boundonR22}. 
\end{proof}

\subsection{Finite Arithmetic Analysis of $\SPAN$}
\label{sec:deflatefiniteguaranties}

In what follows we will have an approximation $\widetilde{A}$ of a matrix $A$ of rank $r$ with the guarantee that $\|A-\widetilde{A}\|\leq \indef$.  

For the sake of readability we will not present optimal bounds for the error induced by roundoff, and we will assume that 
\begin{equation}
\label{eq:finiteassumtions}
    4 \|A\|\cdot \max\{\cn\mu_{\MM}(n) \textbf{u}, \cn \mu_{\QR}(n) \textbf{u}\} \leq \indef \leq \frac{1}{4}\leq \|A\| 
    \quad \mathrm{and} \quad  
    1\leq \min \{\mu_\MM(n), \mu_\QR(n), \cn\}. 
\end{equation}

We begin by analyzing the subroutine $\RURV$ in finite arithmetic. This was done in \cite[Lemma 5.4]{demmel2007fast}. Here we make the constants arising from this analysis explicit and take into consideration that Haar unitary matrices cannot be exactly generated in finite arithmetic.   

\begin{lemma}[$\RURV$ analysis]
\label{lem:finiterurv} 
Assume that $\QR$ and $\MM$ satisfy the guarantees in  Definitions \ref{def:MM} and \ref{def:qr}. Also suppose that the assumptions in (\ref{eq:finiteassumtions}) hold. Then, if $(U, R) := \RURV (A)$ and $V$ is the matrix used to produce such output, there are unitary matrices $\widetilde{U}, \widetilde{V}$ and a matrix $\widetilde{A}$ such that $\widetilde{A} = \widetilde{U} R \widetilde{V} $ and the following guarantees hold:
\begin{enumerate}
     \item $\|U-\widetilde{U}\| \leq \mu_\QR(n) \textbf{u}$. 
    \item $\widetilde{V}$ is Haar distributed in the unitary group. 
    \item For every $1> \alpha >0$ and $t >2 \sqrt{2}+1$, the event: 
    \begin{equation}
    \label{eq:boundsRURV}
    \|\widetilde{V}-V\| < \frac{8 t n^{\frac{3}{2}}}{\alpha} \cn \mu_\QR(n)  \u + \frac{10 n^2}{\alpha } \u \quad \mathrm{and} \quad \|A-\widetilde{A}\| < \|A\|\left(\frac{9 t n^{\frac{3}{2}}}{\alpha} \cn \mu_\QR(n) \u + 2 \mu_\MM(n)\u + \frac{10n^2}{\alpha}\cn \u\right)
    \end{equation}
    occurs with probability at least $1- 2e \alpha^2- \expboundd$. 
\end{enumerate}
\end{lemma}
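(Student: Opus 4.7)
\medskip

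\noindent\textbf{Proof plan for Lemma \ref{lem:finiterurv}.}  The approach is to stitch together three already-proved facts: the Haar-sampling guarantee of Proposition~\ref{prop:finiteHaar} for step~2 of $\RURV$, the $\QR$ guarantee of Definition~\ref{def:qr} for step~4, and a short triangle-inequality computation for step~3.  The only algebraic observation required is that, because $\widetilde{V}$ is exactly unitary, the deviation between $V^{\ast}\widetilde{V}$ and the identity is controlled linearly by $\|V-\widetilde{V}\|$.

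\medskip

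\noindent\emph{Step 1: produce $\widetilde{V}$.}  The pair $(V,R_V)=\QR(\widetilde{G}_n)$ computed on line~2 of $\RURV$ is exactly the situation analyzed in Proposition~\ref{prop:finiteHaar}.  Applying that proposition with the given $\alpha$ and $t$ produces a Haar unitary $\widetilde{V}$ and an error matrix $E_V:=V-\widetilde{V}$ satisfying
\[
\|E_V\|\;\le\;\eta_V\;:=\;\frac{8tn^{3/2}}{\alpha}\cn\mu_\QR(n)\u+\frac{10n^2}{\alpha}\cn\u
\]
with probability at least $1-2e\alpha^{2}-\expboundd$.  This delivers item~(2) and the first inequality of item~(3).

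\medskip

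\noindent\emph{Step 2: produce $\widetilde{U}$ and $B'$.}  Apply Definition~\ref{def:qr} to the call $(U,R)=\QR(B)$ on line~4.  It provides a unitary $\widetilde{U}$ with $\|U-\widetilde{U}\|\le \mu_\QR(n)\u$ and a matrix $B'$ with $B'=\widetilde{U}R$ and $\|B'-B\|\le \mu_\QR(n)\u\|B\|$.  This yields item~(1), and we set
\[
\widetilde{A}\;:=\;\widetilde{U}R\widetilde{V}\;=\;B'\widetilde{V},
\]
which by construction satisfies $\widetilde{A}=\widetilde{U}R\widetilde{V}$ with both $\widetilde{U}$ and $\widetilde{V}$ unitary, as required.

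\medskip

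\noindent\emph{Step 3: bound $\|A-\widetilde{A}\|$.}  The error $E_3$ from line~3 is controlled by Definition~\ref{def:MM}: $\|E_3\|\le \mu_\MM(n)\u\|A\|\|V^{\ast}\|\le \mu_\MM(n)\u\|A\|(1+\eta_V)$.  Substituting $B=AV^{\ast}+E_3$ into $\widetilde{A}=B'\widetilde{V}$ and writing $B'=B+(B'-B)$ gives
\[
A-\widetilde{A}\;=\;A\bigl(I-V^{\ast}\widetilde{V}\bigr)\;-\;E_3\widetilde{V}\;-\;(B'-B)\widetilde{V}.
\]
Since $\widetilde{V}$ is unitary, $V^{\ast}\widetilde{V}=(\widetilde{V}+E_V)^{\ast}\widetilde{V}=I+E_V^{\ast}\widetilde{V}$, so $\|I-V^{\ast}\widetilde{V}\|\le \|E_V\|\le \eta_V$.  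A triangle inequality together with the bound $\|B\|\le \|A\|(1+\eta_V)+\|E_3\|$ then yields
\[
\|A-\widetilde{A}\|\;\le\;\|A\|\,\eta_V\;+\;\mu_\MM(n)\u\|A\|(1+\eta_V)\;+\;\mu_\QR(n)\u\bigl(\|A\|(1+\eta_V)+\mu_\MM(n)\u\|A\|(1+\eta_V)\bigr).
\]
Under the standing assumptions \eqref{eq:finiteassumtions}, $\eta_V\le 1$ and $\mu_\MM(n)\u,\mu_\QR(n)\u\le 1$, so this simplifies to
\[
\|A-\widetilde{A}\|\;\le\;\|A\|\Bigl(\eta_V+2\mu_\MM(n)\u+2\mu_\QR(n)\u\Bigr).
\]
Substituting the value of $\eta_V$, the extra $2\mu_\QR(n)\u$ term is absorbed by upgrading the constant $8$ to $9$ in the $\frac{tn^{3/2}}{\alpha}\cn\mu_\QR(n)\u$ term (using $t>2\sqrt{2}+1>3$, $\alpha\le 1$, $\cn\ge 1$), delivering the second inequality of item~(3).

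\medskip

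\noindent\emph{Main obstacle.}  There is no serious conceptual obstacle here; the lemma is a bookkeeping exercise that packages existing guarantees.  The one subtlety that must be handled carefully is the identity $I-V^{\ast}\widetilde{V}=-E_V^{\ast}\widetilde{V}$, which crucially uses that $\widetilde{V}$ (not $V$) is the unitary matrix.  The remaining difficulty is only in matching the explicit constants in \eqref{eq:boundsRURV}, where one must ensure the failure probability from Proposition~\ref{prop:finiteHaar} is the only randomness invoked, so that the two bounds in \eqref{eq:boundsRURV} hold simultaneously on the same event.
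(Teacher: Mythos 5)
Your proof follows the same route as the paper's: Proposition \ref{prop:finiteHaar} supplies the Haar unitary $\widetilde{V}$, Definition \ref{def:qr} applied to $\QR(B)$ supplies $\widetilde{U}$ and $\widetilde{B}=\widetilde{U}R$, and $A-\widetilde{A}$ is decomposed using the unitarity of $\widetilde{V}$; your identity $A(I-V^{\ast}\widetilde{V})=-AE_V^{\ast}\widetilde{V}$ is exactly the paper's expansion $A(\widetilde{V}+E_0)^{\ast}\widetilde{V}=A+AE_0^{\ast}\widetilde{V}$. The overall structure and the probability accounting (all randomness comes from the single event of Proposition \ref{prop:finiteHaar}) are correct.

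One step does not go through as written. In Step 3 you bound $\|V^{\ast}\|\le 1+\eta_V$ and then assert that the standing assumptions \eqref{eq:finiteassumtions} give $\eta_V\le 1$. They do not: $\eta_V$ contains the factors $8tn^{3/2}/\alpha$ and $10n^2/\alpha$, which are unbounded as $\alpha\to 0$ or $n\to\infty$, while \eqref{eq:finiteassumtions} only controls $\cn\mu_{\QR}(n)\u$ and $\cn\mu_{\MM}(n)\u$ by absolute constants. Since the lemma is claimed for every $\alpha\in(0,1)$, the simplifications $\mu_{\MM}(n)\u(1+\eta_V)\le 2\mu_{\MM}(n)\u$ and the analogous one for the $\mu_{\QR}(n)\u$ term are unjustified in the regime $\eta_V>1$, and in that regime your final constant does not reduce to the one claimed in \eqref{eq:boundsRURV}. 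The fix is the one the paper uses: bound $\|V\|$ via the $\QR$ guarantee itself, $\|V\|\le 1+\mu_{\QR}(n)\u\le 5/4$ (Definition \ref{def:qr} says $V$ is within $\mu_{\QR}(n)\u$ of an exactly unitary matrix), rather than via its distance $\eta_V$ to the Haar unitary $\widetilde{V}$. This gives $\|E_3\|\le \tfrac54\|A\|\mu_{\MM}(n)\u$ and $\|B\|\le 2\|A\|$ with no dependence on $\eta_V$, after which the leftover $2\mu_{\QR}(n)\u$ is absorbed by upgrading $8t$ to $9t$ exactly as you describe.
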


\begin{proof}
By definition $V=\QR (\widetilde{G_n})$ with $\widetilde{G}_n = G_n+E$, where $G_n$ is an $n\times n$ Ginibre matrix and $\|E\|\leq \sqrt{n} \u$. A direct application of the guarantees on each step yields the following: 
\begin{enumerate}
    \item From Proposition \ref{prop:finiteHaar}, we know that there is a Haar unitary $\widetilde{V}$ and a random matrix $E_0$, such that $V = \widetilde{V}+E_0$  and 
    \begin{equation}
    \label{eq:rurvE0}
    \mathbb{P}\left[\|E_0\| < \frac{8 t n^{\frac{3}{2}}}{\alpha} \cn \mu_\QR(n)  \u + \frac{10 n^2}{\alpha } \cn \u \right]
\geq 1-2e\alpha^2 -\expboundd.   
\end{equation}
    \item If $B:= \MM(A, V^*) = AV^* +E_1$, then from the guarantees for $\MM$ we have  $\|E_1\|\leq  \|A\|\|V\| \mu_\MM (n)\textbf{u}$. Now from the guarantees for $\QR$ we know that $V$ is $\mu_\QR(n) \u$ away from a unitary, and hence $$\|V\|\mu_\MM(n) \u \leq (1+\mu_\QR(n)\u) \mu_\MM(n)  \u\leq  \frac{5}{4}\mu_\MM(n)\u$$
   where the last inequality follows from the assumptions in (\ref{eq:finiteassumtions}). This translates into $$\|B\| \leq \|A\|\|V\|+ \|E_1\| \leq (1+ \mu_\QR(n)\u) \|A\|+ \|E_1\| \leq \frac{5}{4}\|A\|+ \|E_1\|.$$
    Putting the above together and using 
    (\ref{eq:finiteassumtions}) again, we get 
    \begin{equation}
    \label{eq:E1}
        \|E_1\|\leq \frac{5}{4} \|A\|  \mu_\MM(n) \textbf{u} \quad \text{and} \quad B \leq \frac{5}{4}\|A\|(1+ \mu_\MM(n)\textbf{u})  <  2\|A\| .
    \end{equation}
    \item Let $(U, R) = \QR(B)$.  Then there is a unitary $\widetilde{U}$ and a matrix $\widetilde{B}$ such that $U= \widetilde{U} + E_2$, $B = \widetilde{B}+E_3$, and $\widetilde{B} = \widetilde{U} R$, with error bounds  $\|E_2\| \leq \mu_\QR (n) \textbf{u}$ and $\|E_3\|\leq \|B\|\mu_\QR(n) \textbf{u}$.  Using (\ref{eq:E1}) we obtain
    \begin{equation}
    \label{eq:rurvE3}
    \|E_3\| \leq \|B\| \mu_\QR (n) \textbf{u} < 2\|A\| \mu_\QR(n) \textbf{u}. 
    \end{equation}
    
    \item Finally, define $\widetilde{A} := \widetilde{B} \widetilde{V}$. Note that $\widetilde{A} = \widetilde{U} R \widetilde{V}$ and 
    $$ \widetilde{A} = \widetilde{B} \widetilde{V} = (B-E_3) \widetilde{V} = (AV^* +E_1-E_3) \widetilde{V} = (A(\widetilde{V}+E_0)^*+E_1-E_3 )\widetilde{V} = A + (AE_0^*+E_1-E_3)\widetilde{V}, $$
    which translates into
    $$\|A-\widetilde{A}\| \leq \|A\|\|E_0\| + \|E_1\|+ \|E_3\|. $$
     Hence, on the event described in the left side of (\ref{eq:rurvE0}), we have 
     $$\|A- \widetilde{A}\| \leq \|A\|\left(  \frac{8t n^{\frac{3}{2}}}{\alpha} \cn \mu_\QR(n)  \u + \frac{10 n^2}{\alpha } \cn \u +\frac{5}{4}\mu_\MM(n)\u+2 \mu_\QR(n)\u \right),$$
     and using some crude bounds, the above inequality yields the advertised bound. 
\end{enumerate}

\end{proof}

We can now prove a finite arithmetic version of Proposition \ref{prop:spanexact}. 

\begin{proposition}[Main guarantee for $\SPAN$]
\label{prop:spanfinite}
Let $n> r $ be positive integers, and let $\indef,\theta  > 0$ and $A, \widetilde{A}\in \mathbb{C}^{n\times n}$ be such that $\|A-\widetilde{A}\|\leq \indef$ and  $\rank(A)= \rank(A^2) =r$. Let $S := \SPAN(\widetilde{A}, r)$ and $T := \spa(A, r)$. If $\QR$ and $\MM$ satisfy the guarantees in Definitions \ref{def:MM} and \ref{def:qr}, and (\ref{eq:finiteassumtions}) holds,  then, for every $t > 2\sqrt{2}+1$  there exist a unitary $W\in \mathbb{C}^{r\times r}$ such that
\begin{equation}
\label{eq:finitedeflate}
\| S-  T W^*\| \leq \mu_\QR(n)\textbf{u} + 12\sqrt{\frac{tn^2\sqrt{r(n-r)} }{\sigma_r(T^*AT)}}.\sqrt{\frac{\indef}{\theta^2}}, 
\end{equation}
with probability at least $1-7\theta^2- \expboundd$. 
\end{proposition}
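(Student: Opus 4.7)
\textbf{Proof proposal for Proposition \ref{prop:spanfinite}.}

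The plan is to reduce to the exact-arithmetic Proposition \ref{prop:spanexact} by using Lemma \ref{lem:finiterurv} to realize the finite-arithmetic output of $\RURV(\widetilde{A})$ as the exact-arithmetic output of $\rurv$ applied to a slightly different matrix $A'$ which is close to $\widetilde{A}$ (and hence to $A$), and for which the auxiliary unitary is genuinely Haar-distributed.

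First I would invoke Lemma \ref{lem:finiterurv} on the input $\widetilde{A}$ with parameters $\alpha = \theta$ and $t$ as in the hypothesis. This produces unitaries $\widetilde{U}, \widetilde{V}$ and a matrix $A'$ with $A' = \widetilde{U} R \widetilde{V}$, where $\widetilde{V}$ is Haar-distributed. On the good event (which holds with probability at least $1-2e\theta^2 - \expboundd$) one has $\|U - \widetilde{U}\| \le \mu_\QR(n)\u$ and
$$
\|\widetilde{A} - A'\| \le \|A\|\left(\frac{9tn^{3/2}}{\theta}\cn\mu_\QR(n)\u + 2\mu_\MM(n)\u + \frac{10n^2}{\theta}\cn\u\right) =: \indef'.
$$
Using the standing assumption \eqref{eq:finiteassumtions}, each of the three terms in $\indef'$ is bounded by $\indef$ times a factor of the form $Ctn^2/\theta$; after absorbing constants this gives $\indef' \le \indef \cdot C_1 t n^2/\theta$ for some absolute constant $C_1$, and in particular $\indef + \indef' \le \indef \cdot C_2 t n^2 / \theta$.

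Since $\widetilde{U}$ is exactly unitary and $A' = \widetilde{U} R \widetilde{V}$ with $\widetilde{V}$ Haar, the first $r$ columns of $\widetilde{U}$ are precisely what $\spa(A',r)$ would produce in exact arithmetic; denote this matrix $\widetilde{S}$. The finite-arithmetic output $S$ consists of the first $r$ columns of $U$, hence $\|S - \widetilde{S}\| \le \|U - \widetilde{U}\| \le \mu_\QR(n)\u$. Now apply Proposition \ref{prop:spanexact} to the pair $(A', A)$ with parameter $\theta_0 := \theta$ and perturbation size $\indef + \indef'$ (noting $\rank(A) = \rank(A^2) = r$ is preserved). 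With probability at least $1 - \theta^2$, there is a unitary $W \in \C^{r \times r}$ with
$$
\|\widetilde{S} - T W^*\| \;\le\; \sqrt{\frac{8\sqrt{r(n-r)}}{\sigma_r(T^*AT)}} \cdot \sqrt{\frac{\indef + \indef'}{\theta}} \;\le\; \sqrt{\frac{8 C_2 \, t n^2 \sqrt{r(n-r)}}{\sigma_r(T^*AT)}} \cdot \sqrt{\frac{\indef}{\theta^2}}.
$$
Combining with $\|S - \widetilde{S}\| \le \mu_\QR(n)\u$ by the triangle inequality, and union-bounding the failure probabilities ($2e\theta^2 + \expboundd$ from Lemma \ref{lem:finiterurv} together with $\theta^2$ from Proposition \ref{prop:spanexact}, totalling at most $7\theta^2 + \expboundd$ since $2e + 1 < 7$), gives the advertised inequality \eqref{eq:finitedeflate}, provided $\sqrt{8 C_2} \le 12$, i.e.\ $C_2 \le 18$.

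The main obstacle is the bookkeeping in the last bullet: one must carefully expand $\indef'$ using the three terms from Lemma \ref{lem:finiterurv}, apply \eqref{eq:finiteassumtions} to absorb each $\|A\|\cn\mu_\QR\u$ and $\|A\|\mu_\MM\u$ into $\indef/4$, and verify that the resulting constant $C_2$ (and hence $\sqrt{8C_2}$) is no larger than $12$ under the assumption $t > 2\sqrt 2 + 1$. No single inequality is subtle, but the constant $12$ in the statement is tight enough that the arithmetic must be done with some care rather than by absorbing everything into a generic ``$O(\cdot)$''.
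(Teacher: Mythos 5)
Your proposal is correct and follows essentially the same route as the paper's proof: invoke Lemma \ref{lem:finiterurv} to realize the finite-arithmetic $\RURV(\widetilde{A})$ as an exact $\rurv$ of a nearby matrix with a genuinely Haar unitary, absorb the roundoff terms into a multiple of $\indef$ via \eqref{eq:finiteassumtions}, apply Proposition \ref{prop:spanexact} to that nearby matrix, and combine by the triangle inequality and a union bound (with $2e+1<7$). The constant bookkeeping also matches: the paper bounds the perturbation by $(12t+16)n^2\indef/\theta$ and uses $8(12t+16)\le 144t=12^2 t$ for $t>2\sqrt{2}+1$, which is exactly your condition $\sqrt{8C_2}\le 12$.
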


\begin{proof}
Let $(U, R) = \RURV(\widetilde{A})$. From Lemma \ref{lem:finiterurv} we know that there exist  $\widetilde{U}, \widetilde{\widetilde{A}} \in \mathbb{C}^{n\times n}$, such that $\|U-\widetilde{U} \|$ and $\|\widetilde{A}- \widetilde{\widetilde{A}}\|$ are small, and $(\widetilde{U}, R) = \rurv(\widetilde{\widetilde{A}})$ for the respective realization of an exact Haar unitary matrix. Then, from $\|\widetilde{A}\|\leq \|A\|+ \indef$ and (\ref{eq:boundsRURV}), for every $1 > \alpha >0$ and $t > 2 \sqrt{2}+1$ we have 
\begin{equation}
\label{eq:boundtildetildeA}
\left\|A-\widetilde{\widetilde{A}}\right\| \leq \left\|\widetilde{\widetilde{A}}-\widetilde{A}\right\|+ \|\widetilde{A}- A\| \leq (\|A\|+\indef)\left(\frac{9 t n^{\frac{3}{2}}}{\alpha} \mu_\QR(n) \cn\u + 2 \mu_\MM(n)\u + \frac{10n^2}{\alpha}\cn \u\right)+ \indef, 
\end{equation}
with probability $1-2e\alpha^2-\expboundd$. 

Now, from (\ref{eq:finiteassumtions}) we have $\u\leq \indef\leq \frac{1}{4}$ and $\cn \|A\| \mu\u \leq \indef$ for $\mu= \mu_\QR(n), \mu_\MM(n)$, so we can bound the respective terms in (\ref{eq:boundtildetildeA}) by $\indef$:
\begin{align}
\label{eq:boundboundbound}
(\|A\|+\indef)\left(\frac{9 t n^{\frac{3}{2}}}{\alpha} \cn \mu_\QR(n) \u + 2 \mu_\MM(n)\u + \frac{10n^2}{\alpha} \cn\u\right)+ \indef & \leq (1+ \indef)\left(\frac{9 t n^\frac{3}{2}}{\alpha}\indef+2\indef+\frac{10n^2}{\alpha}\indef  \right)+\indef \nonumber  \\ &\leq \frac{(12t+16)}{\alpha}n^2 \indef, 
\end{align}
where the last crude bound uses $1 \leq n^{\frac{3}{2}}\leq n^2, 1+\indef \leq \frac{5}{4}$ and $t > 2$.  

Observe that $\widetilde{S} = \spa(\widetilde{\widetilde{A}}, r)$ is the matrix formed by the first $r$ columns of $\widetilde{U}$, and that by Proposition \ref{prop:spanexact} we know that for every $\theta > 0$,  with probability $1-\theta^2$ there exists a unitary $W$ such that 
\begin{equation}
\label{eq:eventfromrurv}
\|\widetilde{S}- T W^*\| \leq  \sqrt{\frac{8\sqrt{r(n-r)} }{\sigma_r(T^*AT)}}.\sqrt{\frac{\left\|A-\widetilde{\widetilde{A}}\right\|}{\theta}}.  
\end{equation}
On the other hand, $S$ is the matrix formed by the first $r$ columns of $U$. Hence 
$$\|S-\widetilde{S}\|\leq \|U- \widetilde{U}\| \leq \mu_\QR (n) \textbf{u}. $$
Putting the above together we get that under this event
\begin{align}
\label{eq:finalinequality}
\|S- T W^*\|  \leq  \|S-\widetilde{S}\|+ \|\widetilde{S}- TW^*\| 
  \leq  \mu_{\QR} (n)\textbf{u} +  \sqrt{\frac{8\sqrt{r(n-r)} }{\sigma_r(T^*AT)}}.\sqrt{\frac{\left\|A-\widetilde{\widetilde{A}}\right\|}{\theta}}.
\end{align}
 Now, taking $\alpha = \theta$, we note that both  events in  (\ref{eq:boundtildetildeA}) and (\ref{eq:eventfromrurv}) happen with probability at least $1-(2e+1)\theta^2-\expboundd$. The result follows from replacing the constant $2e+1$ with 7, using $t> 2\sqrt{2}+1$ and replacing $8(12t+16)$ with $144t$, and combining the inequalities (\ref{eq:boundtildetildeA}), (\ref{eq:boundboundbound}) and (\ref{eq:finalinequality}). 
\end{proof}

We end by proving Theorem \ref{thm:deflate-guarantee-usable}, the guarantees on $\SPAN$ that we will use when analyzing the main algorithm.

\begin{proof}[Proof of Theorem \ref{thm:deflate-guarantee-usable}]
    As Remark \ref{rem:projector} points out, in the context of this theorem we are passing to $\SPAN$ an approximate projector $\widetilde{P}$, and the above result simplifies. Using this fact, as well as the upper bound $r(n-r) \le n^2/4$, we get that
    $$
        \|S - TW^\ast\| \le \mu_{\QR}(n)\u + \frac{12 \sqrt{t n^3 \indef}}{\theta}.
    $$
    with probability at least $1 - 7\theta^2 - 2 e^{-t^2 n}$ for every $t > 2\sqrt 2$. If our desired quality of approximation is $\|S - TW^\ast\| = \outdef$, then some basic algebra gives the success probability as at least
    $$
        1 - 1008\frac{n^3 t \indef}{(\outdef - \mu_{\QR}(n)\u)^2} - 2e^{-t^2n}.
    $$
    Since $\indef \le 1/4$, we can safely set $t = \sqrt{2/\indef}$, giving 
    $$
        1 - 1426\frac{n^3\sqrt{\indef}}{(\outdef - \mu_{\QR}(n)\u)^2} - 2e^{-2n/\indef}.
    $$
    To simplify even further, we'd like to use the upper bound $2e^{-2n/\indef} \le \frac{n^3\sqrt\indef}{(\outdef - \mu_{\QR}(n)\u)^2}$. These two terms have opposite curvature in $\indef$ on the interval $(0,1)$, and are equal at zero, so it suffices to check that the inequality holds when $\indef = 1$. The terms only become closer by setting $n=1$ everywhere except in the argument of $\mu_{\QR}(\cdot)$, so we need only check that
    $$
        \frac{2}{e^2} \le \frac{1}{(\outdef - \mu_{\QR}(n)\u)^2}.
    $$
    Under our assumptions $\outdef,\mu_{\QR}(n)\u \le 1$, the right hand side is greater than one, and the left hand less. Thus we can make the replacement, use $\mach \le \tfrac{\outdef}{2\mu_{QR}(n)}$, and round for readability to a success probability of no worse than
    $$
        1 - 6000\frac{n^3\sqrt\indef}{\outdef^2};
    $$
    the constant here is certainly not optimal.
    
    Finally, for the running time, we need to sample $n^2$ complex Gaussians, perform two QR decompositions, and one matrix multiplication; this gives the total bit operations as
    $$
        T_{\SPAN}(n) = n^2 T_\N+ 2T_\QR(n)+T_\MM(n).
    $$
\end{proof}

\begin{remark}
Note that the exact same proof of Theorem \ref{thm:deflate-guarantee-usable} goes through in the more general case where the matrix in question is not necessarily a projection, but any matrix close to a rank-deficient matrix $A$. In this case an extra $\sigma_r(T^*AT)$ term appears in the probability of success (see the guarantee given in the box for the Algorithm $\SPAN$ that appears in this appendix). 
\end{remark}
	\section{Alternate Proofs of Shattering and Davies' Conjecture} \label{sec:proofsforABBCS}

In this section we'll give an alternate and essentially different route to the smoothed analysis of eigenvalue gap and condition numbers in Theorem \ref{thm:smoothed}, as well as the proof of Davies' Conjecture in \cite{banks2019gaussian}, via results from \cite{abbcs}. We'll begin by recalling some notation from \cite{abbcs}, and direct the reader there for a more thorough treatment.

For any $n$ let $\P(\C^n)$ denote the projective space associated to $\C^n$, and given $A \in \C^{n \times n}$, $\lambda \in \C$ and $v \in \P(\C)$, define $A_{\lambda, v}: v^{\perp} \to v^{\perp}$ by
\[A_{\lambda, v} := P_v^\perp \, \circ\,  (A - \lambda) \vert_{v^\perp}\]
where $v^{\perp} = \{ x \in \C^n \mid \langle x, v \rangle = 0 \}$ and $P_{v^\perp} : \C^n \to v^\perp$ denotes the orthogonal projection. With this in hand, \cite{abbcs} defines the condition number of a triple $(A, \lambda, v)\in \C^{n\times n}\times \C\times \P(\C^n)$ as 
\[ \mu(A, \lambda, v) := \begin{cases}\Vert A \Vert_F \Vert A_{\lambda, v}^{-1} \Vert & \text{if } A_{\lambda, v} \text{ is invertible,}
\\ \infty & \text{otherwise.}\end{cases} \]
They similarly define the mean square condition number of a matrix as 
\[ \muav(A) := \left(\frac{1}{n} \sum_{j=1}^n \Vert A \Vert_F^2 \Vert \Vert A_{\lambda_j, v_j}^{-1} \Vert_F^2 \right)^\frac{1}{2},\]
where $(\lambda_j, v_j)$ are the eigenpairs of $A$. In particular, note that $\muav(A)<\infty$ only when $A$ has simple eigenvalues, and therefore $\muav(A)<\infty$ implies that $A$ is diagonalizable.

\subsection{Davies' conjecture}

To compare the notions of eigenvalue condition number and the condition number of a triple we recall the following theorem from \cite{abbcs}:

\begin{theorem}[Part of Proposition 2.7 of \cite{abbcs}] \label{thm:abbcs_eig_derivative}
Let $\mathcal{V}$ denote the \emph{solution variety} for the eigenpair problem, defined as 
\[ \mathcal{V} = \mathcal{V}_n := \{(A, \lambda, v) \in \mathbb{C}^{n \times n} \times \C  \times \mathbb{P}(\C) \mid (A - \lambda)v = 0 \},\]
and let $\Gamma : [0, 1] \to \mathcal{V}$, $\Gamma(t) = (A_t, \lambda_t, v_t)$ be a smooth curve such that $A_t$ lies in the unit sphere of $\mathbb{C}^{n \times n}$ for all $t$.   Then for all $t \in [0, 1]$, 
\[  |\dot{\lambda_t}| \le \sqrt{1 + \mu(A_t, \lambda_t, v_t)^2} \Vert \dot{A_t} \Vert. \]
\end{theorem}

Now recall that $\kappa(\lambda)$ has the following variational description (e.g. see Theorem 1 in \cite{greenbaum2020first}) for any a simple eigenpair $(\lambda, v)$ of $A$, in terms of the derivatives of smooth curves going through the point $(A, \lambda, v)$. Namely
$$\kappa(\lambda)= \sup_{\substack{\Gamma:[0, 1] \to \calV, \, \Gamma(0)=(A, \lambda, v)}} \frac{|\dot{\lambda}_0|}{\|\dot{A}_0\|}.$$
Hence, Theorem \ref{thm:abbcs_eig_derivative} implies
\begin{equation}
    \label{eq:boundkappawithmu}
    \kappa(\lambda) \leq \sqrt{1+\mu(A, \lambda, v)^2}. 
\end{equation}
It is then clear that $\muav(A)$ can also be used to upper bound $\kappa_V(A)$. In view of this, we remind the reader of the following result from \cite{abbcs}.  
\begin{theorem}[Theorem 2.14 of \cite{abbcs}]
\label{thm:abccsboundonkappaV}
Let $G_n \in \mathbb{C}^{n \times n}$ denote a complex Ginibre matrix with $\mathcal{N}(0, 1_\C/n)$ entries.  For any $A \in \C^{n \times n}$ and $\gamma > 0$, we have 
\[ \dE \left[\frac{\mu_{F, \mathsf{av}}(A + \gamma G_n)^2}{\Vert A+ \gamma G_n \Vert_F^2} \right] \le \frac{n^2 }{\gamma^2}.\]
\end{theorem}
We are now ready to prove the following result, which directly implies Davies' conjecture (for comparison, Theorem 1.1 of \cite{banks2019gaussian} is the same result but the exponent of $n$ is $3/2$ instead of $5/2$.)
\begin{proposition}
Suppose $A \in \C^{n \times n}$ and $\gamma \in (0,1)$.  Then there is a matrix $E \in \C^{n \times n}$ such that $\Vert E \Vert \le \gamma \Vert A \Vert$ 
and
\[ \kappa_V(A + E) \le C \frac{n^{5/2}}{\gamma}\]
where $C$ is an absolute constant.
\end{proposition}

\begin{proof} 
Let $\lambda_i, v_i$ denote the (random) eigenvalues and eigenvectors of $A + \gamma G_n$.  Let $B_r$ denote the event $\Vert A + \gamma G_n \Vert_F < r$.  Because $ \Vert G_n \Vert_F < 2\sqrt{n}$ with probability at least some absolute positive constant, for $ r = \Vert A \Vert + 2 \sqrt{n}$ the event $B_r$ holds with that probability as well. Now note that
\begin{align}
\dE \left[ \sum_i \kappa(\lambda_i)^2 \mid B_r \right]&\le \dE \left[ n + \sum_i \mu(A + \gamma G_n, \lambda_i, v_i)^2  \mid B_r \right] && \text{by (\ref{eq:boundkappawithmu})} \nonumber \\
&\le \dE\left[ n + n \mu_{F, \mathsf{av}}(A + \gamma G_n)^2 \mid B_r \right] \nonumber \\ \label{eq:boundabbcs}
&\le n + \frac{n^3 r^2}{\gamma^2 \P[B_r]},
\end{align}
where in the last line we use Theorem \ref{thm:abccsboundonkappaV} and
\[ \dE \left[ \left. \frac{\mu_{F, \mathsf{av}}(A + \gamma G_n)^2}{r^2} \, \right| B_r \right] \le \dE \left[ \left. \frac{\mu_{F, \mathsf{av}}(A + \gamma G_n)^2}{\Vert A+ \gamma G_n \Vert_F^2} \, \right| B_r \right] \le \frac{\dE \left[ \frac{\mu_{F, \mathsf{av}}(A + \gamma G_n)^2}{\Vert A+ \gamma G_n \Vert_F^2} \right]}{ \P[B_r]}.\]
Recalling the general inequality (see Lemma 3.1 \cite{banks2019gaussian}) $\kappa_V \le \sqrt{n \sum_{i=1}^n \kappa(\lambda_i)^2}$ we get
$$\dE[\kappa_V(A+\gamma G_n)^2| B_r] \leq n \dE \left[ \sum_i \kappa(\lambda_i)^2 \mid B_r \right].$$
So, when $\Vert A \Vert = 1$ and $\gamma < 1$,  if we set $ r = \Vert A \Vert + 2 \sqrt{n}$ as discussed above, the event $B_r$ occurs with positive probability, and by (\ref{eq:boundabbcs}) we know that $n \dE \left[ \sum \kappa(\lambda_i)^2 \mid B_r \right] \leq \frac{C n^5}{\gamma^2}$ for some constant $C$. It follows that there is some realization of $G_n$ for which $\kappa_V(A+\gamma G_n)^2 \leq \frac{C n^5}{\gamma^2}$, as we wanted to show. 
\end{proof}

\subsection{Smoothed analysis of $\gap$}

Let $M\in \C^{n\times n}$ be any matrix, and let $\lambda_1, \dots, \lambda_n$ be its eigenvalues. In what follows we will denote
$$\gap_i(M) := \min_{j\neq i} |\lambda_i -\lambda_j|.$$
We begin by comparing these quantities to the condition number of the corresponding triple. 

\begin{lemma}
\label{lem:gapandconditiontriple}
Let $M$ be a matrix with distinct eigenvalues and spectral decomposition $M=\sum_{i=1}^n \lambda_i v_i w_i^*$. Then, for every $i=1, \dots, n$ it holds that
$$\frac{\mu(M, \lambda_i, v_i)}{\|M\|_F} \geq \frac{1}{\gap_i(M)}. $$
\end{lemma}

\begin{proof}
First we show that $\Lambda(M_{\lambda_i, v_i}) = \Lambda(M-\lambda_i)\setminus \{0\}$. To see this, take any $j\neq i$ and note that
$$w_j^* P_{v_i^\perp} \, \circ \,  (M-\lambda_i) \vert_{v_i^\perp} = (\lambda_j-\lambda_i) w_i^* ,$$
and hence $\lambda_j-\lambda_i$ is an eigenvalue of $M_{\lambda_i, v_i}$. 

Now, using that the norm of a matrix is bigger than its spectral radius we get
\begin{align*}
 \|M_{\lambda_i, v_i}^{-1}\| & \geq \sup_{\lambda\in \Lambda(M_{\lambda_i, v_i})} \frac{1}{|\lambda|}
 \\ & = \frac{1}{\gap_i(M)} && \text{because } \Lambda(M_{\lambda_i, v_i}) = \Lambda(M-\lambda_i)\setminus \{0\}. 
\end{align*}
The claim then follows from the definition of $\mu(M, \lambda_i, v_i)$. 
\end{proof}

Using Theorem \ref{thm:abccsboundonkappaV} we get the following. 

\begin{proposition}
\label{prop:gapfromabbcs}
Let $A\in \C^{n\times n}$ be an arbitrary matrix and let $G_n$ be a normalized complex Ginibre matrix. Then for any $t, \gamma >0$
$$\P[\gap(A+\gamma G_n) < t \gamma ] \leq  n^3 t^2.$$
Thus, $\gap(A+\gamma G_n)=O(\gamma/n^{3/2})$ with probability bounded away from zero. 
\end{proposition}

\begin{proof}
Using Lemma \ref{lem:gapandconditiontriple} we get 
$$\frac{1}{\gap(A+\gamma G_n)^2} = \max_{i} \frac{1}{\gap_i(A+\gamma G_n)^2} \leq \max_i \frac{\mu(A+\gamma G_n, \lambda_i, v_i)^2}{\|A + \gamma G_n\|_F^2} \leq n \frac{\muav(A+\gamma G_n)^2}{\|A+\gamma G_n\|_F^2}.$$
Combining this with Theorem \ref{thm:abccsboundonkappaV} we obtain
$$ \dE\left[\frac{1}{\gap(A+\gamma G_n)^2} \right] \leq \frac{n^3}{\gamma^2}.$$
The proof is then concluded using Markov's inequality. 
\end{proof}

Remarkably, the $\gamma$ dependence in the bound of Proposition \ref{prop:gapfromabbcs} is optimal,  partially answering Question 2 in Section \ref{sec:conclusion} posed by us in a previous version of this paper. Also note that the bound is stronger than that from Corollary \ref{cor:mingapbound} obtained with our techniques. That said, and as discussed in Remark \ref{rem:comparisontoabbcs}, the results in \cite{abbcs} that were necessary for the proof of Proposition \ref{prop:gapfromabbcs} heavily exploit that the random perturbation has a complex Gaussian distribution, and it is not clear how to extend these result to other distributions.

With this in mind, we publicize the following conjecture of independent interest in random matrix theory, which was  communicated to us  by Vishesh Jain:

\begin{conjecture}
Let $K>0$  and let  $M_n$ be an $n \times n$ random matrix with independent complex entries (not necessarily centered), whose distributions are absolutely continuous with respect to the Lebesgue measure on $\C$, and have density upper bounded by $K$. Then
$$\P\Big[\gap(M_n)< \frac{t}{K}\Big] \leq \poly(n, t) \quad \forall t>0, $$
where $\poly(n, t)$ is a universal polynomial (i.e. its coefficients are independent of the distributions of the entries of $M_n$) in $n$ and $t$, which  is zero when $t=0$. 
\end{conjecture}

\end{document}